\DeclareMathOperator{\soc}{soc}
\DeclareMathOperator{\aut}{Aut}
\DeclareMathOperator{\frat}{\Phi}
\DeclareMathOperator{\oo}{O}
\DeclareMathOperator{\SU}{SU}
\DeclareMathOperator{\sym}{S}
\DeclareMathOperator{\psl}{L}
\DeclareMathOperator{\psp}{S}
\DeclareMathOperator{\gl}{GL}
\DeclareMathOperator{\psu}{U}
\DeclareMathOperator{\AGL}{AGL}
\DeclareMathOperator{\alt}{A}
\DeclareMathOperator{\fit}{Fit}
\newcommand{\FF}{\mathbb F}
\renewcommand{\emptyset}{\varnothing}
\newcommand{\st}{such that }
\newcommand{\ifa}{if and only if }
\renewcommand{\leq}{\leqslant}
\renewcommand{\geq}{\geqslant}
\renewcommand{\le}{\leqslant}
\renewcommand{\ge}{\geqslant}
\newtheorem{thm}{Theorem}[section]
\newtheorem{mainthm}{Theorem}
\newtheorem{cor}[thm]{Corollary}
 \newtheorem{lemma}[thm]{Lemma}
\newtheorem*{lemma*}{Lemma}
\newtheorem{prop}[thm]{Proposition}
\theoremstyle{definition}
\newtheorem{eg}[thm]{Example}
\newtheorem{assump}[thm]{Assumption}
\newtheorem{notation}[thm]{Notation}
\numberwithin{equation}{section}
\newcommand{\ignore}[1]{}\makeglossary
\begin{document}
	\bibliographystyle{amsplain}
	\subjclass[2010]{ 20D60 (primary); 20F16, 20E32 (secondary)}
	\keywords{Generating sets, supersoluble groups, simple groups}
\title[Finite groups satisfying the independence property]{Finite groups satisfying\\ the independence property}

\author[S. D. Freedman]{Saul D. Freedman}
\address{Saul D. Freedman\\ School of Mathematics and Statistics\\ University of St Andrews\\ St Andrews, KY16 9SS, UK\\email: sdf8@st-andrews.ac.uk}
\author[A. Lucchini]{Andrea Lucchini}
\address{Andrea Lucchini\\ Universit\`a di Padova\\  Dipartimento di Matematica \lq\lq Tullio Levi-Civita\rq\rq\\ Via Trieste 63, 35121 Padova, Italy\\email: lucchini@math.unipd.it}
\author[D. Nemmi]{Daniele Nemmi}
\address{Daniele Nemmi\\ Universit\`a di Padova\\  Dipartimento di Matematica \lq\lq Tullio Levi-Civita\rq\rq\\ Via Trieste 63, 35121 Padova, Italy\\email: dnemmi@math.unipd.it}
\author[C. M. Roney-Dougal]{Colva M. Roney-Dougal}
\address{Colva M. Roney-Dougal\\ School of Mathematics and Statistics\\ University of St Andrews\\ St Andrews, KY16 9SS, UK\\email: colva.roney-dougal@st-andrews.ac.uk}


\begin{abstract} We say that a finite group $G$ satisfies the \emph{independence
  property} if, for every pair of distinct elements $x$  and $y$ of $G$,
  either $\{x,y\}$ is contained in a minimal generating set for $G$ or
  one of $x$ and $y$ is a power of the other. We give a complete
  classification of the finite groups with this property, and in particular prove that 
  every such group is supersoluble. A key ingredient of our proof is a
    theorem showing that all but three finite almost simple groups $H$ contain an element $s$ such that the maximal subgroups of $H$ containing $s$, but not  containing the socle of $H$, are pairwise non-conjugate. 	\end{abstract}

\maketitle

\section{Introduction}

Let $G$ be a finite group. A generating set $X$ for $G$ is said to be \emph{minimal} if no proper subset of $X$ generates $G.$ Let $d(G)$ and $m(G)$ denote, respectively, the smallest and largest cardinality of a minimal generating set for $G$. A nice result in universal
algebra, due to Tarski and known as the Tarski Irredundant Basis Theorem (see, for example, \cite[Theorem 4.4]{busa}), implies
that, for every positive integer $k$ with $d(G)\leq k\leq m(G),$ the group $G$ has a minimal generating set of cardinality $k$. However, minimal generating sets for finite groups are not well understood. In particular, while several results in the literature (e.g., \cite{guralnick,quesgen,tracey}) yield good estimates for $d(G)$, very little is known about $m(G)$. An exhaustive investigation \cite{cc,whi} was carried out for the finite
symmetric groups, proving that $m(\sym_n) = n-1$ for each $n$, and giving a complete
description of the minimal generating sets of $\sym_n$ having cardinality $n-1$. The problem of determining $m(G)$ in general remains open, even for finite simple groups, though partial results for certain families of these groups are given in \cite{sawhi}.

One natural related question is ``which subsets of $G$ lie in a minimal generating set?'' For singletons, the answer is easy: an element belongs to some minimal generating set for $G$ if and only if it is not contained in the Frattini subgroup of $G$. Therefore, the first meaningful question is ``which pairs of distinct elements belong to a minimal generating set?'' Similarly, we can ask ``which pairs of distinct elements belong to a generating set of size $d(G)$?'' A partial answer to the corresponding question about singletons is given in \cite[\S6]{al}, using \cite[Theorem 1]{bgh}.

We will call two distinct
elements $x$ and $y$ of $G$ {\emph{independent}} in $G$ if there
exists a minimal generating set $X$ for $G$ with 
$\{x, y\} \subseteq X.$ Similarly, we will call $x$ and $y$
{\emph{rank-independent}} in $G$ if there exists such an $X$ with $|X|=d(G)$.  
An obvious obstruction to $x$ and $y$  being independent is that one of the two is a power of the other. We say that $G$
satisfies the {\emph{independence property}} if this is the unique
obstruction, i.e., if two distinct elements $x$ and $y$ are independent whenever neither of $x$ and $y$ is a power of the
other. Similarly, an obvious obstruction
to $x$ and $y$  being rank-independent is that 
they generate a cyclic subgroup, i.e., that each of $x$ and $y$ is a power of some $z \in G$. We say that a non-cyclic finite group $G$ satisfies the {\emph{rank-independence property}} if $\{x,y\}$ extends to a generating set for $G$ of size $d(G)$ whenever $\langle x, y\rangle$ is not cyclic.

Note that we can also formulate the independence and rank-independence properties in the context of certain graphs associated with $G$. 
Each graph defined here has vertex set $G$. 
In the {\emph{independence graph}} of $G$, which was introduced and investigated in
\cite{ind}, 
two distinct vertices are adjacent if and only if they are
independent, while in the {\emph{rank graph}} of $G$, two distinct vertices 
are adjacent if and only if they are rank-independent. The \emph{power graph} of $G$, where distinct vertices are adjacent if and only if one is a power of the other, was introduced by Kelarev and
Quinn \cite{kq} and investigated by several authors (see for example \cite{c0,c1,c3,c4,c6}). Finally, the edges of the \emph{enhanced power graph} of $G$ are the pairs $\{x,y\}$ of distinct vertices such that $\langle x, y \rangle$ is cyclic. This graph was introduced to interpolate between the power graph and the well-known
commuting graph, but has since 
been studied in its own right
(see \cite{c5,e1,z1,z2}). The independence property of a group is equivalent to its independence graph being
the complement of its power graph. Similarly, $G$ satisfies the rank-independence property if and only if its rank graph is the complement of its enhanced power graph.

In this paper, we will give a complete classification of the finite
groups $G$ satisfying the independence property and those satisfying the rank-independence property. We will see in particular that
in each case $G$ is supersoluble.

The classification of finite groups with the rank-independence property is not particularly
difficult (however, our proof relies on the classification of finite simple groups). The description depends on whether $d(G)=2$ or $d(G)\geq 3$. 
 
\begin{mainthm}\label{thm:rank_perfect2}
 Let $G$ be a finite group with $d(G)=2$. Then $G$ satisfies the rank-independence property if
 and only if one of the following occurs:
 \begin{enumerate}[label={(\roman*)},font=\upshape]
		\item $G\cong C_p\times C_p$, with $p$ a prime;
		\item $G\cong Q_8$; or
		\item $G \cong C_p \rtimes C_{q^m}$, where $p$ and $q$ are distinct primes, $m$ is an arbitrary positive integer, and the action of $C_{q^m}$ on $C_p$ has kernel $C_{q^{m-1}}$.
	\end{enumerate}
\end{mainthm}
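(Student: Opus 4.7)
The plan is to reduce the theorem to a classical structural question. Suppose $d(G)=2$. Then the rank-independence property says precisely that every pair $\{x,y\}$ with $\langle x,y\rangle$ non-cyclic generates $G$; equivalently, every non-cyclic $2$-generated subgroup of $G$ equals $G$. Any non-cyclic finite group contains a non-cyclic $2$-generated subgroup: take $x$ of maximal order and any $y\notin\langle x\rangle$; if $\langle x,y\rangle=\langle z\rangle$ were cyclic, maximality of $|x|$ would force $\langle x\rangle=\langle z\rangle$, contradicting $y\notin\langle x\rangle$. Thus any proper non-cyclic subgroup of $G$ would already produce a proper non-cyclic $2$-generated subgroup. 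So the rank-independence property is equivalent to $G$ being \emph{minimal non-cyclic}, i.e., non-cyclic with every proper subgroup cyclic, and the task reduces to the classical classification of such groups.

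If $G$ is a $p$-group, then every maximal subgroup is cyclic. In the abelian case, any non-cyclic abelian $p$-group other than $C_p\times C_p$ contains a proper subgroup isomorphic to $C_p\times C_p$, so $G\cong C_p\times C_p$. In the non-abelian case, Burnside's classification of $p$-groups with a cyclic maximal subgroup gives the dihedral, semidihedral, generalized quaternion, and modular maximal-cyclic families, and inspection of their subgroup lattices leaves only $Q_8$.

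If $G$ is not a $p$-group, then every Sylow subgroup is proper and hence cyclic, so $G$ is a solvable $Z$-group. If three primes divided $|G|$, then for any two of them the corresponding Hall subgroup would be proper and cyclic, forcing the corresponding Sylow subgroups to centralize each other; hence every pair of Sylow subgroups would centralize, making $G$ abelian with pairwise coprime cyclic Sylow subgroups --- hence cyclic, contradiction. So $|G|=p^aq^b$, with normal Sylow $P=\langle a\rangle$ and complement $Q=\langle b\rangle$, $b^{-1}ab=a^r$. Two arguments then pin down the structure. First, if $a\ge 2$, the proper subgroup $\langle a^p,b\rangle$ must be cyclic, which forces $r\equiv 1\pmod{p^{a-1}}$; a short calculation (treating $p=2$ separately) then shows $r$ has order a power of $p$ in $(\mathbb{Z}/p^a\mathbb{Z})^*$, but this order divides $q^b$, forcing $p=q$ --- contradiction, so $a=1$. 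Second, $\langle a, b^q\rangle$ is a proper subgroup of order $pq^{b-1}$, and cyclicity forces $r^q\equiv 1\pmod p$, so the kernel of the action of $Q$ on $P$ equals $\langle b^q\rangle\cong C_{q^{b-1}}$, giving case (iii).

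The main obstacle is the arithmetic in the last step: one must verify that the auxiliary subgroups are genuinely proper and carry out the order computation of $r$ modulo $p^a$ uniformly across $p$ (with the usual care for $p=2$). The converse direction --- that each of the three listed families satisfies the rank-independence property --- is an immediate inspection of subgroup lattices.
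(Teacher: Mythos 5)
Your reduction is correct and matches the one the paper uses: for $d(G)=2$, the rank-independence property is equivalent to every proper subgroup of $G$ being cyclic (the paper splits this observation over Proposition~\ref{nilp} and the final proposition of Section~\ref{rankperf}, but the content is the same; your self-contained lemma that every non-cyclic group contains a non-cyclic $2$-generated subgroup is a nice way to make it explicit). Where you diverge is the classification step. The paper dispatches the non-nilpotent case in one line by citing the Miller--Moreno classification \cite{mm} of minimal non-abelian groups, whereas you reprove the relevant case from scratch: Sylows cyclic $\Rightarrow$ $Z$-group, two primes only, normal Sylow $p$-part $C_{p^a}$, then $\langle a^p,b\rangle$ proper cyclic forces $a=1$ and $\langle a,b^q\rangle$ proper cyclic identifies the kernel as $C_{q^{m-1}}$. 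This buys self-containedness at the cost of length; the paper's citation buys brevity. One step of yours is stated a bit loosely: the three-prime argument (``hence every pair of Sylow subgroups would centralize, making $G$ abelian'') needs the Sylow subgroups to be chosen compatibly. This is easily repaired by invoking a Sylow system in the soluble group $G$, or more directly by using the $Z$-group decomposition $G=C_m\rtimes C_n$ with $(m,n)=1$: if $m$ has two prime divisors $p,p'$, then the proper subgroup $(C_m)_p\rtimes C_n$ being cyclic forces $C_n$ to centralise $(C_m)_p$, and symmetrically for $p'$, so $G$ is abelian; if instead $n$ has two prime divisors, the analogous argument on the complement applies. With that small tightening, your proof is correct.
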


\begin{mainthm}\label{thm:rank_perfect3}
  Let $G$ be a finite group with $d(G)\geq 3.$ Then $G$ satisfies the rank-independence property if and only if $G=P\rtimes C,$ where $P$ is an elementary abelian $p$-subgroup of $G$ and $C$ is a  cyclic group of coprime order
    acting on $P$ as scalar multiplication.
\end{mainthm}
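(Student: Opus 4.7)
\emph{Sufficiency.} Assume $G = P\rtimes C$ is as in the statement and set $n := \dim_{\FF_p} P$. A Lucchini-type formula for $d(G)$ in a coprime semidirect product, combined with the observation that scalar action makes the $C$-submodule generated by any vector one-dimensional, yields $d(G) = n$ if $C$ acts trivially and $d(G) = n+1$ if the scalar character is non-trivial; either way, $d(G)\ge 3$ forces $n\ge 2$. For rank-independence, given distinct $x, y\in G$ with $\langle x, y\rangle$ non-cyclic, I would carry out a case analysis on the structure of $\langle x, y\rangle$ (depending on whether $x, y$ lie in $P$, whether their $P$-components are $\FF_p$-dependent, and the behaviour of their projections to $C$), and in each case use the scalar action to build an explicit minimal generating set of size $d(G)$ containing $\{x, y\}$.

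\emph{Necessity.} Assume $G$ satisfies rank-independence and $d(G)=n_0\ge 3$. First, $\Phi(G)=1$: for any non-identity $x\in G$, a short case analysis using $d(G) \ge 3$ furnishes a $y$ with $\langle x, y\rangle$ non-cyclic, and rank-independence then places $x$ in a minimal generating set, so $x\notin\Phi(G)$. Second, $G$ is solvable: if $G$ had a non-abelian chief factor, the almost-simple groups theorem stated in the abstract, applied to an almost-simple section of $G$, would supply an element $s$ whose distinctive pattern of maximal overgroups lifts to a pair in $G$ failing rank-independence.

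With $G$ solvable and $\Phi(G)=1$, $\soc(G)$ is a direct product of elementary abelian minimal normal subgroups; if there were two distinct minimal normals, a carefully chosen pair from them would contradict rank-independence after a complement analysis via Gaschütz's theorem. So $\soc(G) = P$ is a single elementary abelian $p$-group, and Gaschütz gives $G = P\rtimes H$; since $F(G) = \soc(G) = P$ in a solvable group with $\Phi(G)=1$, we have $C_G(P) = P$, so $H$ acts faithfully on $P$. Finally, applying the Lucchini formula to $d(G) = n_0$ together with rank-independence (which places sharp constraints on how elements of $P$ from different $H$-isotypic components can be extended to $d(G)$-generating sets) pins down the $H$-module structure of $P$ as a direct sum of copies of a single one-dimensional character $\chi\colon H\to\FF_p^*$; that is, $H$ acts by scalar multiplication. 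Faithfulness embeds $H$ into $\FF_p^*$, so $H$ is cyclic of order coprime to $p$. The principal obstacle is the solvability step, which demands a careful translation of the almost-simple groups theorem into an explicit rank-independence failure; the module-theoretic reduction in the final step should be relatively clean once the framework is set up.
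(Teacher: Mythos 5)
Your proposal contains a genuine and structural error in the necessity direction: the claim that $\Phi(G)=1$ is not a necessary condition, and this propagates through the rest of your argument. Consider $G = \FF_p^3 \times C_{q^2}$ with $p \ne q$ primes. This group satisfies the rank-independence property (it is of the form in the statement, with $C = C_{q^2}$ acting as the trivial scalar) and has $d(G)=3$, but $\Phi(G) = \Phi(C_{q^2}) \cong C_q \neq 1$. Moreover your claimed case analysis cannot go through: if $x$ is the unique (up to powers) element of order $q$ in this abelian group, then $\langle x, y\rangle$ is cyclic for \emph{every} $y \in G$, since $\langle x\rangle$ is the unique subgroup of order $q$ and hence lies inside the cyclic group $\langle y\rangle$ whenever $q \mid |y|$, and $\langle x \rangle \times \langle y\rangle$ is cyclic otherwise. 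So no $y$ exists with $\langle x,y\rangle$ non-cyclic, and rank-independence says nothing about whether $x$ lies in a minimal generating set. The downstream steps then also fail: $\soc(G)$ need not be a single elementary abelian $p$-group (in the example it is $\FF_p^3 \times C_q$), $C_G(P)$ need not equal $P$ (the complement may act with a non-trivial kernel), and $H$ need not act faithfully or embed in $\FF_p^\times$ — indeed $C$ in the theorem is allowed to have arbitrary order coprime to $p$. The paper avoids all of this by keeping $F := \Phi(G)$ in play, working with the decomposition of $G/F$, and only at the end deducing the structure of $G$ from that of $G/F$, rather than trying to kill $\Phi(G)$ up front.

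A secondary concern is your route to solvability. You invoke the almost-simple Theorem~\ref{thm:noncomint}, but that result is tailored to the \emph{independence} property, where one has the freedom to construct minimal generating sets of arbitrary size; its application in Section~3 relies essentially on that freedom. For the rank-independence property the paper uses a much more elementary argument: if $G/\Phi(G)$ were non-abelian simple then two distinct involutions $x\Phi(G), y\Phi(G)$ generate a dihedral, hence non-cyclic and proper, subgroup of $G/\Phi(G)$, contradicting $d(G)=2$ together with rank-independence; combined with Lucchini's bound $d(G)\le d(G/N)+1$ for minimal normal $N$, this shows every minimal normal subgroup of $G/\Phi(G)$ is cyclic, giving supersolubility directly. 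Your sufficiency sketch, by contrast, is broadly in line with the paper's (via a Lucchini–Mar\'oti type matrix criterion for generation in such semidirect products), though the case analysis would need to be carried out in detail.
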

In the above result, we permit $C = 1$, and more generally, $G = P \times C$.

In a very recent paper, Harper \cite{sh}  introduced the notion of
$k$-flexible groups, for each positive integer 
$k$. A finite group $G$ is
\emph{$k$-flexible} if, for all $g_1,\dots,g_k\in G$ such that $d(\langle g_1,\dots,g_k\rangle)=k,$ there exist
$g_{k+1},\dots,g_{d(G)} \in G$ such that
$\langle g_1,\dots,g_{d(G)} \rangle = G$.
In particular, the notions of 2-flexible
groups and groups with the rank-independence property coincide.
Theorems~\ref{thm:rank_perfect2} and~\ref{thm:rank_perfect3}
above correspond to a small correction of Lemma 2.7 and a slightly more precise statement
of part of Theorem 2.14 in \cite{sh},
respectively, and were proved independently.

The classification of the finite groups with the independence property
is much more difficult. To prove that a group satisfying the
independence property is supersoluble, we require several new tools that
rely on classifications of the finite simple groups and their maximal subgroups, including the
following key result.  For an element $s$ of a group $G$, we
write $\mathcal{M}_G(s)$ to denote the set of maximal subgroups of $G$
containing $s$. 

\begin{mainthm}
\label{thm:noncomint}
Let $S$ be a non-abelian finite simple group. Then there exist non-commuting elements $s,x \in S$ such that, for each almost simple group $G$ with socle $S$, the intersection $\bigcap_{M \in \mathcal{M}_G(s)} M$ contains $x$.
\end{mainthm}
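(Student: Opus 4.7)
The plan is to prove the theorem case by case on the isomorphism type of the non-abelian simple group $S$ using the classification of finite simple groups, exhibiting for each $S$ a pair $(s,x)\in S\times S$ of non-commuting elements that works simultaneously for every almost simple $G$ with $\soc(G)=S$. The first observation is that any maximal subgroup $M$ of $G$ with $S\leq M$ automatically contains every element of $S$ and so contributes $S$ itself to the intersection, so the essential task is to choose $s$ in such a way that every $M\in\mathcal{M}_G(s)$ with $M\not\supseteq S$ meets $S$ in a subgroup containing some fixed non-abelian $H\leq S$ with $s\in H$; then one takes $x\in H$ with $[s,x]\neq 1$.

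For groups of Lie type I would select $s$ to be a distinguished element of $S$ (for instance a regular unipotent element, or a generator of a torus of Singer type, or more generally an element whose order is divisible by a suitable primitive prime divisor). The maximal overgroups of such $s$ in $G$ are then tightly controlled: in the classical case by Aschbacher's subgroup-structure theorem together with the Liebeck--Praeger--Saxl analysis of novelty maximals, and in the exceptional case by the Liebeck--Seitz classification. Most Aschbacher classes can be ruled out on order grounds (any overgroup of $s$ must contain a prescribed torus or parabolic), leaving only overgroups that share a common Borel or parabolic subgroup of $S$, from which a suitable $x$ can be extracted. For alternating groups $S=\alt_n$ I would take $s$ to be an appropriate cycle of nearly full support and appeal to the Jordan-type classification of overgroups of cycles in $\sym_n$, choosing $x$ inside a common intransitive or imprimitive stabiliser. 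For sporadic $S$, the statement can be verified directly from the ATLAS and the public GAP libraries of maximal subgroups and their fusion under outer automorphisms.

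The main obstacle is ensuring uniformity of the pair $(s,x)$ across every almost simple $G$ with socle $S$: novelty maximal subgroups arising from outer automorphisms may enlarge $\mathcal{M}_G(s)$ relative to $\mathcal{M}_S(s)$, and one must verify in each case, using the known tabulations of novelties, that these extra maximal overgroups also meet $S$ in a subgroup containing $x$. A secondary difficulty is posed by the three small exceptional almost simple groups alluded to in the excerpt, where the auxiliary pairwise-non-conjugacy result fails; for these a direct (computer-assisted) analysis will be needed to locate $s$ such that, even though several of its maximal overgroups in $G$ may be $G$-conjugate, their intersection with $S$ still contains a non-abelian subgroup and hence a non-commuting $x$.
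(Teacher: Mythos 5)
Your overall strategy --- reduce via CFSG to a case analysis, choose a distinguished $s$ (Singer cycle, ppd-element, near-full-support cycle, ATLAS element) whose maximal overgroups are tightly controlled, and deal separately with the handful of bad $\Omega_8^+$ cases --- matches the paper's plan, and you have correctly identified the two genuine obstructions (uniformity over novelty maximals of all almost simple $G$ with a fixed socle, and the exceptional groups where pairwise non-conjugacy fails). However, your proposal leaves unspecified precisely the mechanism that makes the problem tractable. You ask for a fixed non-abelian $H\leq S$ with $s\in H$ such that $H\leq M\cap S$ for every relevant $M$, and then take $x\in H$ not commuting with $s$; but you give no recipe for producing such an $H$, and verifying such a containment directly, subgroup by subgroup, for every maximal overgroup of $s$ in every $G$ with socle $S$ is an enormously delicate ad hoc task.

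The missing idea is the Orbit--Stabiliser counting argument of Lemma~\ref{lem:conjclassmax}: for a self-normalising $M\leq S$ containing $s$, the number of $S$-conjugates of $M$ containing $s$ is at least $|N_S(\langle s\rangle):N_M(\langle s\rangle)|$. Hence if one can arrange that no two subgroups of $\mathcal{M}_S(s)$ are $S$-conjugate, that index is $1$ for every $M\in\mathcal{M}_S(s)$, i.e.\ $N_S(\langle s\rangle)=N_M(\langle s\rangle)\leq M$. The common subgroup you seek is therefore always $H=N_S(\langle s\rangle)$, and the non-commuting $x$ exists as soon as one also checks $N_S(\langle s\rangle)>C_S(s)$. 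This reduces the whole theorem to two verifiable conditions on $s$ --- which is exactly the content of Theorem~\ref{thm:maxsubnoconj} --- rather than a bare hope of locating a suitable $H$. You would also need the reduction to $G=S$ made precise: Lemma~\ref{lem:conjmaxint} shows that each $M\in\mathcal{M}'_G(s)$ is the $G$-normaliser of an intersection of $G$-conjugates of a member of $\mathcal{M}_S(s)$, so $\bigcap_{M\in\mathcal{M}_S(s)} M \leq \bigcap_{M\in\mathcal{M}_G(s)} M$; your sketch only gestures at why novelty maximals cause no harm. Without these two lemmas your case analysis would not close, and the paper's choice of $x$ as an element of $N_S(\langle s\rangle)\setminus C_S(s)$ is the crux you would need to rediscover.
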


We shall prove Theorem~\ref{thm:noncomint} in
Section~\ref{sec:nonconjsub} as a consequence of two stronger
theorems, which may be of interest in their own right:
Theorem~\ref{thm:maxsubnoconj} below, which deals with all but three
choices for $S$, and Theorem~\ref{thm:o8plusexceps}, which addresses
the remaining groups.

For an almost simple group $G$ with socle $S$, we let
$\mathcal{M}'_G(s)$ denote the set of
maximal subgroups $M$ of $G$ with $s \in M$ and $S \not\le M$. A \emph{novelty maximal} of $G$ is a
maximal subgroup $M$ such that $M \cap S$ is a proper non-maximal
subgroup of $S$.   We shall use ATLAS notation for the names of simple groups. For example, $\oo^+_8(3)$ denotes the simple $8$-dimensional orthogonal group of plus type defined over $\mathbb{F}_3$, while $\mathrm{GO}^+_8(3)$ is the corresponding general orthogonal group.

\begin{mainthm}
\label{thm:maxsubnoconj}
Let $S$ be a non-abelian finite simple group, and if $S \cong \oo^+_8(q)$, then suppose that $q \notin \{2,3,5\}$. Then $S$ contains
an element $s$ such that: 
\begin{enumerate}[label={(\roman*)},font=\upshape]
\item \label{maxsub1} $N_S(\langle s \rangle) > C_S(s)$; and
\item \label{maxsub2} for each almost simple group $G$ with socle $S$, there is at most one novelty maximal in $\mathcal{M}'_G(s)$, and no two subgroups in $\mathcal{M}'_G(s)$ are $G$-conjugate.
\end{enumerate}
\end{mainthm}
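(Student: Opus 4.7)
The plan is to prove Theorem~\ref{thm:maxsubnoconj} by a case analysis based on the classification of finite simple groups, exhibiting in each case an explicit element $s \in S$ and verifying the two required properties directly. The key observation is that, for any almost simple overgroup $G$ of $S$ and any maximal subgroup $M \le G$, the number of $G$-conjugates of $M$ containing $s$ equals $|s^G \cap M| \cdot |C_G(s)|/|N_G(M)|$. Thus property~\ref{maxsub2} reduces to choosing $s$ with $|C_G(s)|$ small enough, and with $|s^G \cap M|$ correspondingly constrained, that this ratio equals $1$ for every $M \in \mathcal{M}'_G(s)$ and such that moreover only one novelty appears. Property~\ref{maxsub1} reduces to ensuring some element of $S$ normalises $\langle s\rangle$ without centralising it, for instance by inverting $s$ or, in the Lie-type setting, by realising a non-trivial Weyl-group element on a torus containing~$s$.

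For $S = \alt_n$, I would take $s$ to be an $n$-cycle when $n$ is odd and an $(n-1)$-cycle when $n$ is even. The maximal subgroups of $\sym_n$ split by the O'Nan--Scott theorem into intransitive, transitive imprimitive, and primitive families, and Jordan's classical result sharply restricts which primitive groups can contain a long cycle; together with the standard description of imprimitive subgroups, this typically leaves only point-stabilisers (for the $(n-1)$-cycle) or a short explicit list (for the $n$-cycle) as members of $\mathcal{M}'_G(s)$, and one reads off directly that they are pairwise non-conjugate in $G$. Novelty maximals arise for only finitely many small $n$ and can be treated by hand. For the sporadic groups, I would select $s$ from the character-table and maximal-subgroup data in the \textsc{Atlas}, verifying properties~\ref{maxsub1} and~\ref{maxsub2} by direct inspection, possibly aided by computer algebra.

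The bulk of the work lies in the groups of Lie type. For these I would choose $s$ to generate a maximal torus $T$ of $S$ whose order is divisible by a primitive prime divisor $r$ of $q^e - 1$ for a suitably chosen~$e$: a Singer cycle in $\psl_n(q)$, an analogous non-split torus in a symplectic, orthogonal, or unitary group, or a regular semisimple element of order coprime to the Weyl group in an exceptional group. Aschbacher's theorem, combined with results of Guralnick--Penttila--Praeger--Saxl, Bereczky, and Niemeyer--Praeger on subgroups of classical groups whose order is divisible by a large primitive prime divisor, then sharply restricts the geometric families $\mathcal{C}_1,\dots,\mathcal{C}_8$ and the almost simple family $\mathcal{S}$ that can contain $s$. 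In the favourable cases $s$ lies in a unique $S$-class of geometric maximals, namely that of $N_S(T)$, together with at most one almost simple or novelty maximal; the counting identity above, applied class by class, then confirms the required non-fusion in each almost simple overgroup~$G$. The analogous strategy for exceptional Lie-type groups applies the parallel programme of Liebeck--Seitz on maximal subgroup structure.

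The hardest step will be obtaining uniform control in the classical case, in particular (a) tracking how outer automorphisms fuse $S$-classes of maximals into a single $G$-class, so that $|N_G(M)|$ and $|s^G \cap M|$ are computed for the correct overgroup; and (b) eliminating all but one novelty maximal, which is most delicate in low rank and over small fields, since novelties appear precisely where $M \cap S$ is a proper non-maximal subgroup fixed by an outer automorphism. The three excluded simple groups $\oo^+_8(q)$ with $q \in \{2,3,5\}$ are precisely those where triality fuses $\mathcal{C}_1$-classes of singular-subspace stabilisers in a way that no element $s$ of the above form can accommodate, which is why they are deferred to Theorem~\ref{thm:o8plusexceps}. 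With these obstacles addressed class by class, both conclusions of the theorem follow in each remaining case.
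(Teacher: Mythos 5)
Your overall strategy matches the paper's: a case analysis over the classification of finite simple groups, using ppd-elements or Singer cycles to confine the maximal overgroups of a well-chosen semisimple element, ATLAS and computer algebra for the sporadics and small cases, and the counting identity of Lemma~\ref{lem:conjclassmax} to rule out $G$-conjugate subgroups in $\mathcal{M}'_G(s)$. However, your proposal has concrete gaps.

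The most serious is your choice of $s$ for $S = \alt_n$. You propose an $n$-cycle (for $n$ odd) or an $(n-1)$-cycle (for $n$ even), and then invoke Jordan's theorem to eliminate primitive overgroups. But the version of Jordan's theorem being used (cf.\ \cite{jordan}, \cite[Thm.~13.9]{wielandt}) requires a $p$-cycle that fixes \emph{at least three} points; an $n$-cycle fixes none and an $(n-1)$-cycle fixes only one, so the theorem does not apply. What you would actually need is the much deeper classification of primitive groups containing a cyclic regular (or near-regular) subgroup, and even with that you would be left with projective-linear groups $\pgal_k(q)$ (for $n = (q^k-1)/(q-1)$), affine groups, and so on, with no guarantee that at most one novelty occurs or that the resulting maximals are pairwise non-conjugate. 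The paper avoids this entirely by taking $s$ to be a \emph{product} of disjoint cycles, one of whose lengths is a large prime $p$ chosen via Bertrand's Postulate so that the power $s^{n-p}$ is a $p$-cycle fixing at least three points; Jordan then applies cleanly, and the cycle-type immediately rules out all but a short, explicit, pairwise non-isomorphic list of intransitive overgroups.

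Two further points. First, you flag the orthogonal-plus case as the hardest but give no indication of the actual argument required: for $S = \oo^+_{2m}(q)$ with $m$ even, the paper has to construct $s$ as a block-diagonal product of two Singer-type elements acting on complementary subspaces of minus type, pin down $C_{\hat{S}}(s)$ as a specific maximal torus, classify all maximal overgroups (including two \emph{non-conjugate} extension-field subgroups whose multiplicity must be computed using Lemma~\ref{lem:conjclassmax}\ref{conjclassmax2} and Wall's conjugacy-class theory), and separately control a potential $\Omega^-_8(\sqrt{q})$ overgroup in the $m=4$ case by an eigenvalue-avoidance argument. This is genuinely new work in the paper, not a routine application of Aschbacher/GPPS. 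Second, your explanation for the three excluded groups is not what the paper shows: Theorem~\ref{thm:o8plusexceps} establishes the much stronger statement that for $q \in \{2,3,5\}$ \emph{every} element $r$ of $\oo^+_8(q)$ lies in two $S$-conjugate maximal subgroups of $S$ (not merely that triality fuses $\mathcal{C}_1$-classes for some natural choices of $s$), so no element satisfying~\ref{maxsub2} with $G=S$ exists at all.
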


Even once we have proved that all finite groups satisfying the independence property are supersoluble, it is not straightforward to classify these groups. Indeed, the description of these groups is neither natural nor easy, as evidenced by the following statement.  Throughout, we denote the Frattini subgroup of a group $G$ by $\frat(G)$.


\begin{mainthm}\label{thm:supersol_struc}
	A finite group $G$ satisfies the independence property if and only if one of the following occurs:
	\begin{enumerate}[label={(\roman*)},font=\upshape]
		\item $G$ is a cyclic group of prime power order;
		\item $G$ is the quaternion group $Q_8$ of order 8; or
		\item \label{supersol3} $G\cong (V_1^{\delta_1} \times \dots \times
                  {V_r^{\delta_r})}\rtimes H$,  where  $H$ is abelian, $\delta_1, \dots, \delta_r$ are positive integers for some $r \ge 0$, 
                  $V_1,\dots,V_r$ are irreducible $H$-modules
				on which $H$ acts non-trivially, and the following
                  statements hold.  Here, for $h \in H$, we write $I_h:=\{ i\in \{1,\dots,r\} \mid h \in C_H(V_i)\}$.
		\begin{enumerate}
				\item If $\delta_i=1$, then $|H/C_H(V_i)|$ is prime.
				\item  If $|V_i| = |V_j|$, then $i = j$.
				\item $(|H|,|V_i|)=1$ for all $i \in
                                  \{1, \ldots, r\}.$
			
		\item 
                  For all $x, y \in H$, if $y \in\langle x \rangle \frat(H)$  and 
		$I_x \subseteq I_y$, then one of $x$ and $y$ is a power of the other. If in addition $I_x \neq \emptyset$, then $y \in \langle x \rangle$.
		\end{enumerate}
			\end{enumerate}
                      \end{mainthm}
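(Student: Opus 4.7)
The plan is to prove the equivalence in both directions, with the harder ``only if'' direction broken into two phases: first establishing that every finite group $G$ with the independence property is supersoluble, and then analysing the possible supersoluble structures.

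For supersolubility, I would argue by contradiction. If $G$ has a non-abelian composition factor, then some section $K/L$ of $G$ is almost simple with non-abelian simple socle $S$. Theorem~\ref{thm:noncomint} furnishes non-commuting $s, x \in S$ such that $x \in \bigcap_{M \in \mathcal{M}_{K/L}(s)} M$. Lifting $s$ and $x$ to preimages $\tilde{s}, \tilde{x} \in K \le G$ and using a correspondence-theorem argument, I would show that $\tilde{x}$ lies in every maximal subgroup of $G$ that contains $\tilde{s}$, so $\tilde{x}$ is redundant in any minimal generating set containing $\tilde{s}$. Since $s$ and $x$ do not commute, neither of $\tilde{s}, \tilde{x}$ is a power of the other, contradicting the independence property. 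The cases (i) cyclic of prime power order and (ii) $Q_8$ then arise naturally when $G$ is nilpotent and of rank one or a small exceptional configuration.

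Once supersolubility is established, I would decompose $G$ using its Sylow tower, writing $G = N \rtimes H$ where $N$ is an appropriate normal Hall subgroup carrying the chief factors on which $H$ acts non-trivially. The irreducible constituents $V_i$ of the action of $H$ on these chief factors yield the modules in the statement, and conditions (a)--(c) follow by applying the independence property to judiciously chosen pairs: pairs lying in one module with multiplicity $\delta_i > 1$, pairs spanning different modules, and pairs crossing between $N$ and $H$. Condition (b) prevents isomorphic modules from creating duplicate independence obstructions, condition (c) encodes the coprimality needed to separate $N$ and $H$ cleanly, and abelianness of $H$ comes from the fact that any non-abelian cyclic-by-cyclic section would contain pairs violating the property. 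Condition (d) captures the residual independence property that $H$ (as a quotient of $G$) must satisfy, refined by the centraliser data $I_h$: if $y \in \langle x \rangle \frat(H)$ and $I_x \subseteq I_y$, then the $V_i$-actions cannot distinguish $y$ from a power of $x$, so minimality forces one to be a power of the other. For the converse direction, given a group of type (iii) and a pair of non-power-related elements, I would explicitly construct a minimal generating set containing this pair by combining a minimal generating set of $H$ (guaranteed by condition (d)) with elements of the various $V_i^{\delta_i}$ chosen to avoid redundancy (using (a)--(c)).

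The main obstacle is the supersolubility reduction: the independence property is a property of $G$ and does not automatically descend to sections, so one must carefully track how maximal subgroups of $G$ project onto those of $K/L$, accounting for the novelty maximals and conjugacy issues that Theorem~\ref{thm:maxsubnoconj} was designed to control. A secondary but delicate issue is the verification of condition (d): its necessity demands a detailed analysis of pairs in $H$ that are equivalent modulo Frattini-type data combined with module-centraliser information, while its sufficiency requires a careful case-by-case construction of minimal generating sets that respects the interplay between $H$ and the $V_i^{\delta_i}$.
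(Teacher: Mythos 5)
Your high-level plan matches the paper's (reduce to almost simple via Theorem~\ref{thm:noncomint}, then analyse supersoluble structure), but the supersolubility reduction as sketched has a genuine gap. You propose to lift the conclusion of Theorem~\ref{thm:noncomint} from an almost simple section $K/L$ back to $G$ by arguing, via the correspondence theorem, that the lift $\tilde{x}$ lies in \emph{every} maximal subgroup of $G$ containing $\tilde{s}$. This does not follow: the correspondence theorem only controls maximal subgroups of $G$ that contain $L$, and nothing is said about the (typically many) maximal subgroups of $G$ that do not contain $L$. Such a maximal subgroup could easily contain $\tilde{s}$ without containing $\tilde{x}$, and then the pair $\{\tilde s, \tilde x\}$ would not obviously be ``stuck'' inside a proper subgroup. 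The paper avoids this by a more delicate route (Lemma~\ref{mustcommute}): rather than claiming membership of $\tilde{x}$ in every maximal overgroup of $\tilde{s}$, it shows directly that if $xC_G(S)$ and $sC_G(S)$ are dependent in $G/C_G(S)$ with $x,s \in S$, then any purported minimal generating set $\{x,s,g_1,\dots,g_t\}$ for $G$ can be shrunk --- by choosing a minimal sub-tuple generating $G/C_G(S)$ and then invoking the simplicity of $S$ to force $R\cap S = S$ for the shrunk subgroup $R$. This uses the detailed structural information that $E(G)$ is a single simple group and $G/E(G)$ is soluble.

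Your proposal also elides a nontrivial preliminary: it is not automatic that $G$ has an almost simple \emph{section} in which Theorem~\ref{thm:noncomint} applies with the elements $s,x$ lying inside the simple socle in a way that lifts to $G$. The paper must first show that a non-abelian minimal normal subgroup of $G$ is \emph{simple}, not a direct power of simples; this is the content of Lemmas~\ref{simpleex} and~\ref{semplice}, which require separate combinatorial arguments (including a special treatment of $\oo_8^+(2)$), and then that there is only one such subgroup (Lemma~\ref{unico}), again using independence to derive a contradiction via Lemma~\ref{due}. These reductions are essential to reach the clean setting $G/C_G(S)$ almost simple with $S \hookrightarrow G$, and they are not discharged by your sketch. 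The structural analysis in the supersoluble case, by contrast, is described consistently with the paper's approach in Section~\ref{superind}, though you would need the explicit matrix-rank criterion (Lemma~\ref{corone}) and the $j$-(in)dependence/$\{i,j\}$-(in)dependence dichotomy (Lemma~\ref{crit}) to actually carry out the verification of condition~(d) and the converse construction of minimal generating sets.
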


In part (iii), notice that if $G$ is abelian, then $G = H$ and $r = 0$, hence (d) implies that $\frat(H) = 1$.

The structure of the paper is as follows. Theorems \ref{thm:noncomint}
and \ref{thm:maxsubnoconj} are proved in Section
\ref{sec:nonconjsub}. In Section \ref{riduco}, we prove that a
finite group $G$ satisfying the independence property is
supersoluble. The structure of the finite supersoluble groups
satisfying the independence property is investigated in Section
\ref{superind}, where Theorem~\ref{thm:supersol_struc} is proved.
Finally, finite groups with the rank-independence property are
studied in Section \ref{rankperf},  where we prove
Theorems~\ref{thm:rank_perfect2} and~\ref{thm:rank_perfect3}.

\section{Non-conjugate maximal subgroups of almost simple groups}
\label{sec:nonconjsub}

In this section, we prove Theorem \ref{thm:noncomint}. To do so, we shall first prove several elementary
lemmas and show how Theorem~\ref{thm:noncomint} follows from
Theorems~\ref{thm:maxsubnoconj} and \ref{thm:o8plusexceps}.
We then work through the families
of finite simple groups, proving Theorems~\ref{thm:maxsubnoconj} and~\ref{thm:o8plusexceps}: 
for most families, Theorem \ref{thm:maxsubnoconj} follows easily from known
results on elements of $S$ that lie in few maximal subgroups. However,
significantly more work is required in the case where $S$ is
orthogonal of plus type.

\subsection{Preliminary results, and statement of Theorem~\ref{thm:o8plusexceps}}

Recall the notation $\mathcal{M}_G(s)$ and
$\mathcal{M'}_G(s)$ from just before Theorems~\ref{thm:noncomint} and~\ref{thm:maxsubnoconj}, respectively. 

\begin{thm}
\label{thm:o8plusexceps}
Let $S := \oo^+_8(q)$, with $q \in \{2,3,5\}$. Then $S$ contains an element $s$ such that, for any almost simple group $G$ with socle $S$, the following statements hold.
\begin{enumerate}[label={(\roman*)},font=\upshape]
\item \label{o8plus1} $\bigcap_{M \in \mathcal{M}_G(s)} M$ contains an element of $N_S(\langle s \rangle) \setminus C_S(s)$.
\item If $G > S$, then there exists an  $\alpha_G \in \aut(S)$ such that $\mathcal{M}'_G(s^{\alpha_G})$ contains at most one novelty maximal, and no two subgroups in $\mathcal{M}'_G(s^{\alpha_G})$ are conjugate in $G$.
\end{enumerate}
On the other hand, for each $r \in S$, the set $\mathcal{M}_S(r)$
contains two $S$-conjugate
subgroups, and there exists a group 
$R$ with $S < R \le \aut(S)$ such that $\mathcal{M}'_R(r)$ contains two
$R$-conjugate subgroups. 
\end{thm}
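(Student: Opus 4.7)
The plan is to prove Theorem~\ref{thm:o8plusexceps} by a case-by-case analysis for the three groups $\oo_8^+(2)$, $\oo_8^+(3)$, and $\oo_8^+(5)$, combining the classification of maximal subgroups of $\oo_8^+(q)$ and of its almost simple extensions (Kleidman--Liebeck, supplemented by the ATLAS) with explicit computations in \textsc{Magma}. The essential feature of these socles that makes them exceptional is triality: in $\oo_8^+(q)$, the three $S$-classes of extremal parabolics $P_1$, $P_3$, $P_4$ (and, for $q$ odd, the three $S$-classes of subgroups of type $\Omega_7(q)$) are permuted cyclically by outer graph automorphisms, producing $\aut(S)$-fusions among $S$-classes of maximal subgroups that have no analogue in other families of classical groups.

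For parts (i) and (ii), I would seek $s \in S$ whose normalizer $N_S(\langle s \rangle)$ is contained in a tightly controlled family of maximal subgroups, so that some $x \in N_S(\langle s \rangle) \setminus C_S(s)$ automatically lies in every $M \in \mathcal{M}_S(s)$; checking that this same $x$ remains in $\bigcap_{M \in \mathcal{M}_G(s)} M$ for every almost simple overgroup $G$ is then a finite verification. For part (ii), for each of the finitely many overgroups $G > S$ with $G \le \aut(S)$, I would exploit the action of $\aut(S)$ on the $S$-classes of maximal subgroups of $S$ to pick an automorphism $\alpha_G$ so that the conjugate $s^{\alpha_G}$ avoids any $G$-fusion within $\mathcal{M}'_G(s^{\alpha_G})$, and so that $\mathcal{M}'_G(s^{\alpha_G})$ contains at most one novelty maximal. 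Both the choice of $s$ and the verification of its properties would be carried out directly in \textsc{Magma} using its maximal-subgroup routines for these specific groups.

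For the ``On the other hand'' claim, which shows that the exception is genuine, the plan is to iterate over the $S$-conjugacy classes of elements $r \in S$ and, for each representative, exhibit two distinct $S$-conjugates of a single maximal subgroup that both contain $r$. For reducible $r$ I would use fixed-point counting on the singular $1$-spaces (or singular $4$-spaces) of the natural module: two distinct invariant singular subspaces of the same type give two $S$-conjugate parabolics both containing $r$. For irreducible $r$ (and related semisimple elements), subfield or extension-field subgroups and, for $q$ odd, the three $S$-classes of $\Omega_7(q)$-type subgroups are the natural replacements. The overgroup $R$ for the second half is then obtained by adjoining a triality automorphism, which fuses triality-related $S$-classes into a single $R$-class, so that any two non-$S$-conjugate maximals containing $r$ that lie in triality-related classes become $R$-conjugate. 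I expect the main obstacle to be the case $q = 5$, where $|S|$ is of order $10^{19}$ and so brute-force enumeration of classes and subgroups is expensive; the remedy is to classify elements $r$ by their rational canonical form on the natural module and apply a uniform geometric argument in each type, reserving \textsc{Magma} computation for the finitely many residual small cases.
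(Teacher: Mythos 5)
Your overall strategy — a case-by-case treatment for $q \in \{2,3,5\}$ using Kleidman's tables for $\mathrm{P}\Omega^+_8(q)$, the ATLAS, triality, and \textsc{Magma} — matches the paper's. The paper specifies $s$ concretely (order $15$ when $q=2$, order $(q^3+1)/2$ when $q \in \{3,5\}$) and, for $q\in\{3,5\}$, organises the proof around the observation that $S$ has exactly three $S$-classes of cyclic subgroups of that order, permuted by triality, so that for each $G>S$ one chooses $\alpha_G\in\{1,\theta_1,\theta_2\}$ moving $\langle s\rangle$ into the class whose $\Omega_7(q)$- and reducible overgroups are not $G$-fused. This is exactly your plan for part (ii). One point your proposal misses, which materially simplifies part (i): rather than verify $x \in \bigcap_{M\in\mathcal{M}_G(s)} M$ directly in every almost simple $G$, the paper invokes Lemma~\ref{lem:conjmaxint}, which shows $\bigcap_{M\in\mathcal{M}_S(s)}M \le \bigcap_{M\in\mathcal{M}_G(s)}M$, so it suffices to do the computation once in $S$. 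Without this reduction you are committing to constructing maximal subgroups of all overgroups $G$ up to $\aut(S)$ — still finite, but substantially more work, particularly when $q = 5$.

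For the ``on the other hand'' claim, your plan diverges from the paper's, which simply runs an exhaustive \textsc{Magma} computation over representatives of the conjugacy classes of $S$ and then checks the overgroups $R$. Your proposed geometric substitute — fixed-point counting on singular subspaces for reducible $r$, extension-field/$\Omega_7(q)$-type overgroups for irreducible and semisimple $r$ — is appealing, but note that the assertion is a genuinely small-case phenomenon: Theorem~\ref{thm:maxsubnoconj} shows the opposite conclusion holds for $\oo^+_8(q)$ with $q \ge 7$ and for all other simple groups, so a uniform geometric argument cannot exist, and any structural proof will have to invoke the precise low-$q$ arithmetic (e.g.\ that for $q\le 5$ the relevant element orders admit extra coincidences). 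You would need to verify that every class of $r$ falls into one of your cases and that the claimed two fixed subspaces or two overgroups really exist in each case; the paper sidesteps this bookkeeping by direct enumeration. You correctly anticipate that $q=5$ is the computational bottleneck; the paper's remedy is to work in the quasisimple matrix group $\widetilde{S} = \Omega^+_8(5)$ via \texttt{ClassicalMaximals} rather than in the $10^{19}$-order simple group directly, which you might adopt in place of your rational-canonical-form stratification.
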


This theorem suggests that the statements about the group
 $\Omega^+_8(5)$ from \cite[p.~767]{guralnickkantor} and \cite[Lemma
5.15(b)]{bgk} are not quite correct. We also note that Theorem
\ref{thm:maxsubnoconj} implies that, when $S$ is as in that theorem,
$G$-conjugate subgroups in $\mathcal{M}_G(s)$ correspond to
conjugate maximal subgroups of $G/S$, and vice versa.
On the other hand, the final part of Theorem~\ref{thm:o8plusexceps} shows that Theorem
\ref{thm:maxsubnoconj} does not hold when $S =
\oo^+_8(q)$ with $q \in \{2,3,5\}$. In particular, there
is no $s \in S$ such that we may set $\alpha_G = 1$ for all $G > S$. 

We now state several
elementary but useful results. The first of these follows readily from the Orbit-Stabiliser Theorem, and is well
known; for example, see \cite[Lemma 2.4]{guralnickkantor} and the
proof of \cite[Lemma 5.9]{bgk}. We will usually apply this
result in the special case where $H$ is a (non-normal) maximal subgroup of an
almost simple group $G$. 

\begin{lemma}
\label{lem:conjclassmax}
Let $H$ be a self-normalising subgroup of a finite group $J$, let $s
\in H$, and let $k$ be the number of $J$-conjugates of $H$ that
contain $s$. Then the following statements hold. 
\begin{enumerate}[label={(\roman*)},font=\upshape]
\item\label{conjclassmax1} $k \ge |N_J(\langle s \rangle):N_H(\langle
  s \rangle)|$, with equality if all $J$-conjugates of $\langle s
  \rangle$ in $H$ are $H$-conjugate. 
\item\label{conjclassmax2} Suppose that $|C_H(f)|$ is constant for all
  $f \in s^J \cap H$, and let $r$ be the number of $H$-classes of elements in $s^J \cap H$. Then $k = r|C_J(s):C_H(s)|$. 
\end{enumerate}
\end{lemma}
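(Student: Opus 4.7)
The plan is to prove each part by a direct orbit-counting argument. For part (i), I would consider the action of $N_J(\langle s \rangle)$ by conjugation on the set $\Omega$ of $J$-conjugates of $H$. Because $H$ is self-normalising, the stabiliser of $H$ under this action is
$$N_J(\langle s \rangle) \cap N_J(H) = N_J(\langle s \rangle) \cap H = N_H(\langle s \rangle),$$
so the $N_J(\langle s \rangle)$-orbit of $H$ has size $|N_J(\langle s \rangle) : N_H(\langle s \rangle)|$. Any $H^n$ with $n \in N_J(\langle s \rangle)$ contains $s$, since $nsn^{-1} \in \langle s \rangle \le H$, and so this orbit lies inside the set $\Omega_s$ of $J$-conjugates of $H$ containing $s$. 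This gives the required lower bound on $k = |\Omega_s|$.

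For the equality clause, I would show that under the extra hypothesis the $N_J(\langle s \rangle)$-orbit exhausts $\Omega_s$. Given $K = H^g \in \Omega_s$, the subgroup $\langle s \rangle^{g^{-1}}$ is a $J$-conjugate of $\langle s \rangle$ contained in $H$. By hypothesis it is $H$-conjugate to $\langle s \rangle$, say $\langle s \rangle^{g^{-1}} = \langle s \rangle^h$ for some $h \in H$; then $hg \in N_J(\langle s \rangle)$ and $K = H^g = H^{hg}$ lies in the orbit, as required.

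For part (ii), I would count the set $T := \{g \in J : gsg^{-1} \in H\}$ in two ways. Since $s \in H^g$ if and only if $gsg^{-1} \in H$, and since the conjugate $H^g$ depends only on the coset $Hg$ (again using that $H$ is self-normalising), $T$ is a disjoint union of $k$ cosets of $H$, giving $|T| = k|H|$. On the other hand, the fibres of the map $g \mapsto gsg^{-1}$ from $J$ onto $s^J$ are left cosets of $C_J(s)$, so $|T| = |s^J \cap H| \cdot |C_J(s)|$. Partitioning $s^J \cap H$ into the $r$ $H$-classes and applying the hypothesis on $|C_H(f)|$ gives $|s^J \cap H| = r|H|/|C_H(s)|$; equating the two expressions and dividing through by $|H|$ yields $k = r\,|C_J(s) : C_H(s)|$.

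The argument is essentially routine double counting, so I do not anticipate a serious obstacle. The main point to track carefully is the interplay of the two conjugation conventions: one side of the double count requires cosets of $H$ (where the self-normalising hypothesis enters) and the other requires cosets of $C_J(s)$, and a stray inverse would introduce a spurious factor in the final answer.
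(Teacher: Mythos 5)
Your proof is correct. The paper does not give a self-contained argument but instead cites \cite[Lemma 2.4]{guralnickkantor} and the proof of \cite[Lemma 5.9]{bgk}, noting only that the lemma ``follows readily from the Orbit-Stabiliser Theorem''; your orbit-stabiliser argument for (i) and coset double count for (ii) are precisely the standard arguments that those references use, so this is essentially the same approach.
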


Our next result describes how novelty maximal subgroups of  a finite almost simple group $G$ may
arise as elements of $\mathcal{M}'_G(s)$,  for an element $s$ of the socle $S$ of $G$. Here, we adapt
  \cite[Lemma 2.4(2)]{bgk} and its proof, which 
assume that $|G:S|$ is prime, to the general case.

\begin{lemma}
\label{lem:conjmaxint}
Let $G$ be a finite almost simple group with socle $S$, and $s \in S \setminus \{1\}$. Then each subgroup in $\mathcal{M}'_G(s)$ is the normaliser in $G$
of the intersection of one or more $G$-conjugate subgroups in
$\mathcal{M}_S(s)$. 
In particular,
\[
  \bigcap_{M \in \mathcal{M}_S(s)} M \le \bigcap_{M \in
    \mathcal{M}_G(s)} M.
\]
\end{lemma}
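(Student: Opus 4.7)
The plan is to fix $M \in \mathcal{M}'_G(s)$ and exhibit $M$ as the $G$-normaliser of an intersection of a nonempty set of $G$-conjugate maximal subgroups of $S$ each containing $s$; the displayed inclusion will then follow immediately. The core observation throughout is that, because $S$ is simple and $s \ne 1$, a nontrivial proper subgroup of $S$ can never be normal in $G$, so normalising such a subgroup keeps its $G$-normaliser strictly below $G$.

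First I would show that $M = N_G(K)$, where $K := M \cap S$. Normality of $S$ in $G$ gives $K \trianglelefteq M$, so $M \le N_G(K)$; maximality of $M$ together with $S \not\le M$ also forces $MS = G$. The element $s \in K$ is nontrivial and $K < S$, so by the observation above $K$ is not normal in $G$; hence $N_G(K) < G$, and maximality of $M$ yields $M = N_G(K)$. To realise $M$ as the normaliser of an intersection of $G$-conjugate maximal subgroups of $S$ containing $s$, I would choose any maximal subgroup $L$ of $S$ with $K \le L$ (so $s \in L$ and $L \in \mathcal{M}_S(s)$) and set
\[
\mathcal{L} := \{L^g : g \in G,\ K \le L^g\}.
\]
Each $L^g$ is a maximal subgroup of $S$ (since $S \trianglelefteq G$ preserves both inclusion in $S$ and maximality there) and contains $s \in K$, so $\mathcal{L} \subseteq \mathcal{M}_S(s)$. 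Because $M$ normalises $K$, it permutes $\mathcal{L}$ by conjugation, so $M \le N_G(K^{\star})$ where $K^{\star} := \bigcap_{L' \in \mathcal{L}} L'$; and since $s \in K \le K^{\star} \le L < S$, the same simplicity argument as before gives $M = N_G(K^{\star})$.

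For the \emph{in particular} statement, set $N := \bigcap_{M' \in \mathcal{M}_S(s)} M'$. For $M \in \mathcal{M}_G(s)$ with $S \le M$, trivially $N \le S \le M$. Otherwise $M \in \mathcal{M}'_G(s)$ and the previous paragraph gives $M \ge K^{\star}$; since every $L' \in \mathcal{L}$ lies in $\mathcal{M}_S(s)$ we have $N \le L'$ for all such $L'$, whence $N \le K^{\star} \le M$.

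I do not anticipate a substantive obstacle: the only delicate step is checking that $K$ and $K^{\star}$ are neither trivial nor equal to $S$, which follows from $s \ne 1$ and the simplicity of $S$. This is essentially the argument of \cite[Lemma 2.4(2)]{bgk}; the step in which $M$ permutes $\mathcal{L}$ is what removes the need for the hypothesis there that $|G:S|$ be prime.
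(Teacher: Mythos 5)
Your proof is correct and takes essentially the same approach as the paper's: you identify a maximal subgroup $L$ of $S$ containing $M \cap S$, form an intersection of $G$-conjugates of $L$ that contain $s$, observe $M$ normalises it, and close off with simplicity of $S$ and maximality of $M$. The only cosmetic difference is that you intersect over all $G$-conjugates of $L$ containing $K = M \cap S$, whereas the paper intersects over $\{L^x : x \in M\}$; both choices work equally well.
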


\begin{proof}
Let $M \in \mathcal{M}'_G(s)$. Then $M \cap S \le L$ for some $L \in \mathcal{M}_S(s)$. Since $M \cap S \trianglelefteq M$, we observe that $s^{x^{-1}} \in M \cap S \le L$ for each $x \in M$, and so $s \in L^x$. Thus $X:=\bigcap_{x \in M} L^x$ is an intersection of $G$-conjugate subgroups in $\mathcal{M}_S(s)$. It is clear that $M \le N_G(X) < G$, and hence $M = N_G(X)$.
\end{proof}


Notice that, for a given non-abelian finite simple group $S$, the
final claim in Lemma~\ref{lem:conjmaxint} shows that Theorem
\ref{thm:noncomint} holds for each finite almost simple group $G$ with
socle $S$ if and only if it holds in the special case $G = S$. 

Next, observe from the Orbit-Stabiliser Theorem that if $L$ and $M$
are non-conjugate maximal subgroups in $S$ such that $N_G(L)$
  and $N_G(M)$
are maximal in $G$, then these normalisers are not conjugate
in $G$. Combining this with Lemma~\ref{lem:conjmaxint} yields
the
following lemma. 
 
\begin{lemma}
\label{lem:ordinarynonconj}
Let $G$ be a finite almost simple group with socle $S$, and let $s$ be an element of 
$S$ such that no two subgroups in $\mathcal{M}_S(s)$ are
$S$-conjugate. If the novelty maximals in $\mathcal{M}'_G(s)$ are pairwise
non-conjugate in $G$, then all subgroups in $\mathcal{M}'_G(s)$ are
pairwise non-conjugate in $G$. In particular, if $G > S$ and $\mathcal{M}_S(s)$
contains at most two $G$-conjugate subgroups, then
Theorem~\ref{thm:maxsubnoconj}(ii) holds for $G$ and $s$.
\end{lemma}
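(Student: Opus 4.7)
The plan is to handle the main claim by a short case analysis on whether each of two hypothesised distinct subgroups $M_1,M_2 \in \mathcal{M}'_G(s)$ is a novelty maximal, and then to derive the ``in particular'' statement from Lemma~\ref{lem:conjmaxint}.

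First I would dispose of the cases where $M_1,M_2$ have the same ``type''. If both are non-novelty maximals, then $L_i:=M_i\cap S$ lies in $\mathcal{M}_S(s)$, and by Lemma~\ref{lem:conjmaxint} we have $M_i = N_G(L_i)$. Any $G$-conjugation $M_1 = M_2^g$ would give $L_1 = L_2^g$, so $L_1,L_2$ are $G$-conjugate members of $\mathcal{M}_S(s)$; by the Orbit-Stabiliser observation preceding the lemma, this forces $L_1,L_2$ to be $S$-conjugate, contradicting the standing hypothesis unless $L_1 = L_2$ (and hence $M_1=M_2$). If both are novelties, non-$G$-conjugacy is the additional hypothesis. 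In the mixed case, $G$-conjugacy of $M_1$ and $M_2$ would force $M_1\cap S$ and $M_2\cap S$ to be $G$-conjugate, so one is maximal in $S$ iff the other is, which contradicts exactly one being a novelty. This exhausts the cases, proving the main claim.

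For the ``in particular'' statement, suppose $G > S$ and that the set $\mathcal{M}_S(s)$ contains at most two $G$-conjugate subgroups. By Lemma~\ref{lem:conjmaxint}, every novelty maximal in $\mathcal{M}'_G(s)$ is the $G$-normaliser of the intersection of at least two distinct $G$-conjugate elements of $\mathcal{M}_S(s)$ (``distinct'' because a single member $L$ would give $N_G(L)$ non-novelty). Under the hypothesis there is at most one such pair $\{L_1,L_2\}$, and hence at most one novelty maximal in $\mathcal{M}'_G(s)$. In particular, novelty maximals in $\mathcal{M}'_G(s)$ are trivially pairwise non-conjugate, so the main claim applies and yields the pairwise non-conjugacy of all elements of $\mathcal{M}'_G(s)$ in $G$; together these give Theorem~\ref{thm:maxsubnoconj}\ref{maxsub2}.

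I do not expect any serious obstacle: the heart of the argument is the Orbit-Stabiliser observation already recorded in the text, which handles the non-novelty/non-novelty case, and Lemma~\ref{lem:conjmaxint}, which both identifies non-novelty maximals with $N_G$ of elements of $\mathcal{M}_S(s)$ and bounds the number of novelty maximals. The only small point requiring care is the bookkeeping in the ``in particular'' statement, namely checking that the ``at most two $G$-conjugate subgroups in $\mathcal{M}_S(s)$'' hypothesis really does cap the number of novelty maximals at one rather than allowing separate novelties from distinct $G$-classes.
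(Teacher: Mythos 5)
Your proof is correct and follows essentially the same route as the paper's: the paper also derives the lemma by combining the Orbit--Stabiliser observation with Lemma~\ref{lem:conjmaxint}, leaving the three-way case split (two non-novelties, two novelties, mixed) and the count of novelties implicit, whereas you spell these out. The one point worth double-checking is the parsing of ``at most two $G$-conjugate subgroups'' as yielding a single pair; this matches the paper's usage (for instance in the sporadic case), so your bookkeeping for the ``in particular'' claim is sound.
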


We finish this subsection by proving that Theorems~\ref{thm:maxsubnoconj}
and~\ref{thm:o8plusexceps} imply Theorem~\ref{thm:noncomint}. Note that we have included the statements about novelty maximals in the first two of these theorems for general interest; they are not required to prove Theorem~\ref{thm:noncomint}.

\begin{proof}[Proof of Theorem \ref{thm:noncomint} (assuming Theorems~\ref{thm:maxsubnoconj}
and~\ref{thm:o8plusexceps})]
If $S \cong \oo^+_8(q)$ with $q \in \{2,3,5\}$, then the
result is an immediate consequence of
Theorem~\ref{thm:o8plusexceps}\ref{o8plus1}.

In the remaining cases, let $s$ be the element whose existence
  is guaranteed  by Theorem~\ref{thm:maxsubnoconj}, so that there
  exists 
  a suitable $x \in N_S(\langle s
  \rangle) \setminus C_S(s)$.
 By  Theorem \ref{thm:maxsubnoconj}\ref{maxsub2}, no two subgroups
in $\mathcal{M}_S(s)$ are $S$-conjugate. Lemma
\ref{lem:conjclassmax}\ref{conjclassmax1}, with $J = S$ and $H = M \in
\mathcal{M}_S(s)$,  then shows that
$N_S(\langle s \rangle)= N_M(\langle s \rangle)$, and so
\[
  x \in N_S(\langle s \rangle) \le \bigcap_{M \in
  \mathcal{M}_S(s)} M.\]
Finally,   Lemma~\ref{lem:conjmaxint}
shows that for each almost simple group $G$ with socle $S$, the element
$x$ lies in $\bigcap_{M \in \mathcal{M}_G(s)} M$.
%
\end{proof}

The remainder of this section is dedicated to proving Theorems
\ref{thm:maxsubnoconj} and \ref{thm:o8plusexceps}. As stated in our
proofs, several of our 
arguments involve computations performed via GAP \cite{GAP} and Magma
\cite{magma}. Our proofs also contain details about specific
elements $s$ that may be chosen.

\subsection{Alternating, sporadic and exceptional groups}

In this subsection, we prove Theorem \ref{thm:maxsubnoconj} when $S$ is an alternating, sporadic or exceptional group.



\begin{prop}
\label{prop:altalmostsimple}
Theorem \ref{thm:maxsubnoconj} holds when $S$ is an alternating group $\alt_n$.
\end{prop}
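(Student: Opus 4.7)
The plan is to exhibit an explicit element $s \in \alt_n$ whose maximal overgroups in every almost simple group with socle $\alt_n$ form a small, well-understood list. For $n \ne 6$, the only such almost simple groups are $\alt_n$ and $\sym_n$; the case $n = 6$ involves three additional overgroups $\sym_6$, $\mm_{10}$ and $\pgal_2(9)$, and will be handled, together with a finite list of other small values of $n$, by direct computation in GAP.

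For $n$ sufficiently large, I would choose $s$ according to the parity of $n$: an $n$-cycle when $n$ is odd, and an $(n-1)$-cycle (fixing one point) when $n$ is even. In either case $s \in \alt_n$, and the $\sym_n$-normaliser of $\langle s \rangle$ has order $|s|\,\varphi(|s|)$, properly containing the centraliser of $s$, so part \ref{maxsub1} holds. For part \ref{maxsub2} I would enumerate $\mathcal{M}_{\sym_n}(s)$ using the O'Nan--Scott description of maximal subgroups of $\sym_n$: when $s$ is an $n$-cycle, the transitive overgroups are either the unique imprimitive group $\sym_d \wr \sym_{n/d}$ for each proper non-trivial divisor $d$ of $n$, or primitive, in which case the classical classification of primitive groups containing an $n$-cycle drastically restricts the possibilities; intransitive overgroups do not arise. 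When $s$ is an $(n-1)$-cycle, any transitive overgroup is $2$-transitive with point-stabiliser containing an $(n-1)$-cycle, and the only intransitive overgroup is the point stabiliser $\sym_{n-1}$. In each case, Lemma~\ref{lem:conjclassmax}\ref{conjclassmax1} reduces the conjugacy question in $\mathcal{M}_{\alt_n}(s)$ to computing $|N_{\sym_n}(\langle s \rangle) : N_H(\langle s \rangle)|$ for each maximal overgroup $H$, and the short explicit list of candidate $H$ will permit a direct verification that this index equals $1$.

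The main obstacle is handling the novelty maximals of $\sym_n$ over $\alt_n$, and fixing the cutoff between the generic argument and the small cases handled by computation. The novelty maximals are classified in the literature, and for $n$ large enough almost none of them can contain an element of order $n$ or $n-1$, so the residual candidates can be eliminated by inspection of their element orders; any that do survive are checked individually to see whether at most one lies in $\mathcal{M}'_G(s)$ and whether it is conjugate to any other maximal overgroup of $s$. Finally, for the small excluded values of $n$ (including $n = 6$, where the additional overgroups $\mm_{10}$ and $\pgal_2(9)$ must be considered), I would exhibit an appropriate $s$ by searching through conjugacy classes of $\alt_n$ in GAP and verifying conditions \ref{maxsub1} and \ref{maxsub2} directly against its stored lists of maximal subgroups.
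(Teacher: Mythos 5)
Your proposal shares the paper's general strategy---exhibit an explicit $s$ and analyse its maximal overgroups via Lemma~\ref{lem:conjclassmax}\ref{conjclassmax1}---but the choice of element is fatally different: an $n$-cycle (or $(n-1)$-cycle) typically lies in \emph{several $\alt_n$-conjugate} maximal subgroups, contradicting the conclusion of Theorem~\ref{thm:maxsubnoconj}\ref{maxsub2}. Concretely, take $n = 31$ and $s$ a $31$-cycle. The group $\psl_3(5)$ acting on the $31$ points of $\PG(2,5)$ is a primitive maximal subgroup of $\alt_{31}$ containing a Singer $31$-cycle; one computes $|N_{\psl_3(5)}(\langle s\rangle)| = 31 \cdot 3 = 93$, while $|N_{\alt_{31}}(\langle s\rangle)| = 31 \cdot 15 = 465$ (only half of the holomorph $C_{31}\rtimes C_{30}$ consists of even permutations). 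Since the cyclic subgroups of order $31$ in $\psl_3(5)$ are Sylow and hence conjugate, Lemma~\ref{lem:conjclassmax}\ref{conjclassmax1} gives \emph{exactly} $465/93 = 5$ $\alt_{31}$-conjugates of $\psl_3(5)$ containing $s$, so $\mathcal{M}_{\alt_{31}}(s)$ already contains five pairwise $\alt_{31}$-conjugate subgroups. The even case fails similarly: with $n = 32$ and $s$ a $31$-cycle, $\AGL_5(2)$ is a primitive maximal subgroup of $\alt_{32}$, and since a Singer element of $\gl_5(2)$ has no nonzero fixed vectors, $N_{\AGL_5(2)}(\langle s\rangle) = N_{\gl_5(2)}(\langle s\rangle)$ has order $31 \cdot 5 = 155$, giving $465/155 = 3$ $\alt_{32}$-conjugate overgroups of $s$.

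So "the classical classification of primitive groups containing an $n$-cycle drastically restricts the possibilities" is true, but the surviving possibilities are precisely what kill the argument: any residual primitive overgroup with small Singer normaliser produces multiple $\alt_n$-conjugate overgroups. The paper's element is engineered to rule out \emph{all} such primitive overgroups, not merely to have few of them. It is a product of a $p$-cycle (with $p$ the largest prime below $n-1$, resp.\ $n-5$) and a cycle of coprime length, so that by Bertrand's Postulate $p > n/2$. Then the $p$-cycle power of $s$ fixes at least three points, so Jordan's theorem forces every primitive group containing it to be $\alt_n$ or $\sym_n$; moreover $p > n/2$ exceeds every proper divisor of $n$, so $p$ cannot divide the order of any imprimitive $\sym_k \wr \sym_{n/k}$. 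This reduces $\mathcal{M}'_G(s)$ to a single intransitive subgroup (even $n$) or three pairwise non-isomorphic intransitive subgroups (odd $n$), after which the conjugacy statement is immediate. Your plan also leaves the cutoff between "sufficiently large $n$" and the computationally verified small cases, as well as the treatment of novelty maximals of $\sym_n$, unresolved; but these are secondary to the wrong element choice.
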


\begin{proof} 
Suppose first that $n \in \{5, 6, 7, 9\}$, and let $s$ be any
element of $S$ of order $5$, $5$, $7$ or $15$, respectively. We deduce
from the ATLAS \cite{ATLAS} and calculations using the GAP Character Table Library \cite{GAPchar}
that: 
\begin{enumerate}[label={(\roman*)},font=\upshape]
\item the subgroups in $\mathcal{M}_S(s)$ lie in exactly one, two, two
  and three $S$-conjugacy classes of subgroups, respectively; and 
\item for each $M \in \mathcal{M}_S(s)$, all $S$-conjugates of
  $\langle s \rangle$ in $M$ are $M$-conjugate, and $N_M(\langle s
  \rangle) = N_S(\langle s \rangle) > C_S(s)$, so
    Theorem~\ref{thm:maxsubnoconj}\ref{maxsub1} holds for $s$.
\end{enumerate}
Using Lemma \ref{lem:conjclassmax}\ref{conjclassmax1}, with $J = S$
and $H \in \mathcal{M}_S(s)$, we see from (ii) that $k = 1$, and so no two
subgroups in $\mathcal{M}_S(s)$ are conjugate in
$S$. Moreover, if $|\mathcal{M}_S(s)| > 2$, so that $n = 9$, then
the three subgroups in $\mathcal{M}_S(s)$ are pairwise non-isomorphic,
and hence pairwise non-conjugate in $G$. Thus
Theorem~\ref{thm:maxsubnoconj}\ref{maxsub2}  holds for $S$ by
Lemma~\ref{lem:ordinarynonconj}.

Next, suppose that $n \ge 8$ is even. Additionally,
let \[s:=(1,\ldots,p)(p+1,\ldots,n) \in S,\] where $p$ is the largest
prime less than $n-1$. 
First observe that $s^2$ and $s^{-1}$ are both
  $\sym_n$-conjugate to $s$,
and so $|N_{\sym_n}(\langle s \rangle):\langle s \rangle| > 2$. Thus
$N_S(\langle s \rangle) > \langle s \rangle = C_S(s)$, and Theorem
\ref{thm:maxsubnoconj}\ref{maxsub1} holds for $S$. 
Next,
  Bertrand's Postulate  states that there exists a prime strictly between $i:=n/2 \ge 4$
  and $2i-2$, so 
$(p,n-p) = 1$. As observed in \cite{guralnicktracey}, it follows
that $|\mathcal{M}'_G(s)| = 1$. This is because the $p$-cycle
$s^{n-p}$ fixes at least $3$ points (since $n$ is even), and
so \cite{jordan} (see also 
\cite[Thm.~13.9]{wielandt}) implies that $\alt_n$ and $\sym_n$ are the
only primitive subgroups of $\sym_n$ containing $s^{n-p}$. It is also
clear that an imprimitive group preserving $j$ blocks of
  size $k$, 
with $jk = n$, cannot contain an element of order $p > n/2 \ge
j,k$. Therefore, 
$\mathcal{M}'_G(s) = \{(\sym_p \times \sym_{n-p}) \cap G\}$, and Theorem
\ref{thm:maxsubnoconj}\ref{maxsub2} holds for $S$.

Suppose now that $n \ge 11$ is odd. Here, we
let \[s:=(1,\ldots,p)(p+1,p+2)(p+3,\ldots,n) \in S,\] where $p$ is the
largest prime less than $n-5$. 
Applying Bertrand's Postulate to
$(n-1)/2-1$ yields $p > (n-1)/2-1$. Hence $p \ge (n-1)/2$, which is
greater than each proper divisor of $n$. In particular,  $n-(p+3)$ is coprime to each of $2$, $p$ and $n-(p+2)$, and
so 
$s^{n-(p+3)}$ and $s^{-1}$ are $\sym_n$-conjugate to $s$. Therefore, $N_S(\langle s \rangle) > C_S(s)$. We also observe, similarly to the previous
  case, that  $\alt_n$ is the only proper transitive subgroup of $\sym_n$
containing the $p$-cycle $s^{n-(p+2)}$. Thus $\mathcal{M}'_G(s)$ consists of
three intransitive subgroups, of shape $(\sym_{p} \times \sym_{n-p}) \cap
G$, $(\sym_{2} \times \sym_{n-2}) \cap G$ and $(\sym_{p+2} \times \sym_{n-(p+2)})
\cap G$, respectively. Since $(n-1)/2 \le p < n-5$, these
  subgroups are pairwise non-isomorphic, and hence
Theorem \ref{thm:maxsubnoconj} holds for $S$. 
\end{proof}

We shall treat the Tits group ${}^2\mathrm{F}_4(2)'$ as an exceptional group.

\begin{prop}
\label{prop:sporadalmostsimple}
Theorem \ref{thm:maxsubnoconj} holds when $S$ is a sporadic group.
\end{prop}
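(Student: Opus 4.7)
The plan is to proceed by case-by-case verification over the $26$ sporadic simple groups, mirroring the strategy used in the proof of Proposition~\ref{prop:altalmostsimple}. For each sporadic $S$, I would exhibit an explicit element $s \in S$ (of prime order, or small prime-power order, typically with $|s|$ equal to the largest prime dividing $|S|$, or close to it) and verify conditions \ref{maxsub1} and \ref{maxsub2} of Theorem~\ref{thm:maxsubnoconj}. All necessary data is available: every sporadic group has a completely classified set of maximal subgroups (the Monster being the most recent), and the ATLAS~\cite{ATLAS} together with the GAP Character Table Library~\cite{GAPchar} provides the character tables of both $S$ and its almost simple extensions, as well as the permutation characters $1_M^S$ for each class of maximal subgroups $M$.

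The steps I would carry out are as follows. First, for each $S$, choose a candidate conjugacy class $s^S$ such that: (a) $s$ and some non-trivial Galois conjugate $s^k$ are fused in $S$, equivalently $|N_S(\langle s \rangle)/C_S(s)| > 1$, which is visible from the ATLAS power maps and secures condition~\ref{maxsub1}; and (b) $s$ lies in as few classes of maximal subgroups as possible, preferably just one or two. For most sporadic groups a largest-prime-order element lies in the normaliser of a Sylow subgroup as its unique maximal overgroup, so $|\mathcal{M}_S(s)| = 1$ and condition~\ref{maxsub2} is immediate via Lemma~\ref{lem:ordinarynonconj}; for the rest, I would rely on the more careful Lemma~\ref{lem:conjclassmax}. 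Second, for each maximal subgroup $M$ with $1_M^S(s) \ne 0$, compute the number of $S$-conjugates of $M$ containing $s$ via Lemma~\ref{lem:conjclassmax}\ref{conjclassmax2}; verifying that all these numbers equal $1$ (using $|s^S \cap M|$ extracted from $1_M^S$ evaluated on $s$, together with $|C_S(s):C_M(s)|$) establishes the $S$-conjugacy part of~\ref{maxsub2}. Third, when $\vert\out(S)\vert = 2$, invoke the extended character table of $S{.}2$ and the known action of the outer automorphism on the classes of maximal subgroups (again from the ATLAS) to confirm that the novelty maximals containing $s$ are pairwise non-conjugate in $G$, and then apply Lemma~\ref{lem:ordinarynonconj} to promote non-conjugacy from $S$ to $G$.

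As a generic recipe that succeeds for most $S$: let $p$ be the largest prime with $p^2 \nmid |S|$ and $p \ge 7$, and take $s$ of order $p$. Then $\langle s \rangle \in \syl_p(S)$, and by the Sylow theorems $\mathcal{M}_S(s)$ consists of one $S$-class (namely $N_S(\langle s \rangle)$'s overgroups); the fusion of non-trivial powers of $s$, read off the ATLAS power maps, gives $N_S(\langle s \rangle) > C_S(s)$. For groups like $\mm_{11}, \mm_{12}, \mm_{22}, \mm_{23}, \mm_{24}, \mathrm{J}_1, \mathrm{J}_2, \mathrm{J}_3, \Co_3, \mathrm{HS}, \mathrm{McL}, \mathrm{He}, \mathrm{Suz}, \mathrm{Ru}, \mathrm{Th}, \mathrm{HN}$, and the Fischer, Janko, Lyons and Conway groups, a direct table-lookup provides a suitable $p$ and $s$; a few groups (e.g.\ $\mm_{11}$, $\mm_{23}$) admit no outer automorphisms so part~\ref{maxsub2} reduces to the $S$-statement alone.

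The main obstacle is the Monster $\mathbb{M}$, where the complete list of maximal subgroups has only very recently been established, and where the character table is enormous. However, here it suffices to take $s$ of prime order $59$ or $71$: its centraliser in $\mathbb{M}$ is exactly $\langle s \rangle$, its normaliser is $\langle s \rangle\rtimes C_{29}$ or $\langle s \rangle \rtimes C_{35}$ respectively (hence~\ref{maxsub1} holds), and by inspection of the classified maximal subgroups no maximal of $\mathbb{M}$ other than $N_{\mathbb{M}}(\langle s \rangle)$ has order divisible by $p$; thus $|\mathcal{M}_{\mathbb{M}}(s)| = 1$, and since $\out(\mathbb{M}) = 1$ there is nothing further to check. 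A secondary obstacle is the Baby Monster and the Fischer groups, where the number of classes of maximal subgroups is larger and at least one extension by an outer automorphism exists; there one must also check the novelty-maximal statement, but again an element of order $47$ (for $\mathrm{B}$) or $29$ (for $\mathrm{Fi}_{24}'$), whose normaliser is essentially its unique maximal overgroup, resolves the matter.
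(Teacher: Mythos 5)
Your high-level strategy---pick an element $s$ with small $\mathcal{M}_S(s)$, verify \ref{maxsub1} via ATLAS power maps and \ref{maxsub2} via character-theoretic counting (Lemma~\ref{lem:conjclassmax}) and the outer automorphism action---is the same as the paper's.  The main practical difference is that the paper does not re-derive $\mathcal{M}_S(s)$ from scratch: it cites \cite[Table~1]{burnessharper}, which already lists, for a carefully chosen $s$ in each sporadic $S$, the complete set $\mathcal{M}_S(s)$.  This reduces the entire verification to a few observations: for $S \notin \{\mathrm{M}_{12},\Suz\}$ the tabulated $s$ either lies in pairwise non-isomorphic maximal subgroups, or in two subgroups with $\langle s\rangle$ Sylow, which Lemma~\ref{lem:conjclassmax}\ref{conjclassmax1} shows are non-conjugate; the two exceptional cases $\mathrm{M}_{12}$ and $\Suz$ require a short GAP computation.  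Your proposal would reach the same conclusion but involves rediscovering much of that table.

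There are also concrete imprecisions in your generic recipe that would need repair. The assertion that when $\langle s\rangle$ is a Sylow $p$-subgroup ``$\mathcal{M}_S(s)$ consists of one $S$-class'' does not follow from the Sylow theorems---an order-$p$ element with cyclic Sylow can perfectly well lie in maximal subgroups of several non-conjugate isomorphism types, and the whole content of part~\ref{maxsub2} is to rule out the conjugate case, which requires the Lemma~\ref{lem:conjclassmax} computation you mention later.  More specifically, your description of the Monster is off: for $s$ of order $71$, the normaliser $N_{\mathbb{M}}(\langle s\rangle) = 71{:}35$ is the Borel subgroup of the maximal subgroup $\psl_2(71)$ of $\mathbb{M}$, so it is \emph{not} itself maximal; the unique maximal overgroup of $s$ is $\psl_2(71)$, not $N_{\mathbb{M}}(\langle s\rangle)$ as you write.  (The conclusion $|\mathcal{M}_{\mathbb{M}}(s)|=1$ may still be correct, but the justification given does not establish it.)  A similar issue arises for the Baby Monster and $\mathrm{Fi}_{24}'$.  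None of this undermines the method---it is the same method as the paper's---but as written several of the specific claims used to shortcut the verification are either unjustified or incorrect, and would have to be replaced by the more careful table lookup that the paper performs.
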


\begin{proof}
For each sporadic group $S$,
\cite[Table 1]{burnessharper} describes, for at least one element $g \in S$,  the
corresponding set $\mathcal{M}_S(g)$ (much of this information is also given
in \cite[Table IV]{guralnickkantor}). Suppose first that $S \notin \{\mathrm{M}_{12},\mathrm{Suz}\}$, and let $s$ be an element of $S$ described in \cite[Table 1]{burnessharper}, with $|s| = 17$ if $S = \mathrm{He}$, and $|s| = 22$ if $S = \mathrm{Fi}_{22}$.  We see in \cite[Table 1]{burnessharper} that if 
$\mathcal{M}_S(s)$ contains two isomorphic subgroups, then
$|\mathcal{M}_S(s)| = 2$ and $\langle s \rangle$ is a Sylow subgroup
of $S$. Since at least one maximal subgroup in $\mathcal{M}_S(s)$
contains $N_S(\langle s \rangle)$, and $\langle s \rangle$ is a Sylow subgroup, it follows from Lemma
\ref{lem:conjclassmax}\ref{conjclassmax1} that if
$\mathcal{M}_S(s)$ contains two isomorphic subgroups, then they are
not $S$-conjugate.
 
If instead $S = \mathrm{M}_{12}$, then let $s$ be any element of $S$
of order $11$, and if $S = \mathrm{Suz}$, then let $s$ be any element
of order $21$. Lemma \ref{lem:conjclassmax}\ref{conjclassmax1} and
character table calculations in GAP show that $\mathcal{M}_S(s)$
contains exactly three subgroups, which are pairwise
non-conjugate in $S$. 
Furthermore, precisely two of these subgroups are isomorphic
when $S = \mathrm{M}_{12}$, and they are pairwise
  non-isomorphic when $S =
\mathrm{Suz}$, so at most two are $G$-conjugate in each case.

For all $S$, Lemma~\ref{lem:ordinarynonconj} now yields Theorem~\ref{thm:maxsubnoconj}\ref{maxsub2}, and we deduce Theorem~\ref{thm:maxsubnoconj}\ref{maxsub1} from the ATLAS.
\end{proof}

\begin{prop}
\label{prop:excepalmostsimple}
Theorem \ref{thm:maxsubnoconj} holds when $S$ is an exceptional group.
\end{prop}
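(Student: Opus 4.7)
The plan is to follow the template of Propositions \ref{prop:altalmostsimple} and \ref{prop:sporadalmostsimple}: for each exceptional simple group $S$ (including the Tits group ${}^2\mathrm{F}_4(2)'$, treated here as exceptional), I would exhibit an element $s \in S$ whose set of maximal overgroups $\mathcal{M}_S(s)$ is small and explicitly known, and read off properties \ref{maxsub1} and \ref{maxsub2} from this description.

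The principal source is \cite[Table 1]{burnessharper}, which for every non-alternating, non-sporadic simple group records a specific element $g$ together with $\mathcal{M}_S(g)$ and its $S$-conjugacy behaviour; this was already the decisive input for the sporadic case. For the generic exceptional families I would take $s$ to be a generator of a cyclic maximal torus $T$ of order divisible by a suitable Zsigmondy prime, so that $s$ is regular semisimple with $C_S(s) = T$. The Weyl group of $T$ in $S$ is then non-trivial and acts non-trivially on $T$, which immediately supplies an element of $N_S(\langle s \rangle) \setminus C_S(s)$ and hence gives Theorem~\ref{thm:maxsubnoconj}\ref{maxsub1}. Moreover, in each such case \cite[Table 1]{burnessharper} shows that $|\mathcal{M}_S(s)|$ is tiny (often one or two), and that whenever two isomorphic members arise, $\langle s \rangle$ is a Sylow subgroup of $S$; since at least one $M \in \mathcal{M}_S(s)$ contains $N_S(\langle s \rangle)$, Lemma~\ref{lem:conjclassmax}\ref{conjclassmax1} then forces those subgroups to be non-conjugate in $S$. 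Having established that no two members of $\mathcal{M}_S(s)$ are $S$-conjugate, Lemma~\ref{lem:ordinarynonconj} reduces Theorem~\ref{thm:maxsubnoconj}\ref{maxsub2} to verifying, for each almost simple $G$ with socle $S$, that at most one novelty maximal occurs in $\mathcal{M}'_G(s)$; this can be checked directly against the classifications of novelty maximals of exceptional groups available in the literature, using the order and torus structure of $s$ to rule out all but the obvious candidate.

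For the small exceptional groups where the torus-based argument is not sharp enough, or where the outer automorphism group is unusually large --- concretely the Tits group, $G_2(3)$, $G_2(4)$, ${}^3\mathrm{D}_4(2)$, ${}^2\mathrm{F}_4(2)'$, $\mathrm{F}_4(2)$ and a handful of further small cases --- I would instead perform a direct computation in GAP and Magma using the character table library and the explicit maximal subgroup data in the ATLAS and its online updates, exactly as was done for $\mathrm{M}_{12}$ and $\mathrm{Suz}$ in Proposition~\ref{prop:sporadalmostsimple}.

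The main obstacle will be the uniform treatment of these small or otherwise exceptional cases, where novelty maximals genuinely exist and where non-conjugacy must be verified under the full outer automorphism group (which, for $\mathrm{G}_2(q)$ with $q = 3^k$ and $\mathrm{F}_4(q)$ with $q = 2^k$, includes exceptional graph and graph-field automorphisms). A careful case-by-case analysis, guided by the maximal subgroup tables in the literature and supplemented by explicit machine computation, should resolve these remaining cases.
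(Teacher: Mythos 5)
Your overall architecture is the same as the paper's (find an element $s$ with small $\mathcal{M}_S(s)$, show non-conjugacy, then invoke Lemma~\ref{lem:ordinarynonconj}, with small cases handled computationally). However, your choice of primary source is wrong, and a key step of the argument does not go through as written.

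First, \cite[Table~1]{burnessharper} is the table used in Proposition~\ref{prop:sporadalmostsimple} for the \emph{sporadic} groups; it does not record $\mathcal{M}_S(g)$ for the exceptional families. For the generic exceptional groups the relevant statement is instead Propositions~6.1 and~6.2 of \cite{guralnickkantor}, building on Weigel \cite[\S4]{weigel}, which supply a semisimple $s$ with $|\mathcal{M}'_G(s)| \le 2$; and for the small cases the paper uses \cite[p.~566]{burnessharper}, not the table. Second, your proposed non-conjugacy argument --- ``whenever two isomorphic members arise, $\langle s\rangle$ is a Sylow subgroup, and then Lemma~\ref{lem:conjclassmax}\ref{conjclassmax1} forces non-conjugacy'' --- is the specific mechanism that worked for sporadics, but it fails here. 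In the critical case $S = \mathrm{G}_2(3^e)$, the two subgroups in $\mathcal{M}_S(s)$ both have shape $\SU_3(q).2$ and $|s| = q^2 - q + 1$ is typically not a prime power, so $\langle s\rangle$ is not a Sylow subgroup and the Sylow argument gives nothing. The paper instead gets non-conjugacy directly from \cite[pp.~74--78]{weigel}, supplemented by a counting argument via Lemma~\ref{lem:conjclassmax}\ref{conjclassmax1} and \cite[Tables~8.5, 8.6, 8.42]{bhrd} to pin down that $s$ lies in exactly one member of each of the two classes. Third, for part~(i) the Weyl-group heuristic you describe is morally right, but you need a citable fact; the paper uses \cite[Table~6]{guralnickmalle} in most cases, and for $\mathrm{E}_7(2)$ and $\mathrm{G}_2(q)$ it combines the maximal-torus structure (\cite{carterweyl}, \cite{deriziotis81}) with the theorem \cite{guangxiang} that no non-abelian simple group has a self-normalising cyclic subgroup. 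Without these ingredients your sketch does not close the argument, particularly in the $\mathrm{G}_2(3^e)$ and $\mathrm{F}_4(2^e)$ cases where $|\mathcal{M}_S(s)| = 2$ and the two maximal overgroups are isomorphic.
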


\begin{proof}
  Suppose first that $S$ is isomorphic to $\mathrm{G}_2(3)$, $\mathrm{G}_2(4)$, $\mathrm{F}_4(2)$
or ${}^2 \mathrm{F}_4(2)'$. Then \cite[p.~566]{burnessharper} describes an
element $s \in S$ of order $13$, $21$, $17$ or $16$, respectively,
that lies in exactly three, one, two or two maximal subgroups of $S$,
respectively. In each of these cases, we deduce from the ATLAS  that
$N_S(\langle s \rangle) > C_S(s)$, and from Lemma
\ref{lem:conjclassmax}\ref{conjclassmax1} and character table
calculations in GAP that no two maximal subgroups in
$\mathcal{M}_S(s)$ are conjugate in $S$. Additionally, if $S =
\mathrm{G}_2(3)$, then we see in the ATLAS that $\mathcal{M}'_G(s)$
contains no novelty maximal.
Thus Lemma \ref{lem:ordinarynonconj} implies that Theorem \ref{thm:maxsubnoconj} holds for $S$.

For each remaining finite exceptional simple 
$S$, Propositions
6.1 and 6.2 of \cite{guralnickkantor}, building on the work of
Weigel \cite[\S4]{weigel}, give a semisimple 
$s \in S$ such
that $|\mathcal{M}'_G(s)| \le 2$. In fact, if $|\mathcal{M}'_G(s)| =
2$, then $S$ is equal to  $\mathrm{G}_2(3^e)$ or $\mathrm{F}_4(2^e)$ with $e \ge 2$, 
$|\mathcal{M}_S(s)| = 2$, and $\mathcal{M}'_G(s)$ contains at
most one novelty maximal. Moreover, the two groups in
$\mathcal{M}_S(s)$ are not $S$-conjugate 
\cite[pp.~74--78]{weigel}. Note that when $S = \mathrm{G}_2(3^e)$, Weigel states only that the groups in $\mathcal{M}_S(s)$
are members of
two $S$-conjugacy classes, with each
group having shape $\SU_3(q).2$. We can use Lemma~\ref{lem:conjclassmax}\ref{conjclassmax1} and \cite[Tables 8.5, 8.6 \& 8.42]{bhrd} to show that the element $s$ of order $q^2-q+1$ lies in exactly one member of each of these conjugacy classes. 
Thus in all cases, Theorem~\ref{thm:maxsubnoconj}\ref{maxsub2} follows from
Lemma \ref{lem:ordinarynonconj}.

It remains to show that $N_S(\langle s \rangle) > C_S(s)$ in each case.
If $S$ is not isomorphic to
$\mathrm{E}_7(2)$ or $\mathrm{G}_2(q)$, then this is clear from \cite[Table
6]{guralnickmalle}. If instead $S \in \{\mathrm{E}_7(2), \mathrm{G}_2(q)\}$, then
$\langle s \rangle$ is a maximal torus of $S$ \cite[Tables 3,
  7 \&  10, p.~46]{carterweyl}. As $S$ is the set of fixed points under
a Frobenius endomorphism of a simply connected algebraic group,
it follows from
\cite[pp.~2011--2012]{deriziotis81} that $C_S(s) = \langle s
\rangle$. No non-abelian finite simple group contains a self-normalising cyclic
subgroup \cite{guangxiang}, and so $N_S(\langle s \rangle) > C_S(s)$. 
\end{proof}

\subsection{Classical groups}

This subsection consists of the proof of Theorem
\ref{thm:maxsubnoconj} when $S$ is classical, as well as the
proof of Theorem \ref{thm:o8plusexceps}. Let $u := 2$ if
$S$ is unitary and $u := 1$ otherwise, and let the natural module for
$S$ be $\FF_{q^u}^n$. We also let $\tilde S$ be a classical
quasisimple subgroup of $\gl_n(q^u)$ such that $\tilde S/Z(\tilde S)
\cong S$. For each $h \in S$ and $H \le S$, we will write $\tilde h$
to denote a preimage of $h$ in $S$, and $\tilde H$ to denote the
preimage of $H$ containing $Z(\tilde S)$.

If a classical subgroup $\tilde H$ of $\gl_n(q^u)$ contains irreducible cyclic subgroups,
then a generator $\tilde y$ of any such subgroup of
maximal order is 
a \emph{Singer cycle} of $H$, and the
\emph{Singer subgroup} $\langle \tilde y \rangle$ is equal to $\tilde
H \cap \langle \tilde g \rangle$ for some Singer cycle $\tilde g$ of
$\gl_n(q^u)$ (see \cite{bereczky,hestenes,huppert}). We will
also say that $y$ is a \emph{Singer cycle} of $H$. 

In order to prove Theorem \ref{thm:maxsubnoconj} when $S$ is a
classical group, we will often define a suitable $s$
via Singer cycles of subgroups of $S$. The following lemma details
important properties of such a Singer cycle. 

\begin{lemma}
\label{lem:singerprops}
Suppose that $S$ is 
listed in Table \ref{table:singerords}, and let $\tilde s$ be a Singer cycle of $\tilde S$. Then $|\tilde s|$ is given in the table. Moreover, $N_{\tilde S}(\langle \tilde s \rangle) > C_{\tilde S}(\tilde s)$ and $N_S(\langle s \rangle) > C_S(s)$.
\end{lemma}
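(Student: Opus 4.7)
The plan is to verify the two parts of the statement separately---first the order of $\tilde s$, then the strict inequality $N_{\tilde S}(\langle\tilde s\rangle)>C_{\tilde S}(\tilde s)$---and to descend from $\tilde S$ to $S$ only at the very end. The structure mirrors the sort of case analysis already done for the sporadic, alternating and exceptional groups in the preceding subsection, but here the relevant data is packaged in Table~\ref{table:singerords} rather than in atlas-style tables.

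For the order of $\tilde s$, I would walk along the rows of Table~\ref{table:singerords}. Since $\langle\tilde s\rangle=\tilde S\cap\langle\tilde g\rangle$ for some Singer cycle $\tilde g$ of $\gl_n(q^u)$ of order $q^{un}-1$, the order $|\tilde s|$ is forced by the requirement that $\tilde s$ preserves the relevant sesquilinear or quadratic form defining $\tilde S$. The resulting formulae (of the usual shape $q^m+1$, $(q^n-1)/(q-1)$, $(q^n+1)/(q+1)$, and so on) are classical, and can be read off from the references already cited in the preceding paragraph (Bereczky, Hestenes, Huppert), matching the table entries on a line-by-line basis.

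For $N_{\tilde S}(\langle\tilde s\rangle)>C_{\tilde S}(\tilde s)$, the starting point is the classical fact that $N_{\gl_n(q^u)}(\langle\tilde g\rangle)=\langle\tilde g\rangle\rtimes\langle\phi\rangle$, where $\phi$ has order $n$ and acts on $\langle\tilde g\rangle$ as the $q^u$-th power map, so the quotient $N/C$ is cyclic of order $n$. The task is to exhibit a specific $z\in\tilde S$ normalising $\langle\tilde s\rangle$ without centralising $\tilde s$. In each classical type such a $z$ can be produced explicitly from the known structure of $N_{\tilde S}(\langle\tilde s\rangle)$: a suitable power of the Frobenius-like $\phi$ in the linear case, and an involutory isometry of the relevant form inverting $\tilde s$ in the unitary, symplectic and minus-type orthogonal cases (so that $\tilde s^z=\tilde s^{-1}$). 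A short inspection of each row of Table~\ref{table:singerords} produces such a $z$ inside $\tilde S$.

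Finally, passing to $S=\tilde S/Z(\tilde S)$: writing $\tilde s^z=\tilde s^k$ with $k\not\equiv 1\pmod{|\tilde s|}$, the image $\bar z:=zZ(\tilde S)$ certainly normalises $\langle s\rangle$, and it fails to centralise $s$ precisely when $\tilde s^{k-1}\notin Z(\tilde S)\cap\langle\tilde s\rangle$. The main obstacle lies here, because in some small-rank or small-$q$ configurations $Z(\tilde S)\cap\langle\tilde s\rangle$ is comparable in size to $\langle\tilde s\rangle$, and in particular the inversion choice $k=-1$ can give $\tilde s^{-2}\in Z(\tilde S)$. These are finitely many cases for each row of Table~\ref{table:singerords}, and I would dispose of them either by selecting a different normalising element (for instance, combining an inversion with a Frobenius power) or by a direct check: if every $z\in N_{\tilde S}(\langle\tilde s\rangle)$ satisfied $\tilde s^z\equiv\tilde s\pmod{Z(\tilde S)}$, then $N_{\tilde S}(\langle\tilde s\rangle)/C_{\tilde S}(\tilde s)$ would embed into a very small group, contradicting the lower bound on $|N/C|$ available from the cited Singer cycle references.
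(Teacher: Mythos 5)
Your plan agrees with the paper for the first two claims: the orders are read off from Bereczky's table (the paper cites \cite[Table~1]{bereczky}), and the strict inequality $N_{\tilde S}(\langle\tilde s\rangle)>C_{\tilde S}(\tilde s)$ is quoted from the literature (the paper cites \cite[p.~615]{praeger}) rather than being re-derived via your explicit element $z$, but the content is the same.

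The divergence, and the weak point, is in the descent from $\tilde S$ to $S$. You correctly identify the obstruction: a normalising element $z$ with $\tilde s^z=\tilde s^k\ne\tilde s$ may still die in $S$ if $\tilde s^{k-1}\in Z(\tilde S)\cap\langle\tilde s\rangle$. Your first proposed repair (``select a different normalising element'') is left unworked, and your second, the counting argument, has a genuine gap. Suppose every $z\in N_{\tilde S}(\langle\tilde s\rangle)$ gave $\tilde s^z\equiv\tilde s\pmod{Z(\tilde S)}$. Then every conjugating exponent $k$ satisfies $k\equiv 1\pmod{m/d}$, where $m=|\tilde s|$ and $d=|Z(\tilde S)\cap\langle\tilde s\rangle|$, so the bound you obtain is $|N_{\tilde S}(\langle\tilde s\rangle):C_{\tilde S}(\tilde s)|\le d$. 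But this need not contradict the known lower bound for $|N:C|$. For instance, for $\tilde S=\ssl_n(q)$ one has $|N:C|=n$ and $Z(\tilde S)\subseteq\langle\tilde s\rangle$ with $|Z(\tilde S)|=\gcd(n,q-1)$, so whenever $n\mid q-1$ both quantities equal $n$ and your contradiction evaporates. So your argument does not close without further case-by-case work. The paper avoids this entirely with a uniform trick: if $N_S(\langle s\rangle)=C_S(s)$, then $\tilde s$ is $\tilde S$-conjugate to $\tilde z\tilde s$ for a non-trivial scalar $\tilde z\in Z(\tilde S)$; but $\tilde z\tilde s$ and $\tilde s$ have distinct $\mathbb{F}_{q^{un}}$-eigenvalue sets, and conjugate elements have the same eigenvalues, so this is impossible. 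That eigenvalue observation replaces all of your intended small-case analysis and is the step you would need to find (or prove your fallback in each row of the table) to make your proposal complete.
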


\begin{proof}
The order of $\tilde s$ is given in \cite[Table 1]{bereczky}, and
\cite[p.~615]{praeger} yields $N_{\tilde S}(\langle \tilde s \rangle)
> C_{\tilde S}(\tilde s)$.  Hence either $N_S(\langle s \rangle) > C_S(s)$, or $\tilde s$ and $\tilde z \tilde s$ are $\tilde S$-conjugate for some non-identity scalar matrix $\tilde z \in \tilde S$. However, $\tilde z \tilde s$ and $\tilde s$ have
different $\mathbb{F}_{q^{un}}$-eigenvalues
on $\mathbb{F}_{q^u}^n$, and so $N_S(\langle s \rangle) > C_S(s)$. 
\end{proof}

\begin{table}
\centering
\renewcommand{\arraystretch}{1.1}
\caption{The order of a Singer cycle $\tilde s \in \tilde S$, for certain
  $S$.}
\label{table:singerords}
\begin{tabular}{ |c|c| }
\hline
$S$ & $|\tilde s|$ \\
\hline
\hline
$\psl_n(q)$ & $(q^n-1)/(q-1)$ \\
\hline
$\psu_n(q)$, $n$ odd & $(q^n+1)/(q+1)$\\
\hline
$\psp_n(q)$ & $q^{n/2}+1$\\
\hline
$\oo^-_n(q)$ & $(q^{n/2}+1)/(2,q-1)$\\
\hline
\end{tabular}
\end{table}

Note that each finite simple classical group that contains a
Singer cycle (i.e., that contains an irreducible cyclic subgroup)
appears in Table \ref{table:singerords} (see \cite[p.~188]{bereczky}).

The arguments later in this subsection do not apply to certain small classical groups. We therefore deal with these groups separately. 

\begin{prop}
\label{prop:smallclas}
Theorem \ref{thm:maxsubnoconj} holds for each classical group $S$ given in Table \ref{table:smallclassical}.
\end{prop}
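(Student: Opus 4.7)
The plan is to verify Theorem~\ref{thm:maxsubnoconj} computationally, one group at a time, for each $S$ listed in Table~\ref{table:smallclassical}. These are precisely the groups that fall outside the reach of the Singer-cycle argument that will be deployed in the bulk of this subsection (for example, because $\tilde S$ admits no Singer cycle of the desired order, because the element produced by Lemma~\ref{lem:singerprops} happens to lie in too many maximal subgroups, or because the generic non-conjugacy analysis under outer automorphisms breaks down in low rank). Since the table is finite and each $S$ is small, a direct machine-assisted verification is both feasible and unavoidable.

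For each such $S$, I would proceed as follows. First, using the ATLAS together with the GAP Character Table Library, identify a candidate element $s \in S$ by scanning the conjugacy classes of $S$ for one with small centraliser and small class multiplicities against the maximal subgroups. Second, compute $\mathcal{M}_S(s)$ by applying Lemma~\ref{lem:conjclassmax}\ref{conjclassmax1} (with $J = S$ and $H$ running over representatives of the $S$-classes of maximal subgroups), using the known character table and fusion maps to count the $S$-conjugates of each $H$ containing $s$. Choose $s$ so that (a) $N_S(\langle s \rangle) > C_S(s)$ (read off from the class structure of $s$ and $s^{-1}$ or from \cite[p.~615]{praeger}-type data), establishing Theorem~\ref{thm:maxsubnoconj}\ref{maxsub1}, and (b) no two members of $\mathcal{M}_S(s)$ are $S$-conjugate.

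Third, for each of the (finitely many) almost simple overgroups $G$ of $S$, enumerate the $G$-classes of maximal subgroups $M$ with $S \not\le M$, distinguishing the ordinary maximals of $G$ (where $M \cap S$ is maximal in $S$) from the novelty maximals (where $M \cap S$ is non-maximal in $S$). Data for novelty maximals is tabulated in \cite{bhrd} and in the ATLAS. For each class, apply Lemma~\ref{lem:conjclassmax}\ref{conjclassmax1} with $J = G$ and $H = M$ to determine how many $G$-conjugates of $M$ contain $s$, and verify that for every ordinary class this count is at most $1$ and that at most one novelty class contributes a member to $\mathcal{M}'_G(s)$. By Lemma~\ref{lem:ordinarynonconj}, condition~\ref{maxsub2} then follows provided the at-most-one novelty is genuinely unique; this last claim can be read off from the explicit novelty list for $G$ in each case.

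The main obstacle is choosing a single $s \in S$ that works simultaneously for every almost simple $G$ with socle $S$: in low rank, the outer automorphism group can be large (e.g.\ diagonal, field and graph automorphisms combine non-trivially), so an element $s$ that behaves well in $S$ can collide under an outer class, either fusing two would-be non-conjugate maximals or producing a second novelty maximal inside $\mathcal{M}'_G(s)$. Handling this requires, for each offending $G$, checking how the relevant maximal subgroup classes split or fuse under $\out(S)$, which I would carry out directly in Magma using its almost simple groups database, or in GAP using the character table of $G$ together with the subgroup fusion data from the Character Table Library. A secondary nuisance is the set of exceptional isomorphisms among small classical groups (for instance $\psl_2(7) \cong \psl_3(2)$, $\psl_2(9) \cong \alt_6$, $\psu_4(2) \cong \psp_4(3)$), which means one must be careful not to double-count or to pick an element already covered by Proposition~\ref{prop:altalmostsimple}; wherever this occurs, I would simply cite the earlier proposition and omit the case.
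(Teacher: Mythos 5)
Your proposal is correct and follows essentially the same route as the paper: fix an element $s$ of a specified order in each $S$, compute $\mathcal{M}_S(s)$ via Lemma~\ref{lem:conjclassmax}\ref{conjclassmax1} using GAP/Magma, verify $N_S(\langle s\rangle)>C_S(s)$ and pairwise non-conjugacy in $S$, then conclude part~\ref{maxsub2} via Lemma~\ref{lem:ordinarynonconj}. The one place your plan diverges in mechanism is the novelty check: you propose to enumerate the known novelty classes of each $G$ and apply Lemma~\ref{lem:conjclassmax} again with $J=G$, whereas the paper avoids any $G$-level class counting by noting (via Lemma~\ref{lem:conjmaxint}) that every novelty in $\mathcal{M}'_G(s)$ is $N_G$ of an intersection of $G$-conjugate members of $\mathcal{M}_S(s)$, and then observing either that $|\mathcal{M}_S(s)|\le 3$ and all pairwise intersections coincide (so at most one such normaliser can arise), or that the ATLAS shows no novelties of the relevant order exist; both routes work, but the paper's stays entirely within data about $\mathcal{M}_S(s)$ and is lighter on $G$-specific computation.
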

\begin{table}
\centering
\renewcommand{\arraystretch}{1.1}
\caption{The size of $\mathcal{M}_S(s)$, where $S$ is a given classical simple group and $s$ is any element of $S$ of the given order.}
\label{table:smallclassical} \subfloat{
\begin{tabular}{ |c|c|c| }
\hline
$S$ & $|s|$ & $|\mathcal{M}_S(s)|$ \\
\hline
\hline
$\psl_3(4)$ & $7$ & $3$\\
\hline
$\psl_6(2)$ & $63$ & $2$\\
\hline
$\psu_3(3)$ & $7$ & $1$\\
\hline
$\psu_3(5)$ & $10$ & $2$\\
\hline
$\psu_4(2)$ & $9$ & $2$\\
\hline
$\psu_4(3)$ & $7$ & $7$\\
\hline
\end{tabular} }
\subfloat{ \begin{tabular}{ |c|c|c| }
\hline
$S$ & $|s|$ & $|\mathcal{M}_S(s)|$ \\
\hline
\hline
$\psu_5(2)$ & $11$ & $1$\\
\hline
$\psu_6(2)$ & $11$ & $4$\\
\hline
$\psp_6(2)$ & $15$ & $2$\\
\hline
$\psp_8(2)$ & $17$ & $3$\\
\hline
$\oo_7(3)$ & $14$ & $3$\\
\hline
$\oo^+_8(4)$ & $65$ & $3$\\
\hline
\end{tabular}}
\end{table}
\begin{proof}
  Let $s$ be any element of $S$ such that $|s|$ is as in Table \ref{table:smallclassical}. We can use Lemma \ref{lem:conjclassmax}\ref{conjclassmax1}, together with computations in Magma when $S = \oo^+_8(4)$, or character table calculations in GAP in the remaining cases, to determine the set $\mathcal{M}_S(s)$ (in certain cases, $\mathcal{M}_S(s)$, or its size, is also given in \cite[\S4]{bgk}, the proof of \cite[Prop.~6.3]{guralnickkantor}, or the proof of \cite[Thm.~6.1]{burnessharper}). In particular, $|\mathcal{M}_S(s)|$ has the value given in Table \ref{table:smallclassical}, and no two subgroups in $\mathcal{M}_S(s)$ are $S$-conjugate.

To prove Part (ii), we first note that if $S = \psp_8(2)$, then no two subgroups in $\mathcal{M}_S(s)$ are isomorphic, and if $S = \oo_7(3)$, then the three subgroups in $\mathcal{M}_S(s)$ lie in two isomorphism classes. If instead $S \in \{\psl_3(4),\oo^+_8(4)\}$, then we can show using Magma that the intersection of any two subgroups in $\mathcal{M}_S(s)$ is equal to the intersection of all three. When $|\mathcal{M}_S(s)| \le 3$, it now follows from Lemma \ref{lem:conjmaxint}  that $\mathcal{M}'_G(s)$ contains at most one novelty maximal. In addition, we observe from the ATLAS  that if $|\mathcal{M}_S(s)| > 3$ (that is, if $S \in \{\psu_4(3), \psu_6(2)\}$), then $\mathcal{M}'_G(s)$ contains no novelty maximals. Thus, in each case, Lemma \ref{lem:ordinarynonconj} implies that no two subgroups in $\mathcal{M}'_G(s)$ are $G$-conjugate.

For Part (i), we deduce that $N_S(\langle s \rangle) > C_S(s)$, using Lemma \ref{lem:singerprops} when $s$ is a Singer cycle of $S$, using Magma when $S = \oo^+_8(4) \cong \Omega^+_8(4)$, and using the ATLAS in the remaining cases. 
\end{proof}

In order to prove that Theorem \ref{thm:maxsubnoconj} holds in the remaining cases where $S$ contains a Singer cycle, we require the following elementary observation. Throughout the rest of this section, we shall abbreviate \emph{primitive prime divisor} to \emph{ppd}.

\begin{lemma}
\label{lem:ppdfieldaut}
Let $p$ be a prime, $f$ a positive integer, and $r$ a 
ppd of $p^f-1$. Then $r$ does not divide $f$.
\end{lemma}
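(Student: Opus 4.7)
The plan is to use the standard characterisation of primitive prime divisors in terms of multiplicative orders. By definition, $r$ is a ppd of $p^f - 1$ means that $r \mid p^f - 1$ but $r \nmid p^i - 1$ for every positive integer $i < f$. Equivalently, the multiplicative order of $p$ modulo $r$ is exactly $f$. Note that $r \ne p$, since otherwise $r \mid p^f$, contradicting $r \mid p^f - 1$.

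Next I would invoke Fermat's Little Theorem: since $r$ is a prime distinct from $p$, we have $p^{r-1} \equiv 1 \pmod{r}$. Because the order of $p$ modulo $r$ divides $r-1$, this yields $f \mid r - 1$, and in particular $f \leq r - 1 < r$.

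Finally, since $r$ is a prime strictly greater than the positive integer $f$, it cannot divide $f$. This gives the desired conclusion $r \nmid f$.

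There is no real obstacle here; the lemma is a one-line consequence of the definition of ppd combined with Fermat's Little Theorem, and the proof above is essentially the only natural one.
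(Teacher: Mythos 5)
Your proof is correct. It takes a genuinely different, slightly stronger route than the paper. The paper argues directly by contradiction: assuming $r \mid f$, write $f = kr$ with $k < f$; then by Fermat's Little Theorem in the form $a^r \equiv a \pmod r$ one gets $p^f = (p^k)^r \equiv p^k \pmod r$, so $r \mid p^k - 1$, contradicting primitivity. Your argument instead passes through the order-theoretic characterisation of a ppd (the multiplicative order of $p$ modulo $r$ equals $f$) and Fermat's Little Theorem in the form $p^{r-1} \equiv 1 \pmod r$, to deduce the stronger standard fact $f \mid r-1$, hence $r > f$, hence $r \nmid f$. The paper's proof is minimalist and avoids invoking the order characterisation; yours is arguably more illuminating, since $r \equiv 1 \pmod f$ is the well-known general property of primitive prime divisors and immediately implies the lemma. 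Both are correct and short.
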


\begin{proof}
Suppose, for a contradiction, that $r \mid f$. Then there exists a positive integer $k < f$ such that $p^f = (p^k)^r \equiv p^k \pmod r$. Thus $p^f-1 \equiv p^k-1 \pmod r$. As $r$ divides $p^f-1$, it therefore also divides $p^k-1$. This contradicts the primitivity of $r$ as a prime divisor of $p^f-1$.
\end{proof}

\begin{prop}
\label{prop:singercases}
Theorem \ref{thm:maxsubnoconj} holds when $S$ is linear, unitary of odd dimension, symplectic, or orthogonal of minus type with $n \ge 8$.
\end{prop}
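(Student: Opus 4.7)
The plan is to let $s$ be the image in $S$ of a Singer cycle $\tilde s$ of $\tilde S$, whose order is given in Table~\ref{table:singerords}. Part~(i) of Theorem~\ref{thm:maxsubnoconj} is then immediate from Lemma~\ref{lem:singerprops}, so the rest of the argument concentrates on part~(ii).

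The structural ingredient I would use is the classification of maximal overgroups of a Singer cycle in a finite classical group, due to Bereczky~\cite{bereczky} (building on work of Kantor and of Guralnick--Penttila--Praeger--Saxl). In each of the four families, these overgroups fall into Aschbacher's class $\mathcal{C}_3$: they are field-extension subgroups, with one $S$-class per prime divisor $t$ of $n$ (respectively of $n/2$ in the symplectic and orthogonal-minus cases), together with the Singer normaliser $N_S(\langle s \rangle)$ when it is maximal, and in finitely many small configurations one further exceptional overgroup. Those small configurations, together with the small $(n,q)$ for which Zsigmondy's theorem fails to produce a primitive prime divisor $r$ of $q^e-1$ dividing $|s|$ (with $e=n$ in the linear, symplectic and orthogonal-minus cases and $e=2n$ in the unitary case of odd dimension), are precisely the groups already handled in Proposition~\ref{prop:smallclas}. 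Since $\mathcal{C}_3$ subgroups attached to distinct primes $t$ have distinct orders, no two members of $\mathcal{M}_S(s)$ are $S$-conjugate; Lemma~\ref{lem:conjclassmax}\ref{conjclassmax1}, together with the observation that each such overgroup $M$ contains its own Singer normaliser — which coincides with $N_S(\langle s \rangle)$ — then confirms $|M^S \cap \mathcal{M}_S(s)| = 1$ for every $M$ in $\mathcal{M}_S(s)$.

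For each almost simple $G$ with socle $S$ I would invoke Lemma~\ref{lem:ordinarynonconj}: it then suffices to show that at most one novelty maximal of $G$ lies in $\mathcal{M}'_G(s)$. Here the ppd $r$ obtained above plays its second role, via Lemma~\ref{lem:ppdfieldaut}: writing $q=p^f$, since $r \nmid f$, no non-trivial field automorphism of $S$ can centralise $\langle s \rangle$, which rules out novelty maximals arising from field-automorphism fusion of $\mathcal{C}_3$ classes and confines the potential novelties to the short lists induced by diagonal or graph automorphisms, readable from the tables of \cite{bhrd}. The main obstacle is the case-by-case bookkeeping across the four families, since the outer-automorphism action on the $\mathcal{C}_3$ classes containing $s$ differs markedly between, say, $\psl_n(q)$ and $\oo^-_n(q)$; particular care is needed in the symplectic and orthogonal-minus families, where the relevant ppd lives in $q^{n/2}+1$ rather than in $q^n-1$ directly, so the correct exponent $e$ must be chosen before feeding it into Lemma~\ref{lem:ppdfieldaut}.
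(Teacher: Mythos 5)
Your overall strategy matches the paper's --- take $s$ to be a Singer cycle, get part~(i) from Lemma~\ref{lem:singerprops}, invoke Bereczky's classification of maximal overgroups of Singer cycles, use Zsigmondy to obtain a primitive prime divisor, and finish via Lemma~\ref{lem:ordinarynonconj}. However, there is a concrete gap in the middle of your argument, and the end of your argument departs from the paper in a way that introduces an unjustified claim.

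The gap: you assert that the non-extension-field overgroups of the Singer cycle occur only in ``finitely many small configurations,'' which (together with the Zsigmondy exceptions) you identify with the list in Proposition~\ref{prop:smallclas}. This is false. Bereczky's theorem shows that whenever $S = \psp_n(q)$ with $q$ \emph{even}, the Singer cycle of order $q^{n/2}+1$ lies inside a maximal orthogonal subgroup of $S$ (Aschbacher class $\mathcal{C}_8$, not $\mathcal{C}_3$). This is an infinite family --- e.g.\ $\psp_4(q)$ for every even $q$, or $\psp_{10}(2)$ --- and almost none of these groups appear in Table~\ref{table:smallclassical}. The paper deals with this family explicitly, quoting \cite[Thm.~1.1, p.~94]{mallesaxlweigel} to establish that there are at most two such non-extension-field overgroups, that they are not $S$-conjugate, and that when there are two we have $n=4$; without this step, the claim that no two members of $\mathcal{M}_S(s)$ are $S$-conjugate is unproven. (Also, non-conjugacy between a $\mathcal{C}_8$ overgroup and a $\mathcal{C}_3$ overgroup should be observed, though this is easy.)

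The departure at the end: for the pairwise non-isomorphism of the extension field overgroups in $\mathcal{M}_S(s)$, the paper cites \cite[Lemma~2.12]{bgk}, whose hypothesis $r \nmid un$ is supplied by Lemma~\ref{lem:ppdfieldaut} applied to the ppd $r$ of $q^{un}-1$. You instead use the ppd to argue that ``no non-trivial field automorphism of $S$ can centralise $\langle s\rangle$'' and thereby constrain novelty maximals coming from field-automorphism fusion. This is not what Lemma~\ref{lem:ppdfieldaut} gives you (it controls divisibility, not centralisation), and it is not needed: once $\mathcal{M}_S(s)$ is shown to contain at most two $G$-conjugate subgroups, the ``in particular'' clause of Lemma~\ref{lem:ordinarynonconj} already yields both the bound on novelty maximals and the pairwise non-conjugacy in $G$, so there is no separate novelty-maximal case analysis to perform.
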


\begin{proof}
  We may assume that $S$ has not been 
dealt with in
  Proposition \ref{prop:smallclas}, and by Proposition \ref{prop:altalmostsimple} that $S$ is not isomorphic to an alternating group. We will also assume  that $n \ge 4$ in the symplectic case, 
and that $q \ne 7$ if $n = 2$; we will treat $\psl_2(7)$ as $\psl_3(2)$.

Let $s$ be a Singer cycle of $S$.
Then $N_S(\langle s \rangle) > C_S(s)$ by Lemma \ref{lem:singerprops}. Additionally,  the main theorem of \cite{bereczky} shows that if $\mathcal{M}_S(s)$ contains a maximal subgroup that is not an extension field type subgroup, then $S = \psp_n(q)$ with $q$ even. Moreover, if there is more than one such maximal subgroup, then there are exactly two, they are not conjugate in $S$, and $n = 4$ \cite[Thm.~1.1, p.~94]{mallesaxlweigel}.

If $S = \psl_2(q)$, then our exclusions on $q$ imply that 
$\mathcal{M}_S(s) = \{N_S(\langle s \rangle)\}$. Otherwise, we observe
from Lemma \ref{lem:singerprops} and Zsigmondy's Theorem (since
we exclude the groups in Proposition~\ref{prop:smallclas}) that $|s|$ is divisible by a 
ppd $r$ of $q^{un}-1$.
In addition, by Lemma \ref{lem:ppdfieldaut}, $r \nmid un$. Hence \cite[Lemma 2.12]{bgk} implies that no two extension field subgroups in $\mathcal{M}_S(s)$ are isomorphic.
%
The result now follows from Lemma~\ref{lem:ordinarynonconj}.
\end{proof}

\begin{prop}
Theorem \ref{thm:maxsubnoconj} holds when $S$ is unitary of even dimension or orthogonal of odd dimension.
\end{prop}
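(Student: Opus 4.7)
The plan is to mirror the argument of Proposition~\ref{prop:singercases}, but with $s$ taken to be a Singer cycle of a natural hyperplane-stabilising subgroup, since neither $\psu_n(q)$ with $n$ even nor $\oo_n(q)$ with $n$ odd appears in Table~\ref{table:singerords}. Concretely, for $S = \psu_n(q)$ with $n \ge 4$ even (and not in Proposition~\ref{prop:smallclas}), I would take $\tilde s$ to act as a Singer cycle of $\mathrm{SU}_{n-1}(q)$ on a non-degenerate hyperplane $W$ of the natural module $\mathbb{F}_{q^2}^n$, and as the identity on $W^\perp$; since $n-1$ is odd, the relevant Singer order $(q^{n-1}+1)/(q+1)$ is available from Table~\ref{table:singerords}. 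For $S = \oo_n(q)$ with $n$ odd (and $q$ odd, so that $S$ is not already symplectic), I would take $\tilde s$ to act as a Singer cycle of $\Omega^-_{n-1}(q)$ on a non-degenerate minus-type hyperplane $W$ and trivially on $W^\perp$.

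To establish $N_S(\langle s \rangle) > C_S(s)$, I would argue exactly as in Lemma~\ref{lem:singerprops}, applied inside the hyperplane stabiliser: a Frobenius-type element of the underlying field extension normalises but does not centralise the Singer torus in $\mathrm{SU}_{n-1}(q)$ or $\Omega^-_{n-1}(q)$, and the eigenvalue argument at the end of the proof of that lemma then transfers the strict inequality from $\tilde S$ down to $S$. Here one uses that $\tilde s$ has eigenvalue $1$ on $W^\perp$ and primitive eigenvalues (of larger order) on $W$, so that no non-trivial scalar multiple $\tilde z \tilde s$ can be $\tilde S$-conjugate to $\tilde s$.

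By Zsigmondy's Theorem, together with the exclusion of the small cases handled in Proposition~\ref{prop:smallclas}, the order of $s$ is divisible by a primitive prime divisor $r$ of $q^{2(n-1)} - 1$ in the unitary case, and of $q^{n-1} - 1$ in the orthogonal case. The main theorem of \cite{bereczky}, combined with the explicit tables in \cite{bhrd}, should then force each $M \in \mathcal{M}_S(s)$ to be either the stabiliser of the unique $s$-fixed non-degenerate $1$-space $W^\perp$, an extension field type subgroup, or one of a controllable list of small classical exceptions. The uniqueness of the fixed $1$-space follows from the eigenvalue structure of $\tilde s$: all other eigenvalues form a single Galois orbit of large multiplicative order, so no other non-degenerate $1$-space is $s$-invariant. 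Lemma~\ref{lem:ppdfieldaut} gives $r \nmid 2(n-1)$ in the unitary case and $r \nmid n-1$ in the orthogonal case, whence \cite[Lemma 2.12]{bgk} shows, as in Proposition~\ref{prop:singercases}, that any two extension field subgroups in $\mathcal{M}_S(s)$ are non-isomorphic, and hence non-conjugate in $S$. Together with the uniqueness of the hyperplane stabiliser, this yields that no two subgroups in $\mathcal{M}_S(s)$ are $S$-conjugate, and Lemma~\ref{lem:ordinarynonconj} then delivers Theorem~\ref{thm:maxsubnoconj}\ref{maxsub2}.

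The main obstacle will be to pin down the residual classical or subfield exceptions left open by Bereczky's theorem, and to exclude novelty maximals arising from the graph (or graph-field) automorphism in the even-dimensional unitary case. I expect the uniqueness of the $s$-fixed $1$-space $W^\perp$ to handle both: any further maximal containing $s$ must preserve $W^\perp$, and so sit inside the hyperplane stabiliser already accounted for, or else violate the eigenvalue structure of $\tilde s$. A secondary technicality is checking that the trivial action on $W^\perp$ does not force $s$ into a $\mathcal{C}_2$-type decomposition stabiliser, which again reduces to the eigenvalue analysis.
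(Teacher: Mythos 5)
Your choice of element $s$ is the same as the paper's: a Singer cycle of $\SU_{n-1}(q)$ or $\Omega^-_{n-1}(q)$ acting on a non-degenerate hyperplane $W$ and trivially on $W^\perp$, and your argument for $N_S(\langle s\rangle) > C_S(s)$ by adapting Lemma~\ref{lem:singerprops} is also what the paper does. However, your plan for pinning down $\mathcal{M}_S(s)$ has a genuine gap, and it is precisely the ``main obstacle'' you flag at the end. Bereczky's main theorem classifies maximal overgroups of \emph{Singer elements of $S$ itself}; the element $s$ here is not a Singer cycle of $S$ (indeed, when $S$ is unitary of even dimension or orthogonal of odd dimension, $S$ has no Singer cycle, which is the whole reason for this separate proposition). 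So the claim that ``the main theorem of \cite{bereczky}\dots should then force each $M \in \mathcal{M}_S(s)$ to be\dots'' does not follow; you would instead need something like the ppd machinery of \cite{gpps}, as the paper does for $\oo^+_{2m}(q)$, and then dispose of extension-field and $\mathcal{C}_2$ candidates by hand. Likewise, \cite[Lemma 2.12]{bgk} about extension field subgroups was invoked in Proposition~\ref{prop:singercases} under the hypothesis that $s$ is a Singer cycle of $S$ and $|s|$ is divisible by a ppd of $q^{un}-1$; here $|s|$ is only divisible by a ppd of $q^{u(n-1)}-1$, so that lemma does not directly apply either.

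The paper avoids all of this by citing a stronger off-the-shelf result: by \cite[Thm.~1.1, p.~93]{mallesaxlweigel}, the element $s$ as constructed lies in a \emph{unique} maximal subgroup of $S$, namely the hyperplane stabiliser $M$. With $\mathcal{M}_S(s) = \{M\}$, part (ii) is immediate from Lemma~\ref{lem:ordinarynonconj}, and there is nothing left to check about extension field subgroups, subfield subgroups, imprimitive decompositions, or unitary graph automorphisms. So the right fix for your proof is not to patch the Bereczky/ppd analysis but to replace it with the Malle--Saxl--Weigel uniqueness statement, which does in one stroke what you were trying to reassemble piece by piece.
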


\begin{proof}
  We may assume that $S$ has not been 
  dealt with in Proposition~\ref{prop:smallclas}, and by Proposition \ref{prop:singercases} that $n \ge 4$ in the unitary case and $n \ge 7$ otherwise. 

Let
 $\tilde M$ be the stabiliser in $\tilde S$ of a non-degenerate $(n-1)$-dimensional
subspace of $\mathbb{F}_{q^u}^n$, of minus type if $S$ is
orthogonal. Then $\tilde M$ contains a subgroup  $H$ isomorphic to
$\SU_{n-1}(q)$ or $\Omega^-_{n-1}(q)$. Let $\tilde{s}$ be a Singer cycle of $H$ (with respect to its action on $\mathbb{F}_{q^u}^{n-1}$), and let $\tilde K:=Z(\tilde S)H$. Then Lemma \ref{lem:singerprops} (or an easy generalisation of this result in the case $Z(\tilde S) \not\le H$) yields $N_K(\langle s \rangle) > C_K(s)$, and hence $N_S(\langle s \rangle) > C_S(s)$. Furthermore, $\mathcal{M}_S(s) = \{M\}$ by \cite[Thm.~1.1, p.~93]{mallesaxlweigel}, and so the result follows by Lemma~\ref{lem:ordinarynonconj}.
\end{proof}

The remainder of this section concerns the case $S = \oo^+_{2m}(q)$, where without loss of generality $m \ge 4$, so that $\tilde S = \Omega^+_{2m}(q)$.
The case $m$ odd is straightforward.

\begin{prop}
Theorem \ref{thm:maxsubnoconj} holds when $S = \oo^+_{2m}(q)$, with $m \ge 5$ odd.
\end{prop}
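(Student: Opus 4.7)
The plan is to adapt the argument of the preceding proposition, using a subgroup of $\tilde S = \Omega^+_{2m}(q)$ that does contain a Singer cycle. Fix an orthogonal decomposition $\FF_q^{2m} = U \perp W$ into non-degenerate subspaces of minus type with $\dim U = 2$ and $\dim W = 2m-2$; since the discriminants multiply, the form on $\FF_q^{2m}$ is then of plus type. Let $\tilde M \le \tilde S$ be the stabiliser of $U$, so that $\tilde M$ contains the pointwise stabiliser $H \cong \Omega^-_{2m-2}(q)$ of $U$. Let $\tilde s$ be a Singer cycle of $H$, and let $s$ be its image in $S$.

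For part~(i), applying Lemma~\ref{lem:singerprops} to $H$ (or the easy generalisation invoked in the preceding proposition when $Z(\tilde S) \not\le H$) gives $N_S(\langle s \rangle) > C_S(s)$, because $m-1 \ge 4$ is even. For part~(ii), my approach is to invoke Zsigmondy's Theorem (applicable as $2(m-1) \ge 8$) to produce a primitive prime divisor $r$ of $q^{2(m-1)}-1$, which must then divide $|s| = (q^{m-1}+1)/(2,q-1)$ since $r \nmid q^{m-1}-1$. Lemma~\ref{lem:ppdfieldaut} yields $r \nmid 2m$, so \cite[Lemma~2.12]{bgk} ensures that no two extension-field subgroups in $\mathcal{M}_S(s)$ are isomorphic, while the classification from \cite[Thm.~1.1]{mallesaxlweigel} should show that the only non-extension-field maximal subgroup of $S$ containing $\langle s \rangle$ is the image $M$ of $\tilde M$ in $S$. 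Lemma~\ref{lem:ordinarynonconj} then delivers part~(ii).

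The main obstacle is the careful bookkeeping required to rule out unexpected maximal overgroups of $\langle s \rangle$ --- for instance, subfield or extension-field type subgroups of $S$ that could a priori contain $s$ --- using the primitivity of $r$ and Lemma~\ref{lem:ppdfieldaut}, together with the analogous verification that $\mathcal{M}'_G(s)$ contains at most one novelty maximal for each almost simple overgroup $G$ of $S$. Both reductions should be direct applications of the ppd machinery already used in the proof of Proposition~\ref{prop:singercases}.
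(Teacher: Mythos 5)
There is a fatal flaw in the choice of element $s$. You take $\tilde s$ to be a Singer cycle of the \emph{pointwise} stabiliser $H \cong \Omega^-_{2m-2}(q)$ of the $2$-dimensional anisotropic subspace $U$. So $\tilde s$ acts as the identity on $U$, and hence stabilises every one of the $q+1$ one-dimensional subspaces of $U$. When $q$ is odd these are non-degenerate $1$-spaces, and their stabilisers are maximal reducible subgroups of $S$ lying in at most two $S$-conjugacy classes (by discriminant type). Consequently $\mathcal{M}_S(s)$ contains several $S$-conjugate subgroups, directly contradicting what needs to be proved; in particular \cite[Thm.~1.1]{mallesaxlweigel} cannot give ``only $M$'' among the non-extension-field maximals, because that theorem classifies overgroups of elements whose orders are divisible by suitable ppds but does not suppress the reducible subgroups you have built in. The ppd bookkeeping at the end of your argument cannot rescue this, since those extra maximals are reducible, not extension-field.

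The paper avoids this by choosing a very different element. Since $m$ is odd, one can write $\FF_q^{2m} = U \perp U^\perp$ with $\dim U = m-1$ and $\dim U^\perp = m+1$, both even-dimensional and of minus type, and take $\tilde s$ to be a product of Singer cycles of the two factors (this is the element of order $(q^{(m-1)/2}+1)(q^{(m+1)/2}+1)/(4,q-1)$ from \cite[Prop.~5.13]{bgk}). This element acts irreducibly on both $U$ and $U^\perp$, so the only proper nonzero invariant subspaces are $U$ and $U^\perp$ themselves, and \cite[Prop.~5.13]{bgk} shows that $\tilde s$ lies in a \emph{unique} maximal subgroup of $\tilde S$, namely the stabiliser of $U$. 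Part~(ii) is then immediate from Lemma~\ref{lem:ordinarynonconj}, and part~(i) follows from an eigenvalue/conjugacy argument rather than from Lemma~\ref{lem:singerprops}. If you want to keep your style of argument, you must replace your element with one that acts irreducibly on each summand of a non-degenerate decomposition, so that no small subspaces are fixed.
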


\begin{proof}
  Choose $\tilde s$ to be the element of order
  $(q^{(m-1)/2}+1)(q^{(m+1)/2}+1)/(4,q-1)$ of $\tilde S$ described in Proposition
  5.13 of \cite{bgk}, so that $\tilde s$ is a product of two Singer cycles of orthogonal subgroups of $\tilde S$, unless $q \equiv 3 \pmod 4$. Then \cite[Proposition 5.13]{bgk} shows that $\tilde s$ lies in a
  unique maximal subgroup of $\tilde S$, namely, the stabiliser of an
$(m-1)$-dimensional subspace of $\mathbb{F}_q^{2m}$ of minus type. Hence $|\mathcal{M}_S(s)| = 1$, and Part (ii) follows from Lemma~\ref{lem:ordinarynonconj}. 

It remains to show that $N_S(\langle s \rangle) > C_S(s)$. The set of $\overline{\FF_q}$-eigenvalues of $\tilde s$ is a subset of $\mathbb{F}_{q^{(m-1)(m+1)/2}}$
that is closed under conjugation by the field automorphism $\alpha \mapsto
\alpha^q$. Thus the distinct elements $\tilde s$, $\tilde s^q$ and
$\tilde s^{q^2}$ all have the same eigenvalues. Note also that
$\langle \tilde s \rangle$ stabilises no one-dimensional subspace of $\mathbb{F}_q^{2m}$; otherwise, $\mathcal{M}_S(s)$ would contain the stabiliser of such a subspace. Thus it follows from \cite[pp.~38--39]{wall} (see also \cite[Prop.~2.11]{bgk} for an explicit statement) and \cite[Thms 6.1.12 \& 6.1.15]{defranceschi} that $\tilde s$ is $\tilde S$-conjugate to at least one of $\tilde s^q$ and $\tilde s^{q^2}$. As $\tilde s$ and $-\tilde s$ do not have equal eigenvalues when $q$ is odd, we deduce that $N_S(\langle s \rangle) > C_S(s)$.
%
\end{proof}

The case where $m$ is even is much more involved.

\begin{proof}[Proof of Theorem \ref{thm:o8plusexceps}]
  As specified below, many of the facts about 
    almost simple groups mentioned in this proof were deduced using Magma. In the case $q = 5$, most of our computations were performed in the matrix group $\tilde S = \Omega^+_8(5)$, since this group's maximal subgroups can be constructed using the \texttt{ClassicalMaximals} function.
  In general, in order to show that an element $r \in S$ lies in multiple $G$-conjugates of a given maximal subgroup $M \in \mathcal{M}'_G(r)$, it suffices by Lemma \ref{lem:conjclassmax}\ref{conjclassmax1} to verify that $N_G(\langle r \rangle) > N_M(\langle r \rangle)$.

  The majority of the proof will be divided into two cases, depending on 
 $q$. However, we first observe computationally that for the element $s$ specified in each case below, $\bigcap_{M \in \mathcal{M}_S(s)} M$ contains an element of $N_S(\langle s \rangle) \setminus C_S(s)$, and hence by Lemma~\ref{lem:conjmaxint}, so does $\bigcap_{M \in \mathcal{M}_G(s)} M$. Computations also show that for each $r \in S$, there exist two $S$-conjugate subgroups in $\mathcal{M}_S(r)$. Therefore it remains to prove Part (ii), and the rest of the final claim of the theorem.

\smallskip

\noindent \textbf{Case (a)}: $q = 2$. Let $s$ be any element of $S$ of order $15$. If $|G:S| \ge 3$, then we observe from the ATLAS  that, up to $G$-conjugacy, $G$ has a unique maximal subgroup of order divisible by $|s|$ that does not contain $S$. In fact, we see using Magma that this maximal subgroup has no element of order $|s|$, and so $\mathcal{M}'_G(s) = \varnothing$.

If instead $|G:S| = 2$, so that $G \cong \mathrm{SO}^+_8(2)$, then no novelty maximal of $G$ has order divisible by $|s|$ . Furthermore, computations show that there exist $\alpha,\beta \in \mathrm{Aut}(S)$ such that $\mathcal{M}'_G(s^\alpha) = \varnothing$, while two subgroups in $\mathcal{M}'_G(s^\beta)$ are conjugate in $G$. We also observe via Magma that for each $h \in S$ that is not $G$-conjugate to $s^\alpha$, there exist two $G$-conjugate subgroups in $\mathcal{M}'_G(h)$. Hence for each $r \in S$, there exists an $\mathrm{Aut}(S)$-conjugate $R$ of $G$ such that $\mathcal{M}'_R(r)$ contains two $R$-conjugate subgroups.

\smallskip

\noindent \textbf{Case (b)}: $q \in \{3,5\}$. Magma computations show that $S$ has precisely three conjugacy classes of cyclic subgroups of order $(q^3+1)/2$, and for any element $s$ of this order, the subgroups in $\mathcal{M}_S(s)$ isomorphic to $\Omega_7(q)$ are members of two $S$-conjugacy classes (with these classes depending on the class of $\langle s \rangle$). Moreover, the maximal subgroups of $S$ isomorphic
to $\Omega_7(q)$ fall into six $S$-conjugacy classes, and
only one $\mathrm{Aut}(S)$-conjugacy class \cite[Table
I]{kleidmano8}. We therefore deduce that there exist $\theta_1,\theta_2 \in \mathrm{Aut}(S)$ such that $\langle s \rangle$, $\langle s^{\theta_1} \rangle$ and $\langle s^{\theta_2} \rangle$ lie in three distinct $S$-classes.
Additional computations show that for each $t \in \{s,s^{\theta_1},s^{\theta_2}\}$, the set $\mathcal{M}_S(t)$ contains precisely three subgroups that are not isomorphic to $\Omega_7(q)$: the stabiliser $K_{t,1}$ in $S$ of a 
$6$-dimensional subspace of $\mathbb{F}_q^8$ of minus type, and two of its images $K_{t,2}$ and $K_{t,3}$ under triality automorphisms of $S$ (see \cite[Table I]{kleidmano8}). In particular, these three subgroups are pairwise non-conjugate in $S$.


Next, we observe using \cite[Table 8.50]{bhrd} that if $M$ is a
novelty maximal subgroup of $G$ whose order is divisible by $|s|$,
then either $M \cap S \cong \mathrm{G}_2(q)$; $M \cap S$ is an extension of a
$2$-group by $\alt_8$ or by $\psl_3(2)$; or $M \cap S$ is
$S$-conjugate to $U_t:=K_{t,1} \cap K_{t,2} \cap K_{t,3}$. However, we
see in the ATLAS that $\mathrm{G}_2(q)$ contains no element of order $|s|$, and no maximal subgroup in $\mathcal{M}_S(t)$ has order divisible by the order of any of these 
extensions of $2$-groups. It therefore follows from Lemma \ref{lem:conjmaxint} that any novelty maximal subgroup in $\mathcal{M}'_G(t)$ is equal to $N_G(U_t)$.

We now conclude from \cite[Table I]{kleidmano8} that, for each $G > S$, there exists $\alpha_G \in \{1,\theta_1,\theta_2\}$ such that,  when $t =s^{\alpha_G}$, any subgroup in $\mathcal{M}'_G(t)$ is equal to either $N_G(U_t)$ or $N_G(K_{t,i})$ for some $i \in \{1,2,3\}$. Moreover, since $K_{t,i}$ and $K_{t,j}$ are not $S$-conjugate when $i \ne j$, if $N_G(K_{t,i})$ and $N_G(K_{t,j})$ are maximal in $G$ then they are not $G$-conjugate. Therefore, no two subgroups in $\mathcal{M}'_G(t)$ are $G$-conjugate.

Finally, let $r \in S$, and suppose that $G \cong \mathrm{PSO}^+_8(q)$ and that no two subgroups in $\mathcal{M}'_G(r)$ are $G$-conjugate. Using Magma and \cite[Table I]{kleidmano8}, we deduce that either there exists $\rho \in \mathrm{Aut}(S)$ such that $\mathcal{M}'_{G^\rho}(r)$ contains two $G^\rho$-conjugate subgroups; or $q = 5$, and there exists a graph automorphism $\pi$ of $S$ (of order $2$ or $3$) such that $\mathcal{M}'_{\langle S, \pi \rangle}(r)$ contains two $\langle S,\pi \rangle$-conjugate subgroups. Therefore, for each $r \in S$, there exists a group $R$ with $S < R \le \mathrm{Aut}(S)$ such that $\mathcal{M}'_R(r)$ contains two $R$-conjugate subgroups.
\end{proof}

Now, to prove Theorem \ref{thm:maxsubnoconj} 
for $m$ even, we would like to use Proposition~5.14 and Lemma~5.15 of \cite{bgk}. However, we require a slightly stronger statement than Proposition~5.14. Additionally, the statement of Proposition~5.14 is not quite correct when $4 \mid m$, nor is the proof of Lemma~5.15(b)--(c).
We therefore prove the following result, much of whose proof is similar to 
\cite[\S5]{bgk}, and which implies that \cite[Lemma 5.15(b)]{bgk} is in fact correct when $q \ge 7$.  We exclude the groups $\oo^+_8(q)$ for $q \le 5$, which we dealt with in Theorem~\ref{thm:o8plusexceps} and Proposition \ref{prop:smallclas}.

\begin{thm}
\label{thm:orthogplusmaxes}
Suppose that $S = \oo^+_{2m}(q)$, with $m \ge 4$ even, and $q \ge 7$ if $m = 4$. Then $\tilde S$ contains an element $\tilde s$ of order $k = k(m, q)$, where
\[k := \begin{cases}
(q^2+1)/(2,q-1), & \text{ if } m = 4, \\ (q^{(m-2)/2}+1)(q^{(m+2)/2}+1)/(4,q-1), & \text{ if } 4 \nmid m,\\
(q^{(m-2)/2}+1)(q^{(m+2)/2}+1)/(q+1), & \text{ if } m = 8 \text{ and } q = 2,\\
(q^{(m-2)/2}+1)(q^{(m+2)/2}+1)/(q+1)^2, & \text{ otherwise},
\end{cases}\]
such that the following statements hold.
\begin{enumerate}[label={(\roman*)},font=\upshape]
\item There are precisely three subgroups in $\mathcal{M}_{\tilde S}(\tilde s)$, which we will denote by $\tilde K_1$, $\tilde K_2$ and $\tilde L$.
\item The groups $\tilde K_1$ and $\tilde K_2$ are extension field subgroups that are
  not conjugate in $\tilde S$. If $4 < m \equiv 0 \pmod 4$, then $\tilde K_1$ and $\tilde K_2$ are of (Aschbacher) type $\mathrm{GU}_m(q)$. Otherwise, they are of type $\mathrm{GO}^+_m(q^2)$.
\item Let $V:=\mathbb{F}_q^{2m}$. If $m = 4$, then $\tilde L$ is imprimitive, stabilising the decomposition $V = U \perp U^\perp$, for some 
four-dimensional subspace $U$ of $V$ of minus type. Otherwise, $\tilde L$ is the stabiliser of
an  $(m-2)$-dimensional subspace of $V$ of minus type. 
\end{enumerate}
\end{thm}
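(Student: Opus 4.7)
The plan is to adapt the strategy of \cite[Proposition~5.14]{bgk}, with modifications to accommodate the case $4\mid m$ and a more careful counting argument. I would begin by constructing $\tilde s$ as a product $\tilde s_1 \tilde s_2$ of commuting semisimple elements acting on an orthogonal decomposition $V=V_1\perp V_2$ of the natural module. When $m>4$, take $V_1$ of minus type with $\dim V_1=m-2$ and $V_2$ of minus type with $\dim V_2=m+2$, so that $V$ itself is of plus type; let $\tilde s_i$ be a Singer cycle of $\Omega^-(V_i)$, of order $(q^{\dim V_i/2}+1)/(2,q-1)$ by Lemma~\ref{lem:singerprops}. When $4\nmid m$, the element $\tilde s_1\tilde s_2$ has the stated order directly. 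When $4\mid m$ and $m>4$, the cyclic group generated by $\tilde s_1\tilde s_2$ sits inside a unitary extension field subgroup $\tilde S\cap\mathrm{GU}_m(q)$, and a suitable power of $\tilde s_1\tilde s_2$ has the stated order (with $(q+1)^2$, or $(q+1)$ in the case $m=8$, $q=2$, cancelling after identification of paired eigenvalues). For $m=4$, take instead $V_1,V_2$ both of dimension $4$ and minus type with $\tilde s_i$ a Singer cycle of $\Omega^-_4(q)$.

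Next, I would verify that $\tilde s$ lies in the three claimed maximal subgroups. By construction $\tilde s$ stabilises $V_1$, hence lies in the $\mathcal{C}_1$-subgroup $\tilde L$ (or in the $\mathcal{C}_2$-stabiliser of the decomposition when $m=4$). The two extension field subgroups $\tilde K_1,\tilde K_2$ arise from the observation that $\tilde s$ acts semisimply on $V$, and its eigenvalues over $\overline{\mathbb{F}_q}$ admit two essentially different pairings endowing $V$ with an $\mathbb{F}_{q^2}$-structure — either compatible with a Hermitian form (when $4\mid m$, $m>4$) or with a quadratic form of plus type over $\mathbb{F}_{q^2}$ (otherwise). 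Non-conjugacy of $\tilde K_1$ and $\tilde K_2$ in $\tilde S$ I would establish via an invariant: the discriminant of the associated form over $\mathbb{F}_{q^2}$ in the orthogonal case, and in the unitary case the fact that the two $\tilde S$-classes of $\mathcal{C}_3$-subgroups of type $\mathrm{GU}_m(q)$ are interchanged by a diagonal outer automorphism.

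To show that there are no other maximal subgroups in $\mathcal{M}_{\tilde S}(\tilde s)$, I would apply Aschbacher's theorem to an arbitrary maximal overgroup $\tilde M$ of $\langle\tilde s\rangle$. Zsigmondy's theorem, together with the hypothesis $q\ge 7$ when $m=4$ and the exclusion of the groups handled in Theorem~\ref{thm:o8plusexceps} and Proposition~\ref{prop:smallclas}, ensures that $|\tilde s|$ is divisible by primitive prime divisors $r_1$ of $q^{m-2}-1$ and $r_2$ of $q^{m+2}-1$; Lemma~\ref{lem:ppdfieldaut} then gives $r_1,r_2\nmid 2m$. This, combined with the ppd-subgroup classification in \cite[Theorem~1.1, p.~93]{mallesaxlweigel}, rules out the Aschbacher classes $\mathcal{C}_4$--$\mathcal{C}_8$ and $\mathcal{S}$; within $\mathcal{C}_1,\mathcal{C}_2,\mathcal{C}_3$ the eigenvalue multiset of $\tilde s$ forces precisely the shapes described in (ii), (iii). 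The cardinalities in (i) then follow from Lemma~\ref{lem:conjclassmax}\ref{conjclassmax1} together with explicit computations of $N_{\tilde S}(\langle\tilde s\rangle)$ and $N_{\tilde M}(\langle\tilde s\rangle)$ for each candidate $\tilde M$.

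The main obstacle is the case $4\mid m$ with $m>4$, where the $\mathcal{C}_3$-type switches from $\mathrm{GO}^+_m(q^2)$ to $\mathrm{GU}_m(q)$ and the order $k(m,q)$ is reduced relative to the naive product of Singer cycles. One must identify the precise power of $\tilde s_1\tilde s_2$ that simultaneously lies in $\tilde L$ and in exactly two non-conjugate $\mathcal{C}_3$-subgroups of unitary type, and verify that no $\mathcal{C}_3$-subgroup of orthogonal type over $\mathbb{F}_{q^2}$ contains this element as well. This is the step at which the statement of \cite[Proposition~5.14]{bgk} needs correction, and it is handled by an explicit analysis of the embedding $\mathrm{GU}_m(q)\hookrightarrow\Omega^+_{2m}(q)$ and of how the eigenvalue pairs of $\tilde s$ are permuted by the field automorphism $\alpha\mapsto\alpha^q$.
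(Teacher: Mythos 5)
Your overall shape — define $\tilde s$ via Singer cycles on a minus-plus decomposition $V=V_1\perp V_2$, then run an Aschbacher-plus-ppd analysis — is the right starting point and is indeed what the paper does for the generic cases. But several of the steps you wave through are precisely where the real difficulties lie, and as written the argument has genuine gaps.

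First, you claim that Zsigmondy supplies ppds $r_1 \mid q^{m-2}-1$ and $r_2 \mid q^{m+2}-1$ for all relevant $(m,q)$, and you rely on both. When $m=8$ and $q=2$ there is \emph{no} ppd of $q^{m-2}-1=2^6-1=63$. The paper flags this explicitly as an exceptional case (hence the separate formula for $k(8,2)$), and its Lemma~\ref{lem:orthplussubspaces} handles it by a different argument: $\tilde r_1$ is chosen to be a Singer cycle of $\Omega^-_6(2)$ of order $9$, and irreducibility on $U$ is deduced directly from the fact that an element of order $9$ does not lie in any proper subfield of $\mathbb{F}_{2^6}$, rather than from a ppd. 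As you have it, the proof simply fails at $m=8$, $q=2$.

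Second, and more seriously, the case $m=4$ is not reducible to ``Aschbacher classes $\mathcal{C}_4$--$\mathcal{C}_8$, $\mathcal{S}$ are killed by ppds, and eigenvalues sort out $\mathcal{C}_1,\mathcal{C}_2,\mathcal{C}_3$''. With $m=4$ the element $\tilde s$ has order $k=(q^2+1)/(2,q-1)$, which is divisible by ppds of $q^4-1$, and when $q$ is a square the subfield subgroups isomorphic to $\Omega^-_8(\sqrt q)$ also have order divisible by those ppds — they are \emph{not} excluded by any ppd or eigenvalue-multiset argument. The paper's Proposition~\ref{prop:omega8subgps} spends the bulk of its proof on exactly this: it bounds the number of ``bad'' eigenvalue sets $\mathcal{B}$ (by showing only two $N_{\gl_8(q)}(\tilde S)$-classes of such subgroups can contribute, giving $|\mathcal{E}|\leq 4$), and then uses an Euler-totient estimate $\varphi(k)/4\ge 6$ to show a good choice of $\tilde r_2$ exists. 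It also has to exclude $\Omega_7(q)/\mathrm{Sp}_6(q)$ and $(\psl_2(q)\times\psp_4(q)).2$ overgroups via explicit centraliser/torus comparisons (Lemmas~\ref{lem:sotorus}--\ref{lem:omegapluscentraliser}), and the $\mathrm{GU}_4(q)$ extension field subgroup via a cyclic-vs-non-cyclic torus argument. None of these appear in your plan; they are not minor bookkeeping but the essential content for $m=4$. Relatedly, even for $m\ge 8$ and $4\mid m$, ruling out $\mathcal{S}$-type overgroups is not automatic from \cite[Thm.~1.1]{mallesaxlweigel} (the paper uses \cite{gpps} here, not \cite{mallesaxlweigel}) — when $m+3$ is the unique ppd of $q^{m+2}-1$, alternating/symmetric overgroups are candidates and the paper needs the dedicated Lemma~\ref{lem:symelt} to eliminate $\sym_{2m+2}$.

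Finally, your route to non-conjugacy of $\tilde K_1$ and $\tilde K_2$ via a discriminant invariant is not what the paper does and would need to be worked out in detail; the paper instead counts $\mathrm{GO}^+_{2m}(q)$-conjugates of $\hat K$ containing $\tilde s$ via Lemma~\ref{lem:conjclassmax}\ref{conjclassmax2} and matches this against the two $\tilde S$-classes of extension field subgroups. If you can make the discriminant/diagonal-automorphism argument precise it could be a workable alternative, but as written it is a placeholder rather than a proof.
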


Note that the case $m = 8$ and $q = 2$ is exceptional as here $q^{m-2}-1 = 2^6-1$ has no ppd. We begin by defining the element $\tilde s$ that we will use in each case.

\begin{assump}
\label{assump:selt}
Suppose that $S = \oo^+_{2m}(q)$, with $m \ge 4$ even, and $q \ge 7$ if $m = 4$. Additionally, let $k = k(m,q)$ be as in Theorem~\ref{thm:orthogplusmaxes}. If $4 \nmid m$, then let $\tilde s \in \tilde S$ be as described in \cite[Prop.~5.14]{bgk}, so that $\langle \tilde s\rangle$ has order $k$
and 
stabilises exactly two proper non-zero subspaces of  $V:= \mathbb{F}_q^{2m}$, namely, 
an $(m-2)$-dimensional subspace of $V$ of minus type and its orthogonal complement.

From now on, suppose that $4 \mid m$. Let $\tilde X$ be the stabiliser in $\tilde S$ of a 
subspace $U$ of $V$ of minus type, of dimension four if $m = 4$ or dimension $m-2$ if $m > 4$. Then $U^\perp$ is also 
of minus type. Additionally, $\tilde X$ contains the subgroup $R_1 \times R_2$, where $R_1 \cong \Omega(U)$
and $R_2 \cong \Omega(U^\perp)$. 

We define $\tilde r_1$ and $\tilde r_2$ to be elements of Singer subgroups of $R_1$ and $R_2$, respectively (corresponding to their actions on $U$ and $U^\perp$), as follows; in each case, an element of the specified order exists by Lemma \ref{lem:singerprops}. If $m = 4$, then let $\tilde r_1$ and $\tilde r_2$ both have order $k$. Then $\tilde r_1$ has eigenvalues $\alpha$, $\alpha^q$, $\alpha^{q^2}$ and $\alpha^{q^3}$ on $U$, while $\tilde r_2$ has eigenvalues $\beta$, $\beta^q$, $\beta^{q^2}$ and $\beta^{q^3}$ on $U^\perp$, where $\alpha$ and $\beta$ are elements of $\mathbb{F}_{q^4}$ of order $k$. 
Similarly to the proof of \cite[Lemma 5.15]{bgk}, we choose $\tilde r_2$ in this case so that these eight eigenvalues are all distinct (we will place further restrictions on $\tilde r_2$ in the case $m = 4$ in the proof of Proposition \ref{prop:omega8subgps}). 
If instead $m > 4$, then let $\tilde r_1$ and $\tilde r_2$ have order $(q^{(m-2)/2}+1)/t$ and $(q^{(m+2)/2}+1)/(q+1)$, respectively, where $t := 1$ if $m = 8$ and $q = 2$, and $t := q+1$ in all remaining cases. For all $m$ (with $4 \mid m$), let $\tilde s:=(\tilde r_1,\tilde r_2) \in \tilde X$. We deduce 
  that $|\tilde s| = k$.
\end{assump}

Next, we show that when $4 \mid m$, the group $\tilde L$ from Theorem \ref{thm:orthogplusmaxes} is the unique subgroup in $\mathcal{M}_{\tilde S}(\tilde s)$ that is not isomorphic to the specified extension field group $\tilde K_1$. We first state two preliminary results.

\begin{lemma}
\label{lem:orthplussubspaces}
Let $\tilde S$, $U$ and $\tilde s$ be as in Assumption \ref{assump:selt}, with $4 \mid m$. Then $U$ and $U^\perp$ are the only proper non-zero subspaces of $\mathbb{F}_q^n$ stabilised by $\langle \tilde s \rangle$.
\end{lemma}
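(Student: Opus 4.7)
The plan is to apply the primary decomposition theorem to the action of $\tilde s$ on $V$. Since $V = U \oplus U^\perp$ is a direct sum of $\tilde s$-invariant subspaces on which $\tilde s$ acts as $\tilde r_1$ and $\tilde r_2$ respectively, any $\langle \tilde s \rangle$-invariant subspace $W$ of $V$ will decompose as $W = (W \cap U) \oplus (W \cap U^\perp)$ provided that the minimum polynomials $p_1, p_2 \in \mathbb{F}_q[x]$ of $\tilde r_1$ on $U$ and $\tilde r_2$ on $U^\perp$ are coprime. Combined with irreducibility of each action, this will force $W \cap U \in \{0, U\}$ and $W \cap U^\perp \in \{0, U^\perp\}$, so $W \in \{0, U, U^\perp, V\}$.

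I would first establish the irreducibility of the actions. Since $\tilde r_i$ lies in a Singer subgroup of $\Omega(U)$ or $\Omega(U^\perp)$, acting as multiplication by a cyclic subgroup of the associated field extension, $\tilde r_i$ acts irreducibly on its summand if and only if the multiplicative order of $q$ modulo $|\tilde r_i|$ equals the dimension of that summand. For $m = 4$, an easy check using the factorisation $q^4-1 = (q^2-1)(q^2+1)$ shows that $k = (q^2+1)/(2,q-1)$ does not divide $q^2-1$, so the multiplicative order of $q$ modulo $k$ is exactly $4$. For $m > 4$, I would apply Zsigmondy's Theorem: a primitive prime divisor $r$ of $q^{m-2}-1$ divides $q^{(m-2)/2}+1$ but does not divide $q+1$ (since $m-2 \geq 6$), so $r$ divides $|\tilde r_1|$, forcing the multiplicative order of $q$ modulo $|\tilde r_1|$ to be $m-2$; the analogous argument for $\tilde r_2$ uses a primitive prime divisor of $q^{m+2}-1$. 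The single Zsigmondy exceptional case $(q, m-2) = (2, 6)$ is precisely the case $m = 8$, $q = 2$ singled out in Assumption~\ref{assump:selt} by $t = 1$, and here a direct computation shows that $2$ has order $6$ modulo $|\tilde r_1| = 9$.

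Coprimality of $p_1$ and $p_2$ is then immediate. For $m > 4$, $p_1$ and $p_2$ are irreducible of distinct degrees $m-2$ and $m+2$, hence coprime. For $m = 4$, both are irreducible of degree $4$, and Assumption~\ref{assump:selt} specifies that the eight eigenvalues of $\tilde s$ on $V$ are pairwise distinct, so $p_1$ and $p_2$ share no root.

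I expect the main technical obstacle to be the irreducibility step when $m > 4$: one must rule out every proper divisor of $m-2$ (and of $m+2$) as the multiplicative order of $q$ modulo $|\tilde r_i|$. Using that $4 \mid m$ implies that $(m\pm 2)/2$ is odd, and then a brief case analysis starting from $q^{(m\pm 2)/2} \equiv -1 \pmod{|\tilde r_i|}$ and the size estimate $|\tilde r_i| > q+1$ eliminates all proper divisors, with the delicate case $m = 8$ handled by the direct arithmetic above.
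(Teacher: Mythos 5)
Your proof is correct and follows essentially the same strategy as the paper: establish that $\langle \tilde s \rangle$ acts irreducibly on each of $U$ and $U^\perp$, then observe that these two irreducible modules cannot share a composition factor (you phrase this as coprimality of the minimal polynomials of $\tilde r_1$ and $\tilde r_2$; the paper phrases it as non-isomorphism of the $\mathbb{F}_q[\langle \tilde s \rangle]$-modules $U$ and $U^\perp$, using the distinct dimensions when $m > 4$ and the disjoint eigenvalue sets when $m = 4$), and conclude that the only invariant subspaces are $0$, $U$, $U^\perp$ and $V$. The one genuine difference is in the irreducibility step: the paper deduces irreducibility from a citation to Hering's Theorem~3.5 (together with the ppd divisibility of $|\tilde r_1|$ and $|\tilde r_2|$, and a separate Singer-cycle observation for $(m,q)=(8,2)$), whereas you give a short self-contained argument via the multiplicative order of $q$ modulo $|\tilde r_i|$ — this is a legitimate and arguably cleaner replacement, since it avoids an external reference at the cost of a small arithmetic check, and your handling of the Zsigmondy exception $(m,q)=(8,2)$ matches the paper's separate treatment of that case.
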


\begin{proof}
If $m = 8$ and $q = 2$, then the element $\tilde r_1$ from Assumption \ref{assump:selt} is a Singer cycle of $\Omega^-_6(2)$ by Lemma \ref{lem:singerprops}, and hence $\langle \tilde r_1 \rangle$ acts irreducibly on $U$. In the remaining cases, $|\tilde r_1|$ is divisible by each ppd of $q^{\dim(U)}-1$, and in all cases, $|\tilde r_2|$ is divisible by each ppd of $q^{\dim(U^\perp)}-1$. We therefore deduce from \cite[Thm.~3.5]{hering} that $\tilde s = (\tilde r_1,\tilde r_2)$ acts irreducibly on each of $U$ and $U^\perp$.

Now, if $m = 4$, then the eigenvalues of $\tilde s$ on $U$ are distinct from its eigenvalues on $U^\perp$, and otherwise, $\dim(U) \ne \dim(U^\perp)$. Thus $U$ and $U^\perp$ are non-isomorphic irreducible $\mathbb{F}_q[\langle \tilde s \rangle]$-modules, and the result follows.
\end{proof}

\begin{lemma}
\label{lem:symelt}
Let $m \ge 8$ be a multiple of $4$, and suppose that $m+3$ is a ppd of $q^{m+2}-1$. Then the symmetric group $\sym_{2m+2}$ contains no element of order  $k=k(m,q)$, as defined in Theorem~\ref{thm:orthogplusmaxes}.
\end{lemma}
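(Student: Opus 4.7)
\end{lemma}

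\begin{proof}[Proof plan]
The plan is to use that $r := m+3$ is a prime divisor of $k$ exceeding $(2m+2)/2$, forcing any hypothetical element of order $k$ to have a unique long cycle, and then to derive a contradiction from the cycle structure that remains. First I would verify that $r \mid k$: since $r$ is a ppd of $q^{m+2}-1$ and $r$ is odd, $r$ divides $q^{(m+2)/2}+1$; and since $m+2 > 2$, $r$ does not divide $q^2-1$, hence not $(q+1)^2$. So $r$ divides the numerator but not the denominator in each of the formulae defining $k$.

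Suppose for contradiction that $\sigma \in \sym_{2m+2}$ has order $k$. Since $r$ is a prime dividing $|\sigma|$, some cycle of $\sigma$ must have length a multiple of $r$; but $2r = 2m+6 > 2m+2$, so this cycle has length exactly $r$ and is unique. The remaining cycles involve at most $2m+2-r = m-1$ points, each of length less than $r$, so $|\sigma| = r \cdot L$ where $L$ is the lcm of these remaining cycle lengths. Therefore $L = k/r$ must arise as an element order in $\sym_n$ for some $n \le m-1$.

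The key step will be to exhibit a prime power $p^a \mid L$ with $p^a \ge m$, since any element of $\sym_n$ with $n \le m-1$ whose order is divisible by such a $p^a$ would require a cycle of length $\ge p^a > m-1$. I would treat the Zsigmondy-exceptional case $(q,m) = (2,8)$ directly: under the exceptional formula $k = 99 = 9 \cdot 11$, so $L = 9 = 3^2 \ge 8 = m$. Otherwise, I would apply Zsigmondy's theorem to $q^{m-2}-1$ to obtain a ppd $r''$. Standard arguments show $r'' \mid q^{(m-2)/2}+1$, that $r''$ is coprime to $(q+1)^2$ (as $m-2 > 2$ forces $r'' \nmid q^2-1$), and that $r'' \ne m+3$ (using $\gcd(q^{m-2}-1, q^{m+2}-1) = q^{\gcd(m-2,m+2)}-1 = q^2-1$, since $m \equiv 0 \pmod 4$). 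Hence $r'' \mid L$, and the ppd property gives $r'' \ge m-1$.

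The easy subcase is $r'' \ge m$; the main obstacle is $r'' = m-1$, which forces $m-1$ to be prime and $q$ to be a primitive root modulo $m-1$. In that event I would study $\Phi_{m-2}(q)$, which divides $L$ and whose prime factors are either ppds of $q^{m-2}-1$ or equal to the largest prime factor of $m-2$; under the subcase hypothesis every ppd equals $m-1$, so $\Phi_{m-2}(q) = (m-1)^a c$ with $c$ dividing the largest prime factor of $m-2$. If $a \ge 2$, then $(m-1)^a \ge m$ suffices. The residual case $a = 1$ forces $\Phi_{m-2}(q) \le (m-1)(m-2) < m^2$, which combined with the lower bound $\Phi_{m-2}(q) \ge (q-1)^{\phi(m-2)}$ severely constrains $q$ and $m$. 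I would finish by either extracting a second ppd of $q^{m+2}-1$ from $\Phi_{m+2}(q)/(m+3)$ to supply the required prime power, or finitely enumerating the remaining pairs $(q,m)$ and verifying that none satisfies the hypothesis that $m+3$ is a ppd of $q^{m+2}-1$.
\end{proof}
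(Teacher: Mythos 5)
Your opening moves are sound and genuinely match the paper's strategy: dispose of $(q,m)=(2,8)$ by hand, use $r=m+3$ to force a unique $r$-cycle (since $2r>2m+2$), reduce to showing $L:=k/r$ is not an element order of $\sym_{m-1}$, and invoke Zsigmondy on $q^{m-2}-1$ to get a ppd $r''\mid L$ with $r''\geq m-1$. The side checks (that $m+3$ and $r''$ survive division by $(q+1)^2$, that $r''\neq m+3$ via $\gcd(m-2,m+2)=2$) are all correct.

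The gap is in your choice of target: you set out to find a prime power $p^a\mid L$ with $p^a\geq m$, and this is simply the wrong criterion for the hard subcase $r''=m-1$, so you end up chasing $\Phi_{m-2}(q)$ and ultimately concede the finish to ``either extract a second ppd or finitely enumerate.'' You do not need $p^a\geq m$. If $r''=m-1$ is prime and divides $L$, then any permutation in $\sym_{m-1}$ of order $L$ must contain an $(m-1)$-cycle, which already exhausts all $m-1$ points, so that permutation \emph{is} the $(m-1)$-cycle and $L=m-1$ exactly. Combined with the first half of your argument this pins down $k=(m-1)(m+3)$. The paper then finishes in two lines: writing $k=A\cdot B$ with $A=(q^{(m-2)/2}+1)/(q+1)$ and $B=(q^{(m+2)/2}+1)/(q+1)$, one has $\gcd(A,B)=1$ (because $\gcd(q^{(m-2)/2}+1,q^{(m+2)/2}+1)=q+1$, using that both exponents are odd and coprime when $4\mid m$), while $m-1\mid A$ and $m+3\mid B$; hence $A=m-1$ and $B=m+3$, so $B-A=q^{(m-2)/2}(q-1)=4$, impossible for every admissible $q$ and $m$. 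Your detour through cyclotomic values and primitive roots is both unnecessary and left unresolved; replacing the ``prime power $\geq m$'' criterion with the observation that $L=m-1$ in the residual case, followed by the numerical computation above, closes the argument.
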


\begin{proof}
  If $q = 2$ and $m = 8$, then $\sym_{2m+2} = \sym_{18}$ contains no element of order $k = 99$. Assume therefore that $q > 2$ or $m > 8$, and suppose for a contradiction that $y$ is an element of $\sym_{2m+2}$ of order $k$. Observe that $k$ is divisible by each ppd of $q^{m+2}-1$, and hence by $m+3$. Similarly, $k$ is divisible by each ppd of $q^{m-2}-1$, and such a prime is no smaller than $m-1$ (see \cite[Remark 1.1]{gpps}). As $(m-1)+(m+3)$ is the degree of $\sym_{2m+2}$, we deduce that $k$ is the product $(m-1)(m+3)$ of two primes. 
  Now, $(q^{(m-2)/2}+1,q^{(m+2)/2}+1) = q+1$, and it follows that $m-1 = (q^{(m-2)/2}+1)/(q+1)$ and $m+3 = (q^{(m+2)/2}+1)/(q+1)$. Substituting these equalities into $(m-1) + 4 = m+3$, we deduce that  $4 = q^{(m-2)/2}(q-1)$, a contradiction for all $q$ and $m$.
\end{proof}

\begin{prop}
\label{prop:omegamultsubgps}
Let $\tilde S$, $U$ and $\tilde s$ be as in Assumption \ref{assump:selt}, with $m \ge 8$ and $4 \mid m$.
Then the stabiliser of $U$ in $\tilde S$ is the unique subgroup in $\mathcal{M}_{\tilde S}(\tilde s)$ that is not an extension field subgroup of type $\mathrm{GU}_m(q)$.
\end{prop}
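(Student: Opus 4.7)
The plan is to run through Aschbacher's classification of the maximal subgroups of $\tilde S = \Omega^+_{2m}(q)$ and show that any maximal subgroup $M$ containing $\tilde s$ is either the stabiliser of $U$ or an extension field subgroup of type $\mathrm{GU}_m(q)$. The reducible ($\mathcal{C}_1$) case is immediate from Lemma~\ref{lem:orthplussubspaces}: the only proper non-zero $\langle \tilde s \rangle$-invariant subspaces of $V$ are $U$ and $U^\perp$, so the stabiliser of $U$ (which coincides with that of $U^\perp$) is the unique $\mathcal{C}_1$-member of $\mathcal{M}_{\tilde S}(\tilde s)$.

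For the remaining Aschbacher classes I will exploit large prime divisors of $k := |\tilde s|$. By Zsigmondy, except when $(m,q) = (8,2)$, there exist primitive prime divisors $r_1$ of $q^{m-2}-1$ and $r_2$ of $q^{m+2}-1$, each of which divides $k$; in the exceptional case $k = 99$ and I will instead work with the ppd $r_2 = 11$ of $2^{10}-1$ together with the prime-power divisor $9$. By Lemma~\ref{lem:ppdfieldaut} neither $r_i$ divides $2m$, and standard bounds give $r_1 \ge m-1$ and $r_2 \ge m+3$. I will then invoke the Guralnick--Penttila--Praeger--Saxl classification of subgroups of classical groups containing ppd-elements (following the strategy of \cite[\S5]{bgk} for the case $4 \nmid m$) to restrict the Aschbacher classes that can contain $\tilde s$. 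A $\mathcal{C}_2$-subgroup preserving a decomposition $V = V_1 \oplus \cdots \oplus V_t$ would force $\tilde s$ to permute the $V_i$, so the orbit sums would be $\langle\tilde s\rangle$-invariant and hence lie in $\{0, U, U^\perp, V\}$ by Lemma~\ref{lem:orthplussubspaces}; the irreducibility of $\tilde s$ on each of $U$ and $U^\perp$ established in the proof of that lemma then forces $t = 1$, a contradiction. Tensor-product, tensor-induced, subfield and extraspecial-normaliser subgroups have orders or factor dimensions that cannot accommodate simultaneous divisibility by $r_1$ and $r_2$. For $\mathcal{S}$-type subgroups, Lemma~\ref{lem:symelt} disposes of alternating and symmetric groups, and the remaining cross-characteristic candidates are eliminated by matching the divisibility requirements of $r_1 r_2$ against the standard tables cited in \cite{gpps}.

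For the surviving $\mathcal{C}_3$ case, the task is to permit the unitary embedding while ruling out the other extension-field types. Any extension-field subgroup is defined over $\mathbb{F}_{q^p}$ for some prime $p \mid 2m$; the eigenvalues of $\tilde s$ on $V$ lie in the compositum $\mathbb{F}_{q^{m-2}} \cdot \mathbb{F}_{q^{m+2}}$, and compatibility with an $\mathbb{F}_{q^p}$-linear action forces $p = 2$. The two $\mathbb{F}_{q^2}$-types available are $\mathrm{GO}^\pm_m(q^2)$ and $\mathrm{GU}_m(q)$, which are distinguished by how the Frobenius $\alpha \mapsto \alpha^q$ permutes the eigenvalue set of $\tilde s$. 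The cyclotomic factorisation of $k$ produced by the precise orders of $\tilde r_1$ and $\tilde r_2$ fixed in Assumption~\ref{assump:selt} (in particular the divisor $(q+1)$ or $(q+1)^2$) is compatible only with the unitary embedding.

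The main obstacle will be this final $\mathcal{C}_3$ analysis: both the $\mathrm{GO}^\pm_m(q^2)$ and $\mathrm{GU}_m(q)$ types have orders divisible by $r_1 r_2$, so ppd considerations alone cannot separate them, and one must instead carry out a careful eigenvalue-field/Frobenius-action argument tailored to the exact cyclotomic structure of $|\tilde s|$. A secondary obstacle is the boundary case $(m,q) = (8,2)$, where a ppd of $q^{m-2}-1$ is absent and the arguments must be rerun using the concrete factorisation $k = 9 \cdot 11$, verifying class-by-class by hand (or via Magma) that no additional maximal subgroup slips through.
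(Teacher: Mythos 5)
Your overall plan (GPPS for ppd-elements, Lemma~\ref{lem:orthplussubspaces} for the reducible case, Lemma~\ref{lem:symelt} for the alternating/symmetric overflow) matches the paper's strategy, but your $\mathcal{C}_3$ analysis contains a genuine misconception that derails the final step. You assert that \emph{both} $\mathrm{GO}^+_m(q^2)$ and $\mathrm{GU}_m(q)$ have orders divisible by $r_1 r_2$, so that ppd considerations alone cannot separate them, and you therefore propose a heavier eigenvalue-field/Frobenius argument. This is not correct: the ppd $r_2$ of $q^{m+2}-1$ alone already does the job. The $p'$-part of $|\Omega^+_m(q^2)|$ divides $(q^m-1)\prod_{i=1}^{m/2-1}(q^{4i}-1)$, and since $4 \mid m$ forces $m+2 \equiv 2 \pmod 4$, the value $m+2$ lies in none of the exponent sets $\{4, 8, \ldots, 2m-4\}$ or $\{m\}$, while $2(m+2) > 2m-4$; hence no ppd of $q^{m+2}-1$ divides $|\Omega^+_m(q^2)|$. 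The unitary subgroup survives because $(m+2)/2$ is odd, so $q^{(m+2)/2}+1$ is a factor of $|\mathrm{GU}_m(q)|$. Your perception of a tie is an artefact of focusing on $r_1$ (a ppd of $q^{m-2}-1$), which genuinely does divide $|\Omega^+_m(q^2)|$ via the factor $q^{2m-4}-1$; but the paper never uses $r_1$ at all.

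This in turn means the secondary obstacle you flag -- the $(m,q)=(8,2)$ case, where $q^{m-2}-1=63$ has no ppd -- evaporates once you drop $r_1$. The ppd $r_2$ always exists for $m\ge 8$ (as $m+2\ge 10$ is never $2$ or a case where $q=2$ and $m+2=6$), and the only residue of the $(8,2)$ degeneracy is absorbed into Lemma~\ref{lem:symelt}, where the paper handles $k=99$ directly. Your separate $\mathcal{C}_2$ argument and your informal dismissals of the tensor/subfield classes are subsumed by the GPPS classification applied to $r_2$ alone, which already restricts any non-reducible, non-extension-field $\tilde M$ to a $2$-extension of an alternating or symmetric group of degree at most $2m+2$; Lemma~\ref{lem:symelt} (together with $|\tilde s|$ odd) finishes that case. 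So the correct fix is not to build the Frobenius-action machinery you outline, but to discard $r_1$ and rerun your divisibility check against $r_2$, which cleanly eliminates the $\mathrm{GO}^+_m(q^2)$ type.
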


\begin{proof}
Notice that $|\tilde s|$ is divisible by each ppd of $q^{m+2}-1$. The main theorem of \cite{gpps} yields all possibilities for maximal subgroups of $\tilde S$ whose orders are divisible by such a prime. We deduce, with the aid of \cite[Table 3.5.E]{kleidmanliebeck} and \cite[Table 2.5]{bhrd}, that if $\mathcal{M}_{\tilde S}(\tilde s)$ contains a subgroup $\tilde M$ that is not reducible or an extension field subgroup, then $m+3$ is the unique ppd of $q^{m+2}-1$, and $\tilde M$ is an extension of a (possibly trivial) $2$-group by an alternating or symmetric group of degree at most $2m+2$. As $|\tilde s|$ is odd, $\sym_{2m+2}$ must then contain an element of order $|\tilde s|$. However, Lemma \ref{lem:symelt} shows that this is not the case. Therefore, each subgroup in $\mathcal{M}_{\tilde S}(\tilde s)$ is reducible or an extension field subgroup.

It is clear from Lemma \ref{lem:orthplussubspaces} that the stabiliser
of $U$ in $\tilde S$ is the unique reducible subgroup in
$\mathcal{M}_{\tilde S}(\tilde s)$. Thus it remains to show that
$\mathcal{M}_{\tilde S}(\tilde s)$ contains no extension field
subgroup $\tilde Y$ that is not of type $\mathrm{GU}_m(q)$. By
\cite{gpps}, any such $\tilde Y$ whose order is divisible by a
ppd of $q^{m+2}-1$ is defined over $q^2$, and so
$\tilde Y$ is an extension of $\Omega^+_m(q^2)$ by a group of order
$4$ \cite[Table 2.6]{bhrd}. The $p'$-part of the order of $\Omega^+_m(q^2)$ divides $(q^m-1) \prod_{i = 1}^{m/2-1}(q^{4i} - 1)$, and for each positive integer $f \in \{m+3, \ldots, 2m-4\}$,
no ppd of $q^{m+2} - 1$ divides $q^{f} - 1$.
We conclude that $|\tilde Y|$ is not divisible by any ppd of $q^{m+2}-1$, and therefore $\tilde Y \notin \mathcal{M}_{\tilde S}(\tilde s)$.
\end{proof}

In order to prove a similar proposition when $m = 4$, we consider maximal tori of $\tilde S$ (or $\mathrm{SO}^+_{2m}(q)$ if $q$ is odd) that contain $\tilde s$, via a brief discussion of related algebraic groups. We in fact allow $m$ to be any even integer at least $4$, as the results here will also be useful when considering extension field subgroups in $\mathcal{M}_{\tilde S}(\tilde s)$ in each case.

Let $\underline S$ be the algebraic group $\mathrm{SO}_{2m}(\overline{\mathbb{F}_q})$ for some $m \ge 4$. There exists a Frobenius endomorphism $\sigma$ of $\underline S$ such that the subgroup $\hat S:=\underline S_\sigma$ of fixed points of $\underline S$ under $\sigma$ is equal to $\mathrm{SO}^+_{2m}(q)$ if $q$ is odd, or $\Omega^+_{2m}(q)$ if $q$ is even (see \cite[pp.~193--194]{malletesterman}, where $\Omega^+_{2m}(q)$ is written as $\mathrm{SO}^+_{2m}(q)$ when $q$ is even). By definition, 
each maximal torus $\hat T$ of $\hat S$ is equal to $
  \underline T \cap \hat S$ 
for a corresponding  maximal torus $\underline T$ of $\underline S$. An element $\tilde r$ of $\hat T$ is called \emph{regular} if the dimension of $C_{ \underline S}(\tilde r)$ is no larger than the dimension of $C_{ \underline S}(\tilde x)$ for any $\tilde x \in \underline S$, or equivalently, if $\underline T$ is equal to $C_{\underline S}(\tilde r)^\circ$, the connected component of $C_{ \underline S}(\tilde r)$ containing the identity \cite[Defn.~14.8, Cor.~14.10]{malletesterman}. We will continue to use this notation in the following two results and their proofs.

\begin{lemma}
\label{lem:sotorus}
Let $\tilde r$ be a regular element of a maximal torus $\hat T$ of $\hat S$. If $|\tilde r|$ is odd, then $C_{\hat S}(\tilde r) = \hat T$.
\end{lemma}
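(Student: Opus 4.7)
The plan is to first show that $C_{\underline S}(\tilde r)=\underline T$; the result will then follow because $\hat T$ is abelian and contains $\tilde r$ (giving $\hat T\le C_{\hat S}(\tilde r)$), while
\[
C_{\hat S}(\tilde r)\;=\;C_{\underline S}(\tilde r)\cap\hat S\;=\;\underline T\cap\hat S\;=\;\hat T.
\]

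Regularity gives $C_{\underline S}(\tilde r)^\circ=\underline T$ by the equivalent form of the definition recalled in the paper. Any $g\in C_{\underline S}(\tilde r)$ normalises $C_{\underline S}(\tilde r)$, and hence normalises its identity component $\underline T$; so $C_{\underline S}(\tilde r)\le N_{\underline S}(\underline T)$. Reducing modulo $\underline T$ identifies $C_{\underline S}(\tilde r)/\underline T$ with the stabiliser $W_{\tilde r}$ of $\tilde r$ in the Weyl group $W=N_{\underline S}(\underline T)/\underline T$ acting on $\underline T$, so it suffices to prove $W_{\tilde r}=1$.

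For this, I would realise $\underline T$ in the usual way as the diagonal matrices $\mathrm{diag}(\lambda_1,\dots,\lambda_m,\lambda_1^{-1},\dots,\lambda_m^{-1})$, with $W\cong W(D_m)$ acting by signed permutations of the indices subject to an even total number of sign changes. Let $\lambda_1,\dots,\lambda_m$ be the eigenvalues of $\tilde r$. Since the roots of $D_m$ are $\pm e_i\pm e_j$ with $i<j$, regularity translates to $\lambda_i\ne\lambda_j^{\pm 1}$ for all distinct $i,j$. Consequently the pairs $\{\lambda_i,\lambda_i^{-1}\}$ are pairwise disjoint, so any $w\in W_{\tilde r}$ must fix each such pair setwise; hence the underlying permutation is trivial and $w$ is a pure sign change. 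Fixing $\tilde r$ then forces $\lambda_i=\pm 1$ at every coordinate where $w$ flips the sign.

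The odd-order hypothesis now closes the argument: $|\tilde r|$ odd rules out $\lambda_i=-1$, while regularity forces $|\{i:\lambda_i=1\}|\le 1$ (two such indices would give equal eigenvalues). An even-weight sign change supported on a set of size at most one is trivial, so $w=1$ and $W_{\tilde r}=1$. The main obstacle I anticipate is the eigenvalue bookkeeping, and in particular noticing that the parity condition distinguishing $W(D_m)$ from $W(B_m)$ is essential: without it, a single index $i$ with $\lambda_i=1$ would give rise to a nontrivial sign change in $W_{\tilde r}$, and the result would fail for $\mathrm{O}_{2m}$ in place of $\mathrm{SO}_{2m}$.
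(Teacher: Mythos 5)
Your proof is correct, and it takes a genuinely different route from the paper's. The paper, after noting $C_{\underline S}(\tilde r)^\circ = \underline T$, cites Malle--Testerman, Prop.~14.20, for the fact that the component group $C_{\underline S}(\tilde r)/C_{\underline S}(\tilde r)^\circ$ has order dividing $(2,q-1)$ and exponent dividing $|\tilde r|$; the odd-order hypothesis then kills this $2$-group immediately, giving $C_{\underline S}(\tilde r) = \underline T$. You instead identify $C_{\underline S}(\tilde r)/\underline T$ with the stabiliser $W_{\tilde r}$ inside the Weyl group $W(D_m)$ of signed permutations of even weight, and compute it directly: regularity makes the multisets $\{\lambda_i,\lambda_i^{-1}\}$ pairwise disjoint (so no nontrivial permutation part survives), odd order forbids $\lambda_i = -1$, and the $\lambda_i \ne \lambda_j$ constraint leaves at most one coordinate equal to $1$, which the even-weight condition then rules out. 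Your version is self-contained and makes the role of the parity constraint in $W(D_m)$ (as opposed to $W(B_m)$) transparent, which is a nice sanity check; the paper's version is shorter and immediately generalisable, since the cited bound on the component group is uniform over types. The only thing you take for granted, which is worth spelling out, is the identification $C_{\underline S}(\tilde r)/\underline T \cong W_{\tilde r}$; the containment $\le$, which is all you actually use, follows from your observation that $C_{\underline S}(\tilde r)$ normalises its identity component, and the reverse containment holds because any lift of a Weyl element fixing $\tilde r$ centralises it.
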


\begin{proof}
As $\tilde r$ is regular, $\underline T = C_{ \underline S}(\tilde r)^\circ$. In addition, it follows from \cite[pp.~70--72, Prop.~14.20]{malletesterman} that the order of $C_{ \underline S}(\tilde r)/C_{ \underline S}(\tilde r)^\circ$ divides $(2,q-1)$, and its exponent divides $|\tilde r|$. Hence if $|\tilde r|$ is odd, then $C_{ \underline S}(\tilde r) = \underline T$, and so $C_{\hat S}(\tilde r) =  \underline T \cap \hat S = \hat T$.
\end{proof}

\begin{lemma}
\label{lem:omegapluscentraliser}
Let $\tilde S$ and $\tilde s$ be as in Assumption \ref{assump:selt}. Then the centraliser $C_{\hat S}(\tilde s) = C_{\mathrm{GO}^+_{2m}(q)}(\tilde s)$ is a maximal torus of $\hat S$ of shape $(q^a+1) \times (q^b+1)$, where $a + b = m$, with $a:= 2$ if $m = 4$ and $a:= (m-2)/2$ otherwise.
\end{lemma}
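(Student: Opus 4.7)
The plan is to exhibit a maximal torus $\hat T$ of $\hat S$ of the claimed shape that contains $\tilde s$, show that $\tilde s$ has $2m$ pairwise distinct eigenvalues on $V\otimes\overline{\mathbb{F}_q}$, and then conclude directly from the eigenvalue structure that $C_{\hat S}(\tilde s)=C_{\mathrm{GO}^+_{2m}(q)}(\tilde s)=\hat T$.

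First I would construct $\hat T$. The orthogonal decomposition $V=U\perp U^\perp$ with both summands of minus type (of dimensions $2a$ and $2b$) yields a natural embedding of $\mathrm{SO}^-_{2a}(q)\times\mathrm{SO}^-_{2b}(q)$ into $\hat S$; for $q$ even one checks this still lands inside $\Omega^+_{2m}(q)$ using additivity of the Dickson invariant across orthogonal summands. Inside this product I take $\hat T:=T_1\times T_2$, where $T_i$ is the Singer torus of the appropriate $\mathrm{SO}^-$, cyclic of order $q^a+1$ or $q^b+1$. Since the Singer subgroup of each $\Omega^-$ has index $(2,q-1)$ in the corresponding $T_i$, and $\tilde r_i$ lies in that Singer subgroup by Assumption~\ref{assump:selt} (for $4\mid m$) or \cite[Prop.~5.14]{bgk} (for $4\nmid m$), we have $\tilde s\in\hat T$. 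As $\hat T$ has rank $a+b=m$ equal to the rank of $\underline S$, it is a maximal torus of $\hat S$ of the claimed shape, equal to $\underline T\cap\hat S$ for some maximal algebraic torus $\underline T$ of $\underline S$.

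Second, I would verify the eigenvalue claim. Each $\tilde r_i$ lies in a Singer torus acting irreducibly on the relevant summand, with eigenvalues forming a full Frobenius orbit of a primitive $|\tilde r_i|$-th root of unity. The orbit has size $2a_i$ because $q^{a_i}\equiv -1\pmod{|\tilde r_i|}$ (immediate from the definition of $|\tilde r_i|$) and $|\tilde r_i|>2$, so no smaller divisor of $2a_i$ can annihilate $q$. In particular every eigenvalue of $\tilde r_1$ has order exactly $|\tilde r_1|$ and every eigenvalue of $\tilde r_2$ has order exactly $|\tilde r_2|$. For $m>4$ with $4\mid m$ these orders differ (one computes $|\tilde r_1|<|\tilde r_2|$), so the two eigenvalue sets are automatically disjoint. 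For $m=4$ they coincide, and here Assumption~\ref{assump:selt} explicitly chooses $\tilde r_2$ so that the eight eigenvalues of $\tilde s$ are distinct; for $4\nmid m$ the analogous disjointness is built into the element of \cite[Prop.~5.14]{bgk}.

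Finally, distinctness of eigenvalues makes the centraliser computation mechanical. Any element of $\mathrm{GL}(V\otimes\overline{\mathbb{F}_q})$ commuting with $\tilde s$ is diagonal in the eigenbasis, and the orthogonal-form constraint (which pairs eigenspaces of eigenvalues $\lambda$ and $\lambda^{-1}$) forces such an element into $\underline T\subseteq\underline S$. Therefore $C_{\underline S}(\tilde s)=\underline T$, and since $\underline T$ consists of matrices of determinant one, the same holds with $\underline S$ replaced by the full algebraic orthogonal group. Intersecting with $\hat S$ and with $\mathrm{GO}^+_{2m}(q)$ then both yield $\hat T$. The main obstacle is the eigenvalue-disjointness in the $m=4$ case, where $|\tilde r_1|=|\tilde r_2|$ and we genuinely need the careful choice of $\tilde r_2$ recorded in Assumption~\ref{assump:selt}; the existence of a suitable $\tilde r_2$ relies on there being enough Frobenius orbits of primitive elements of the required order in $\mathbb{F}_{q^4}$, for which the hypothesis $q\ge 7$ in the $m=4$ case is essential.
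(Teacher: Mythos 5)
Your approach is genuinely different from the paper's. The paper starts from the abstract fact that the semisimple element $\tilde s$ lies in \emph{some} maximal torus $\hat T$ of $\hat S$, uses the classification of maximal tori of $\hat S$ (together with Lemma~\ref{lem:orthplussubspaces}, to rule out the cyclic $q^4-1$ torus when $m=4$) to pin down the shape of $\hat T$, then shows $\tilde s^2$ is a regular element via Veldkamp's criterion, and finally invokes Lemma~\ref{lem:sotorus} on the component group of $C_{\underline S}(\tilde s^2)$ to conclude $C_{\hat S}(\tilde s)=\hat T$, with a separate elementary-divisor argument from \cite{defranceschi} giving $C_{\hat S}(\tilde s)=C_{\mathrm{GO}^+_{2m}(q)}(\tilde s)$. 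You instead build $\hat T$ concretely as a product of two Singer tori (so its shape is manifest), prove that $\tilde s$ has $2m$ pairwise distinct eigenvalues, and compute the $\mathrm{GL}$-centraliser directly; this bypasses Lemma~\ref{lem:sotorus} and the regular-element machinery, and delivers the identity $C_{\hat S}(\tilde s)=C_{\mathrm{GO}^+_{2m}(q)}(\tilde s)$ at no extra cost. In outline both approaches work, and yours is arguably the more elementary.

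However, your justification of eigenvalue distinctness has a genuine gap. You assert that the Frobenius orbit of an eigenvalue of $\tilde r_i$ has size $2a_i$ ``because $q^{a_i}\equiv -1\pmod{|\tilde r_i|}$ and $|\tilde r_i|>2$, so no smaller divisor of $2a_i$ can annihilate $q$.'' This does not follow: those hypotheses exclude orbit sizes dividing $a_i$, but not a proper divisor $d$ of $2a_i$ with $d\nmid a_i$. For instance when $a_i=3$ (the case $m=8$) the possibility $d=2$ is not excluded by $q^3\equiv-1$ alone; nor is $d=4$ excluded when $a_i=6$. The conclusion is nevertheless true, and is recovered either by noting that $|\tilde r_i|$ is divisible by a ppd of $q^{2a_i}-1$ (with the exceptional case $m=8$, $q=2$ handled by observing directly that no element of order $9$ lies in a proper subfield of $\FF_{2^6}$ --- this is precisely the route the paper takes in citing \cite[\S 9]{veldkamp}), or, more simply, by citing the irreducibility of the $\langle\tilde r_i\rangle$-action on $U$ resp.\ $U^\perp$ established in the proof of Lemma~\ref{lem:orthplussubspaces}, which forces the minimal polynomial of $\tilde r_i$ over $\mathbb{F}_q$ to have degree exactly $2a_i$. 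A secondary point: your observation that ``$\underline T$ consists of matrices of determinant one'' is the right argument only for $q$ odd; for $q$ even the two components of $\mathrm{O}_{2m}(\overline{\FF_q})$ are distinguished by the Dickson invariant rather than the determinant, so there one should instead appeal directly to the connectedness of $\underline T$ to place it inside the identity component $\underline S$.
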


\begin{proof}
  As $\tilde s$ is semisimple, it lies in a maximal torus $\hat T$ of $\hat S$. By considering the 
    maximal tori of $\hat S$ (see \cite[\S4--5]{buturlakin}, \cite[\S9]{veldkamp} and \cite[Lemma 2]{seitzstructure}), we deduce that if $\hat T$ does not have shape $(q^a+1) \times (q^b+1)$, then $m = 4$ and $\hat T$ is a cyclic group of order $q^4-1$ that stabilises a totally singular four-dimensional subspace of $\mathbb{F}_q^8$. Lemma~\ref{lem:orthplussubspaces} shows that this second case cannot occur.

Recall that when $m = 4$, the elements $\tilde r_1$ and $\tilde r_2$ from Assumption \ref{assump:selt} have distinct eigenvalues on $\mathbb{F}_q^8$. Additionally, even though $q^{2a}-1$ has no ppd when $m = 8$ and $q = 2$, the element $\tilde r_1$ in this case has order $9$, and no element of this order lies in a proper subfield of $\mathbb{F}_{q^{2a}} = \FF_{2^6}$. We therefore deduce from \cite[\S9]{veldkamp} in each case that $\tilde s^2$ is a regular element of $\hat T$ (as is $\tilde s$). Since $C_{\hat S}(\tilde s) \le C_{\hat S}(\tilde s^2)$ and $|\tilde s^2|$ is odd, we obtain $C_{\hat S}(\tilde s) = \hat T$ from Lemma~\ref{lem:sotorus}.

Finally, it follows from Lemma \ref{lem:orthplussubspaces} if $4 \mid m$, or from the definition of $\tilde s$ in Assumption \ref{assump:selt} if $4 \nmid m$, that $\langle \tilde s \rangle$ stabilises no one-dimensional subspace of $\mathbb{F}_q^n$. Hence $\tilde s$ has no linear elementary divisor, and so \cite[p.~85--86]{defranceschi} yields $C_{\hat S}(\tilde s) = C_{\mathrm{GO}^+_{2m}(q)}(\tilde s)$.
\end{proof}

We can now prove a version of Proposition~\ref{prop:omegamultsubgps} with $m = 4$.

\begin{prop}
\label{prop:omega8subgps}
Let $\tilde S$ and $U$ be as in Assumption \ref{assump:selt}, with $m = 4$ and $q \ge 7$. Then the element $\tilde s$ from Assumption \ref{assump:selt} can be chosen so that the stabiliser of the decomposition $\mathbb{F}_q^{2m} = U \perp U^\perp$ in $\tilde S$ is the unique subgroup in $\mathcal{M}_{\tilde S}(\tilde s)$ that is not an extension field subgroup of type $\mathrm{GO}^+_m(q^2)$.
\end{prop}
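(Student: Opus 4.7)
The plan is to eliminate, class by class using \cite[Table 8.50]{bhrd}, all maximal subgroups of $\tilde S = \Omega^+_8(q)$ that could contain $\tilde s$, except for the stabiliser of $V = U \perp U^\perp$ and extension field subgroups of type $\mathrm{GO}^+_4(q^2)$. Any further restrictions on $\tilde r_2$ (equivalently, on the eigenvalue $\beta$), together with analogous restrictions on $\alpha$, will be introduced within the proof as needed for the subfield and extension-field cases.

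For reducible and imprimitive ($\mathcal{C}_1$, $\mathcal{C}_2$) subgroups, Lemma \ref{lem:orthplussubspaces} shows that $\langle \tilde s \rangle$ stabilises only $U$ and $U^\perp$ among proper non-zero subspaces of $V$; since any decomposition stabilised by $\langle \tilde s \rangle$ as an unordered set has its summands stabilised individually by a suitable power of $\tilde s$, the unique maximal $\mathcal{C}_1$/$\mathcal{C}_2$ subgroup in $\mathcal{M}_{\tilde S}(\tilde s)$ must be the stabiliser of $V = U \perp U^\perp$. For the remaining classes, $|\tilde s| = (q^2+1)/(2,q-1)$ is divisible by a ppd of $q^4 - 1$ (Zsigmondy applies since $q \geq 7$), so the main theorem of \cite{gpps} restricts the candidates to extension field ($\mathcal{C}_3$) subgroups, subfield ($\mathcal{C}_5$) subgroups, $\mathcal{C}_6$ subgroups, and a finite list of almost-simple cases. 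The last two are excluded by an order comparison, since their element orders are bounded while $|\tilde s|$ grows with $q$; and the subfield case is excluded by requiring that $\alpha + \alpha^{-1}$ and $\beta + \beta^{-1}$ (both lying in $\mathbb{F}_{q^2}$) generate $\mathbb{F}_{q^2}$ over its prime subfield, which is a mild additional condition on $\alpha$ and $\beta$ compatible with the distinctness hypothesis.

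The crux is the treatment of $\mathcal{C}_3$, where the surviving types in dimension $8$ are $\mathrm{GU}_4(q)$, $\mathrm{GO}^+_4(q^2)$ and $\mathrm{GO}^-_4(q^2)$. The $\mathrm{GU}_4(q)$ type is ruled out by an eigenvalue multiplicity argument: its elements have $\mathbb{F}_{q^2}$-eigenvalues in Hermitian pairs $\{\mu, \mu^{-q}\}$, which on $V \otimes \overline{\mathbb{F}_q}$ double via $\mathbb{F}_{q^2}/\mathbb{F}_q$-Galois to the quadruple $\{\mu, \mu^q, \mu^{-q}, \mu^{-q^2}\}$ per pair; since $\alpha$ and $\beta$ have order $q^2+1$, we have $\mu^{-q^2} = \mu$ whenever $\mu$ lies in the $\mathbb{F}_q$-Galois orbit of $\alpha$ or $\beta$, so this quadruple degenerates, contradicting the distinctness of $\tilde s$'s eight eigenvalues. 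The $\mathrm{GO}^-_4(q^2)$ type is ruled out by a Witt-index computation: under the $\mathbb{F}_{q^2}$-linearisation determined by $\tilde s$, the $\langle \tilde s \rangle$-module decomposes into two $2$-dimensional pieces with eigenvalues $\{\alpha, \alpha^{-1}\}$ and $\{\beta, \beta^{-1}\}$, both anisotropic since $\alpha, \beta \notin \mathbb{F}_{q^2}$; the orthogonal direct sum of two anisotropic $2$-dimensional quadratic forms over a finite field is hyperbolic (plus type), so the resulting $\mathbb{F}_{q^2}$-form is of plus type, forcing $\mathrm{GO}^+_4(q^2)$.

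The main obstacle lies in the last step: one must identify the $\mathbb{F}_{q^2}$-quadratic form obtained from the $\mathbb{F}_q$-form on $V$ via the natural $\mathbb{F}_{q^2}$-linearisation determined by $\tilde s$'s characteristic polynomial (or equivalently by the centralising torus computed in Lemma \ref{lem:omegapluscentraliser}), and then verify that its restriction to each $\langle \tilde s \rangle$-isotypic component is anisotropic, so that the classification of $\mathbb{F}_{q^2}$-quadratic forms in dimension $4$ delivers the type. Once this is done, the existence of at least one extension field subgroup of type $\mathrm{GO}^+_4(q^2)$ containing $\tilde s$ follows from the same $\mathbb{F}_{q^2}$-linearisation, completing the proof.
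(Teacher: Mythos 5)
The most serious gap is your treatment of the remaining (non-$\mathcal{C}_1$, non-$\mathcal{C}_2$, non-$\mathcal{C}_3$) maximal subgroups. You assert that, after the ppd restriction from \cite{gpps}, the only candidates besides $\mathcal{C}_3$ are $\mathcal{C}_5$, $\mathcal{C}_6$ and a finite list of almost-simple subgroups whose element orders are bounded. This is not correct for $\Omega^+_8(q)$ with a ppd of $q^4-1$: here $e = 4 = n/2$, so the conclusions of \cite{gpps} are weak, and there are in fact maximal subgroups of defining-characteristic type whose orders grow with $q$ and which \emph{do} contain elements of order $k = (q^2+1)/(2,q-1)$. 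Concretely, $\Omega_7(q)$ (or $\mathrm{Sp}_6(q)$ for $q$ even) has maximal tori of order divisible by $q^2+1$, so it contains elements of order $k$; likewise the $\mathcal{C}_4$-type subgroup $(\psl_2(q)\times\psp_4(q)).2$ does. Your ``order comparison'' does not exclude these. The paper excludes $\Omega_7(q)$/$\mathrm{Sp}_6(q)$ by comparing centraliser structure --- using \cite{buturlakin} to show that an element of order $k$ in such a subgroup lies in a torus of shape $k \times (q\pm 1)$, whereas $C_{\hat S}(\tilde s)$ is abelian of order $k^2$ and coprime to $q\pm 1$ --- and excludes $(\psl_2(q)\times\psp_4(q)).2$ by observing that any element of order $k$ there has a non-abelian centraliser. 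This gap cannot be patched by naive order arguments.

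A second substantive gap is the subfield case $\Omega^-_8(\sqrt{q})$. Your proposed condition --- that $\alpha+\alpha^{-1}$ and $\beta+\beta^{-1}$ generate $\mathbb{F}_{q^2}$ over its prime subfield --- is not clearly relevant (the subfield in question is $\mathbb{F}_{\sqrt{q}}$, not the prime field) and is not shown to be sufficient: membership in a subfield subgroup is a condition on the entire rational canonical form, not merely on traces, and moreover there are several $\tilde S$-classes of such subgroups which must be accounted for separately. The paper's actual argument is considerably more delicate: it bounds the set $\mathcal{E}$ of ``bad'' eigenvalue patterns by exploiting the fact that the subfield subgroups fall into exactly two $N_{\gl_8(q)}(\tilde S)$-classes and each contributes at most two sets, giving $|\mathcal{E}| \le 4$, and then uses a lower bound on Euler's totient function to guarantee a choice of $\beta$ avoiding all of them. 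Your sketch offers no comparable quantitative control.

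Two smaller issues: (a) You list $\mathrm{GO}^-_4(q^2)$ as a candidate $\mathcal{C}_3$ type, but over $\Omega^+_8(q)$ only $\mathrm{GU}_4(q)$ and $\mathrm{GO}^+_4(q^2)$ arise in $\mathcal{C}_3$ (see \cite[Table 8.50]{bhrd}), so that case needn't be considered. (b) Your final remark about existence of an extension field subgroup of type $\mathrm{GO}^+_4(q^2)$ in $\mathcal{M}_{\tilde S}(\tilde s)$ is not part of the present proposition --- that is the content of Proposition~\ref{prop:orthplusextfield}. Your $\mathcal{C}_1$/$\mathcal{C}_2$ argument and your general strategy of using Lemma~\ref{lem:orthplussubspaces} are sound, and your eigenvalue-based exclusion of $\mathrm{GU}_4(q)$ is a plausible alternative to the paper's torus argument via Lemma~\ref{lem:omegapluscentraliser}, but the proof as written does not close the gaps identified above.
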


\begin{proof}
We shall initially consider an arbitrary choice for $\tilde s$, and work through the families of maximal subgroups of $\tilde S$, given in \cite[Table 8.50]{bhrd} (see also \cite[Table I]{kleidmano8}).
Let $\tilde L$ be the stabiliser in $\tilde S$ of the decomposition of $V:= \mathbb{F}_q^{8}$ as $U \perp U^\perp$. Then $\tilde L$ contains the stabiliser in $\tilde S$ of $U$. Lemma \ref{lem:orthplussubspaces} shows that $U$ and $U^\perp$ are the only proper non-zero subspaces of $V$ stabilised by $\langle \tilde s \rangle$. Therefore, $\mathcal{M}_{\tilde S}(\tilde s)$ contains no reducible subgroups. Moreover, if $\langle \tilde s \rangle$ stabilises a decomposition $V = W_1 \oplus W_2$, where $\dim(W_1) = \dim(W_2) = 4$, then $\langle \tilde s^2 \rangle = \langle \tilde s \rangle$ stabilises each of $W_1$ and $W_2$, and hence $\{W_1,W_2\} = \{U,U^\perp\}$. Thus $\tilde L$ is the unique stabiliser of such an imprimitive decomposition that contains $\tilde s$.

Next, let $\tilde E$ be an extension field subgroup of $\tilde S$ of type $\mathrm{GU}_4(q)$, and recall that $\hat S =\mathrm{SO}^+_{2m}(q)$ if $q$ is odd, and $\hat S =\tilde S$ if $q$ is even. We see from \cite[Construction 2.5.14, Lemma 5.3.6]{burnessgiudici} that $\hat E:=N_{\hat S}(\tilde E)$ has shape $\mathrm{GU}_4(q).2$. Thus by \cite[Cor.~2]{buturlakin} (see also \cite[Lemma 2]{seitzstructure}), each (semisimple) element of $\hat E$ of order $k:=|\tilde s|$ lies in a cyclic maximal torus of $\mathrm{GU}_4(q)$ of order $q^4-1$. However, Lemma \ref{lem:omegapluscentraliser} shows that $C_{\hat S}(\tilde s)$ is not cyclic. We therefore deduce that $\tilde s \notin \tilde E$.

For a similar argument, let $X$ be a maximal subgroup of $S$ that is isomorphic to $\Omega_7(q)$ if $q$ is odd, or to $\mathrm{Sp}_6(q)$ if $q$ is even. Then each element of $X$ of order 
$|s| = k$
lies in a maximal torus of shape $k \times (q+1)$ or $k \times (q-1)$ \cite[Thms.~3--4]{buturlakin}. Additionally, $C_S(s)$ is abelian of order $k^2$ \cite[Thms.~5 \& 7]{buturlakin}. As $k^2$ is divisible by neither $q+1$ nor $q-1$ (since $q \ne 2$), we see that $s \notin X$ and $\tilde s \notin \tilde X$.

Suppose now that $q$ is odd, and let $H$ be a maximal subgroup of $S$ 
of shape $(\psl_2(q) \times \psp_4(q)).2$. Since $|\psl_2(q)|$ and $|s|$ are coprime, any element of $H$ of order $k$ has a centraliser that contains the non-abelian group $\psl_2(q)$. Thus $\tilde s \notin \tilde H$.

It now follows from \cite[Table I]{kleidmano8} and \cite[Table 8.50]{bhrd} that if $\tilde Y$ is any other maximal subgroup of $\tilde S$ whose order is divisible by 
$k$, such that $\tilde Y$ is not an extension field subgroup of type $\mathrm{GO}^+_m(q^2)$, then $q$ is a square and $Y \cong \Omega^-_8(\sqrt{q})$. 

 For a subspace $W$ of $V$ and an element $g \in \tilde S$, let $g_W$ denote the set of $\mathbb{F}_{q^8}$-eigenvalues of $g$ on $W$. Recall from Assumption~\ref{assump:selt} that $\tilde s_U = \mathcal{A}:= \{\alpha, \alpha^q, \alpha^{q^2},\alpha^{q^3}\}$ and $\tilde s_{U^\perp} = \mathcal{B}:=\{\beta, \beta^q, \beta^{q^2}, \beta^{q^3}\}$, where $\alpha$ and $\beta$ are elements of $\mathbb{F}_{q^4}$ of order $k$ and $\mathcal{A} \cap \mathcal{B} = \varnothing$. Let $\mathcal{Y}$ be the set of maximal subgroups $\tilde Y$ of $\tilde S$ such that $Y \cong \Omega^-_8(\sqrt{q})$, and for $\tilde Y \in \mathcal{Y}$, let $\mathcal{J}({\tilde Y})$ be the set of elements $y \in \tilde Y$ of order $k$, such that the proper non-zero subspaces of $V$ stabilised by $y$ are precisely $U$ and $U^\perp$; the set $y_V$ contains $\mathcal{A}$; and $y_V \setminus \mathcal{A} = \{\gamma, \gamma^q, \gamma^{q^2}, \gamma^{q^3}\}$ for some $\gamma \in \mathbb{F}_{q^4}$ of order $k$. Additionally, let \[\mathcal{E}:= \left \{ y_V \setminus \mathcal{A} \mid y \in \mathcal{J}({\tilde Y}), \tilde Y \in \mathcal{Y} \right \}.\]
We will determine an upper bound for $|\mathcal{E}|$. Using this bound, we will show that the element $\tilde r_2$ in Assumption \ref{assump:selt} (equivalently, the set $\mathcal{B}$) can be chosen so that, for all $\tilde Y \in \mathcal{Y}$, the set $\mathcal{J}({\tilde Y})$ does not contains $\tilde s$, and so $\tilde s \notin \tilde Y$.

 Fix $\tilde Y \in \mathcal{Y}$ and $y \in \mathcal{J}({\tilde Y})$. Notice that $\{\mathcal{A}, y_V \setminus \mathcal{A}\} = \{y_U, y_{U^\perp}\}$, and that a given $c \in \langle y \rangle \cap \mathcal{J}({\tilde Y})$ satisfies $c_U = y_U$ if and only if $c_{U^\perp} = y_{U^\perp}$ (additionally, at least one of $c_U$ and $c_{U^\perp}$ is equal to $\mathcal{A}$).
 We also deduce from \cite[\S4--5]{buturlakin} (and the fact that $Y \cong \mathrm{PSO}^-_8(\sqrt{q})$ if $q$ is odd) that any two cyclic subgroups of $\tilde Y$ of order $k$ are conjugate in $\tilde S$. Thus the fixed $\tilde Y \in \mathcal{Y}$ contributes two (not necessarily distinct) sets to $\mathcal{E}$: the set $y_{U^\perp}$ for elements $y \in \mathcal{J}({\tilde Y})$ such that $y_U = \mathcal{A}$, and the set $y_{U}$ for elements $y \in \mathcal{J}({\tilde Y})$ such that $y_{U^\perp} = \mathcal{A}$.

We observe from Rows 64--69 and Columns V and XII of \cite[Table I]{kleidmano8} that the subgroups in $\mathcal{Y}$ form exactly two $N_{\gl_8(q)}(\tilde S)$-conjugacy classes.
Thus there exist $\tilde Y_1, \tilde Y_2 \in \mathcal{Y}$ such that \[\mathcal{E}= \left \{ y_V \setminus \mathcal{A} \mid y \in \mathcal{J}({\tilde Y_1}) \cup \mathcal{J}({\tilde Y_2}) \right \}.\]
By the previous paragraph, each of $\tilde Y_1$ and $\tilde Y_2$ contributes at most two sets to $\mathcal{E}$, and thus $|\mathcal{E}| \le 4$.

%

To ensure that $\tilde s$ does not lie in $\mathcal{J}({\tilde Y})$ for any $\tilde Y \in \mathcal{Y}$ (and hence that $\tilde s \notin \tilde Y$), it suffices to choose $\tilde r_2$ in Assumption \ref{assump:selt} so that the corresponding set $\mathcal{B}$ intersects trivially with each of the sets in $\mathcal{E}$, and with $\mathcal{A}$ (as required to satisfy Assumption \ref{assump:selt}). Since each of these sets of eigenvalues has size four, this is possible as long as $\varphi(k)/4 \ge 6$, where $\varphi$ is Euler's totient function.  It is well known (see, e.g., \cite[Prop.~2]{nicolas}) that $\varphi(i) \ge \sqrt{i/2}$ for each $i$. Hence an appropriate $\tilde r_2$ exists if $k \ge 1152$. If instead $k < 1152$, then the square $q \ge 9$ is at most $25$, and again $\varphi(k)/4 \ge 6$.
\end{proof}

We observe from \cite[Prop.~5.14]{bgk} that, when $4 \nmid m$, the subgroup $\tilde L$ described in Theorem \ref{thm:orthogplusmaxes} is the unique subgroup in $\mathcal{M}_{\tilde S}(\tilde s)$ that is not an extension field subgroup. Thus to prove Theorem \ref{thm:orthogplusmaxes}, it remains to show, for all even $m \ge 4$, that $\mathcal{M}_{\tilde S}(\tilde s)$ contains exactly two extension field subgroups of the specified type, and that they are not conjugate in $\tilde S$.

\begin{prop}
\label{prop:orthplusextfield}
Let $\tilde S$ and $\tilde s$ be as in Assumption \ref{assump:selt}. Then $\mathcal{M}_{\tilde S}(\tilde s)$ contains exactly two extension field subgroups of type $J$, where $J \cong \mathrm{GU}_m(q)$ if $4 < m \equiv 0 \pmod 4$, and $J \cong \mathrm{GO}^+_m(q^2)$ otherwise. Additionally, the two subgroups are not conjugate in $\tilde S$.
\end{prop}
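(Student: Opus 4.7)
The plan is to combine the classification of $\tilde S$-conjugacy classes of extension field subgroups with an application of Lemma~\ref{lem:conjclassmax}\ref{conjclassmax2}. First, I would consult \cite[Table~I]{kleidmano8} (when $m=4$) and \cite[Tables 3.5.E \& 8.50]{bhrd} (in general) to record that the extension field subgroups of $\tilde S$ of the required type $J$ form exactly two $\tilde S$-conjugacy classes in each case: for $m=4$ these two classes are swapped by a triality automorphism, while for $m>4$ they are swapped by the graph automorphism of $D_m$. Consequently, any two representatives chosen from different classes are automatically non-conjugate in $\tilde S$, which takes care of the final assertion of the proposition.

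Next, I would exhibit one subgroup from each class containing $\tilde s$. For the $\mathrm{GO}^+_m(q^2)$ case, the idea is to view $V=\mathbb{F}_q^{2m}$ as an $m$-dimensional $\mathbb{F}_{q^2}$-space with a plus-type $\mathbb{F}_{q^2}$-quadratic form whose trace form coincides with the ambient $\mathbb{F}_q$-form, chosen so that the subspaces $U$ and $U^\perp$ of Assumption~\ref{assump:selt} become $\mathbb{F}_{q^2}$-subspaces of minus type of the appropriate dimensions. Under this reinterpretation, $\tilde r_1$ and $\tilde r_2$ are Singer cycles of the corresponding $\mathbb{F}_{q^2}$-orthogonal subgroups of minus type, so $\tilde s$ lies in a copy $\tilde K$ of $\mathrm{GO}^+_m(q^2)\cap\tilde S$. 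Applying a triality (if $m=4$) or graph automorphism (if $m>4$) that swaps the two $\tilde S$-classes yields an element $\tilde s'$ in the other class whose torus data matches that of $\tilde s$, and Lemma~\ref{lem:omegapluscentraliser} shows that $\tilde s'$ is $\tilde S$-conjugate to $\tilde s$; conjugating produces a representative of the second class which also contains $\tilde s$. The $\mathrm{GU}_m(q)$ case ($4<m\equiv 0\pmod 4$) is entirely analogous, identifying $V$ with $\mathbb{F}_{q^2}^m$ equipped with a suitable Hermitian form so that $U$ and $U^\perp$ become Hermitian subspaces.

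To show that each class contributes exactly one conjugate containing $\tilde s$, apply Lemma~\ref{lem:conjclassmax}\ref{conjclassmax2} with $J=\tilde S$ and $H=\tilde K$. By Lemma~\ref{lem:omegapluscentraliser}, $C_{\tilde S}(\tilde s)$ is a maximal torus $\hat T$ of shape $(q^a+1)\times(q^b+1)$; since $\tilde s$ remains regular semisimple inside $\tilde K$ and $\hat T$ is already contained in $\tilde K$, one obtains $C_{\tilde K}(\tilde s)=\hat T=C_{\tilde S}(\tilde s)$, so the relevant index equals~$1$. Moreover, the distinctness of the eigenvalue packets of $\tilde s$ on $U$ and $U^\perp$ (recorded in Assumption~\ref{assump:selt} and used in Lemma~\ref{lem:omegapluscentraliser}) forces any element of $\tilde K$ that is $\tilde S$-conjugate to $\tilde s$ to lie in a single $\tilde K$-class, so $r=1$ in the lemma. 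Hence each of the two $\tilde S$-classes contributes exactly one subgroup to $\mathcal{M}_{\tilde S}(\tilde s)$, giving exactly two extension field subgroups in total.

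The main obstacle I anticipate is the last step: verifying $C_{\tilde K}(\tilde s)=C_{\tilde S}(\tilde s)$ and the single-$\tilde K$-class property uniformly across both types and all admissible $(m,q)$. This reduces to careful bookkeeping with maximal tori and Weyl-group orbits inside $\mathrm{GO}^+_m(q^2)$ and $\mathrm{GU}_m(q)$, for which the algebraic-group framework developed before Lemma~\ref{lem:sotorus} together with the torus classification in \cite{buturlakin} is best suited; the $m=8$, $q=2$ case will likely need a small direct check because of the absence of a primitive prime divisor of $q^{m-2}-1$.
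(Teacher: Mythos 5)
Your proof takes a genuinely different route from the paper's, and while the overall plan is reasonable, the final counting step contains a real gap.  The paper does the count inside $\mathrm{GO}^+_{2m}(q)$ rather than $\tilde S$: writing $\hat K := N_{\hat S}(\tilde K)\cong J.\langle\phi\rangle$, it observes that the two $\tilde S$-classes of type-$J$ subgroups fuse into a single $\mathrm{GO}^+_{2m}(q)$-class, so it suffices to show that $\tilde s$ lies in exactly two $\mathrm{GO}^+_{2m}(q)$-conjugates of $\hat K$.  At that level Wall's similarity results (via \cite[Prop.~2.11]{bgk}, using $|\tilde s|$ odd) pin down the value $r=2$ of the class count in Lemma~\ref{lem:conjclassmax}\ref{conjclassmax2}, and the required constancy of centraliser order is verified by checking $C_{\mathrm{GO}^+_{2m}(q)}(\tilde f)\le\hat K$ for all $\tilde f\in\tilde s^{\mathrm{GO}^+_{2m}(q)}\cap\hat K$ using the maximal-torus structure of $J$.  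The two-to-one fusion then yields exactly one subgroup per $\tilde S$-class.  (The paper also handles $4\nmid m$ by citing \cite[Prop.~5.14]{bgk} outright, whereas your argument is phrased uniformly.)

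You instead apply Lemma~\ref{lem:conjclassmax}\ref{conjclassmax2} directly with $J=\tilde S$, $H=\tilde K$, and assert that $r=1$ because the eigenvalue packets of $\tilde s$ on $U$ and $U^\perp$ are distinct.  This is where the argument breaks down.  Distinctness of eigenvalue packets (plus Lemma~\ref{lem:omegapluscentraliser}) controls conjugacy in $\mathrm{GL}$ and, through Wall, in the full conformal/general orthogonal group, but it does \emph{not} determine the number of $\tilde K$-classes inside the proper subgroup $\tilde K\cong J\cap\tilde S$ of $\tilde S$: conjugacy inside $\tilde K$ is genuinely finer, and there is no a priori reason why all of $\tilde s^{\tilde S}\cap\tilde K$ should form a single $\tilde K$-class.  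Indeed, the very fact that the paper finds two $\hat K$-classes shows that the analogous naive count does not give $1$ at the level where similarity does control conjugacy.  You also do not verify the hypothesis of Lemma~\ref{lem:conjclassmax}\ref{conjclassmax2} that $|C_{\tilde K}(f)|$ is constant over $f\in\tilde s^{\tilde S}\cap\tilde K$ — equivalently, that $C_{\tilde S}(f)\le\tilde K$ for each such $f$, the point the paper addresses via the torus structure of $J$.  A further, smaller, gap: in your existence step you move $\tilde s$ to the second class via triality or a graph automorphism and invoke ``matching torus data'' to conclude the image is $\tilde S$-conjugate to $\tilde s$; having the same torus type places $\tilde s'$ in a conjugate torus, but not necessarily in the $\tilde S$-class of $\tilde s$, so this step also needs the kind of conjugacy argument the paper deploys (or an explicit appeal to Wall together with $|\tilde s|$ odd).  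The paper's torus-conjugacy argument (all $\hat S$-tori of shape $(q^a+1)\times(q^b+1)$ are conjugate, and $J$ contains one) avoids this issue cleanly.
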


\begin{proof}
    Let $a$, $b$, and $\hat S$ 
    be as in Lemma~\ref{lem:omegapluscentraliser}, so that $C_{\hat S}(\tilde s) = C_{\mathrm{GO}^+_{2m}(q)}(\tilde s)$ is a maximal torus $\hat T$ of $\hat S$ of shape $(q^a+1) \times (q^b+1)$.
  Additionally, let $\tilde K$ be a 
    subgroup of $\tilde S$ of type $J$. Then \cite[Constructions 2.5.13--2.5.14, Lemmas 5.3.4 \& Lemma 5.3.6]{burnessgiudici} and \cite[Tables 3.5.E \& 3.5.G]{kleidmanliebeck} imply that \[\hat K:=N_{\hat S}(\tilde K) \cong J.\langle \phi \rangle,\] where $\phi$ is the involutory field automorphism of the 
    subgroup $\SU_m(q)$ or $\Omega^+_m(q^2)$ of $J$. As all maximal tori of $\hat S$ isomorphic to $\hat T$ are conjugate in $\hat S$ \cite[p.~394]{veldkamp}, and as $J$ contains such a maximal torus (see \cite[Cor.~2]{buturlakin}, \cite[\S9]{veldkamp} and \cite[Lemma 2]{seitzstructure}), some $\hat S$-conjugate of $\hat K$ contains $\hat T$, and hence contains $\tilde s$. Now, by \cite[Tables 8.50 \& Table 8.82]{bhrd} and \cite[Tables 3.5.E \& 3.5.G]{kleidmanliebeck}, $\tilde S$ has exactly two conjugacy classes of maximal subgroups that are extension field subgroups of type $J$, and these extend to two conjugacy classes of maximal subgroups of $\hat S$. We have shown that each of these two conjugacy classes has at least one subgroup containing $\tilde s$.

    To complete the proof, we will show that $\mathcal{M}_{\tilde S}(\tilde s)$ contains exactly two extension field subgroups of type $J$. If $4 \nmid m$, then this is the case by \cite[Prop.~5.14]{bgk}. If instead $4 \mid m$, then $|s|$ is odd. Thus \cite[p.~34, pp.~38--39]{wall} (see also \cite[Prop.~2.11]{bgk}) implies that any two similar elements of $\mathrm{GO}^+_{2m}(q)$ are conjugate, as are any two similar elements of $J$. Additionally, conjugating $\tilde K$ by an element of $\hat S$ if necessary, we may assume that $\hat T \le \hat K$. Therefore, arguing as in the proof of \cite[Lemma 5.9]{bgk}, we deduce that
    the elements of $\tilde s^{\mathrm{GO}^+_{2m}(q)} \cap \hat K$
    form
  exactly two $\hat K$-conjugacy classes.

Now, any two extension field subgroups of $\tilde S$ of type $J$ extend to conjugate subgroups of $\mathrm{GO}^+_{2m}(q)$ \cite[Tables 3.5.E \& Table 3.5.G]{kleidmanliebeck}, and so $\hat K$ is self-normalising in $\mathrm{GO}^+_{2m}( q)$. Moreover, by considering the maximal tori of $J$ (again, see \cite[Cor.~2]{buturlakin}, \cite[\S9]{veldkamp} and \cite[Lemma 2]{seitzstructure}), we deduce that $C_{\mathrm{GO}^+_{2m}(q)}(\tilde f) \le \hat K$ for each $\tilde f \in \tilde s^{\mathrm{GO}^+_{2m}(q)} \cap \hat K$. Therefore, Lemma \ref{lem:conjclassmax}\ref{conjclassmax2} implies that $\tilde s$ lies in exactly two $\mathrm{GO}^+_{2m}(q)$-conjugates of $\hat K$, and hence in exactly two $\mathrm{GO}^+_{2m}(q)$-conjugates of $\tilde K$.
\end{proof}

Theorem \ref{thm:orthogplusmaxes} now follows from \cite[Prop.~5.14]{bgk} (with $4 \nmid m$) and Propositions \ref{prop:omegamultsubgps}, \ref{prop:omega8subgps} and \ref{prop:orthplusextfield}. We are also now able to prove the final case of Theorem \ref{thm:maxsubnoconj}, and hence complete the proof of Theorem \ref{thm:noncomint}.


\begin{prop}
Theorem \ref{thm:maxsubnoconj} holds when $S = \oo^+_{2m}(q)$, with $m \ge 4$ even.
\end{prop}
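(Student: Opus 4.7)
The plan is to combine Theorem~\ref{thm:orthogplusmaxes} with Lemma~\ref{lem:ordinarynonconj}, along the same lines as the $m$ odd case handled in the previous proposition. By Theorem~\ref{thm:o8plusexceps} and Proposition~\ref{prop:smallclas}, I may assume that either $m \ge 6$, or $m = 4$ and $q \ge 7$, which is precisely the range covered by Theorem~\ref{thm:orthogplusmaxes}. I take $\tilde s$ as in Assumption~\ref{assump:selt} and let $s$ be its image in $S$; Theorem~\ref{thm:orthogplusmaxes} then gives $\mathcal{M}_{\tilde S}(\tilde s) = \{\tilde K_1,\tilde K_2,\tilde L\}$, with $\tilde K_1$ and $\tilde K_2$ non-$\tilde S$-conjugate extension field subgroups of the same Aschbacher type ($\mathcal{C}_3$), and $\tilde L$ either reducible ($\mathcal{C}_1$) when $m>4$ or imprimitive ($\mathcal{C}_2$) when $m=4$.

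For Part~(ii), the three subgroups $K_1,K_2,L$ of $\mathcal{M}_S(s)$ are pairwise non-$S$-conjugate by Theorem~\ref{thm:orthogplusmaxes}, and $L$ belongs to a different Aschbacher class from $K_1$ and $K_2$. When $m=4$, inspecting \cite[Table I]{kleidmano8} confirms that the imprimitive class containing $L$ is not fused with the extension field classes of $K_1,K_2$ by any outer automorphism (including triality); for $m \ge 6$, the outer automorphism group consists only of diagonal, field and a single graph automorphism, all of which preserve Aschbacher types. Hence $L$ is not $G$-conjugate to $K_1$ or $K_2$ in any almost simple $G$ with socle $S$; at worst $K_1$ and $K_2$ fuse in $G$. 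Thus $\mathcal{M}_S(s)$ contains at most two $G$-conjugate subgroups, and the ``in particular'' clause of Lemma~\ref{lem:ordinarynonconj} delivers Part~(ii).

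For Part~(i), I mimic the eigenvalue argument used in the $m$ odd proposition above. Lemma~\ref{lem:omegapluscentraliser} identifies $C_{\hat S}(\tilde s)=C_{\mathrm{GO}^+_{2m}(q)}(\tilde s)$ as a maximal torus $\hat T$ of shape $(q^a+1)\times(q^b+1)$, so it suffices to produce an element of $\hat S$ normalising but not centralising $\langle \tilde s \rangle$. The multiset of $\overline{\FF_q}$-eigenvalues of $\tilde s$ on $V$ is a union of Frobenius orbits coming from the Singer elements $\tilde r_1,\tilde r_2$, hence closed under $\alpha\mapsto\alpha^q$; so $\tilde s$ and $\tilde s^q$ (distinct since $|\tilde s|\nmid q-1$) are similar in $\mathrm{GO}^+_{2m}(q)$. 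Applying \cite[pp.~38--39]{wall} and \cite[Thms 6.1.12 \& 6.1.15]{defranceschi}, together with Lemma~\ref{lem:orthplussubspaces} to control how conjugacy classes split when $4 \mid m$, some power $\tilde s^{q^j}\ne \tilde s$ is $\tilde S$-conjugate to $\tilde s$, giving the desired normalising element; descent to $S$ is routine since $s^{q^j}$ remains distinct from $s$. The main obstacle is the standard index-two (or -four) bookkeeping required to push the Wall-type conjugacy from $\mathrm{GO}^+_{2m}(q)$ down to $\hat S$, but this proceeds exactly as in the $m$ odd proof, completing Theorem~\ref{thm:maxsubnoconj} and hence Theorem~\ref{thm:noncomint}.
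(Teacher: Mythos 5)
Your argument for Part~(i) (mimicking the eigenvalue/Frobenius-orbit argument from the $m$ odd case) is a workable alternative; the paper instead dispatches this in one line by citing Galt's result that $s$ is strongly real in $S$, giving $s^S = (s^{-1})^S$ directly. Your argument for Part~(ii) when $m > 4$ also matches the paper: $L$ is reducible while $K_1,K_2$ are extension field subgroups of a different order, so at most two of the three can be $G$-conjugate, and the ``in particular'' clause of Lemma~\ref{lem:ordinarynonconj} applies.

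The problem is your treatment of $m = 4$. You assert that inspecting \cite[Table~I]{kleidmano8} shows the imprimitive class of $L$ is not fused with the two extension field classes of $K_1,K_2$ by any outer automorphism, ``including triality.'' That assertion is false. Here $L \cap \tilde S$ has shape (roughly) $(\Omega^-_4(q)\times\Omega^-_4(q)).2^2$ and the $K_i \cap \tilde S$ have shape $\Omega^+_4(q^2).2^2$; since $\Omega^-_4(q)\cong\psl_2(q^2)$ and $\Omega^+_4(q^2)\cong\psl_2(q^2)\times\psl_2(q^2)$, all three have the same underlying socle type and the same order, and the one $\mathcal{C}_2$ class and the two $\mathcal{C}_3$ classes in fact form a single triality orbit. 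So for $G$ containing an order-$3$ graph automorphism, all three of $K_1,K_2,L$ can be $G$-conjugate, and your appeal to the ``in particular'' clause of Lemma~\ref{lem:ordinarynonconj} collapses (that clause requires at most two $G$-conjugate subgroups of $\mathcal{M}_S(s)$). Moreover the claim that ``Aschbacher types are preserved'' is precisely what triality fails to do in $\Omega^+_8$; you can't import the $m\geq 6$ reasoning here.

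The paper closes this gap by a genuinely different route for $m=4$: it cites \cite[Lemma~5.15(e)]{bgk}, which shows that the pairwise intersections $K_i\cap K_j$ and $K_i\cap L$ all equal $K_1\cap K_2\cap L$. By Lemma~\ref{lem:conjmaxint} every novelty maximal in $\mathcal{M}'_G(s)$ is the $G$-normaliser of an intersection of two or more $G$-conjugates from $\{K_1,K_2,L\}$, and since all such intersections coincide there is at most one novelty maximal. That makes the novelty maximals vacuously pairwise non-conjugate, and the \emph{first} assertion of Lemma~\ref{lem:ordinarynonconj} (not the ``in particular'' clause) then gives the conclusion — no information about how many of $K_1,K_2,L$ fuse in $G$ is needed. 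Without something like this intersection argument, your proof has a real hole at $m=4$.
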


\begin{proof}
  By \cite[Lemma 10]{galt}, $s$ is $S$-conjugate to its inverse, and so $N_S(\langle s \rangle) > C_S(s)$. Additionally, Theorem \ref{thm:orthogplusmaxes} shows that $\mathcal{M}_S(s)$ consists of three subgroups, $K_1$, $K_2$ and $L$, no two of which are conjugate in $S$. If $m = 4$, then the intersection of any two of these subgroups is equal to the intersection of all three \cite[Lemma 5.15(e)]{bgk}, and hence Lemma \ref{lem:conjmaxint} shows that $\mathcal{M}'_G(s)$ contains at most one novelty maximal. If instead $m > 4$, then $L$ is isomorphic to neither $K_1$ nor $K_2$ (see Table 3.5.E and the corresponding results in Chapter 4 of \cite{kleidmanliebeck}). 
 In each case, Lemma \ref{lem:ordinarynonconj} yields the result.
\end{proof}


\section{Finite groups satisfying the independence property are supersoluble}\label{riduco}

The aim of this section is to prove that each finite group $G$ satisfying the independence property is supersoluble. We will reduce the proof of this statement to the case where $G$ is almost simple, and finally, in the almost simple case, we will reach our conclusion by applying Theorem \ref{thm:noncomint}. Two elements $x$ and $y$ of a group $G$ are \emph{dependent} if no minimal generating set for $G$ contains $\{x, y\}$. 

We shall assume throughout this section that $G$ is a finite group satisfying the independence property.
Our first result reduces the study of
such groups
to the 
case where their Frattini subgroup is trivial.

\begin{prop}\label{frattini}
  The Frattini subgroup $\frat(G) \neq 1$ if and only if $G$ is either a cyclic $p$-group or the quaternion  group  of order 8.
\end{prop}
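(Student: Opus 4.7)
The plan is to treat the two directions separately, with essentially all the work in the forward implication. For the backward direction I would simply remark that $\frat(C_{p^n}) = \langle g^p \rangle \neq 1$ for $n \geq 2$ and $\frat(Q_8) = Z(Q_8)$ has order two; to see that these groups really satisfy the independence hypothesis of this section, note that $d(C_{p^n}) = 1$ combined with the standard $p$-adic valuation argument makes one of any two elements a power of the other, while a direct inspection shows that every pair of non-mutual-powers in $Q_8$ actually generates $Q_8$ minimally.

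For the forward direction I assume $\frat(G) \neq 1$. Because $\frat(G)$ is nilpotent, I can pick an element $z \in \frat(G)$ of prime order $p$. The crucial input from the independence property is that, since $z$ lies in no minimal generating set, for every $y \in G$ with $y \neq z$ one of $z$ and $y$ must be a power of the other. I would exploit this dichotomy twice. First, if $y$ were a nontrivial $p'$-element, then $y \notin \langle z \rangle$ and $z \notin \langle y \rangle$ purely by order considerations, forcing $G$ to have no nontrivial $p'$-element and so to be a $p$-group. Second, if $y$ had order $p$ with $y \notin \langle z \rangle$, then $\langle y \rangle \cap \langle z \rangle = 1$ and again neither element would be a power of the other; hence every element of order $p$ lies in $\langle z \rangle$, and $G$ has a unique subgroup of order $p$.

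At this point the classical theorem on finite $p$-groups with a unique subgroup of order $p$ lets me conclude that $G$ is either cyclic or, with $p = 2$, isomorphic to a generalised quaternion group $Q_{2^n}$ for some $n \geq 3$. The remaining step is to rule out $Q_{2^n}$ for $n \geq 4$. Here I would pass from $z$ to a second element of $\frat(G)$ of larger order: writing $Q_{2^n} = \langle a, b \mid a^{2^{n-1}} = 1,\ b^2 = a^{2^{n-2}},\ bab^{-1} = a^{-1} \rangle$, one computes $\frat(G) = \langle a^2 \rangle$, so for $n \geq 4$ the element $z' := a^2$ has order at least $4$ and lies in $\frat(G)$. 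Pairing $z'$ with $y := b$ I would check that $a^2 \notin \langle b \rangle = \{1, b, a^{2^{n-2}}, b^3\}$ and $b \notin \langle a^2 \rangle \subseteq \langle a \rangle$, yielding the required contradiction with the independence property.

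The point where I expect the plan to be most delicate is precisely the choice of the auxiliary element of $\frat(G)$. Taking $z$ of prime order is essential for the $p'$-element and uniqueness-of-$\langle z \rangle$ arguments, but it is not strong enough on its own to distinguish $Q_8$ from the larger generalised quaternion groups, since every $Q_{2^n}$ has a unique (central) involution. The gap between $n = 3$ and $n \geq 4$ only opens up once a strictly higher-order element of $\frat(G)$ is used, which is exactly why $Q_8$, whose Frattini subgroup has order only $2$ and thus offers no such element, survives as a legitimate exception while $Q_{2^n}$ for $n \geq 4$ does not.
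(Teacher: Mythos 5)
Your proof is correct and follows essentially the same route as the paper: pick an element of prime order in $\frat(G)$, use that it is a non-generator together with the independence property to deduce that $G$ has a unique minimal subgroup (you split this into ``$G$ is a $p$-group'' and ``unique subgroup of order $p$''; the paper gets the unique-minimal-subgroup statement in one step), invoke the classification of such groups as cyclic or generalised quaternion, and then eliminate $Q_{2^n}$ for $n \geq 4$ by exhibiting a pair of dependent elements neither of which is a power of the other. The only difference is cosmetic, in the last step: you pair $a^2 \in \frat(G)$ directly with $b$ and show neither lies in the cyclic group generated by the other, whereas the paper pairs $b$ with $ba^2$, uses that these agree modulo $\frat(G)$ to get dependence, concludes $\langle b, ba^2\rangle$ is cyclic, hence $[b,a^2]=1$, which forces $n=3$. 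Both variants are valid and of comparable length; your remark at the end about why $Q_8$ escapes the argument is a helpful explanation of the boundary case.
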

\begin{proof}
  Suppose that 
    $\frat(G) \neq 1$, and let $x$ be an element of $\frat(G)$ of prime order. Since $x$ is a non-generator of $G$, $x$ and $y$ are dependent for each $y \in G \setminus \{x\}$.
    In particular, since $|x|$ is prime, $\langle x \rangle \subseteq \langle y \rangle$ for every $y \in G\setminus \{1\}$. Thus $\langle x\rangle$ is the unique minimal subgroup of $G$ and therefore either
$G$ is a cyclic $p$-group or
$G=\langle a, b \mid a^{2^{n-1}}=1, 
b^2= a^{2^{n-2}}, a^b=a^{-1}\rangle$  (with $n \ge 3$) is a generalized
quaternion group (see \cite[Thm.~9.7.3]{wrscott}). In the second case, $a^2\in \frat(G)$ and therefore $b$ and $ba^2$ are dependent. Since $G$ satisfies the independence property, 
$\langle b, ba^2\rangle$ is cyclic, and hence $[b,a^2]=1$.
This implies that $n=3$, as required. The converse is clear. 
\end{proof}

We now present a sequence of results characterising the minimal normal subgroups of our group $G$.

\begin{lemma}\label{ciclici}
  Suppose that 
  $\frat(G) = 1$, and that $G$ has an abelian minimal normal subgroup $N$. Then $N$ is cyclic of prime order. 
 \end{lemma}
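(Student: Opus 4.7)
The plan is to argue by contradiction. Since $N$ is abelian and minimal normal in the finite group $G$, it is elementary abelian, so $N\cong C_p^k$ for some prime $p$ and some $k\ge 1$. Assume for contradiction that $k\ge 2$. Then $N$ contains at least two distinct subgroups of order $p$, and I can pick non-identity elements $x,y\in N$ with $\langle x\rangle\ne\langle y\rangle$. Because both have prime order $p$ and generate different cyclic groups, neither of $x$ and $y$ is a power of the other. By the independence property, there exists a minimal generating set
\[
X=\{x,y,z_1,\dots,z_r\}
\]
for $G$.

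Set $K:=\langle y,z_1,\dots,z_r\rangle$. By the minimality of $X$, $K$ is a proper subgroup of $G$, and $G=\langle K,x\rangle$. Since $x,y\in N$, passing to the quotient $G/N$ shows that the images $\bar z_1,\dots,\bar z_r$ generate $G/N$, and so $G=KN$.

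The crux of the argument is to deduce that $N\le K$. Given any $g\in G$, I can write $g=kn$ with $k\in K$ and $n\in N$; since $y\in N$ and $N$ is abelian,
\[
y^g = y^{kn} = (y^k)^n = y^k,
\]
because $y^k\in N$ commutes with $n$. Hence the normal closure $\langle y\rangle^G$ equals $\langle y\rangle^K$, which is contained in $K$ as $y\in K$. The minimality of $N$ as a normal subgroup of $G$, together with $y\ne 1$, forces $\langle y\rangle^G=N$. Thus $N\le K$, and in particular $x\in K$. But then $X\setminus\{x\}$ generates $G$, contradicting the minimality of $X$. Therefore $k=1$ and $N$ is cyclic of prime order.

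The step most worth getting right is the normal-closure reduction: the whole argument rests on the observation that because $N$ is abelian the action of $G=KN$ on any element of $N$ factors through $K$, so that the normal closure of $y$ computed in $G$ automatically sits inside $K$. Once that is in place, the minimality of $N$ and the minimality of $X$ collide, giving the contradiction that kills the case $k\ge 2$.
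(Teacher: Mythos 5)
Your proof is correct and follows essentially the same route as the paper's: the paper sets $H := \langle g_1,\dots,g_t\rangle$, notes $G = HN$ and (because $N$ is abelian) that $N$ is $H$-irreducible, and concludes $\langle n_1, H\rangle = G$, contradicting minimality; your normal-closure computation $y^{kn} = y^k$ is just the explicit form of that same $H$-irreducibility observation, applied to $K = \langle y, z_1,\dots,z_r\rangle$ in place of $H$.
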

\begin{proof}
  Let $n_1$ and $n_2$ be distinct non-trivial elements of $N$. We claim that they are dependent. Indeed, assume for a  contradiction that $X:=\{n_1,n_2,g_1,\dots,g_t\}$ is a minimal generating set for $G$ and let $H:=\langle g_1, \dots,  g_t\rangle.$ Since \[G= \langle n_1,n_2,g_1,\dots,g_t \rangle\leq HN,\]
	$G=HN$ and $N$ is $H$-irreducible. But then $G= \langle n_1,g_1,\dots,g_t \rangle$, contradicting the minimality of $X$.

        Since $n_1$ and $n_2$ are dependent,
	one of 
        them is a power of the other. Since $N$ is elementary abelian, it follows that $\langle n_1 \rangle=
	\langle n_2 \rangle.$ Therefore,
 $N\cong C_p.$
\end{proof}

\begin{lemma}\label{simpleex}
  Suppose that 
    $\frat(G) = 1$, and that $G$ has  a non-abelian minimal normal subgroup $N$. If $\oo^+_8(2)$ is not a composition factor of $N,$ then $N$ is simple.
	\end{lemma}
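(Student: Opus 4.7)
Suppose for a contradiction that $N = T_1 \times \cdots \times T_k$ with $k \geq 2$ and every $T_i \cong S$ a non-abelian finite simple group, where by hypothesis $S \not\cong \oo^+_8(2)$. By Theorem~\ref{thm:noncomint} applied to $S$, there exist non-commuting $s, x \in S$ such that, for every almost simple group $H$ with socle $S$, every maximal subgroup of $H$ that contains $s$ also contains $x$. Identifying $T_1$ with $S$, I view $s, x$ as elements of $T_1 \leq N \leq G$ and define $a := (s, 1, \ldots, 1)$ and $b := (x, 1, \ldots, 1)$ in the direct product $N$. A relation of the form $b = a^n$ inside $T_1$ would force $x = s^n$, hence $[s,x] = 1$, contrary to the choice of $s, x$; so neither $a$ nor $b$ is a power of the other.

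To derive a contradiction with the independence property, I will show that $a$ and $b$ are dependent. Assume therefore that $X := \{a, b, g_1, \ldots, g_t\}$ is a minimal generating set for $G$, and let $H := \langle g_1, \ldots, g_t\rangle$. Since $a, b \in N$, we have $G = HN$. Moreover, as $N$ is a minimal normal subgroup, $G$ permutes $\{T_1, \ldots, T_k\}$ transitively, and since the $T_i$ pairwise commute, $N$ normalises each $T_i$; combined with $G = HN$, this forces $H$ to act transitively on $\{T_1, \ldots, T_k\}$. Write $\pi \colon N_G(T_1) \to \aut(T_1)$ for the conjugation map, and let $K := \pi(N_G(T_1))$, an almost simple group with socle $\pi(T_1) \cong S$. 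Setting $H_1 := H \cap N_G(T_1)$, note $\pi(a) = s$ and $\pi(b) = x$.

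Applying Theorem~\ref{thm:noncomint} to $K$: if $\langle s, \pi(H_1)\rangle < K$, it is contained in some $M \in \mathcal{M}_K(s)$, whence $x \in M$ and therefore $\langle s, x, \pi(H_1)\rangle \leq M < K$, contradicting the fact that $X$ (restricted to $N_G(T_1)$ via the orbit structure) must generate $K$. Hence $\langle s, \pi(H_1)\rangle = K$. I then propagate this by using the transitive action of $H$ on $\{T_1,\ldots,T_k\}$: picking, for each $i$, some $h_i \in H$ with $T_1^{h_i} = T_i$, the $H$-conjugates of $a$ combined with $H_1$ generate each $T_i$ (modulo the respective centraliser), yielding $N \leq \langle a, H \rangle$ and therefore $\langle a, H\rangle = HN = G$, contradicting minimality of $X$.

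The main obstacle is the final propagation step: turning the local surjectivity onto $K$ into the global identity $\langle a, H\rangle = G$. Concretely, one must show that the subgroup $\pi(H_1) \leq K$ is large enough for the Theorem~\ref{thm:noncomint} dichotomy to apply, and that conjugation by $H$ really carries a full copy of $T_1$ into each $T_i$ rather than just a small diagonal image; this requires careful bookkeeping of the $H$-orbit structure on the factors together with the semidirect-product decomposition of $\langle a, H\rangle$ over $\langle a, H\rangle \cap N$. I expect the exclusion of $S \cong \oo^+_8(2)$ to enter precisely here: for $\oo^+_8(2)$ the conjugate-maximal-subgroup pathology recorded in Theorem~\ref{thm:o8plusexceps} prevents a sufficiently uniform choice of $(s, x)$ across the relevant almost simple quotients, so the clean propagation above is unavailable for that group and requires separate treatment.
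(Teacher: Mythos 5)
Your approach has a genuine gap, and it is in exactly the place you flagged: the propagation step. The difficulty is not a matter of ``careful bookkeeping'' --- it is structural, and it traces back to your choice of test elements $a=(s,1,\dots,1)$, $b=(x,1,\dots,1)$, both concentrated in a single coordinate. Set $R:=\langle a,H\rangle$. Every $H$-conjugate of $a$ again has a single nontrivial coordinate, so $\langle a^H\rangle = \prod_i L_i$ where $L_i = \langle\, s^{\gamma(h)} : h\in H,\ T_1^h=T_i \,\rangle$. To get $R\cap N = N$ you would need each $L_i$ to equal $T_i$, but $L_1 = \langle s\rangle^{\pi(H_1)}$ is the orbit of $\langle s\rangle$ under only the automorphisms induced by $H_1 = H\cap N_G(T_1)$, and $\pi(H_1)$ may well be a complement to $S$ in $K = \pi(N_G(T_1))$ --- it can even be trivial, in which case $L_1 = \langle s\rangle$. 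Theorem~\ref{thm:noncomint} gives you no leverage here: it constrains the maximal overgroups of $s$, but says nothing about the $\pi(H_1)$-orbit of $\langle s\rangle$. Your intermediate step is also unjustified: there is no reason $\langle s, x, \pi(H_1)\rangle$ should equal $K$, because a minimal generating set for $G$ does not restrict to a generating set for the subgroup $N_G(T_1)$.

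The paper avoids this by a different construction. It invokes Guralnick--Malle \cite[Cor.~7.2]{guma}, not Theorem~\ref{thm:noncomint}, to find $x,y\in S$ with $S = \langle x^\alpha, y^\beta\rangle$ for \emph{all} $\alpha,\beta\in\aut(S)$, and then uses the single element $n = (x,y,1,\dots,1)$ occupying \emph{two} coordinates. Conjugating $n$ by an element of $H$ that carries $T_2$ to $T_1$ produces a second generator whose $T_1$-component is $y^\beta$, and $\langle x, y^\beta\rangle = S$ forces full projections onto each factor; the side condition $y\ne x^\gamma$ for all $\gamma$ then rules out the diagonal possibility inside $H\cap N$. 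Both of these steps are unavailable with a one-coordinate element. Finally, your explanation of the $\oo^+_8(2)$ exclusion is incorrect: Theorem~\ref{thm:noncomint} \emph{does} hold for $\oo^+_8(2)$ (that is the content of Theorem~\ref{thm:o8plusexceps}(i)); what fails for $\oo^+_8(2)$ is the Guralnick--Malle Beauville-structure hypothesis that the paper's proof actually relies on, and the paper covers $\oo^+_8(2)$ separately in Lemma~\ref{semplice} via Lemma~\ref{strano} using an element of order~$4$.
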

\begin{proof}
Here, $N=S_1\times \dots \times S_t$ with $S_i \cong S$ a non-abelian finite simple group. Assume for a contradiction that $S\neq\oo^+_8(2)$ and $t\geq 2.$ By \cite[Cor.~7.2]{guma}, there exist $x$ and $y$ in $S$ such that $S=\langle x^\alpha, y^\beta\rangle$
for every $\alpha, \beta \in \aut(S)$. Notice that in particular $y \neq x^{\gamma}$ for all $\gamma \in \aut(S)$.

 Let $n:=(x,y,1,\dots,1)\in N$ and choose $g_1,\dots,g_d \in G$
 such that $G=\langle g_1,\dots, g_d\rangle N.$ Set $H:=\langle n, g_1,\dots,g_d\rangle$ and denote by $\pi_i: N\to S_i$ the projection to the $i$-th factor of $N.$ The factors $S_1,\dots,S_t$ are permuted transitively by $H$, so in particular there exists $h\in H$ such that
 $S_2^h=S_1$.  Thus 
$\pi_1(n^h)=y^\beta$ for some $\beta \in \aut(S).$

 It follows that 
$S_1=\langle x, y^\beta \rangle \leq \langle \pi_1(n), \pi_1(n^h) \rangle \leq \pi_1(H\cap N)$, and since $H$ is transitive on $\{S_1,\dots,S_t\}$, the image $\pi_i(H\cap N)=S_i$ for all $i \in \{1, \ldots, t\}$.
Therefore,  there exists a partition  of $\{1,\dots,t\}$  into $u$ blocks of size $ v:= t/u$, say  $J_i:=\{j_{i1}, \ldots, j_{iv}\}$ for $1 \leq i \leq u$, 
 corresponding elements $\alpha_{ik} \in \aut(S)$,  and diagonal subgroups $\Delta_i:=\{(s^{\alpha_{i1}},\dots,s^{\alpha_{iv}})\mid s \in S\}$ of $S_{j_{i1}}\times \dots \times S_{j_{iv}}$, such that $H\cap N=\Delta_1\times \dots \times \Delta_u$.
We may assume
that $1=j_{11} \in J_1.$

Since $n\in  H\cap N,$ it follows  that 
$J_1\subseteq \{1,2\}.$ If $J_1=\{1,2\},$ then \[\Delta_1=\{(s,s^\gamma)\mid s \in S\}\leq S_1\times S_2,\] with $\gamma=\alpha_{11}^{-1}\alpha_{12}$, and $n\in \Delta_1.$ This implies that $y=x^\gamma$, 
  a contradiction. Hence $J_1=\{1\}$,  so $|J_i| = 1$ for all
  $i$ and 
$H\cap N = S_1 \times \cdots \times S_t  =N$. We conclude that $G = H = \langle n, g_1, \ldots, g_d \rangle$.

We have shown that $\langle n, g_1, \ldots, g_d \rangle = G$ for all $g_1, \ldots, g_d \in G$ such that $\langle g_1, \ldots, g_d \rangle N = G$. Thus no minimal generating set for $G$ contains both $n$ and $m:=(y,1,\dots,1) \in N$.
  Consequently,
%
one of $n$ and $m$ is a power of the other. This implies in particular that $\langle x, y \rangle$ is cyclic, contradicting  $\langle x, y \rangle=S.$
\end{proof}

The restriction in the previous lemma concerning 
 $\oo^+_8(2)$ 
 can be removed by a different argument. For this purpose we need the following lemma.
In the next few results, we shall denote elements $h \in X \wr \sym_t$ by $(\rho_1(h),
\ldots, \rho_t(h))\sigma(h)$, where $\rho_i$ is the projection from
the base group to the $i$-th copy of $X$ and $\sigma(h) \in \sym_t$.

 \begin{lemma}\label{strano}
 Let $X$ be a finite group, let $a$ be an element of $X$,
  let $t \ge 2$, and let $\alpha:=(a,a^2,1,\dots,1)$
and  $\beta:=(a,1,\dots,1)$ be elements of $X \wr \sym_t$. Let $Y$ be
a subgroup of $X \wr \sym_t$ and set $R:=\langle \alpha, Y\rangle$
and $K:=\langle \alpha, \beta,Y\rangle.$ For any given $k \in K$ and $i\in \{1,\dots,t\}$, there exists $r\in R$ such that $\rho_i(r)=\rho_i(k)$ and $\sigma(r)=\sigma(k).$
 \end{lemma}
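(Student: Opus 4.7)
The plan is to induct on the length of $k$ as a word in the generators $\alpha^{\pm 1}$, $\beta^{\pm 1}$, and elements of $Y$, showing that every occurrence of $\beta^{\pm 1}$ can be replaced by an element of $R$ without altering either $\rho_i$ or $\sigma$ of the resulting element. The observation powering the induction is that $\alpha$ and $\beta$ both lie in the base group $X^t$ (so each has trivial $\sigma$-part) and agree on the first coordinate (both equal $a$), while $\beta$ has trivial entries at coordinates $2, \ldots, t$. Intuitively, $\beta$ is indistinguishable from $\alpha$ at every coordinate except the second, where it is instead indistinguishable from the identity.

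For the inductive step, write $k = w g$ with $w$ a shorter word and $g$ a single generator, and use the inductive hypothesis (for the same index $i$) to obtain $r_w \in R$ with $\rho_i(r_w) = \rho_i(w)$ and $\sigma(r_w) = \sigma(w)$. If $g \in \{\alpha^{\pm 1}\} \cup Y^{\pm 1}$, then $g \in R$, and I would take $r := r_w g \in R$. The wreath product multiplication rule expresses $\rho_i(h g) = \rho_i(h) \cdot \rho_j(g)$ for an index $j$ depending only on $\sigma(h)$ and $i$; since $\sigma(r_w) = \sigma(w)$, the same index $j$ governs both $\rho_i(r_w g)$ and $\rho_i(w g)$, and so $\rho_i(r) = \rho_i(k)$ and $\sigma(r) = \sigma(k)$.

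The decisive case is $g = \beta^{\pm 1}$, where $g$ need not lie in $R$. With $j$ the index associated to $\sigma(w)$ and $i$ as above, the contribution of $g$ to $\rho_i(k)$ is $\rho_j(\beta^{\pm 1})$. If $j \ne 1$, this contribution equals $1$, so $\rho_i(k) = \rho_i(w)$ and $r := r_w$ works. If $j = 1$, this contribution equals $a^{\pm 1} = \rho_j(\alpha^{\pm 1})$, so setting $r := r_w \alpha^{\pm 1} \in R$ preserves $\rho_i$ by the same formula: $\rho_i(r) = \rho_i(w) \cdot a^{\pm 1} = \rho_i(k)$; and $\sigma(r) = \sigma(w) = \sigma(k)$ because $\alpha$ and $\beta$ both have trivial $\sigma$-part. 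The only point that needs care is the precise convention used for wreath product multiplication, but this is routine bookkeeping; there is no genuine obstacle, as the structural content reduces entirely to the two substitution rules described above.
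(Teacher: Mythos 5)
Your proof is correct and takes essentially the same approach as the paper: induct on word length, exploiting that $\rho_1(\beta^{\pm 1}) = a^{\pm 1} = \rho_1(\alpha^{\pm 1})$ while $\rho_j(\beta^{\pm 1}) = 1$ for $j \neq 1$. The only cosmetic difference is that you peel off the last letter $g$, so the inductive hypothesis is applied to $w$ at the same coordinate $i$, whereas the paper peels off the first letter $z_1$ and applies the hypothesis to $k^* := z_2\cdots z_\ell$ at the shifted coordinate $i\sigma(z_1)$.
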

\begin{proof}
Let $A:=\langle \alpha \rangle$ and $B:=\langle \beta \rangle.$
Any element $k\in K$ can be written in the form $k=z_1\cdots z_\ell$ with
$z_i \in A \cup B \cup Y.$ We prove the statement by induction on $\ell.$
First assume $\ell=1.$ If $k=z_1 \in Y \cup A,$ we set $r=k.$ If
$k=z_1=\beta^m \in B$, we set $r=\alpha^m$ if $i=1,$ and $r=1$ otherwise.

Now assume $\ell>1$ and set $k^*=z_2\cdots z_\ell.$ We have
$\sigma(k)=\sigma(z_1)\sigma(k^*)$ and
$\rho_i(k)=\rho_i(z_1)\rho_{i\sigma(z_1)}(k^*).$ By induction, 
  for each $i$  there exist $r_1, r_2\in R$ such that $\sigma(r_1)=\sigma(z_1),$  $\sigma(r_2)=\sigma(k^*),$ $\rho_i(r_1)=\rho_i(z_1)$, and  $\rho_{i\sigma(z_1)}(r_2)=\rho_{i\sigma(z_1)}(k^*).$ The element $r=r_1r_2$ satisfies the required properties.
\end{proof}

\begin{lemma}\label{semplice}
  Suppose that 
  $\frat(G) = 1$, and that $G$ has a non-abelian minimal normal
  subgroup $N$. Then $N$ is simple.
\end{lemma}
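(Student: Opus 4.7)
By Lemma~\ref{simpleex}, we need only handle the case $S \cong \oo^+_8(2)$. Write $N = S_1 \times \cdots \times S_t$ with $S_i \cong S$, and suppose for contradiction that $t \geq 2$. The plan is to exhibit elements $\alpha, \beta \in N$ that are dependent in $G$ but such that neither is a power of the other, contradicting the independence property. I would choose $a \in S$ of even order at least $4$ (such elements clearly exist in $\oo^+_8(2)$); then $a$ and $a^2$ have distinct orders, so they are not $\aut(S)$-conjugate. Set
\[
\alpha := (a, a^2, 1, \ldots, 1) \quad \text{and} \quad \beta := (a, 1, \ldots, 1)
\]
in $N$. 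A direct check shows that neither of $\alpha, \beta$ is a power of the other (for instance, $\beta^k$ has trivial second coordinate for every $k$, while $\alpha^j = \beta$ would force $a^{2j} = 1$ and $a^j = a$, hence $|a| \mid 2$).

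Assume that $\{\alpha, \beta, g_1, \ldots, g_d\}$ is a minimal generating set for $G$, and put $Y := \langle g_1, \ldots, g_d\rangle$, $R := \langle \alpha, Y\rangle$, and $K := G = \langle \alpha, \beta, Y\rangle$, so $R \subsetneq K$ by minimality. I would apply Lemma~\ref{strano} (with $X = \aut(S)$, viewing $G \leq \aut(S) \wr \sym_t$) to deduce that for every $k \in K$ and every $i \in \{1, \ldots, t\}$ there exists $r \in R$ with $\rho_i(r) = \rho_i(k)$ and $\sigma(r) = \sigma(k)$. In particular, $\rho_i(R \cap N) = S_i$ for each $i$, so $R \cap N$ is a subdirect product of $N = S^t$. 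Its Goursat-style decomposition corresponds to a partition of $\{1, \ldots, t\}$ into blocks equipped with gluing automorphisms of $S$; because $R \cap N$ is normal in $R$ and $Y \leq R$, this partition is invariant under the action of $\sigma(Y) \leq \sym_t$ on $\{1, \ldots, t\}$.

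The hardest part is to deduce that every block is a singleton, which forces $R \cap N = N$ and hence $R = RN = K$, the desired contradiction. Consider the block $B$ containing coordinate $1$: if $B$ contains some $j \in \{3, \ldots, t\}$, then the diagonal compatibility condition applied to $\alpha$ would force $a = 1$; if $B = \{1, 2\}$, then diagonal compatibility would force the existence of $\phi \in \aut(S)$ with $\phi(a) = a^2$, contradicting the choice of $a$. Hence $B = \{1\}$. Since $N$ is a minimal normal subgroup of $G$ with $t \geq 2$, the group $\sigma(Y)$ acts transitively on $\{1, \ldots, t\}$; combined with the $\sigma(Y)$-invariance of the partition, this shows every coordinate is a singleton block, so $R \cap N = S_1 \times \cdots \times S_t = N$, completing the contradiction.
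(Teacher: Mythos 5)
Your proposal follows the paper's proof essentially step for step: the same choice of $\alpha$ and $\beta$, the same use of Lemma~\ref{strano}, the same Goursat-style structure theorem for $R\cap N$, and the same use of the order of $a$ to rule out blocks of size at least $2$. (The only minor inessential differences are that the paper takes $|a|=4$ exactly and works explicitly with $G/C_G(N)$ rather than ``viewing $G\le \aut(S)\wr\sym_t$''; the latter is not quite correct as stated since $C_G(N)$ need not be trivial, but this is purely a matter of care.)

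However, the phrase ``In particular, $\rho_i(R\cap N)=S_i$'' is a real gap: this does \emph{not} follow immediately from the conclusion of Lemma~\ref{strano}. That lemma hands you, for each $s\in S$ and each $i$, an element $r\in R$ whose image in $\aut(S)\wr\sym_t$ has $\sigma(r)=1$ and $i$-th component equal to (the inner automorphism induced by) $s$; but such an $r$ need not lie in $N$, and so it contributes nothing directly to $\pi_i(R\cap N)$. The missing argument is the following. Since $R\cap N\trianglelefteq R$ and $\sigma(r)=1$, conjugation by $r$ preserves $R\cap N$ and acts on $N$ coordinate-wise, acting on $S_i$ as conjugation by $s$. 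Hence $\pi_i(R\cap N)^s=\pi_i(R\cap N)$ for every $s\in S$, so $\pi_i(R\cap N)\trianglelefteq S_i$. For $i=1$ it contains $a\neq 1$ (as $\alpha\in R\cap N$), so by simplicity $\pi_1(R\cap N)=S_1$, and the remaining equalities $\pi_i(R\cap N)=S_i$ then follow from the transitivity of the $R$-action on the direct factors. This is exactly the step the paper supplies (``Since $S$ is non-abelian simple and $\alpha\in R\cap N\trianglelefteq R$, we deduce $\pi_1(R\cap N)=S$''); in your writeup it is asserted rather than proved.
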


\begin{proof}
 We have $N=S_1\times \dots \times S_t$, with $S_i \cong S$ a non-abelian finite
 simple group. 
 By Lemma \ref{simpleex}, we may assume that
 $S = \oo^+_8(2)$. In
 particular, $S$ contains an element $a$ of order four. 

 Assume for a contradiction that $t\geq 2$, and let
 $\alpha:=(a,a^2,1,\dots,1)$ and $\beta:=(a,1,\dots,1).$ Suppose that $G=\langle \alpha, \beta, g_1,\dots,g_d\rangle$ for some $g_1, \ldots, g_d \in G$, and let $Y:=\langle 
g_1,\dots,g_d\rangle$ and $R:=\langle 
 \alpha,g_1,\dots,g_d\rangle$. We apply Lemma~\ref{strano} to 
 $RC_G(N)/C_G(N)$ and  $G/C_G(N)\leq \aut(N)=\aut(S) \wr \sym_t$, with the element $(s, s, \ldots, s)C_G(N) \in G/C_G(N)$,  for an arbitrary $s \in S$, with $i = 1$. We deduce that there  exist $\gamma_2,\dots, \gamma_t \in \aut(S)$ 
 such that $(s,\gamma_2,\dots,\gamma_t)C_G(N) \in RC_G(N)$.

Let $\pi_1$ be the projection from $N$ to $S_1$. Since $S$ is non-abelian simple and $\alpha \in R \cap N \unlhd
  R$, we deduce that $\pi_1(R\cap
  N)=S$. Since $RN=G$, the action of $R$ on 
    $S_1, \dots,
S_t$ is transitive and consequently, arguing as in the proof of
  Lemma~\ref{simpleex}, we see that
  $ R\cap N=\Delta_1\times \dots\times \Delta_u$, where
  $\Delta_i:=\{(s^{\alpha_{i1}},\dots,s^{\alpha_{iv}})\mid s \in S\}$
  is a diagonal subgroup of $S_{j_{i_1}}\times \dots \times
  S_{j_{i_v}}$ and each $\alpha_{ik} \in \aut(S)$.
On the other hand, $\alpha \in R\cap N$, so the order of $a$ 
implies that  $v=1$, hence $ R\cap
N=N$ and $R= RN=G.$ This implies that $\alpha$ and $\beta$ are
dependent, a contradiction since neither of $\alpha$ and $\beta$ is a power of the other.
\end{proof}

\begin{lemma}\label{due}
  Let $N_1$ and $N_2$ be two distinct minimal normal subgroups of a finite group $X$, and let $a\in N_1$ and $b\in N_2.$ If $ab$ and $b$ are independent in $X$, then 
  there exists an isomorphism $\phi :N_1\to N_2$ such that $\phi(a)=b$,  and such that $\phi$ is an $X$-isomorphism if $N_1$ is abelian.
 \end{lemma}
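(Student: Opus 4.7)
The plan is to extract a homomorphism from a witness minimal generating set and then apply Goursat's lemma to identify it as the sought isomorphism. By hypothesis, there is a minimal generating set $\{ab, b, g_1, \ldots, g_k\}$ of $X$; I will set $H := \langle g_1, \ldots, g_k \rangle$ and $M := \langle ab, H\rangle$, so that $M < X$ by minimality. Because $b \in N_2$ and $a^{-1} \in N_1$, the equalities $\langle ab, b, H \rangle = X$ and $b = a^{-1}(ab)$ immediately give $MN_1 = MN_2 = X$. Distinct minimal normal subgroups satisfy $N_1 \cap N_2 = 1$, hence $N_1 N_2 = N_1 \times N_2$.

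Next, I will consider $D := M \cap (N_1 \times N_2)$. Applying the modular law to $N_2 \le N_1 N_2$ and $MN_2 = X$ yields $(M \cap N_1 N_2) N_2 = N_1 N_2$, so the first-factor projection $\pi_1 : D \to N_1$ is surjective; symmetrically $\pi_2(D) = N_2$. Goursat's lemma then realises $D$ as the preimage of the graph of an isomorphism $\theta : N_1/A \xrightarrow{\sim} N_2/B$, where $A := D \cap N_1 = M \cap N_1$ is normal in $N_1$ and $B := D \cap N_2 = M \cap N_2$ is normal in $N_2$.

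The crux of the proof --- and the main obstacle --- is forcing $A = B = 1$. Since $N_1 \unlhd X$, the relation $A \unlhd M$ holds automatically; combined with $A \unlhd N_1$ from Goursat, this yields $A \unlhd \langle M, N_1 \rangle = MN_1 = X$. Moreover $A < N_1$, for otherwise $N_1 \subseteq M$ would force $a \in M$, hence $b = a^{-1}(ab) \in M$ and $M \supseteq \langle ab, b, H \rangle = X$, contradicting $M < X$. The minimality of $N_1$ then forces $A = 1$, and the symmetric argument (using $MN_2 = X$) gives $B = 1$. Therefore $D$ is the graph of an isomorphism $\phi : N_1 \to N_2$, and $(a, b) = ab \in D$ shows $\phi(a) = b$.

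Finally, suppose $N_1$ is abelian. Then $N_2 \cong N_1$ is abelian, so $N_1 \times N_2$ is abelian and every subgroup of it is normal; in particular $D \unlhd N_1 \times N_2$. Combined with $D \unlhd M$, this gives $D \unlhd \langle M, N_1 N_2 \rangle = MN_2 = X$. Under the diagonal conjugation action of $X$ on $N_1 \times N_2$, $X$-invariance of the graph of $\phi$ is precisely the condition $\phi(xnx^{-1}) = x\phi(n)x^{-1}$ for all $n \in N_1$ and $x \in X$, i.e., $\phi$ is an $X$-isomorphism.
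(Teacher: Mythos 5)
Your proof is correct, and it arrives at the same key facts as the paper's proof (in particular, that the subgroup generated by all the other elements of the witness minimal generating set meets both $N_1$ and $N_2$ trivially), but packages them differently. The paper shows directly that $H := \langle ab, x_1,\dots,x_d\rangle$ is a common complement to $N_1$ and $N_2$: it deduces $H \cap N_1 \unlhd X$ by noting that $H \cap N_1$ is normalised by $H$ and \emph{centralised} by $N_2$ (using $[N_1,N_2]=1$ together with $HN_2 = X$), then builds $\phi$ by hand via the bijection $n \mapsto n^*$ and verifies it is a homomorphism. You instead pass to $D = M \cap N_1N_2$, use the modular law to see that $D$ is a subdirect product of $N_1 \times N_2$, and invoke Goursat's lemma; your route to $A = M\cap N_1 \unlhd X$ goes through $A \unlhd N_1$ (a Goursat consequence of the surjectivity of $\pi_1$) combined with $MN_1 = X$, rather than through centralisation by $N_2$ combined with $MN_2 = X$. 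Goursat then hands you the isomorphism for free, with no need to check the homomorphism property separately. For the $X$-equivariance claim, the paper computes $(nn^*)^x = (nn^*)^h \in H$ after writing $x = n'h$, whereas you deduce $D \unlhd X$ from $D \unlhd M$ and $D \unlhd N_1N_2$ (the latter via abelianness of $N_1 \times N_2$) and read off equivariance from the $X$-invariance of the graph; both arguments are sound and roughly the same length. The Goursat framing is slightly heavier machinery but arguably more transparent about what is being proved; the paper's version is more elementary and self-contained.
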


\begin{proof}
	Assume that $ab$ and $b$ are independent, and let $\{ab, b, x_1,\dots, x_d\}$ be a minimal generating set for $X.$ Additionally, let
	$H:=\langle ab,  x_1,\dots, x_d\rangle$, so $H \neq X$. We have $b=a^{-1}(ab)\in N_2\cap HN_1$ and therefore $X=\langle H, b \rangle=HN_1=HN_2.$ Moreover
	$H\cap N_1$ is normalized by $H$ and centralized by $N_2,$ so
	$H\cap N_1$ is normal in $X=HN_2$ and therefore $H\cap N_1=1.$ Similarly $H\cap N_2=1,$ so $H$ is a common complement of $N_1$ and $N_2$ in $X.$ In particular, for any $n$ in $N_1$ there exists a unique $n^*$ in $N_2$ such that $nn^*\in H$, and since $[N_1,N_2] = 1$, the map $\phi: N_1\to N_2$ sending $n$ to $n^*$ is an isomorphism.  Now $ab\in H,$ so $\phi(a)=b.$

	Suppose now that $N_1$ is abelian, and let $n \in N_1$ and $x \in X$. Then $x = n'h$ for some $n' \in N_1$ and $h \in H$. As $[N_1,N_1] = 1 = [N_1,N_2]$, we see that $(n n^*)^x = (n n^*)^h \in H$. Thus $\phi(n^x) = (n^*)^x = \phi(n)^x$, and so $\phi$ is an $X$-isomorphism.
 \end{proof}


\begin{lemma}\label{unico}
  The group $G$ contains at most one non-abelian minimal normal subgroup.
\end{lemma}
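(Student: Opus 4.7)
The plan is to suppose for contradiction that $G$ has two distinct non-abelian minimal normal subgroups $N_1$ and $N_2$, and to derive a contradiction using Theorem~\ref{thm:noncomint} applied to $N_1$ alone. Since the argument will only use the existence of one non-abelian minimal normal subgroup, it in fact establishes the stronger statement that $G$ has none, and in particular at most one. By Lemma~\ref{semplice}, each $N_i$ is a non-abelian simple group, and since $N_1$ and $N_2$ are distinct minimal normals we have $N_1 \cap N_2 = 1$ and $[N_1,N_2]=1$.

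I would first apply Theorem~\ref{thm:noncomint} to $N_1$: the quotient $G/C_G(N_1)$ is an almost simple group with socle isomorphic to $N_1$ (since $N_1 \cap C_G(N_1) = Z(N_1) = 1$), so there exist non-commuting elements $s, x \in N_1$ such that $x$ lies in every maximal subgroup of $G/C_G(N_1)$ whose preimage contains $s$. The central technical step is then to strengthen this to: \emph{every} maximal subgroup $M$ of $G$ containing $s$ must also contain $x$. For this I would split into three cases according to the relationship of $M$ with $N_1$ and $C_G(N_1)$. If $N_1 \leq M$, then $x \in N_1 \leq M$ is immediate. If $N_1 \not\leq M$ but $C_G(N_1) \leq M$, then $M/C_G(N_1)$ is maximal in the almost simple quotient and contains the image of $s$, so Theorem~\ref{thm:noncomint} yields $x \in M$. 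The interesting case is where neither inclusion holds: here maximality of $M$ forces $MC_G(N_1) = G$, and then $M \cap N_1$ is normalised by $M$ (which normalises $N_1$) and centralised by $C_G(N_1)$, hence normal in $G$; minimality of $N_1$ forces $M \cap N_1 \in \{1, N_1\}$, and since $N_1 \not\leq M$ we obtain $M \cap N_1 = 1$, contradicting $1 \neq s \in M \cap N_1$. This case analysis — in particular the observation that $M \cap N_1$ is normal in all of $G$ — is the heart of the matter.

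Once this is in hand, I would observe that $x$ is redundant in any generating set containing $s$: if $\langle s, x, g_1, \ldots, g_d\rangle = G$ but $\langle s, g_1, \ldots, g_d\rangle$ were a proper subgroup, it would lie inside some maximal subgroup $M$ of $G$, which would then contain $s$ and, by the previous paragraph, also $x$, contradicting that $\{s, x, g_1, \ldots, g_d\}$ generates $G$. Hence $\{s, x, g_1, \ldots, g_d\}$ is never a minimal generating set, so $\{s, x\}$ cannot be extended to any minimal generating set for $G$. However $s$ and $x$ are distinct non-commuting elements of $N_1$, so neither is a power of the other; the independence property therefore forces them to lie in some minimal generating set, a contradiction. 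The hardest part of the write-up will be a clean verification of the normality argument in the third case of the intermediate step, as that is where the rest of the reasoning hinges.
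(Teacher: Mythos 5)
Your proposal is correct, but it takes a genuinely different route from the paper, and in fact proves more than the lemma claims. The paper's proof of Lemma~\ref{unico} is deliberately self-contained and avoids Theorem~\ref{thm:noncomint}: it reduces to the case $\frat(G)=1$, applies Lemma~\ref{semplice} to get $N_1,N_2$ simple, and then — crucially — invokes Lemma~\ref{due} to produce an isomorphism $\phi\colon N_1\to N_2$ conjugating elements of equal order, so that $S \cong N_1 \cong N_2$ has the property that elements of equal order are $\aut(S)$-conjugate. The classification of such simple groups (a result of Zhang, \cite{jpz}) then pins $S$ down to a short list; a further argument with elements of order $p^2$ reduces to $S\cong\alt_5$, and a hands-on computation with the order-$5$ element and its normaliser yields the final contradiction. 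By contrast, you apply Theorem~\ref{thm:noncomint} to a \emph{single} non-abelian minimal normal subgroup $N_1$, together with a clean case analysis on the relationship of a maximal subgroup $M$ to $N_1$ and $C_G(N_1)$; the key step in your case (c), showing $M\cap N_1\trianglelefteq G$, is sound. Your argument therefore proves the stronger statement that $G$ has \emph{no} non-abelian minimal normal subgroup, which is also what the paper ultimately obtains, but only later and by a longer route: the paper first passes through Lemma~\ref{unico}, then Corollary~\ref{sunto} (which establishes $E(G)$ simple when $G$ is insoluble), then Lemma~\ref{mustcommute} and Lemma~\ref{lem:final_setup}, and finally derives the contradiction in Theorem~\ref{thm:indsupersol}. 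Indeed, your ``intermediate step'' combines the content of Lemma~\ref{mustcommute} and Lemma~\ref{lem:final_setup} into a single direct argument. The trade-off is that the paper's proof of Lemma~\ref{unico} is more elementary for that particular lemma (postponing the heavy use of Theorem~\ref{thm:noncomint}), while yours is more economical overall and collapses several of the paper's subsequent lemmas.

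One small gap to patch in the write-up: invoking Lemma~\ref{semplice} requires $\frat(G)=1$. You should begin, as the paper does, by observing that if $\frat(G)\neq 1$ then Proposition~\ref{frattini} forces $G$ to be a cyclic $p$-group or $Q_8$, both of which are nilpotent and so have no non-abelian minimal normal subgroup, and then proceed under the assumption $\frat(G)=1$.
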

\begin{proof}  If $\frat(G) \neq 1$ then the result is immediate from Proposition \ref{frattini}, so assume $\frat(G)=1.$
  Assume further, for a  contradiction, that $N_1$ and $N_2$ are distinct non-abelian minimal normal subgroups of $G$. By Lemma~\ref{semplice},  $N_1$ and $N_2$ are simple. Therefore, there exist $a\in N_1$ and $b\in N_2$ such that  $|a|=|b|\neq 1.$

  Since neither of $ab$ and $b$ is a power of the other, they are independent and therefore, by Lemma \ref{due}, there exist a non-abelian simple group $S$ with $N_1 \cong N_2 \cong S$, and an element $\phi \in \aut(S)$ such that $b=\phi(a).$ So $S$ is a non-abelian simple group such that all elements of the same order are conjugate in its automorphism group. By \cite[Thm.~3.1]{jpz}, $S\in \{\psl_2(5), \psl_2(7), \psl_2(8), \psl_2(9), \psl_3(4)\}$.

  Assume that $S$ contains an element $a$ of order $p^2$, for some prime $p,$ and consider $(a,a^p)$ and $(1,a^p)$ in $N_1\times N_2.$ Neither of these elements is a power of the other, so they are independent, contradicting Lemma~\ref{due}. So $S$ contains no elements of order $p^2$, and therefore $S \cong \psl_2(5) \cong \alt_5$. 

  Here, we consider an element $u$ of order $5$ in $S$ and an involution  $t$ in $\mathrm{N}_S(\langle u \rangle)$. Then $M:=\langle t, u\rangle$ is isomorphic to the dihedral group of order 10,  and is the unique maximal subgroup of $S$ containing $u.$ Let $x:=(t,u), y:=(1,t) \in N_1\times N_2\cong S^2.$  Neither of $x$ and $y$ is a power of the other, so they are independent.

  Let $\{x, y, g_1,\dots,g_d\}$ be a minimal generating set for $G$, and define $H:=\langle x, g_1,\dots, g_d\rangle.$ We have $HN_2=G$,  since $y \in N_2.$
  Let $X:=HN_1\cap N_1N_2.$ Now $(t, u) \in H$ and $(t^{-1}, 1) \in N_1$, so $(1, u) \in HN_1 \cap N_1 N_2 = X$. Furthermore, $\langle (1, t), (1, u) \rangle N_1$ is the unique maximal subgroup of $N_1N_2$ that contains $\langle (1, u), N_1 \rangle$, so if $X\neq N_1N_2,$ then $X \leq \langle (1,t), (1,u)\rangle N_1$.
  But then $y$ normalizes $X$ and consequently $X \unlhd G,$ contradicting the minimality of $N_2$.
  
  Thus $X = N_1N_2 \leq HN_1$ 
  and using  Dedekind's modular law and $HN_2 = G$ we see that \[(H\cap N_1N_2)N_1 =  N_1N_2 =(H\cap N_1N_2)N_2.\] In particular $H \cap N_1 N_2$ is a subdirect product of $N_1 N_2 \cong
 \alt_5^2$. If $N_1N_2\leq H,$ then $G=N_1N_2\langle  g_1,\dots, g_d\rangle\leq H,$ a contradiction. Hence $H\cap N_1N_2 = \{(s,s^\phi)\mid s \in S\}$ for some $\phi \in \aut(S)$, but this contradicts $(t,u)\in H\cap N_1N_2.$
\end{proof}

In the following, we write $F(G)$ for the Fitting subgroup of $G$ (the largest normal nilpotent subgroup of $G$), $E(G)$ for the subgroup of $G$ generated by all quasisimple subnormal subgroups of $G$,  and $F^*(G)$ for the generalized Fitting subgroup of $G$ (the subgroup generated by $F(G)$ and $E(G)$).

\begin{cor}\label{sunto}
	If $G$ is insoluble, then:
	\begin{enumerate}[label={(\roman*)},font=\upshape]
		\item $E(G)$ is a non-abelian simple group;  and
		\item $G/E(G)$ is soluble.	\end{enumerate}
\end{cor}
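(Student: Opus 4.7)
The plan is to combine Lemmas~\ref{ciclici}, \ref{semplice} and \ref{unico} with standard facts about the generalised Fitting subgroup. Since $G$ is insoluble, Proposition~\ref{frattini} forces $\frat(G) = 1$, so each $O_p(G)$ is elementary abelian and $F(G)$ is abelian; the classical consequence of $\frat(G) = 1$ due to Gasch\"utz then decomposes $F(G)$ as a direct product of minimal normal subgroups of $G$. By Lemma~\ref{ciclici} each such summand is cyclic of prime order $p_i$, so $G$ acts on $F(G)$ through characters $\chi_i \colon G \to \mathbb{F}_{p_i}^*$ and $G/C_G(F(G))$ is abelian.

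For part~(i), suppose first that $E(G) = 1$. Then $F^*(G) = F(G)$, and since $F(G)$ is abelian the standard inclusion $C_G(F^*(G)) \le F^*(G)$ yields $C_G(F(G)) = F(G)$; combining with the previous paragraph, $G/F(G)$ is abelian and $G$ is soluble, contradicting insolubility. Hence $E(G) \ne 1$, and any component $L$ of $G$ is subnormal in $G$, so $\frat(L) \le \frat(G) = 1$; via the standard inclusion $Z(L) \le \frat(L)$ for quasisimple $L$, this forces $Z(L) = 1$ and $L$ is a non-abelian simple group. The product of the $G$-conjugates of $L$ is a non-abelian minimal normal subgroup of $G$; Lemma~\ref{semplice} forces it to be simple, so $L$ itself is normal in $G$, and Lemma~\ref{unico} guarantees that it is the unique such component. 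Therefore $E(G) = L$ is non-abelian simple.

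For part~(ii), set $N := E(G)$. Since $Z(N) = 1$ we have $N \cap C_G(N) = 1$, so $NC_G(N)/N \cong C_G(N)$, and $G/NC_G(N) \hookrightarrow \out(N)$ is soluble by Schreier's conjecture (via the classification of finite simple groups). Hence $G/N$ is soluble if and only if $C_G(N)$ is soluble. To establish the latter, I would first observe that $F^*(G) = F(G) \times N$ (the product being direct because $F(G) \cap E(G) = Z(E(G)) = 1$), and then show $C_G(F^*(G)) = F(G)$: any element $fn \in F^*(G)$ with $f \in F(G)$ and $n \in N$ that centralises $N$ must satisfy $n \in Z(N) = 1$, because $F(G)$ already centralises $N$. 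Consequently
\[
  C_G(N) \cap C_G(F(G)) = C_G\bigl(\langle N, F(G)\rangle\bigr) = C_G(F^*(G)) = F(G),
\]
so $C_G(N)/F(G)$ embeds in the abelian group $G/C_G(F(G))$. Thus $C_G(N)$ is metabelian, hence soluble, completing the proof.

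The main obstacle is justifying the $G$-module semisimplicity of $F(G)$, since Lemma~\ref{ciclici} on its own controls only the minimal normal subgroups of $G$ and not the full $\mathbb{F}_pG$-module structure of each $O_p(G)$. This requires invoking the Gasch\"utz-type result that when $\frat(G) = 1$, every chief factor of $G$ lying in the abelian $F(G)$ is complemented in $G$ (and therefore, by intersecting a complement with $F(G)$, is complemented in $F(G)$). With that in hand, the remainder is a clean chase through the centraliser identities afforded by $F^*(G)$, and the invocation of Schreier for $\out(N)$ is routine given the paper's reliance on CFSG throughout.
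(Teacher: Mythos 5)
Your proof is correct and follows essentially the same path as the paper: both exploit $\frat(G)=1$ together with Lemmas~\ref{ciclici}, \ref{semplice} and \ref{unico}, the decomposition $F^*(G)=F(G)\times E(G)$ with $C_G(F^*(G))=F(G)$ (equivalently $C_G(\soc(G))=F(G)$), the fact that $G/C_G(F(G))$ is abelian, and Schreier's conjecture for the almost simple quotient. The only cosmetic difference is in how part~(ii) is packaged: the paper embeds $G/F(G)$ into the product $G/C_G(S)\times G/C_G(F(G))$ and reads off the composition factors, whereas you run up the tower $F(G)\le C_G(N)\le NC_G(N)\le G$ and check each successive quotient is soluble.
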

\begin{proof}
  The Frattini subgroup of $G$ is trivial, otherwise, by Proposition \ref{frattini}, $G$ would be nilpotent. This implies that all quasisimple subnormal subgroups of $G$ are simple, and so $\soc(G) = F^*(G) = E(G) \times F(G)$. Moreover $C_G(\soc(G))=F(G),$ and we may write $F(G)=N_1\times \cdots \times N_t$ as a product of abelian minimal normal subgroups of $G.$

  By Lemma \ref{ciclici}, for $1\leq i\leq t$, $N_i\cong C_{p_i}$ for a suitable prime $p_i$ and therefore
  \[G/C_G(F(G)) \cong G/\bigcap_{1\leq i \leq t}C_G(N_i)\leq \prod_{1\leq i \leq t}G/C_G(N_i) \leq \prod_{1\leq i\leq t}\aut(C_{p_i})\]
is abelian. If $E(G) = 1$, then $\soc(G) = F(G)$, so $C_G(F(G))=F(G)$ and $G$ is metabelian, a contradiction.  Hence $G$ contains a non-abelian minimal normal subgroup and so by Lemmas~\ref{semplice} and \ref{unico}, $E(G)$ is a non-abelian simple group $S$. Therefore, 
\[G/F(G)=G/C_G(\soc(G))=G/(C_G(S)\cap C_G(F(G))\leq G/C_G(S) \times G/C_G(F(G)).
\]
Since $G/C_G(S)$ is almost simple with socle isomorphic to $S$, and $G/C_G(F(G))$ is abelian, we conclude that $S = E(G)$ is the unique non-abelian composition factor of $G$ and therefore $G/E(G)$ is soluble.
\end{proof}

We now 
show that in an insoluble group with the independence property, certain elements are dependent, and so in particular must commute.

\begin{lemma}\label{mustcommute}
  Suppose that $G$ is insoluble and let $S=E(G)$ be the unique non-abelian minimal normal subgroup of $G$.
  Suppose in addition that there exist $x, s \in S$
      such that $xC_G(S)$ and $sC_G(S)$ are dependent in  $G/C_G(S)$.
   Then
$x$ and $s$ are dependent, and in particular they commute.
\end{lemma}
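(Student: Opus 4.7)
The plan is to establish the contrapositive: if $x$ and $s$ are independent in $G$, then the images $\bar x := xC_G(S)$ and $\bar s := sC_G(S)$ are independent in $\bar G := G/C_G(S)$. Since $G$ satisfies the independence property, a pair of distinct dependent elements must have one a power of the other and hence commute, which delivers the ``in particular'' clause.

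First I would fix a minimal generating set $X = \{x, s, g_1, \ldots, g_d\}$ of $G$ and set $C := C_G(S)$. Among the subsets $J \subseteq \{1, \ldots, d\}$ for which $T_J := \{\bar x, \bar s\} \cup \{\bar g_j : j \in J\}$ generates $\bar G$, I would choose one of smallest cardinality; such a $J$ exists because $J = \{1, \ldots, d\}$ works. By minimality of $|J|$, no $\bar g_j$ with $j \in J$ can be removed from $T_J$ without losing generation, so it suffices to show that neither $\bar x$ nor $\bar s$ is redundant in $T_J$; then $T_J$ is a minimal generating set of $\bar G$ containing both, contradicting the assumed dependence of $\bar x$ and $\bar s$.

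By symmetry, the hard case is when $\bar x$ is redundant, so that $H := \langle s, g_j : j \in J \rangle$ satisfies $HC = G$. The key step is then to show that $S \leq H$. Setting $D := H \cap C$, the isomorphism $H/D \cong \bar G$ forces the preimage $K := H \cap SC$ of the socle $\bar S$ of $\bar G$ to satisfy $K/D \cong S$. Since $[S, C] = 1$, one identifies $SC$ with $S \times C$, and each $\sigma \in S$ has a lift in $K$ of the form $(\sigma, c)$ with $c \in C$ determined modulo $D$. The assignment $\sigma \mapsto cD$ is a homomorphism $S \to C/D$. As $C$ embeds into the soluble group $G/S$ (by Corollary~\ref{sunto}, since $C \cap S = Z(S) = 1$), the quotient $C/D$ is soluble, so this homomorphism must be trivial. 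Hence $K = S \times D$, and in particular $S \leq H$.

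Given $S \leq H$, the element $x \in S$ lies in $\langle s, g_1, \ldots, g_d \rangle$, which thus contains every element of $X$ and therefore equals $G$, contradicting the minimality of $X$. The main obstacle is this extension-theoretic step: showing that $HC = G$ forces $S \leq H$, via the triviality of any homomorphism from the non-abelian simple group $S$ to the soluble quotient $C/D$. The remainder is routine bookkeeping, translating a minimal generating set of $G$ into a minimal generating set of $\bar G$ that retains both $\bar x$ and $\bar s$.
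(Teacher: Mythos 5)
Your proof is correct, and the overall scaffolding (argue by contradiction; pass to the quotient $\bar G = G/C_G(S)$; use the dependence of $\bar x,\bar s$ plus minimality of the chosen subset of $\bar g_j$'s to force one of $\bar x,\bar s$ to be redundant modulo $C_G(S)$; then derive the final contradiction with the minimality of the generating set of $G$) is the same as in the paper. Where you genuinely diverge is in the key step showing that the subgroup $H=\langle s, g_j:j\in J\rangle$ with $HC_G(S)=G$ must contain $S$. The paper instead forms $R:=\langle z,g_1,\dots,g_t\rangle$, observes that $R\cap S$ is normalised both by $R$ and (since $C_G(S)$ centralises $S$) by $C_G(S)$, hence by $RC_G(S)=G$; by minimal normality of $S$ this forces $R\cap S\in\{1,S\}$, and $R\cap S=1$ is excluded because $R\cong RS/S=G/S$ would then be soluble by Corollary~\ref{sunto}, contradicting that $R$ surjects onto the insoluble almost simple group $\bar G$. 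You instead work explicitly inside $SC_G(S)=S\times C_G(S)$: you take the preimage $K=H\cap SC_G(S)$ of the socle, show $K$ projects onto $S$ with kernel $D=H\cap C_G(S)$, produce a homomorphism $S\to C_G(S)/D$, note this must be trivial because $C_G(S)$ embeds into the soluble $G/S$, and deduce $K=S\times D$, so $S\le H$. Both routes ultimately rest on the same two facts --- simplicity of $S$ and solubility of $G/S$ --- but the paper's use of minimal normality is a little quicker than your homomorphism argument; yours has the mild advantage of not explicitly invoking that $S$ is a \emph{minimal} normal subgroup, only that it is simple.
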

\begin{proof}
 Let $\overline G:=G/C_G(S),$ and, for all $g\in G,$ let $\overline g:=gC_G(S).$
 Assume for a contradiction that $\{x,s,g_1,\dots,g_t\}$ is a minimal generating set for $G.$
 We may order the indices so that, for some $r$, the subset $\{ g_1, \dots,  g_r\}$ of $\{ g_1, \dots,  g_t\}$ is minimal, subject to $\overline G=\langle \overline x, \overline s, \overline g_1,\dots, \overline g_r\rangle.$ It follows that there exists $z\in \{x,s\}$ such that $\overline G= \langle   \overline z, \overline g_1,\dots, \overline g_r\rangle.$

 Let
	$R:=\langle z, g_1, \dots, g_t\rangle.$ Clearly
	$G 
        = S\langle z,g_1,\dots,g_t \rangle=RS.$ If $R\cap S=1,$ then $R\cong G/S$ is soluble by Corollary \ref{sunto}, contradicting $RC_G(S)/C_G(S) = \overline G$.
	Thus $R\cap S$ is a non-trivial and normalized by $RC_G(S)=G$, and consequently $S=R\cap S$ and $G=R,$ contradicting the fact that $\{x,s,g_1,\dots,g_t\}$ is a minimal generating set for $G.$
        Hence $x$ and $s$ are dependent, so $\langle x, s\rangle$ is cyclic. 
\end{proof}

Recall that for a group $X$ and an element $x \in X$, we write $\mathcal{M}_X(x)$ for the set of maximal subgroups of $X$ containing $s$. 

\begin{lemma}\label{lem:final_setup}
  Let $X$ be a finite group and let $x, y \in X$ be such that $y \in  \cap_{M \in \mathcal{M}_X(x)} M$.
  Then no minimal generating set for $X$ contains $\{x, y\}$.
\end{lemma}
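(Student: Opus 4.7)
The plan is to argue by contradiction: suppose $\{x, y, g_1, \dots, g_t\}$ is a minimal generating set for $X$ containing the pair $\{x, y\}$. The minimality condition says that no proper subset generates $X$, so in particular removing $y$ produces a proper subgroup $H := \langle x, g_1, \dots, g_t \rangle < X$.

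Since $H$ is a proper subgroup of the finite group $X$, it is contained in some maximal subgroup $M$ of $X$. By construction $x \in H \leq M$, and therefore $M \in \mathcal{M}_X(x)$. The hypothesis $y \in \bigcap_{M \in \mathcal{M}_X(x)} M$ then forces $y \in M$ as well. Combining these observations,
\[
X = \langle x, y, g_1, \dots, g_t \rangle \leq M,
\]
contradicting the fact that $M$ is a proper (maximal) subgroup of $X$. This contradiction shows that no minimal generating set for $X$ can contain $\{x,y\}$.

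I do not expect any real obstacle here: the entire argument is a direct application of the definitions of minimal generating set and of $\mathcal{M}_X(x)$, together with the standard fact that every proper subgroup of a finite group lies in a maximal subgroup. The only point that needs a moment of care is the use of minimality to conclude that $H \neq X$ (so that a maximal subgroup above $H$ exists); this is immediate since otherwise $\{x, g_1, \dots, g_t\}$ would already generate $X$, contradicting minimality of the given set.
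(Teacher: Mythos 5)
Your proof is correct and is essentially the same as the paper's: both drop $y$, observe that the remaining generators lie in a proper subgroup contained in some maximal $M \in \mathcal{M}_X(x)$, and derive a contradiction from $y \in M$. No differences worth noting.
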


\begin{proof}
Suppose, for a  contradiction, that there exist $g_1,\dots,g_d\in X$ such  that $\{x,y,g_1,\dots,g_d\}$ is a minimal generating set for $X.$ Then $\langle x, g_1,\dots,g_d\rangle\neq X$, so there exists $M\in \mathcal{M}_X(x)$ such that $\langle x, g_1,\dots,g_d\rangle\leq M.$ By assumption, $y \in M$, so $X=\langle x, y, g_1,\dots,g_d\rangle\leq M,$ a contradiction.
\end{proof}

Finally, we reach the main result of this section. Recall that $G$ is assumed to be a finite group that satisfies the independence property.

\begin{thm} \label{thm:indsupersol}
The group $G$ is supersoluble.
\end{thm}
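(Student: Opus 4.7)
The plan is to combine the structural lemmas of this section with Theorem~\ref{thm:noncomint} to first prove that $G$ is soluble, and then to bootstrap to supersolubility by induction, using Lemma~\ref{ciclici} as the engine.

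First, if $\frat(G)\neq 1$, Proposition~\ref{frattini} gives that $G$ is a cyclic $p$-group or $Q_8$, both of which are supersoluble, so assume $\frat(G)=1$. Second, suppose for a contradiction that $G$ is insoluble. By Corollary~\ref{sunto}, $S:=E(G)$ is a non-abelian finite simple group and $G/S$ is soluble; since $Z(S)=1$, the centraliser $C:=C_G(S)$ intersects $S$ trivially, and $\bar G := G/C$ is almost simple with socle canonically identified with $S$. Theorem~\ref{thm:noncomint} yields non-commuting elements $s,x \in S$ such that $x \in \bigcap_{M \in \mathcal{M}_H(s)} M$ for every almost simple group $H$ with socle $S$; applied to $H = \bar G$, this shows $\bar x \in \bigcap_{M \in \mathcal{M}_{\bar G}(\bar s)} M$. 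Lemma~\ref{lem:final_setup} then says no minimal generating set of $\bar G$ contains $\{\bar s,\bar x\}$, so these elements are dependent in $\bar G$. Lemma~\ref{mustcommute} transports this dependence back to $G$, and the independence property forces one of $s,x$ to be a power of the other --- in particular, they commute, contradicting the choice of $s$ and $x$. Hence $G$ is soluble.

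Third, to deduce supersolubility I would proceed by induction on $|G|$. A minimal normal subgroup $N$ of $G$ is abelian (as $G$ is soluble), so by Lemma~\ref{ciclici}, $N\cong C_p$. Since $N$ is cyclic, lifting a supersoluble chief series of $G/N$ back to $G$ reduces the task to showing $G/N$ is supersoluble; the key step is then to verify that $G/N$ again satisfies the independence property, so that the inductive hypothesis applies. Given $\bar a,\bar b\in G/N$ with neither a power of the other, I would lift to $a,b\in G$ (still not powers of one another, else the relation would persist in $G/N$), use the independence property of $G$ to extend $\{a,b\}$ to a minimal generating set $\{a,b,g_1,\dots,g_d\}$, and extract from its projection a minimal generating subset of $G/N$ retaining $\{\bar a,\bar b\}$.

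The main obstacle is this last descent step. The projection of the minimal generating set of $G$ need not be minimal in $G/N$, and dropping $\bar a$ (say) from the projection would produce a proper subgroup $H<G$ with $HN=G$; since $|N|$ is prime, $H\cap N=1$, so $H$ is a complement of $N$ of index $p$. A Frattini-style argument in the spirit of Lemma~\ref{ciclici}, together with $\frat(G)=1$ and the minimality of the original generating set of $G$, is then needed either to rule this scenario out or to construct an alternative minimal generating set of $G/N\cong H$ containing $\{\bar a,\bar b\}$.
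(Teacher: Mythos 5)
Your first two paragraphs (handling $\frat(G)\neq 1$ via Proposition~\ref{frattini}, then deducing solubility from Corollary~\ref{sunto}, Theorem~\ref{thm:noncomint}, Lemma~\ref{lem:final_setup} and Lemma~\ref{mustcommute}) follow the paper's argument exactly and are correct.

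The supersolubility step, however, contains a genuine gap, which you yourself flag. Your plan is to pass to $G/N$ for a minimal normal $N\cong C_p$ and invoke the inductive hypothesis, but this requires that $G/N$ inherits the independence property, and you do not prove this. In fact the proposed descent fails precisely in the scenario you describe: if the image of a minimal generating set $\{a,b,g_1,\dots,g_d\}$ of $G$ is not minimal in $G/N$, one can only conclude that some proper $H<G$ satisfies $HN=G$; there is no clear way to recover a minimal generating set of $G/N$ containing both $\bar a$ and $\bar b$ without essentially re-proving the independence property for $G/N$, which is not known to hold. (Indeed nothing in the paper suggests the independence property is quotient-closed.) The paper avoids this entirely: once $G$ is known to be soluble with $\frat(G)=1$, Lemma~\ref{ciclici} shows $F(G)=N_1\times\cdots\times N_u$ is a direct product of minimal normal subgroups of prime order, and since $C_G(F(G))=F(G)$ for a soluble group with trivial Frattini subgroup, the quotient $G/F(G)$ embeds in $\prod_i \aut(N_i)$, which is abelian because each $N_i$ is cyclic of prime order. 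Thus $G$ has a normal series with cyclic factors and is supersoluble --- no induction and no claim about $G/N$ is needed. You should replace your third and fourth paragraphs with this direct argument.
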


\begin{proof}
  First assume that $G$ is insoluble, so that $S:=E(G)$ is  non-abelian simple
     by
    Corollary~\ref{sunto}. Let $H \cong G/C_G(S).$ By Theorem~\ref{thm:noncomint},
  $\soc(H) \cong S$ contains two non-commuting elements $xC_G(S)$ and
  $sC_G(S)$ with the property that every maximal subgroup of
  $H$ containing $sC_G(S)$ also contains $xC_G(S)$. 
    Furthermore, since $S \cap C_G(S) = 1$,  we can choose the corresponding 
    $x, s$ to lie in $E(G)$.   By Lemma~\ref{lem:final_setup}, no minimal generating set for $H$ contains $\{xC_G(S), C_G(S)\}$, that is, $xC_G(S)$ and $yC_G(S)$ are dependent in $H$. But then, by Lemma \ref{mustcommute}, $s$ and 
  $x$ must commute, a contradiction. So $G$ is soluble.

  If  $\frat(G) \neq 1$ then $G$ is nilpotent, by Proposition~\ref{frattini}, so assume otherwise.  By Lemma \ref{ciclici}, $F(G)= N_1\times \dots \times N_u$ is a direct product of minimal normal subgroups of prime order, and $G/F(G)\leq \prod_{1\leq i \leq u} \aut(N_i)$ is abelian, hence $G$ is supersoluble.
\end{proof}

\section{Supersoluble groups with the independence property}\label{superind}
In this section we shall determine the structure of the finite supersoluble groups $G$ satisfying the independence property. By Proposition \ref{frattini}, we may restrict our attention to the case $\frat(G)=1$.
This implies that the Fitting subgroup $\fit(G)$ of $G$ is a direct product of minimal normal subgroups of $G,$ and is complemented in $G$. Let $K$ be a complement of $\fit(G)$ in $G.$ Since $G$ is supersoluble,
$G^\prime \leq \fit(G),$ and consequently $K$ is abelian. Let $W$ be a complement of $Z(G)$ in $\fit(G).$ Then $G=W \rtimes H,$ with $H=\langle K, Z(G)\rangle.$ Moreover, by Lemma~\ref{ciclici}, each minimal normal subgroup of $G$ is cyclic (of prime order).
As a consequence, we assume through this section that
\[G=(V_1^{\delta_1} \times \dots \times {V_r^{\delta_r})}\rtimes H,\]
where $H$ is abelian,  $\delta_1, \dots, \delta_r$ are positive integers, and $V_1,\dots,V_r$ are
  irreducible $H$-modules of prime order that are pairwise non-$H$-isomorphic, such that $H$ acts non-trivially on each. (So if $G$ is abelian then $r = 0$ and $G = H$.)
For each $i \in \{1, \ldots, r\}$, 
we set 
\[\begin{array}{c} p_i := |V_i|, \quad
           W_i:=V_i^{\delta_i}, \quad \text{and}
  \quad W:=V_1^{\delta_1} \times \dots \times {V_r^{\delta_r}} = W_1
    \times \dots \times W_r.
  \end{array}         
\]

Observe now that,  for each $h$ in $H$, 
there exists an $\alpha_i(h) \in \mathbb{F}_{p_i}^\times$ such that
$w^h=\alpha_i(h)w$ for all $w\in W_i.$

\begin{notation}\label{not:matrix}
  Let $g_1:= h_1(w_{1,1},\dots,w_{1,r}),\dots, g_t:=h_t(w_{t,1},\dots,w_{t,r})$ be elements of $G$, with $h_i \in H$ and $w_{i,j}:=(x_{i,j,1},\dots,x_{i,j,\delta_j})\in W_j$.  For each $j \in \{1,\ldots,r\}$,  we define a matrix $A^{(j)}=A^{(j)}(g_1,\ldots,g_t)$ whose columns are $a^{(j)}_1,\ldots,a^{(j)}_t$, where for each $i \in \{1,\ldots,t\}$ the transpose of $a^{(j)}_i$ is
\[ 
     ( 1 - \alpha_j(h_i),
      x_{i, j, 1},
      x_{i, j, 2},
      \ldots,
      x_{i, j, \delta_j}) \in \mathbb{F}_{p_j}^{\delta_j + 1}.\]
\end{notation}
\begin{lemma}\label{corone}
  Let $g_1, \ldots, g_t$ be as in Notation~\ref{not:matrix}.
 Then $G = \langle g_1,\dots,g_t\rangle$ if and only if $\langle h_1,\dots,h_t\rangle=H$ and ${\rm{rank}}(A^{(j)})=\delta_{j}+1$ for all $j \in \{1, \ldots, r\}$. 
\end{lemma}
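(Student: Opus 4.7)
The plan is to prove both directions using the semidirect structure $G = W \rtimes H$ together with the multiplication law
\[ (h_1 w_1)(h_2 w_2) = h_1 h_2\bigl(\alpha(h_2) w_1 + w_2\bigr), \]
acting coordinatewise on each $W_j$-component, where I write $W_j$ additively and $\alpha$ stands for $\alpha_j$ on the $j$-th component.

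For the forward direction, projecting $G \to G/W \cong H$ immediately yields $\langle h_1, \ldots, h_t\rangle = H$. For the rank condition I argue by contrapositive: fix $j$ and suppose $(c_0, c_1, \ldots, c_{\delta_j})$ is a nonzero left null vector of $A^{(j)}$. I then show that the subset
\[
M := \left\{ h(w^{(1)}, \ldots, w^{(r)}) \in G : c_0(1 - \alpha_j(h)) + \sum_{k=1}^{\delta_j} c_k w^{(j)}_k = 0 \right\}
\]
is a proper subgroup of $G$ containing every $g_i$. Closure under multiplication is a short calculation using the identity $1 - \alpha_j(h_1 h_2) = \alpha_j(h_2)(1 - \alpha_j(h_1)) + (1 - \alpha_j(h_2))$. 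Properness follows by choosing either $w \in W_j$ with $\sum_k c_k w_k \neq 0$ (if some $c_k \neq 0$ for $k \geq 1$), or otherwise choosing $h \in H$ with $\alpha_j(h) \neq 1$ (possible since $\alpha_j$ is non-trivial). This contradicts $G = \langle g_1, \ldots, g_t\rangle \subseteq M$.

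For the reverse direction, set $L := \langle g_1, \ldots, g_t\rangle$. The hypothesis $\langle h_i\rangle = H$ forces $LW = G$, and $L \cap W$ is then normal in $G$, being $L$-invariant and centralized by the abelian $W$. Assuming $L < G$, I take a maximal subgroup $M \supseteq L$; since $L$ surjects onto $H$, we cannot have $W \leq M$, so $MW = G$ and $M \cap W$ is a maximal $H$-invariant subgroup of $W$. Pairwise non-isomorphism of the irreducible $V_i$ forces $W/(M \cap W) \cong V_j$ for a unique $j$, whence $M \cap W = \bigoplus_{i \neq j} W_i \oplus U_j$ for a hyperplane $U_j = \ker \ell$ of $W_j$. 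In $\bar G := G/(M \cap W) \cong V_j \rtimes H$, the image $\bar M$ is a complement of $V_j$. The key calculation is that $H^1(H, V_j) = 0$: a derivation $d : H \to V_j$ satisfies $d(h_1 h_2) = d(h_2 h_1)$ by commutativity of $H$, rearranging to $(\alpha_j(h_2) - 1) d(h_1) = (\alpha_j(h_1) - 1) d(h_2)$, and picking $h_2$ with $\alpha_j(h_2) \neq 1$ exhibits $d$ as a coboundary. Hence $\bar M = u H u^{-1} = \{ h(\alpha_j(h) - 1) u : h \in H\}$ for some $u \in V_j$. Translating $\bar g_i \in \bar M$ back, with $\tilde u \in W_j$ a lift of $u$, gives $\ell(w^{(j)}_i) = (\alpha_j(h_i) - 1) \ell(\tilde u)$ for each $i$, which reads off as the nontrivial linear relation $(\ell(\tilde u), c_1, \ldots, c_{\delta_j}) \cdot A^{(j)} = 0$ on the rows, contradicting the rank hypothesis.

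I expect the main obstacle to be the reverse direction, and in particular its two structural ingredients: identifying $M \cap W$ with a single $V_j$-isotypic hyperplane (which uses pairwise non-isomorphism of the $V_i$), and the classification of complements of $V_j$ in $\bar G$ by $V_j$-conjugates of $H$ (which uses $H$ abelian and $\alpha_j$ non-trivial). Both are built into the standing hypotheses of Section~\ref{superind}; once they are in hand, the passage between complements in $\bar G$ and left null vectors of $A^{(j)}$ is routine linear algebra, and the forward direction reduces to writing down the right ad~hoc subgroup $M$.
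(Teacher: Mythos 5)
Your proof is correct and complete. Note that the paper does not actually prove this lemma but simply refers the reader to Propositions 2.1 and 2.2 of the cited work of Lucchini and Mar\'oti; what you have supplied is a direct, self-contained derivation of the statement, which I checked in detail. The forward direction via an explicit subgroup cut out by a left null vector of $A^{(j)}$ (closure following from the cocycle-type identity $1-\alpha_j(h_1h_2)=\alpha_j(h_2)(1-\alpha_j(h_1))+(1-\alpha_j(h_2))$, properness from nontriviality of $\alpha_j$) is exactly right. The reverse direction correctly reduces to classifying complements of $V_j$ in $\bar G$: since $H$ is abelian and $\alpha_j\ne 1$, your computation shows any derivation $d:H\to V_j$ satisfies $d(h)=(\alpha_j(h)-1)u$ for $u=(\alpha_j(h_0)-1)^{-1}d(h_0)$, i.e.\ $H^1(H,V_j)=0$, so $\bar M$ is a $V_j$-conjugate of $H$, and translating $\bar g_i\in\bar M$ produces precisely the left null vector $(\ell(\tilde u),c_1,\ldots,c_{\delta_j})$ of $A^{(j)}$, which is nonzero because $(c_1,\ldots,c_{\delta_j})\ne 0$. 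Two minor points worth flagging: you should say explicitly that $M\cap W$ is normal in $G$ because it is normal in $M$ and centralised by the abelian $W$, hence by $MW=G$ (you do say this, and it is the right justification); and the identification of $M\cap W$ as $\bigoplus_{i\ne j}W_i\oplus U_j$ does use both pairwise non-isomorphism of the $V_i$ and the fact that $H$ acts on $W_j$ by scalars (so every hyperplane of $W_j$ is an $H$-submodule), both of which are among the section's standing hypotheses, as you note.
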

\begin{proof}
  See Propositions 2.1 and 
    2.2 in \cite{cliq}.
\end{proof}

\begin{lemma}\label{lem:three_conditions}
  Suppose that $G$ satisfies the independence property. Then the following statements hold.
	\begin{enumerate}[label={(\roman*)},font=\upshape]
        \item If $\delta_i=1,$ then $|H/C_H(V_i)|$ is prime. 
			\item If $|V_i|=|V_j|$, then $i=j.$
		\item $(|H|,|W|)=1.$
	\end{enumerate}
\end{lemma}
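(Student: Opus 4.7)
My plan is to prove each of the three parts by contradiction: assuming the stated hypothesis fails, I exhibit a pair $x, y \in G$ such that neither is a power of the other yet $\{x,y\}$ does not extend to any minimal generating set for $G$, violating the independence property. The main technical tool is Lemma~\ref{corone}: a subset $\{g_1,\ldots,g_t\}$ generates $G$ precisely when its $H$-projection generates $H$ and each matrix $A^{(j)}$ has rank $\delta_j + 1$. Consequently, an element of a candidate generating set is \emph{redundant} (removable while preserving generation) exactly when removing it preserves both the $H$-projection and every $A^{(j)}$-rank.

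For (iii), suppose some prime $p$ divides both $|H|$ and $|W|$; then $p = p_i$ for some $i$, and by Cauchy's theorem there is $h \in H$ of order $p$. Since $\alpha_i(h) \in \mathbb{F}_{p_i}^\times$ has order dividing $\gcd(p,p-1)=1$, we get $h \in C_H(V_i)$. Fix $v_0 \in V_i \setminus \{0\}$, let $v \in W_i$ have $v_0$ in its first coordinate and zeros elsewhere, and set $x := v$ and $y := hv$. The $H$-projections $1$ and $h$ differ, so neither is a power of the other. The columns of $x$ and $y$ in $A^{(i)}$ are identical (both equal $(0, v_0, 0, \ldots, 0)^T$ because $\alpha_i(h)=1$), the column of $x$ in every other $A^{(j)}$ is zero, and $x$ contributes nothing to the $H$-projection. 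Hence $x$ is redundant in every generating set containing $y$, so $\{x,y\}$ is dependent.

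For (ii), suppose $|V_i|=|V_j| = p$ with $i\ne j$. Because $H$ acts non-trivially on each, we have $p \ge 3$, so we may pick $\lambda \in \mathbb{F}_p^\times \setminus \{1\}$. Let $v \in W_i$ and $w \in W_j$ be the lifts of nonzero vectors $v_0$ and $w_0$ to first coordinates, and set $x := vw$ and $y := (\lambda v) w$. Then $y = x^k$ would force $k \equiv \lambda$ and $k \equiv 1 \pmod p$, which is impossible; the same holds with $x$ and $y$ swapped. Both $x$ and $y$ have trivial $H$-part and zero columns in $A^{(k)}$ for $k \notin \{i,j\}$; in $A^{(i)}$ their columns are proportional (both lie on the line through $(0, v_0, 0, \ldots, 0)^T$), and in $A^{(j)}$ their columns coincide. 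Thus $x$ is again redundant in every generating set containing $y$.

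For (i), the most delicate case, assume $\delta_i = 1$ and $n := |H/C_H(V_i)|$ is composite. Since $H/C_H(V_i)$ embeds in the cyclic group $\mathbb{F}_{p_i}^\times$, it is itself cyclic; choose a divisor $a$ of $n$ with $1 < a < n$ and an $h \in H$ whose image generates $H/C_H(V_i)$, so that $\zeta := \alpha_i(h)$ has order $n$. Set $x := h^a$ and $y := h^a v$ for some $v \in V_i \setminus \{0\}$. That $y$ is not a power of $x$ is immediate, since $x^k = h^{ak}$ has no $V_i$-component. Conversely, $x = y^k$ unwinds, via the standard semidirect-product power formula, to $h^{ak} = h^a$ together with a vanishing polynomial identity in $\zeta^a$ modulo $p_i$; since $a$ divides $n$ and $n$ divides $|h|$, the first condition forces $k \equiv 1 \pmod{|h^a|}$, and then the polynomial evaluates to $1 \not\equiv 0 \pmod{p_i}$, a contradiction. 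Now suppose $\{x,y,g_1,\ldots,g_t\}$ generates $G$. In $A^{(i)}$ the columns of $x$ and $y$ are $(c,0)^T$ and $(c,v_0)^T$ with $c := 1 - \zeta^a \ne 0$, hence linearly independent. For $x$ to be essential, removing it must drop the rank, forcing every $g_k$-column in $A^{(i)}$ to lie in the line spanned by $(c,v_0)^T$, say $\lambda_k(c,v_0)^T$; for $y$ also to be essential, the analogous argument after removing $y$ forces $\lambda_k v_0 = 0$, so $\lambda_k = 0$ for all $k$. Thus each $g_k$ satisfies $h_k \in C_H(V_i)$ and has trivial $V_i$-component. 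But then the images of $\{h^a, h_1, \ldots, h_t\}$ in $H/C_H(V_i)$ generate only $\langle h^a C_H(V_i)\rangle$, of order $n/a < n$, so the $H$-projection fails to generate $H$, contradicting Lemma~\ref{corone}. The composite-order hypothesis enters precisely here: only when a proper divisor $a > 1$ of $n$ exists can forcing $x$ and $y$ to be simultaneously essential collapse the $H$-projection, and this interlocking of the two essentiality conditions is the main obstacle that must be navigated in the argument.
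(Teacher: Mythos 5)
Your proof is correct, and it takes a genuinely different route from the paper's for parts (i) and (ii); part (iii) is essentially the paper's argument with a different but equivalent choice of pair ($v$ and $hv$ in place of the paper's $h$ and $hv$, with $h \in C_H(V_i)$ making the smaller element removable in either version). For (ii), the paper argues structurally: it deduces from the independence property that $ab$ and $b$ (with $a \in V_i$, $b \in V_j$ of equal order) are independent and applies Lemma~\ref{due} to extract a forbidden $H$-isomorphism $V_i \to V_j$. Your argument is instead a direct rank computation via Lemma~\ref{corone} on $x = vw$ and $y = (\lambda v)w$, showing every column of $x$ lies in the span of the corresponding column of $y$ while $x$ has trivial $H$-part; this is more self-contained (it avoids Lemma~\ref{due}), at the minor cost of needing $p \ge 3$ to guarantee $\lambda \ne 1$ exists. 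For (i), the paper takes $h$ whose image in $H/C_H(V_i)$ has \emph{prime} order (so $\langle h, C_H(V_i)\rangle \ne H$), sets $g_1 = h$, $g_2 = hv$, and in a hypothetical minimal generating set locates some $g_k$ with $h_k \notin C_H(V_i)$, which makes one of $g_1, g_2$ redundant; since $g_1$ and $g_2$ do not commute, the ``not a power'' check is immediate. You instead take $h$ with image of \emph{full} order $n$, pass to $x = h^a$, $y = h^a v$ for a proper divisor $1 < a < n$, and show that the simultaneous essentiality of $x$ and $y$ forces every remaining column of $A^{(i)}$ to vanish, so the $H$-projection lands inside $\langle h^a, C_H(V_i)\rangle \ne H$. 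Both proofs exploit the compositeness of $n$ correctly; yours requires the geometric-series calculation $\sum_{l=0}^{k-1}\zeta^{al} = 1$ to rule out $x \in \langle y\rangle$, which the paper gets for free, but in exchange your version makes more transparent exactly how the composite hypothesis interlocks the two essentiality conditions.
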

\begin{proof}
   For (i), notice first that by assumption, $|H/C_H(V_i)|> 1$.
    Assume for a contradiction that $\delta_i=1$ but $|H/C_H(V_i)|$ is not prime,  and choose $h\in H$ such that $|hC_H(V_i)|$ is prime. Take $0\neq x\in V_i = W_i$, and let  $g_1:=h= h(0, 0, \ldots, 0)$ and $g_2:=hx = h(0, \ldots, 0, x, 0, \ldots, 0)$ (with $x$ in position $i$). Since $h \not\in C_H(V_i)$, the elements $g_1$ and $g_2$ do not commute,
so neither is a power of the other. Furthermore, $\langle g_1,
  g_2 \rangle \neq G$. Since $G$ satisfies the independence property,
  $g_1$ and $g_2$ are therefore independent, so there exist $g_3,\dots g_t \in G$ such that
$\{g_1,g_2,g_3,\dots,g_t\}$ is a minimal generating set for $G.$
Using Notation~\ref{not:matrix}, since $\langle h_1, h_2, h_3,\dots,h_t\rangle=\langle h, h_3,\dots,h_t\rangle$ and
$\langle h, C_H(V_i)\rangle \neq H,$ there exists $k\geq 3$ such that
$h_k\not\in C_H(V_i).$ Since $\delta_i = 1$,  the matrix
    $A^{(i)}=A^{(i)}(g_1,\ldots,g_t)$ has two rows. From $h_k \not\in C_H(V_i)$ we deduce that
$1-\alpha_i(h_k)\neq 0$, so there exists an $\ell\in \{1,2\}$ such that
 the columns $a^{(i)}_\ell$ and $a^{(i)}_k$ of $A^{(i)}$ are linearly independent. Moreover, if $j\neq i$ then
$a^{(j)}_1 = a^{(j)}_2$.
It follows from Lemma \ref{corone} that $\langle g_\ell, g_3,\dots,g_m\rangle=G,$ a contradiction. This proves (i).

For (ii), assume for a contradiction that $|V_i| = |V_j|$ for distinct $i$ and $j$, and let $a \in V_i \setminus \{0\}$ and $b \in V_j \setminus \{0\}$, so that  $|a| = |b|$. Then $ab$ and $b$ are independent in $G$, and hence Lemma~\ref{due} shows that  there is a $G$-isomorphism, and hence an $H$-isomorphism, between $V_i$ and $V_j$, a contradiction. 
  
For (iii), assume, for a contradiction, that $p_i$ divides $|H|$ for some $i \in \{1,\dots,r\}.$  Fix  $v \in V_i$ and $h\in H$ with $|h|=|v|=p_i.$
Neither of $g_1:=h$ and $g_2:=hv$ is a power of the
other, so there exist $g_3,\dots,g_t \in G$ such that
$\{g_1,g_2,g_3,\dots,g_t\}$ is a minimal generating set for $G.$ The
condition $|h|=|v|=p_i$ implies that $h \in C_H(V_i)$. With the notation of Notation~\ref{not:matrix}, we notice that $\langle h_1, h_2,
h_3,\dots,h_t\rangle=\langle h, h_3,\dots,h_t\rangle$, and if $j \neq i$ then
$a^{(j)}_1 = a^{(j)}_2$, and $a^{(i)}_1$
is zero. But then  $\langle g_2,\dots,g_t\rangle=G,$ a
contradiction. 
\end{proof}

The following example shows that three necessary conditions in Lemma~\ref{lem:three_conditions} are not sufficient to ensure that $G$ satisfies the independence property.

\begin{eg}
Consider the group \[G:=(V_1\times V_2^2) \rtimes (\langle x \rangle \times \langle y \rangle) \cong \AGL_1(3) \times (\FF_5^2 \rtimes \langle 2I_2 \rangle) \le \AGL_1(3) \times \AGL_2(5),\] so that $|V_1|=3,
|V_2|=5, |x|=4, |y|=2$, $x \in C_G(V_1),$ $y\in C_G(V_2),$ $w^x=2w$
for all $w\in V_2^2$ and $v^y=2v$ for all $v\in V_1.$ Observe that $G$ satisfies Conditions (i)--(iii) of Lemma~\ref{lem:three_conditions}.

We claim that $x^2y$ and $y$ are dependent in $G$. To see this, suppose for a contradiction that $\{g_1,\ldots,g_r,x^2y,y\}$ is a minimal generating set for $G$, and let $R:=\langle g_1,\ldots,g_r,x^2y \rangle$. It is not restrictive to assume that $g_1,\ldots,g_r \in (V_1 \times V_2^2)\langle x \rangle$. Notice that $R V_1 \langle y \rangle = G$, and therefore $R$ contains $xy^kv$ for some $k \in \mathbb{Z}$ and $v \in V_1$. However, $(xy^kv)^2 = x^2$, so $y \in R$, a contradiction.
As $x^2y$ and $y$ generate distinct subgroups of order $2$, the group $G$ does not satisfy the independence property.
\end{eg}

In the remainder of the section, we will determine some additional  conditions. For this purpose we need some definitions.
Let $g_1$ and $g_2$ be as in Notation~\ref{not:matrix}.  Then we say that $g_1$ and $g_2$ are \emph{$j$-independent} for some $j \in \{1, \ldots, r\}$ if either $\mathrm{rank}(A^{(j)}(g_1,g_2)) = 2$
  or, for some ordering $\lambda,\mu$ of $1$ and $2$, the column $a^{(j)}_\lambda \neq 0$, $a^{(j)}_\mu = 0$, and $\langle h_\lambda \rangle \frat(H) \neq \langle h_1, h_2 \rangle \frat(H).$


Similarly, $g_1$ and $g_2$ are \emph{$\{i,j\}$-independent} for distinct $i,j \in \{1,\ldots,r\}$ if $\langle h_1 \rangle\frat(H) = \langle h_2 \rangle \frat(H)$ and  for some ordering $\lambda,\mu$ of $1$ and $2$,  the columns $a^{(i)}_\lambda$ and $a^{(j)}_\mu$ are non-zero, and $a^{(j)}_\lambda = a^{(i)}_\mu = 0$.

\begin{lemma}\label{crit}
Assume that $G$ satisfies the three conditions of Lemma~\ref{lem:three_conditions}.
Two elements $g_1=h_1(w_{1,1},\dots,w_{1,r})$ and $g_2=h_2(w_{2,1},\dots,w_{2,r})$ are independent in $G$ if and only if one of the following statements holds:
\begin{enumerate}[label={(\roman*)},font=\upshape]
\item $h_1$ and $h_2$ are independent in $H;$
\item  there exists $j \in \{1,\ldots,r\}$ such that
$g_1$ and $g_2$ are $j$-independent; or
\item there exist distinct $i,j \in \{1,\ldots,r\}$ such that $g_1$ and $g_2$ are $\{i,j\}$-independent.
\end{enumerate}
\end{lemma}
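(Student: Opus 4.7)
The approach is to apply Lemma \ref{corone} to translate the existence of a minimal generating set of $G$ containing $\{g_1, g_2\}$ into explicit conditions on the matrices $A^{(j)}$ and the $h_i$: namely, $g_1, g_2$ are independent iff there exist $g_3, \ldots, g_t$ with (a) $\langle h_1, \ldots, h_t \rangle = H$, (b) $\mathrm{rank}(A^{(j)}(g_1, \ldots, g_t)) = \delta_j + 1$ for each $j$, and (c) deleting any $g_i$ breaks (a) or (b). Thanks to the structural hypotheses of Lemma \ref{lem:three_conditions}---$H$ abelian, $(|H|, |W|) = 1$, and the $V_j$ pairwise non-isomorphic as $H$-modules---the matrices $A^{(j)}$ can be manipulated independently: a generator with trivial $H$-component whose $W$-part is supported only in $W_j$ contributes only to the column rank of $A^{(j)}$ and leaves condition (a) unaffected.

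For the sufficiency direction, I would construct $g_3, \ldots, g_t$ explicitly in each case. In case (i), extend $\{h_1, h_2\}$ to a minimal generating set of $H$, take the added $h$'s as $g$-parts with trivial $W$-components, and then adjoin purely-$W_j$ generators to reach $\mathrm{rank}(A^{(j)}) = \delta_j+1$ for each $j$; the minimality at $g_1, g_2$ is automatic from their essentiality for $H$-generation. In cases (ii) and (iii), $h_1, h_2$ need not extend to a minimal generating set of $H$, so the essentiality of $g_1, g_2$ must be forced through the matrices $A^{(j)}$: the conditions in the definitions of $j$-independence and $\{i, j\}$-independence are engineered precisely to guarantee that, after suitably extending the $h$-parts to generate $H$ and adjoining appropriate purely-$W$ generators, neither $g_1$ nor $g_2$ can be deleted without collapsing rank in some $A^{(k)}$ or breaking $H$-generation.

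Conversely, given a minimal generating set $\{g_1, g_2, g_3, \ldots, g_t\}$ for $G$, I would show that one of (i)--(iii) holds by a case analysis on which of (a) and (b) each of $g_1, g_2$ is essential for. If both $h_1$ and $h_2$ are essential for (a)---equivalently, they both lie in some minimal-generating-subset of $\{h_1, \ldots, h_t\}$ for $H$---a trimming argument inside $H$ yields (i). Otherwise, at least one of the columns $a^{(j)}_1, a^{(j)}_2$ must be essential for some $A^{(j)}$, and one then analyses whether both are essential in the same $A^{(j)}$ (yielding $\mathrm{rank} = 2$, the main subcase of (ii)), essential in different $A^{(i)}, A^{(j)}$ (yielding (iii)), or one column is zero while the other's essentiality is forced through a Frattini-quotient consideration on $h_1, h_2$ (yielding the second subcase of (ii)). I expect the main obstacle to be case (iii): its crossed zero-pattern requires simultaneous bookkeeping of which $V_i, V_j$ witness essentiality, verifying $\langle h_1\rangle \frat(H) = \langle h_2 \rangle \frat(H)$, and showing that this equality is forced whenever both (i) and (ii) fail.
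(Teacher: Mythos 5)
Your proposal follows the same route as the paper's proof: translate independence into the rank conditions of Lemma~\ref{corone}, prove sufficiency of (i)--(iii) by explicitly building the extra generators (extending the $H$-parts and then adjoining purely-$W$ elements to complete the ranks, with the conditions of Lemma~\ref{lem:three_conditions}---in particular (a)---needed to make the constructions go through), and prove necessity by analysing, for a minimal generating set, which matrix columns are essential and invoking the Frattini non-generator property on $h_1,h_2$. The paper organises the converse as a contradiction from $\neg$(i)$\wedge\neg$(ii)$\wedge\neg$(iii), whereas you frame it as a direct case split on essentiality, but these are the same argument; the one thing worth tightening is that the ``independent-manipulation'' of the $A^{(j)}$ is really a feature of the matrix criterion itself (plus the scalar action of $H$), not a consequence of conditions (b)--(c) of Lemma~\ref{lem:three_conditions} as you suggest, while the primality condition (a) is the one that your construction for $j$-independence actually leans on.
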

\begin{proof}
If $G$ is abelian, i.e., if $G = H$, then (ii) and (iii) cannot hold, and (i) is equivalent to $g_1$ and $g_2$ being independent. Thus we will assume that $G$ is non-abelian. We first show the sufficiency of each of the three conditions in turn, and then the necessity that at least one of them holds. While proving the sufficiency of each condition, we may assume that $G$ is not equal to $\langle g_1, g_2 \rangle$, as otherwise $g_1$ and $g_2$ are independent.
  
  If $h_1$ and $h_2$ are independent in $H$, then for some $a \ge 2$ there exist $h_3,\dots,h_a$ in $H$ such that $\{h_1,h_2,h_3,\dots,h_a\}$ is a minimal generating set for $H.$ Then we take $v_1,\dots,v_b \in W$ such that $\{v_1,\dots,v_b\}$ is minimal, subject to \[\{g_1,g_2,h_3,\dots,h_a,v_1,\dots,v_b\}\] being a generating set for $G.$ In this way we obtain a minimal generating set for $G$ containing $g_1$ and $g_2.$

  \smallskip

Next suppose that $g_1$ and $g_2$ are $j$-independent for a fixed $j$. 
First,  let $A := A^{(j)}(g_1, g_2)$, and assume that ${\rm{rank}}( A)=2$.
Let $\{h_3,\dots,h_m\}$ be a subset of $H$ of minimal cardinality, subject to $m\geq \delta_j+1$ and
$H=\langle h_1, h_2, h_3,\dots, h_m \rangle.$ Since $H$ acts as scalars on $V_j$, the group  $H/C_H(V_j)$ is cyclic,  so
we may choose $h_3,\dots,h_m$ 
so that $\langle h_1, h_2, h_3, C_H(V_j)\rangle = H$ and $h_i\in C_H(V_j)$ if $i\geq 4$. If $\langle h_1, h_2, C_H(V_j)\rangle = H$ and $m\geq 3,$ we can additionally require $h_3\in C_H(V_j).$
 Notice in particular that if $\delta_j=1$, then $H/C_H(V_j)$ is assumed to be of prime order, so $\langle h_1,h_2,C_H(V_j)\rangle =H$ and  either $m=2$ or $h_3 \in C_H(V_j)$.
If $m > 2$, then choose $z_{i,k} \in \mathbb{F}_{p_j}$ for $i \in \{3, \ldots,  \delta_j+1\}$ and $k \in \{1, \ldots,  \delta_j\}$  such that
\[{\rm{rank}}\begin{pmatrix}
	1-\alpha_j(h_1)&1-\alpha_j(h_2)&1-\alpha_j(h_3)&0&\dots&0\\
	x_{1,j,1}&x_{2,j,1}&z_{3,1}&z_{4,1}&\dots&z_{\delta_{j+1},1}\\
	\vdots&\vdots&\vdots&\vdots&\cdots&\vdots\\
	x_{1,j,\delta_j}&x_{2,j,\delta_j}&z_{3,\delta_j}&z_{4,\delta_j}&\dots&z_{\delta_{j+1},\delta_j}
\end{pmatrix}=\delta_j+1,
\]
 which is possible since $\mathrm{rank}(A) = 2$.
 Set $g_i:=h_i(z_{i,1},\dots,z_{i,\delta_j})$ for each $i \in \{3,\ldots,\delta_j+1\}$, and $g_i:=h_i$ for each $i \in \{\delta_j+2,\ldots,m\}$. Additionally, let \[Z_j:=\underset{i \in \{1, \ldots, r\} \setminus \{j\}}
   {\prod}W_i.\]
Then, by Lemma \ref{corone}, $\{g_1Z_j,g_2Z_j,g_3Z_j,\dots,g_mZ_j\}$ is a minimal generating set for $G/Z_j$ and  therefore there exist elements $g_{m+1}, \ldots, g_{m+\ell}$ of $Z_j$, for some $\ell$, such that $\{g_1,g_2,g_3,\dots,g_{m+ \ell}\}$
is a minimal generating set for $G$.

 Hence without loss of generality we may assume that
  $a^{(j)}_1 \neq 0$, $a^{(j)}_2=0$ and $\langle h_1 \rangle \frat(H) \neq \langle h_1, h_2\rangle \frat(H).$
There exists a (possibly empty) subset  $\{h_3,\dots,h_\ell\}$ of $H$ such that
$\{h_2, h_3, \dots, h_\ell\}$ is minimal subject to $\langle h_1, h_2, h_3, \dots, h_\ell\rangle=H.$
Since $H/C_H(V_j)$ is cyclic,
we may choose $h_4,\dots,h_\ell$ to lie in $C_H(V_j)$.
Let $m:=\max 
\{\ell,\delta_j+2\}$ and set $h_i=1$ for each $i \in \{\ell + 1, \ldots, m\}$. 
 If $\alpha_j(h_1) = 1$ then $\alpha_j(h_3) \neq 1$, since $H$ does not centralise $V_j$. Thus we may choose $z_{i,k} \in \mathbb{F}_{p_j}$ for $i \in \{3, \ldots, \delta_j+2\}$ and $k \in \{1, \ldots,  \delta_j\}$ such that
\[{\rm{rank}}\begin{pmatrix}
	1-\alpha_j(h_1)&1-\alpha_j(h_3)&0&\dots&0\\
x_{1,j,1}&z_{3,1}&z_{4,1}&\dots&z_{\delta_{j+2},1}\\
	\vdots&\vdots&\vdots&\cdots&\vdots\\
x_{1,j,\delta_j}&z_{3,\delta_j}&z_{4,\delta_j}&\dots&z_{\delta_{j+2},\delta_j}
\end{pmatrix}=\delta_j+1.
\]
 Set $g_i:=h_i(z_{i,1},\dots,z_{i,\delta_j})$  for each $i \in \{3,\ldots,\delta_j+2\}$, and $g_i:=h_i$ for each $i \in \{\delta_j+3,\ldots,m\}$. Then
 $\{g_1Z_j,g_2Z_j,g_3Z_j,\dots,g_mZ_j\}$ is a minimal generating set for $G/Z_j$ and as in the previous case, there exist elements of $Z_j$ that extend $\{g_1,g_2,g_3,\dots,g_m\}$ to a minimal generating set for $G$.

   \smallskip
   
 Finally, for this direction of the proof, assume that $g_1$ and $g_2$ are $\{i, j\}$-independent for distinct $i$ and $j$, so that $\langle h_1 \rangle\frat(H) = \langle h_2 \rangle \frat(H)$. Without loss of generality, $a^{(j)}_1$ and $a^{(i)}_2$ are non-zero, whilst  $a^{(j)}_2 = a^{(i)}_1=0.$
Let $B := \{h_3,\dots,h_m\}$ be a subset of $H$ of minimal cardinality subject to $m\geq \max\{\delta_i,\delta_j\}+2$ and
$H=\langle h_1, B \rangle=\langle h_2, B \rangle.$  Without loss of generality, suppose that $\delta_i\leq \delta_j$.  Let $C := C_G(V_i) \cap C_G(V_j)$. We shall split into two cases to place various assumptions on $B$  and define associated
  matrices over $\mathbb{F}_{p_i}$ and $\mathbb{F}_{p_j}$, before concluding both cases of the proof.

\smallskip

\paragraph{\bf Case (a):} $\delta_j \ge 2$, or $\delta_i = \delta_j = 1$ and either $G/C$ is cyclic or $\langle h_1,h_2\rangle\not\leq C$. Since $H/C_H(V_j)$ and $H/C_H(V_i)$ are cyclic,
if $\delta_j\geq 2$ then we may  assume that $h_4,\dots,h_m$ lie in $C_H(V_i)$, and satisfy $h_k \in C \cap H$ if $k>4.$  Similarly, if $\delta_i=\delta_j=1$, then $H/(C \cap H) \cong G/C$ is either cyclic or isomorphic to $C_q^2$ for some prime $q$ (this is because $\langle h_1,h_2\rangle\not\leq C$ is equivalent to $h_1,h_2 \notin C$), so we may assume 
  in this case that $h_4,\dots,h_m \in C \cap H$.
  In all three cases, we may choose elements
  $z_{\ell,s,k} \in \mathbb{F}_{p_s}$ for $\ell \in \{3,  \ldots, m\}$, $s \in \{i,j\}$ and $k \in \{1, \ldots, \delta_s\}$ such that 
\[{\rm{rank}}\begin{pmatrix}
1-\alpha_s(h_{f(s)})&1-\alpha_s(h_3)&\dots&1-\alpha_s(h_{\delta_{s}+2})\\
x_{f(s),s,1}&z_{3,s,1}&\cdots&z_{\delta_{s}+2,s,1}\\
\vdots&\vdots&\cdots&\vdots\\
x_{f(s),s,\delta_s}&z_{3,s,\delta_s}&\cdots&z_{\delta_s+2,s,\delta_s}
\end{pmatrix}=\delta_s+1\]
for each $s$, where $f(j):=1$ and $f(i):=2$, and $z_{\ell, s, k} = 0$  $\ell >\delta_s + 2$.

\medskip

\paragraph{\bf Case (b):} $\delta_i=\delta_j=1$, $G/C$ is not cyclic, and $\langle h_1,h_2\rangle \leq C$. Here, our assumptions on $G$ imply that $H/(C \cap H)\cong C_q^2$ for some prime $q$. Hence we may assume that
 $h_3 \in C_H(V_i) \setminus C_H(V_j)$,  $h_4 \in C_H(V_j) \setminus C_H(V_i)$, and $h_5, \ldots, h_m \in C \cap H$.
Since $h_1,h_2 \in C$, and $a^{(j)}_1$ and $a^{(i)}_2$ are non-zero by assumption, we observe that \[{\rm{rank}}\begin{pmatrix}
	1-\alpha_j(h_1)&1-\alpha_j(h_3)\\
	x_{1,j,1}& 0
\end{pmatrix} = {\rm{rank}}\begin{pmatrix}
1-\alpha_i(h_2)&1-\alpha_i(h_4)\\
x_{2,i,1}&  0
\end{pmatrix}=2.
\] 
We set $z_{k,s,1} := 0$ for all $k \in \{3,\ldots,m\}$ and $s \in \{i,j\}$.

\smallskip

To conclude Cases (a) and (b), we set \[g_k:=h_k(z_{k,j,1},\dots,z_{k,j,\delta_j})(z_{k,i,1},\dots,z_{k,i,\delta_i})\] for each $k \in \{3, \ldots, m\}$, and let  $Z_{i,j}:=\prod_{k\neq i, j}W_k$.
Then $G/Z_{i,j}$ has minimal generating set $\{g_1Z_{i,j},g_2Z_{i,j},g_3Z_{i,j},\dots,g_mZ_{i,j}\}$, and therefore  there exist elements of $Z_{i, j}$ that extend $\{g_1,\dots,g_m\}$ to a minimal generating set for $G$, as required.

\medskip

For the converse direction, suppose that $h_1$, and $h_2$ are dependent in $H$, and that there is no $j\in \{1,\dots, r\}$ such that $g_1$ and $g_2$ are $j$-independent and no 2-set $\{i,j\}$ such that $g_1$ and $g_2$ are $\{i,j\}$-independent.  
Assume, for a  contradiction, that $\{g_1,g_2,g_3,\dots,g_d\}$ is a minimal generating set for $G$, with $d \ge 2$. We may assume that $h_2=h_1^tf$ with $t\in \mathbb Z$ and $f\in \frat(H),$ so in particular $H=\langle h_1, h_3, \dots, h_{d}\rangle.$

For each $j$, let $A^{(j)}:= A^{(j)}(g_1,\ldots,g_d)$ and let $B^{(j)}$
be
the matrix obtained from $A^{(j)}$ by deleting its second column. 
Since $g_1$ and $g_2$ are $j$-dependent, columns $a^{(j)}_1$ and $a^{(j)}_2$ are linearly dependent.  If $a^{(j)}_2=0$ whenever  $a^{(j)}_1=0$, then  ${\mathrm{rank}}(A^{(j)})={\mathrm{rank}}(B^{(j)})$ for all $j$, and therefore $G=\langle g_1, g_3, \dots, g_{d}\rangle$ by Lemma~\ref{corone}, a contradiction. So there exists a $k$ such that  $a^{(k)}_1=0$ and $a^{(k)}_2\neq 0$. Since $g_1$ and $g_2$ are $k$-dependent, $\langle h_2 \rangle \frat(H) = \langle h_1, h_2\rangle \frat(H)=\langle h_1, h_1^tf\rangle \frat(H)=\langle h_1 \rangle \frat(H).$ In particular $H=\langle h_2, h_3, \dots, h_d\rangle$ and as before we can conclude that there exists an $\ell$ such that $a^{(\ell)}_2=0$ and $a^{(\ell)}_1\neq 0$. But then $g_1$ and $g_2$ are $\{k,\ell\}$-independent, a contradiction.
\end{proof}

In what follows, we let $I_h:=\{i \in \{1,\dots,r\} \mid h \in C_H(V_i)\}$ for $h \in H$, 
    and set $F:=\frat(H)$.

\begin{cor}\label{coropiu}
   Assume that $G$ satisfies the three conditions of Lemma~\ref{lem:three_conditions}, and let $x, y \in H$ be such that
  $y \in \langle x \rangle F$
   and 
   $I_x \subseteq I_y$. Then $x$ and $y$ are dependent in $G$.  If, in addition, $G$ satisfies the independence property and $I_x \neq \emptyset,$ then $y \in \langle x\rangle$.
\end{cor}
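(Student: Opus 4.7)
The plan is to derive both statements from Lemma~\ref{crit}. Since $x,y \in H$, every matrix column $a^{(k)}_i$ from Notation~\ref{not:matrix} has all entries zero except possibly the first entry $1-\alpha_k(h_i)$, which vanishes precisely when $k \in I_{h_i}$; in particular, each $A^{(k)}(x,y)$ has rank at most $1$. I would verify that none of the three alternatives of Lemma~\ref{crit} holds for the pair $x,y$. Write $y = x^t f$ with $f \in F$. For (i), any minimal generating set of $H$ containing $\{x,y\}$ could be rewritten with $y$ replaced by $f$, and then $f$ could be removed as a non-generator, contradicting minimality. Hence $x,y$ are dependent in $H$.

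For (ii), the rank-$2$ alternative is excluded by the previous rank bound; the other alternative demands $a^{(j)}_{\lambda}\ne 0$, $a^{(j)}_{\mu}=0$ and $\langle h_{\lambda}\rangle F \ne \langle h_1,h_2\rangle F$. Since $y \in \langle x \rangle F$ gives $\langle x,y\rangle F = \langle x \rangle F$, taking $\lambda=1$ fails the last condition, while $\lambda=2$ forces $j \in I_x \setminus I_y$, contradicting $I_x \subseteq I_y$. For (iii), the column-vanishing pattern with $\lambda=1$ requires some $\ell \in I_x$ with $\ell \notin I_y$, and with $\lambda=2$ requires some $i \in I_x$ with $i \notin I_y$; both collide with $I_x \subseteq I_y$. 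Thus Lemma~\ref{crit} shows $x$ and $y$ are dependent in $G$.

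For the second statement, by the independence property one of $x,y$ is a power of the other. If $y \in \langle x \rangle$ we are done, so I would suppose for a contradiction that $x = y^s$ for some $s$ and $y \notin \langle x \rangle$; then $\langle x \rangle F = \langle y \rangle F$. Fix $j \in I_x$ and a non-zero $v \in V_j$, placed in the first coordinate of $W_j$ when $\delta_j \ge 2$, and examine the pair $xv$ and $y$. The analysis is the same as before, except that now $a^{(j)}_1 \ne 0$ because of the $v$-entry. This actually makes condition (ii) for $k=j$ conceivable only with $\lambda=1$, but $\langle xv \rangle F = \langle x \rangle F = \langle x,y \rangle F$ still defeats it; for $k \ne j$ and for condition (iii) the same index-chasing shows $I_x \subseteq I_y$ blocks every possibility. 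So $xv$ and $y$ are dependent, and by the independence property one is a power of the other. Since $xv \notin H$ while $y \in H$, it cannot be that $xv \in \langle y \rangle$, so $y = (xv)^k$ for some $k$. Because $x \in C_H(V_j)$, this expands as $(xv)^k = x^k \cdot kv$, forcing $kv = 0$ in $V_j$ and $y = x^k \in \langle x \rangle$, contradicting $y \notin \langle x \rangle$.

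The routine step is the first part; the main obstacle is the bookkeeping for the pair $xv,y$ in the second part, particularly checking condition (iii) of Lemma~\ref{crit}. The extra non-zero entry in $a^{(j)}_1$ coming from $v$ changes which columns are zero, and one must verify that for every choice of distinct $i,\ell$ and every ordering the required vanishing pattern still forces a membership $k \in I_x \setminus I_y$ prohibited by the hypothesis.
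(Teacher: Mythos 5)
Your proof is correct and follows essentially the same route as the paper's: both derive the dependence of $x$ and $y$ (and of $xv$ and $y$) by verifying that none of the three conditions of Lemma~\ref{crit} can hold, using exactly the observations that the matrices $A^{(k)}(x,y)$ have rank at most one with first-row entries controlled by $I_x$ and $I_y$, that $y \in \langle x\rangle F$ forces $\langle x,y\rangle F = \langle x\rangle F$, and that perturbing $x$ by a non-zero $v$ in a coordinate $V_j$ with $j\in I_x \subseteq I_y$ kills the second column of $\tilde A^{(j)}$ while leaving the others unchanged. The only genuine differences are stylistic: you frame the second statement as a proof by contradiction (assuming $x = y^s$ with $y\notin\langle x\rangle$, which yields the extra hypothesis $\langle x\rangle F=\langle y\rangle F$ and so simplifies the $\{i,j\}$-independence check), whereas the paper argues directly that $xv$ and $y$ are dependent without this assumption and then uses $H\cap V_k = 1$ to conclude $y\in\langle xv\rangle = \langle x\rangle \times \langle v\rangle$, hence $y\in\langle x\rangle$; and you spell out the failure of condition~(i) via the non-generator property of the Frattini subgroup, which the paper leaves implicit. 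One small notational slip: you write $\langle xv\rangle F$, but since $xv\notin H$ and $F\le H$ this should be read as $\langle h_1\rangle F = \langle x\rangle F$, where $h_1 = x$ is the $H$-component of $xv$ appearing in the definition of $j$-independence. This does not affect the validity of the argument.
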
	
	
\begin{proof} 
  If $G = H$, then 
    the result is clear, so assume that $r \ge 1$. 
For  each $j \in \{1,\ldots,r\}$, let $A^{(j)}:=A^{(j)}(x,y)$. Clearly $ \mathrm{rank}(A^{(j)}) \leq 1$ for every $j.$	Notice that $a^{(j)}_1=0$ if and only if $x \in C_H(V_j)$, and $a^{(j)}_2 = 0$ if and only if $y \in C_H(V_j)$. In particular, since
$I_x \subseteq I_y$, if $a^{(j)}_1=0$ then $a^{(j)}_2=0$. This, together with  $\langle x, y\rangle F=\langle x\rangle F,$ implies that $x$ and $y$ are $j$-dependent for all $j$, 
and
$\{j_1,j_2\}$-dependent for all 2-subsets $\{j_1, j_2\}$.   
The result now follows from Lemma \ref{crit}.

Assume now in addition that $G$ satisfies the independence property and that $I_x\neq \emptyset.$ Let $k \in I_x\cap I_y$, $0\neq v \in  V_k$, 
 and for $j \in \{1, \ldots, r\}$ let $\tilde A^{(j)}:=A^{(j)}(xv,y)$, with columns $\tilde
  a^{(j)}_1$ and $\tilde a^{(j)}_2$. Then $\tilde a^{(k)}_1 \ne 0$, $\tilde a^{(k)}_2=0$, and $\tilde
  A^{(j)}=A^{(j)}$ if  $j\neq k$. Thus,
arguing as before, we conclude that
 $xv$ and $y$ are dependent, and consequently one of $xv$ and $y$ is a power of the other. As $H \cap V_k$ is trivial, we deduce that $y \in \langle xv \rangle$, and in fact $y \in \langle x \rangle$.
\end{proof}

Thus if $G$ satisfies the independence property, then Conditions (a)--(d) of Theorem~\ref{thm:supersol_struc}\ref{supersol3} hold. The following result completes the proof of Theorem~\ref{thm:supersol_struc}.

%


\begin{prop}
 Assume that $G$ satisfies Conditions (a)--(d) of Theorem~\ref{thm:supersol_struc}\ref{supersol3}. Then $G$ satisfies the independence property.
\end{prop}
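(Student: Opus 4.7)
The plan is by contrapositive: assume that $g_1 = h_1 w_1$ and $g_2 = h_2 w_2$ are distinct and dependent in $G$, and deduce that one is a power of the other. By Lemma~\ref{crit}, dependence of $g_1, g_2$ is equivalent to (I) $h_1, h_2$ are dependent in the abelian group $H$; (II) for every $j$, $g_1, g_2$ are not $j$-independent; and (III) for every pair $\{i,j\}$, they are not $\{i,j\}$-independent. A standard argument in finite abelian groups---passing to the Frattini quotient $H/\frat(H) = \prod_p E_p$ and exploiting the fact that an element is redundant in any generating set whenever its image in the quotient lies in the cyclic subgroup generated by another image---converts (I) into the alternative $h_2 \in \langle h_1\rangle\frat(H)$ or $h_1 \in \langle h_2\rangle\frat(H)$. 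Up to relabelling, I assume the first.

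Next, partition $\{1,\dots,r\} = J_0 \sqcup J_1 \sqcup J_2 \sqcup J_3$ according to which of $a^{(j)}_1, a^{(j)}_2$ vanish. For $j \in J_3$ (both columns nonzero), rank $\le 1$ forces parallelism, so $a^{(j)}_2 = c_j a^{(j)}_1$ for some $c_j \in \mathbb{F}_{p_j}^{\times}$; comparing top entries yields $j \in I_{h_1} \iff j \in I_{h_2}$ and $w_{2,j} = c_j w_{1,j}$. For $j \in J_1$, the ordering $(\lambda,\mu)=(2,1)$ satisfies the first two conditions in the second disjunct of the definition of $j$-independence, so the failure of $j$-independence forces the third: $\langle h_2\rangle\frat(H) = \langle h_1,h_2\rangle\frat(H)$, which together with our step-1 hypothesis yields $\langle h_1\rangle\frat(H) = \langle h_2\rangle\frat(H)$. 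Condition (III) then rules out $J_1, J_2$ being simultaneously nonempty: for any $j_1 \in J_1, j_2 \in J_2$, the ordering $(\lambda,\mu)=(2,1)$ together with this equality would satisfy the definition of $\{j_1,j_2\}$-independence.

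The central structural step is to show, possibly after swapping $g_1, g_2$, that $I_{h_1}\subseteq I_{h_2}$. Any $k \in I_{h_1}\setminus I_{h_2}$ forces $k \in J_1$: the top entries of $a^{(k)}_1, a^{(k)}_2$ are $0$ and $1-\alpha_k(h_2)\ne 0$, and rank $\le 1$ with a nonzero top entry in $a^{(k)}_2$ forces $a^{(k)}_1 = 0$. Symmetrically, $k' \in I_{h_2}\setminus I_{h_1}$ forces $k' \in J_2$, and by the previous paragraph these cannot coexist; the relabelling is consistent with step~1 because when it is needed, $J_1 \ne \emptyset$ forces $\langle h_1\rangle\frat(H) = \langle h_2\rangle\frat(H)$, and either labelling then satisfies the step-1 assumption. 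Condition~(d) applied to $x = h_1, y = h_2$ then upgrades $h_2 \in \langle h_1\rangle\frat(H)$ to the literal equality $h_2 = h_1^m$ for some integer $m$.

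It remains to build $m' \in \mathbb{Z}$ with $g_2 = g_1^{m'}$. Writing $g_1^{m'}$ in additive notation on each $W_j$, its $W_j$-component is $w_{1,j}\cdot s_j$ with $s_j = \sum_{i=0}^{m'-1}\alpha_j(h_1)^i$. Matching $w_{2,j}$ reduces---using $J_1 = \emptyset$ from the nesting step and the identity $s_j = c_j$ for $j \in J_3\setminus I_{h_1}$, automatic from $m' \equiv m \pmod{|h_1|}$---to the simultaneous congruences $m'\equiv m \pmod{|h_1|}$, $m'\equiv c_j \pmod{p_j}$ for $j \in J_3\cap I_{h_1}$, and $m'\equiv 0 \pmod{p_j}$ for $j \in J_2 \cap I_{h_1}$. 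These moduli are pairwise coprime by (b) and (c), so CRT supplies the desired $m'$. The main obstacle is the nesting step: extracting from the column constraints (II) and the cross-index obstruction (III) the combinatorial fact that $I_{h_1}$ and $I_{h_2}$ must be nested is what puts us in a position to apply~(d); the CRT assembly is then essentially bookkeeping.
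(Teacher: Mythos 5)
Your proof follows essentially the same route as the paper (reduce via Lemma~\ref{crit} to the $j$-dependence and $\{i,j\}$-dependence constraints, derive a nesting $I_{h_1}\subseteq I_{h_2}$, invoke Condition~(d), and finish with CRT using Conditions~(b),(c)), and the $J_0,\dots,J_3$ bookkeeping is a nice way to organise the column-vanishing cases. But there are two genuine gaps where you assert more than you have established.

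First, the step ``Condition~(d) upgrades $h_2\in\langle h_1\rangle\frat(H)$ to the literal equality $h_2=h_1^m$'' is only valid when $I_{h_1}\neq\emptyset$: the second sentence of Condition~(d) is the one that yields $y\in\langle x\rangle$, and it has the hypothesis $I_x\neq\emptyset$. When $I_{h_1}=\emptyset$ you get only ``one of $h_1,h_2$ is a power of the other,'' and if it is $h_1=h_2^n$ (rather than $h_2\in\langle h_1\rangle$) your CRT template is aimed at the wrong conclusion $g_2=g_1^{m'}$. This case does work out -- when $I_{h_1}=\emptyset$ and $h_1\in\langle h_2\rangle$ one deduces $I_{h_2}=\emptyset$, hence $J_0=J_1=J_2=\emptyset$, so everything is in $J_3$ and $g_1=g_2^{\,n}$ -- but none of this is in your write-up. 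The paper avoids it entirely by first conjugating $g_1,g_2$ by a common element of $W$ so that $w_{1,j}=0$ (hence also $w_{2,j}=0$) for all $j\notin I_{h_1}$; then $I_{h_1}=\emptyset$ reduces immediately to $g_i=h_i$ and Condition~(d) alone finishes.

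Second, ``$J_1=\emptyset$ from the nesting step'' does not follow. Nesting gives $I_{h_1}\subseteq I_{h_2}$, but in the boundary subcase $I_{h_1}=I_{h_2}$ one can have $J_1\neq\emptyset$ (indices $j\in I_{h_1}$ with $w_{1,j}=0$, $w_{2,j}\neq 0$) while $J_2=\emptyset$, and then the congruence $w_{2,j}=w_{1,j}s_j$ is unsatisfiable. What is actually true is that your $\{i,j\}$-exclusion argument shows $J_1$ and $J_2$ cannot both be nonempty; combined with the observation that $I_{h_2}\setminus I_{h_1}\subseteq J_2$, the bad configuration forces $I_{h_1}=I_{h_2}$ and $\langle h_1\rangle\frat(H)=\langle h_2\rangle\frat(H)$, so a further swap of $g_1$ and $g_2$ (which is legitimate under these equalities) restores $J_1=\emptyset$. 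This extra swap must be stated; it is exactly what the paper does in its subcase $I_{h_1}=I_{h_2}$ (``without loss of generality $\mathcal{C}_2\subseteq\mathcal{C}_1$''). With these two repairs, your argument is correct.
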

	
\begin{proof} Let $g_1:=h_1w_1$ and $g_2:=h_2w_2$ be elements, 
as in Notation~\ref{not:matrix}, that are dependent in $G$. 
  We shall prove that one of $g_1$ and $g_2$ is a power of the other.

   By Lemma~\ref{crit}, the fact that $g_1$ and $g_2$ are dependent in $G$ implies that
$h_1$ and $h_2$ are dependent in $H$. We may therefore assume
throughout the proof that $h_2=h_1^uf$ for some $u \in
\mathbb Z$ and $f\in F$.   If $r = 0$ then $G = H$, and so $F = 1$ and $g_2=g_1^u$, as required. Hence we may assume that $r \ge 1$. 
  
For $1\leq j \leq r,$ consider the matrix $A^{(j)}:=A^{(j)}(g_1,g_2)$. We shall repeatedly use the fact that \[\mathrm{rank}(A^{(j)}) < 2 \mbox{ for all } j \in \{1, \ldots, r\},\]
since otherwise $g_1$ and $g_2$ are $j$-independent, and hence by Lemma~\ref{crit} are independent, a contradiction. Note that that lemma's proof uses  Condition~(a).

By conjugating $g_1$ and $g_2$ by a common element of $W$, if necessary,  we may assume that
$w_{1,j}=0$ whenever $j \notin I_{h_1}$. Hence if
$j \notin I_{h_1}$ then $w_{2,j}=0,$ and otherwise $\mathrm{rank}(A^{(j)})=2$. Thus if $I_{h_1} = \emptyset$, then $g_1 = h_1$ and $g_2 = h_2$, and the result follows from Condition~(d). If $I_{h_2} = \emptyset$, then we reach the same conclusion by a similar argument, corresponding to conjugation by a different element of $W$. Therefore, we shall assume for the remainder of the proof that $I_{h_1}, I_{h_2} \neq \emptyset$, and that $w_{1,j} = w_{2,j} = 0$ whenever $j \notin I_{h_1}$. We distinguish between two possibilities.

\smallskip

\paragraph{\bf Case (a):} $\langle h_2\rangle F\neq \langle h_1, h_2 \rangle F$. Since $g_1$ and $g_2$ are $j$-dependent for all $j \in \{1, \ldots, r\}$, if
 $a^{(j)}_1=0$ then $a^{(j)}_2=0$. Fix $k\in I_{h_1}$, so that $a^{(k)}_1=(0,w_{1,k})^T.$ If $w_{1,k}=0,$ then $a^{(k)}_1=0$ and consequently $a^{(k)}_2=0$, yielding $k \in I_{h_2}.$ 
If  $w_{1,k}\neq 0$ and $k \notin I_{h_2},$ then $\mathrm{rank}(A^{(k)})=2,$
a contradiction. So $I_{h_1}\subseteq I_{h_2}$.

Letting $\mathcal{C}_1:=\{j \in I_{h_1} \mid w_{1,j}\neq 0\}$
and $\mathcal{C}_2:=\{j \in  I_{h_1} \mid w_{2,j}\neq 0\}$,
we see that $\mathcal{C}_2\subseteq \mathcal{C}_1$, $w_1=\prod_{j\in  \mathcal{C}_1}w_{1,j}$, and
$w_2=\prod_{j\in \mathcal{C}_2}w_{2,j}$. Since $|V_1|,\dots, |V_t|$
are pairwise coprime by Condition~(b), and since
$\mathrm{rank}(A^{(j)}) < 2$ for each $j$, it follows that
$w_2=w_1^\ell$ for some $\ell\in \mathbb Z$.  We also deduce from  Condition~(d) that  $h_2=h_1^t$ for some $t\in \mathbb Z$. Since $(|W|,|H|)=1$  by Condition~(c),
there exists $s\in \mathbb Z$ such that $s\equiv t \mod |H|$ and $s\equiv \ell \mod |W|$, and so $g_2=w_2h_2=w_1^\ell h_1^t=w_1^sh_1^s=g_1^s.$

\smallskip

\paragraph{\bf Case (b):} $\langle h_1\rangle F = \langle h_2 \rangle F,$
so that  $h_1 \in \langle h_2\rangle F$ and $h_2 \in \langle h_1\rangle F$.
Assume first that 
there exist $j \in I_{h_1}\setminus I_{h_2}$ and  $k \in I_{h_2}\setminus I_{h_1}$. If  $a^{(j)}_1\neq 0$ then  $\mathrm{rank}(A^{(j)})=2$, a contradiction. So $a^{(j)}_1=0$, and similarly, $a^{(k)}_2=0$. But then
$g_1,g_2$ are $\{j, k\}$-independent, and therefore, by Lemma \ref{crit}, independent in $G$, a contradiction.

We may therefore assume (by swapping $g_1$ and $g_2$ throughout the proof if necessary) that $\emptyset \neq I_{h_1}\subseteq I_{h_2}$, 
so that by Condition~(d), $h_2=h_1^t$ for some $t\in \mathbb Z.$
Assume that $\Omega:=I_{h_2}\setminus I_{h_1} \neq \emptyset.$ For each $\omega \in \Omega,$ we see that $a^{(\omega)}_1\neq 0$ and $a^{(\omega)}_2=0$. 
Therefore
if $k \in I_{h_1}$ and $w_{2,k}\neq 0,$ then $w_{1,k}\neq 0$
(otherwise $a^{(k)}_2$ is non-zero, $a^{(k)}_1=0$, and $g_1$ and $g_2$ are
$\{\omega,k\}$-independent, a contradiction). We deduce that $w_2=w_1^\ell$ for some $\ell\in \mathbb Z$, and as in Case (a), that
$g_2 \in \langle g_1\rangle.$

So we may assume that $I_{h_1}=I_{h_2}\neq \emptyset$. It follows from  two applications of Condition~(d) that $\langle h_1 \rangle =\langle h_2 \rangle$, $g_1=h_1(\prod_{j \in I_{h_1}}w_{1,j})$, and $g_2=h_2(\prod_{j \in I_{h_1}}w_{2,j})$. Let $\mathcal{C}_1:=\{j\in I_{h_1} \mid w_{1,j}\neq 0\}$
and $\mathcal{C}_2:=\{j\in I_{h_1} \mid w_{2,j}\neq 0\}$. If neither  $\mathcal{C}_1 \subseteq \mathcal{C}_2$ nor $\mathcal{C}_2 \subseteq \mathcal{C}_1$, 
then $g_1$ and $g_2$ are $\{i,k\}$-independent for all $i \in \mathcal{C}_1 \setminus \mathcal{C}_2$ and $k \in \mathcal{C}_2 \setminus \mathcal{C}_1$, a contradiction. So without loss of generality $\mathcal{C}_2\subseteq \mathcal{C}_1$, and we conclude again that $g_2$ is a power of $g_1.$
\end{proof}


\section{Groups satisfying the rank-independence property}\label{rankperf}

The aim of this section is to classify the  finite groups that satisfy the rank-independence property, so assume throughout this section that $G$ is a finite group. Note that the proof of the following result uses the classification of finite simple groups.

\begin{thm}[\cite{gene}]\label{minnorm}
  If  $N$ is a minimal normal subgroup of $G$ with $N \neq G,$
	then $d(G) \leq d(G/N) + 1.$
      \end{thm}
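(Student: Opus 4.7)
The plan is to fix a minimum-size lift and show that at most one extra generator is ever needed. Set $k := d(G/N)$ and choose $g_1,\dots,g_k \in G$ so that $g_1 N,\dots,g_k N$ generate $G/N$; equivalently, $H := \langle g_1,\dots,g_k \rangle$ satisfies $HN = G$. If $H = G$ there is nothing to prove, so I will assume $H < G$, which forces $H \cap N < N$. It then suffices to exhibit $y \in G$ with $\langle H, y \rangle = G$, since adjoining $y$ to $g_1,\dots,g_k$ produces a generating set of size $k+1$.

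First I would dispose of the case where $N$ is abelian. Since $N$ is minimal normal, it is elementary abelian. The subgroup $H \cap N$ is normalised by $H$ (because $N \trianglelefteq G$) and centralised by $N$ (because $N$ is abelian), so it is normal in $HN = G$. By minimality of $N$ and the assumption $H \cap N < N$, we obtain $H \cap N = 1$, so $G = N \rtimes H$. Pick any non-identity $y \in N$ and set $L := \langle H, y \rangle$. The intersection $L \cap N$ is $H$-invariant and contained in the abelian group $N$, hence also $N$-invariant, and therefore normal in $HN = G$; since it contains $y \neq 1$, minimality yields $L \cap N = N$, whence $L \supseteq NH = G$.

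Next, suppose $N$ is non-abelian, so $N = S_1 \times \dots \times S_t$ with each $S_i$ isomorphic to a fixed non-abelian finite simple group $S$ and the factors transitively permuted by $G$. The goal is again to find $y \in G$ with $\langle H, y \rangle = G$; equivalently, $\langle H, y \rangle \cap N = N$, since $HN = G$. The key obstruction is that $\langle H, y \rangle \cap N$ is only guaranteed to be $H$-invariant, not $N$-invariant, so $y$ must be chosen so that the $H$-closure of $y$, together with commutators $[y,h]$, is not trapped inside a proper $H$-invariant subgroup of $N$; in particular one must rule out every diagonal and subdirect subgroup of $S^t$ that is stable under the transitive $H$-action on the factors. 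The standard route is to invoke the classification of finite simple groups in the form that every non-abelian finite simple group is $\tfrac{3}{2}$-generated (Guralnick--Kantor, Stein), combined with a careful choice of $y$ whose projections onto the $S_i$ avoid every diagonal subgroup compatible with $H$.

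The hard part is unquestionably the non-abelian case. The abelian half is completely elementary because $H \cap N$ is automatically normal in $G$, but once $N$ is a direct power of a non-abelian simple group, $\langle y^H \rangle$ need not be normal in $N$, and one must genuinely exploit deep properties of simple groups to eliminate every possible diagonal or subdirect intermediate subgroup. This is the technical heart of the Dalla Volta--Lucchini--Morigi style argument, and it is the reason why \cite{gene} depends on CFSG.
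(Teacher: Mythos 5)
Your abelian reduction is correct and matches the standard argument: once you observe that $H \cap N$ (and later $\langle H,y\rangle \cap N$) is normalised by $H$ and automatically centralised by the abelian $N$, minimality does all the work. That half is essentially the same as the proof in the cited reference.

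The non-abelian half, however, is a sketch rather than a proof, and the one tool you actually name does not close it. Your strategy fixes $H=\langle g_1,\dots,g_k\rangle$ with $HN=G$ and seeks a single $y$ with $\langle H,y\rangle = G$; unwinding, this is equivalent to finding $y \in N$ with $\langle (H\cap N),\, y^H\rangle = N$. If $H\cap N \neq 1$, then (since $H$ is transitive on the factors $S_1,\dots,S_t$) some projection $\pi_1(H\cap N)$ is non-trivial, and $3/2$-generation of $S$ lets you pick $t_1$ with $\langle \pi_1(H\cap N), t_1\rangle = S_1$; taking $y=(t_1,1,\dots,1)$, the fact that $y$ is supported on a single coordinate rules out every $H$-invariant diagonal-block subdirect subgroup, and one gets $\langle (H\cap N), y^H\rangle = N$. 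That part can be made to work. But the complement case $H\cap N = 1$ is genuinely different and you never address it. There you must find $y$ with $\langle y^H\rangle = N$. For $y$ supported on one coordinate, $\pi_1(\langle y^H\rangle)$ is generated by the orbit of $t_1$ under the image of the point-stabiliser $H_1$ in $\mathrm{Aut}(S_1)$; if that image is trivial (for instance $G = S\wr C_t$ with $H=C_t$ acting regularly, so $H_1=1$), this is $\langle t_1\rangle$, a cyclic proper subgroup, and the construction fails for every $t_1$. Here one must spread the support of $y$ over several coordinates, show simultaneously that all projections are full and that no diagonal subgroup traps $y^H$, and this requires a different and more delicate generation statement (of Wiegold/Dalla Volta--Lucchini flavour) than the $3/2$-generation of $S$ itself.

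There is also a historical mismatch worth flagging: the theorem is quoted from Lucchini's 1995 paper, which predates the Guralnick--Kantor $3/2$-generation theorem (2000). The proof there necessarily proceeds along a different route (counting subdirect/crown-type subgroups and using earlier CFSG consequences, e.g.\ that finite simple groups are $2$-generated and bounds on $\mathrm{Out}(S)$), not via $3/2$-generation. So even if your sketch were completed, it would be a genuinely different argument from the cited one. As written, the non-abelian case has a real gap at $H\cap N = 1$, and that is precisely where the serious work in the original proof lives.
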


  We can easily reduce to the supersoluble case. In the next proof, $\fit(G)$ denotes the Fitting subgroup of $G$. 
\begin{prop}
	Assume that $G$ satisfies the rank-independence property. Then $G$ is supersoluble.
\end{prop}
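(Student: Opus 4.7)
My plan is to first show, via a clean application of Theorem~\ref{minnorm}, that every minimal normal subgroup of $G$ is cyclic of prime order, and then to deduce the supersolubility of $G$ by induction on $|G|$.

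For the first step, let $N$ be a minimal normal subgroup of $G$ and suppose, for a contradiction, that $N$ is not of prime order. Then $N$ is non-cyclic, so $N$ contains elements $x, y$ with $\langle x, y\rangle$ non-cyclic: choose $x$ and $y$ linearly independent if $N$ is elementary abelian of rank at least two, or non-commuting if $N$ is non-abelian. By rank-independence, $\{x, y\}$ extends to a generating set $\{x, y, g_3, \ldots, g_d\}$ of $G$ of size $d = d(G)$. Since $x, y \in N$, the cosets $g_3 N, \ldots, g_d N$ generate $G/N$, so $d(G/N) \le d - 2$. But Theorem~\ref{minnorm} gives $d(G) \le d(G/N) + 1 \le d - 1$, a contradiction.

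To conclude supersolubility I would induct on $|G|$. Given a minimal normal subgroup $N$ of $G$ (cyclic of prime order by the first step), I would show that $G/N$ inherits the rank-independence property: given a non-cyclic pair $\bar x, \bar y \in G/N$, lift to $x, y \in G$ (then $\langle x, y\rangle$ is automatically non-cyclic, since the image of a cyclic subgroup is cyclic) and apply rank-independence of $G$ to obtain a generating set of $G$ of size $d(G)$ containing $\{x, y\}$; projecting gives a generating set of $G/N$ of size $d(G)$ containing $\{\bar x, \bar y\}$. When $d(G) = d(G/N)$ this is already of the required size. When $d(G) = d(G/N) + 1$, the subgroup $N$ does not lie in $\frat(G)$ and so has a complement $H$ in $G$ by Gasch\"utz' theorem, giving $G = H \ltimes N$; one then argues that the rank-independence property of $G$, applied to suitable pairs, forces the extension to include a non-trivial element of $N$, whose image in $G/N$ is trivial and can be discarded. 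The inductive hypothesis then makes $G/N$ supersoluble, and together with $N$ being cyclic of prime order normal in $G$, this shows that $G$ admits a normal series with cyclic factors, so $G$ is supersoluble.

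The main obstacle is this inheritance step in the case $d(G) = d(G/N) + 1$: arranging that a generating set for $G$ of size $d(G)$ can be chosen to contain both the prescribed pair $\{x, y\}$ and a generator of $N$. This likely requires applying rank-independence not only to $\{x, y\}$ but also to carefully modified pairs such as $\{xn, y\}$ for a suitable $n \in N$, while exploiting the semidirect product structure to control the $N$-components of the extension elements.
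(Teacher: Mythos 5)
Your Step~1 is sound and is essentially the paper's central move: combining Theorem~\ref{minnorm} with the rank-independence property forces a minimal normal subgroup to be cyclic. Two caveats. First, you must deal separately with $N = G$ (characteristically simple $G$), since Theorem~\ref{minnorm} requires $N \neq G$; the paper handles this by first showing $G/\frat(G)$ is not simple. Second, and more importantly, the paper runs this argument for minimal normal subgroups of $G/\frat(G)$ rather than of $G$, and this is what lets it finish without any induction.

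Your inductive step, by contrast, is unrecoverable: the rank-independence property does \emph{not} pass to quotients by a minimal normal subgroup of prime order, even in the problematic case $d(G) = d(G/N) + 1$ that you flag. Take $G = \FF_7^2 \rtimes C_6$ with $C_6 \cong \FF_7^\times$ acting by scalar multiplication, and let $N$ be any $\FF_7$-line. By Theorem~\ref{thm:rank_perfect3}, $G$ has the rank-independence property and $d(G) = 3$, and $N$ is minimal normal of order $7$; but $G/N \cong \FF_7 \rtimes C_6$ has $d(G/N) = 2$ and fails the property (an element of order $7$ together with the unique involution of the $C_6$-complement generate a proper dihedral subgroup, and indeed $G/N$ is not on the list of Theorem~\ref{thm:rank_perfect2}). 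So no choice of ``carefully modified pairs'' can make your inheritance step work. The paper's proof avoids inheritance entirely: once every minimal normal subgroup $N/\frat(G)$ of $G/\frat(G)$ is shown to be cyclic, the triviality of $\frat(G/\frat(G))$ forces $\fit(G)/\frat(G)$ to be a direct product of normal subgroups of prime order with $G/\fit(G)$ abelian, which directly yields a normal series of $G/\frat(G)$ with cyclic factors; supersolubility of $G$ then follows from that of $G/\frat(G)$, with no need to verify the hypothesis on any quotient.
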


\begin{proof}
 If $G$ is cyclic then the result is immediate, so assume otherwise.
 First we claim that $G/\frat(G)$ is not simple, so assume otherwise for a contradiction.
Since $G$ is not cyclic,  $G/\frat(G)$ is a non-abelian simple group and therefore it contains two distinct involutions
$x\frat(G)$ and $y\frat(G)$. But then $\langle x, y \rangle$ is not cyclic, so $x$ and $y$ are rank-independent.  However, $\langle x, y  \rangle \frat(G)/\frat(G)$ is dihedral, so in particular $\langle x, y \rangle$ is a proper subgroup of $G$, contradicting the fact that $d(G)=d(G/\frat(G))=2$. Therefore $G/\frat(G)$ is not simple.

	Now 	let $N/\frat(G)$ be a minimal normal subgroup of $G/\frat(G)$.  By Theorem \ref{minnorm}, $d(G)=d(G/\frat(G))\leq d(G/N)+1.$ In particular, no
	two distinct elements of  $N$ are rank-independent, so every pair of such elements generates a cyclic group. It follows that no generating set for $N$ of minimal size contains more than one element, i.e., $N$ is cyclic. So $\fit(G)/\frat(G)$ is a direct product of cyclic minimal normal subgroups of $G/\frat(G)$ and this implies that $G/\frat(G)$ is supersoluble, and hence $G$ is supersoluble.
\end{proof}


\begin{lemma}\label{sylow}Assume that $G$ satisfies the rank-independence property, and has a  non-cyclic normal
  $p$-subgroup $P$. Then $P$ is either
  elementary abelian or generalized quaternion.
\end{lemma}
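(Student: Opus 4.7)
The plan is to exploit the standard containment $\frat(P)\leq \frat(G)$ (valid whenever $P$ is normal in $G$) together with the rank-independence property to pin down the structure of $P$. The inclusion $\frat(P)\leq \frat(G)$ is a routine non-generator argument: if $G=\langle x, Y\rangle$ with $x\in \frat(P)$, set $H:=\langle Y\rangle$; then $G=\frat(P)H$, and intersecting with $P$ via the modular law gives $P=\frat(P)(H\cap P)$. Since the elements of $\frat(P)$ are non-generators of $P$, this forces $P\leq H$ and hence $G=H$, as required.

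If $\frat(P)=1$, then $P^p=1$ and $P'=1$, so $P$ is elementary abelian and we are done. So I would assume $\frat(P)\neq 1$ and pick an element $z\in \frat(P)\leq \frat(G)$ of order $p$. The key observation is that $\langle z,y\rangle$ must be cyclic for every $y\in G$: otherwise, rank-independence would yield a minimal generating set $\{z,y,g_3,\ldots,g_{d(G)}\}$ for $G$, but since $z$ lies in $\frat(G)$ and is therefore a non-generator, removing it would still leave a generating set, contradicting the minimality of $d(G)$.

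Applied to an arbitrary $y\in P$ of order $p$, the cyclic subgroup $\langle z,y\rangle$ sits inside the $p$-group $P$. In any $p$-group the subgroups of a cyclic subgroup are totally ordered by inclusion, so for cyclic $\langle z,y\rangle$ one of $z,y$ must lie in the cyclic group generated by the other; since $|z|=|y|=p$ this forces $\langle z\rangle=\langle y\rangle$. Hence $\langle z\rangle$ is the unique subgroup of order $p$ in $P$. By the classical theorem of Burnside stating that a finite $p$-group with a unique subgroup of order $p$ is either cyclic or (when $p=2$) generalised quaternion, and since $P$ is non-cyclic by hypothesis, we conclude that $p=2$ and $P$ is a generalised quaternion group.

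The only mildly subtle ingredient is the containment $\frat(P)\leq \frat(G)$, which is entirely standard; once that is in place, the proof is a short application of the rank-independence property combined with the Burnside classification of $p$-groups having a unique subgroup of prime order.
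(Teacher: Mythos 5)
Your proof is correct and takes essentially the same route as the paper: reduce to the case $\frat(P)\neq 1$, use $\frat(P)\leq\frat(G)$ to see that an order-$p$ element of $\frat(P)$ can lie in no generating set of size $d(G)$, deduce from the rank-independence property that it generates a cyclic subgroup together with every element of $P$, conclude that $P$ has a unique minimal subgroup, and invoke the classical classification of $p$-groups with a unique subgroup of order $p$ (the paper cites Scott, Thm.~9.7.3, rather than attributing it to Burnside). Your write-up is just slightly more explicit about the two standard ingredients ($\frat(P)\leq\frat(G)$ and the non-generator argument).
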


\begin{proof}
 If $\frat(P) = 1$, then $P$ is elementary abelian, and the result is immediate, so let $x$  be an element of $\frat(P)$ of order $p.$ Since $\frat(P) \leq \frat(G),$ the element $x$ does not lie in any minimal generating set for $G$, and so $\langle x, y \rangle$ is cyclic for all $y \in G$. 
This implies in particular that $\langle x\rangle$ is the unique minimal subgroup of $P.$ Since we are assuming that $P$ is not cyclic, it follows from \cite[Thm.~9.7.3]{wrscott} that $P$ is a generalized quaternion group.
\end{proof}

\begin{prop}\label{nilp}
	Assume that $G$ is a non-cyclic nilpotent group. Then $G$ satisfies the rank-independence property  \ifa one of the following holds:
	\begin{enumerate}[label={(\roman*)},font=\upshape]
		\item $G\cong C_p\times C_p$;
		\item $G\cong Q_8$;
		\item $G\cong P\times C$, with $P$ an elementary abelian Sylow subgroup of $G$ \st $d(P)\geq 3$, and $C$ cyclic (this includes the case $|C| = 1$).
	\end{enumerate}
\end{prop}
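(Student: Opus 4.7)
The plan is to analyze $G$ via its Sylow decomposition, combining Lemma~\ref{sylow} (which restricts each non-cyclic Sylow subgroup to being elementary abelian or generalized quaternion) with the standard identity $d(G)=\max_p d(P_p)$ for nilpotent $G$ with Sylow subgroups $P_p$. The reverse implication is a direct verification in each listed case, while the forward implication splits into three structural steps.

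I first show that $G$ has at most one non-cyclic Sylow subgroup. If $P_1,P_2$ were two non-cyclic Sylow subgroups with $d(P_1)\ge d(P_2)$, then $d(G)=d(P_1)$, and I pick $x,y\in P_2$ generating a non-cyclic subgroup (which exists because $P_2$ is non-cyclic: two basis elements in the elementary abelian case, or the standard generators in the generalized quaternion case). Since $\pi_{P_1}(x)=\pi_{P_1}(y)=1$, any extension $\{x,y,z_3,\dots,z_{d(G)}\}$ to a generating set of $G$ forces $\langle \pi_{P_1}(z_3),\dots,\pi_{P_1}(z_{d(G)})\rangle = P_1$, hence $d(G)-2\ge d(P_1)=d(G)$, a contradiction.

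Next, if the unique non-cyclic Sylow is $Q_{2^n}$ with $n\ge 4$, using the presentation of Lemma~\ref{sylow} I choose $x=a^2\in \Phi(Q_{2^n})\subseteq \Phi(G)$ and $y=b$: then $\langle x,y\rangle\cong Q_{2^{n-1}}$ is non-cyclic, but $x$ is a non-generator of $G$ and so cannot lie in any minimal generating set, contradicting rank-independence. Hence $n=3$, and if the Hall $2'$-complement $H$ of $Q_8$ in $G$ were non-trivial (hence cyclic by the previous step), then $d(G)=2$ and two non-commuting order-$4$ elements inside $Q_8\times\{1\}$ generate $Q_8\ne G$, another contradiction. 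So $G=Q_8$, which is case~(ii). An analogous argument with a basis of the unique non-cyclic Sylow shows that if that Sylow is elementary abelian of rank $2$, then the cyclic complement must also be trivial, giving $G\cong C_p\times C_p$ (case~(i)); all remaining $G$ fall into case~(iii).

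For the reverse direction, (i) and (ii) are immediate, since a non-cyclic pair in $C_p\times C_p$ is a basis and every proper subgroup of $Q_8$ is cyclic. For (iii), writing $G=P\times C$ with $P$ elementary abelian of rank $d\ge 3$ and $C$ cyclic of coprime order (so $d(G)=d$), coprimality of $|P|$ and $|C|$ yields $\langle x,y\rangle = \langle u,u'\rangle\times\langle v,v'\rangle$ for $x=(u,v)$ and $y=(u',v')$; since $\langle v,v'\rangle$ is cyclic, the non-cyclicity of $\langle x,y\rangle$ forces $u,u'\in P$ to be $\mathbb{F}_p$-linearly independent. Extending to a basis $\{u,u',w_3,\dots,w_d\}$ of $P$, the set $\{x,y,(w_3,c),(w_4,1),\dots,(w_d,1)\}$ with $c$ a generator of $C$ has $P$- and $C$-projections equal to $P$ and $C$, hence generates $G$ by coprime projection; its size is $d=d(G)$. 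The main obstacle is the $Q_8\times H$ step: verifying that appending any non-trivial cyclic tail breaks rank-independence depends on the fact that $d(Q_8\times H)=\max(2,d(H))$ does not grow enough to accommodate both a non-cyclic pair sitting inside $Q_8\times\{1\}$ and generators of $H$.
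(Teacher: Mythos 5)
Your proof is correct, and its core moves coincide with the paper's: both exploit the Sylow decomposition of a nilpotent group together with the identity $d(G)=\max_p d(P_p)$, project onto a Sylow subgroup of maximal rank to show that at most one Sylow subgroup is non-cyclic and that the complement is cyclic, invoke Lemma~\ref{sylow} to pin down the structure of that Sylow subgroup, and verify case~(iii) via a coprime subdirect-product argument. The main difference is how the small cases are dispatched. The paper splits on $d(G)\ge 3$ versus $d(G)=2$: when $d=2$ it observes that \emph{every} proper subgroup of $G$ must be cyclic (any non-cyclic pair inside a proper subgroup would have to generate all of $G$), so $G=P$ is a $p$-group all of whose proper subgroups are cyclic, hence $C_p\times C_p$ or $Q_8$. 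That single observation simultaneously forces the complement to be trivial \emph{and} rules out $Q_{2^n}$ for $n\ge 4$ (since $\langle a^2,b\rangle\cong Q_{2^{n-1}}$ is then a proper non-cyclic subgroup). You instead handle each of these sub-cases explicitly: $Q_{2^n}$ for $n\ge 4$ via a Frattini element, then separate ``complement must be trivial'' arguments for $Q_8$ and for $C_p\times C_p$. Both routes are valid; the paper's $d=2$ observation is a touch slicker, while your version is more self-contained and also spells out the verification of case~(iii), which the paper omits as ``easy to check.''

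One presentational point: in your ``at most one non-cyclic Sylow'' step you take ``two non-cyclic Sylow subgroups with $d(P_1)\ge d(P_2)$'' and then assert $d(G)=d(P_1)$. As written this only follows if $P_1$ actually attains the maximum rank over \emph{all} Sylow subgroups (which it might not if a third non-cyclic Sylow had higher rank). The fix is trivial --- take $P_1$ to be a Sylow subgroup of maximal rank, so $d(G)=d(P_1)$, and $P_2$ any other non-cyclic Sylow --- and this is what the paper does implicitly by choosing $p$ with $d(P_p)=d(G)$ at the outset. Not a real gap, but worth tightening.
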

\begin{proof}
	It is easy to check that if $G$ satisfies (i), (ii) or (iii), then $G$ satisfies the rank-independence property. Conversely,
        suppose that $G$ has the rank-independence property and let $d:=d(G).$ There exists a prime $p$ dividing $|G|$ such that the Sylow $p$-subgroup $P$ of $G$ satisfies $d(P)=d$.
        Let $C$ be a $p$-complement in $G.$ Distinct elements of $C$ cannot belong to a generating set for $G$ of cardinality $d$, so $C$ is cyclic. If $d\geq 3,$ then $P$ is an elementary abelian $p$-group by Lemma \ref{sylow}. If $d=2$, then each proper subgroup of $G$ is cyclic, hence $G=P$ is isomorphic either to $C_p\times C_p$ or to $Q_8.$
\end{proof}

\begin{prop}\label{PT}
Assume that $G$ is not nilpotent, and that $d(G)\geq 3$. Then $G$ satisfies the rank-independence property if and only if $G=P\rtimes C$,  where $P$ is an elementary abelian Sylow $p$-subgroup of $G$
	and $C$ is a cyclic group, 
        acting on $P$ as scalar multiplication. 
\end{prop}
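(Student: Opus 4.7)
The plan is to prove the two directions separately: sufficiency by a direct verification modelled on the sufficiency half of Theorem~\ref{thm:supersol_struc}, and necessity by a sequence of structural reductions exploiting rank-independence.

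For sufficiency, assume $G = P \rtimes C$ as in the statement. Since $G$ is non-nilpotent, the scalar action of $C$ on $P$ is non-trivial, by some $\lambda \in \mathbb{F}_p^\times$ with $\lambda \neq 1$. The key observation is that every subspace of $P$ is $C$-invariant, so the chief factors of $G$ inside $P$ are pairwise $G$-isomorphic, each being the 1-dimensional scalar module. Using Lemma~\ref{corone} one first computes $d(G) = \dim_{\mathbb{F}_p}(P)$. Given any $x, y \in G$ with $\langle x, y \rangle$ non-cyclic, the matrix criterion of Notation~\ref{not:matrix} reduces the extension problem to completing the two $P$-component columns of $A^{(1)}$ to a full-rank matrix of size $\dim_{\mathbb{F}_p}(P) + 1$; since only one module type appears, this is always possible, yielding the required size-$d(G)$ generating set containing $\{x,y\}$.

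For necessity, let $G$ be non-nilpotent with $d(G) \ge 3$ and the rank-independence property; by the previous proposition, $G$ is supersoluble. I would first show $\frat(G) = 1$: any prime-order $x \in \frat(G)$ is a non-generator, so no minimum generating set contains $x$, and rank-independence therefore forces $\langle x, y \rangle$ to be cyclic for every $y \in G$. Since $|x|$ is prime this makes every non-trivial element of $G$ have order divisible by $|x|$, so $G$ is a $p$-group, contradicting non-nilpotency. With $\frat(G) = 1$, the structural analysis at the beginning of Section~\ref{superind} gives $G = F \rtimes H$ where $F = \fit(G) = V_1^{\delta_1} \times \cdots \times V_r^{\delta_r}$ is a direct product of minimal normal subgroups of prime order and $H$ is abelian.

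The core of the necessity argument is three structural claims, which I would establish by rank-independence arguments in the spirit of Lemma~\ref{lem:three_conditions}: (a) only one prime $p$ divides $|F|$, so that $F$ is an elementary abelian $p$-Sylow subgroup of $G$; (b) $H$ is cyclic; and (c) the modules $V_1,\dots,V_r$ are pairwise $H$-isomorphic, so $H$ acts as a single scalar on $F$. For each claim I would argue by contradiction. If two primes $p\neq q$ divide $|F|$, I would pick nontrivial $a \in O_p(G)$ and $b \in O_q(G)$ together with suitable $h \in H$, form $g_1, g_2 \in G$ combining these components so that $\langle g_1,g_2\rangle$ is non-cyclic, and use the matrix criterion to show that any extension of $\{g_1, g_2\}$ to a generating set has size strictly exceeding $d(G)$, contradicting rank-independence. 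Similar matrix-rank arguments, using that $H$-non-cyclicity or the existence of two distinct characters $\chi_i \neq \chi_j$ on summands of $F$ creates rank obstructions in Notation~\ref{not:matrix}, yield (b) and (c).

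The hardest step will be (a), eliminating multiple prime divisors of $|F|$. In the independence-property setting, the analogous coprimality is Condition~(iii) of Lemma~\ref{lem:three_conditions}, whose proof uses that every non-power pair lies in some minimal generating set; under the strictly weaker rank-independence hypothesis we cannot use arbitrary minimal generating sets, only those of size exactly $d(G)$. I therefore expect the argument to hinge on the hypothesis $d(G) \ge 3$ together with the crown-style count of how many generators each chief-factor type $V_i$ contributes to $d(G)$: if $F$ contains chief factors over two different primes then the crown invariants split, and a pair chosen to straddle the two primes can be shown to require at least $d(G)+1$ generators to complete, giving the desired contradiction. Once (a) is established, (b) and (c) follow by simpler adaptations of the same matrix-rank arguments, completing the necessity direction.
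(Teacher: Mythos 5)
Your necessity argument begins with the claim that $\frat(G)=1$, and this step is both false and fatally undermines the rest of the plan. Consider $G = C_3^2 \rtimes C_4$, where $C_4$ acts on $C_3^2$ by scalar multiplication by $-1$ (so the kernel of the action is $C_2 \le C_4$). This $G$ is non-nilpotent, has $d(G)=3$, and is of the form $P\rtimes C$ with $P=C_3^2$ elementary abelian Sylow, $C = C_4$ cyclic of coprime order acting as scalars --- so by the theorem it must satisfy the rank-independence property --- yet $\frat(G) = \frat(C_4) = C_2 \ne 1$. Your proposed argument for $\frat(G)=1$ fails precisely here: if $x\in\frat(G)$ has prime order $q$, rank-independence does yield that $\langle x,y\rangle$ is cyclic for every $y$, but this only says that $x$ and $y$ are both powers of some $z$, not that $\langle x\rangle\subseteq\langle y\rangle$; in particular when $|y|$ is coprime to $q$ one simply has $\langle x,y\rangle=\langle xy\rangle$, and no conclusion about the order of $y$ follows. (The paper's Proposition~\ref{frattini} gets the stronger conclusion $\langle x\rangle\subseteq\langle y\rangle$ only because it works under the full \emph{independence} property, where dependence forces one element to be a power of the other.) The paper's proof of Proposition~\ref{PT} avoids this entirely: it sets $F:=\frat(G)$ and works in $G/F$ throughout, applies Lemma~\ref{corone} to get $d(G)=\max\{d(H),\delta_i+1\}$, uses a non-central chief factor to force $d=\delta_1+1$, shows a complement $C$ of $V_1^{\delta_1}$ in $G/F$ must have cyclic image (via the column-degeneracy in the matrix criterion), deduces $r=1$ and $p\nmid|H|$, and only then passes back to $G$ and invokes Lemma~\ref{sylow} on the normal Sylow $p$-subgroup. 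Your structural claims (a)--(c) are essentially the right intermediate targets, but they must be established for $G/\frat(G)$, not for $G$ after assuming $\frat(G)=1$.

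Two smaller issues. In the sufficiency sketch you write $d(G)=\dim_{\mathbb{F}_p}(P)$; it should be $\dim_{\mathbb{F}_p}(P)+1$ (one needs $\delta$ columns for the module part plus a generator of $C$ modulo $P$). Relatedly, completing the two columns of $A^{(1)}$ to full rank is not by itself enough --- one must also ensure the chosen lifts project onto a generating set of $C$, which is why the paper's converse argument explicitly includes a factor $x$ with $\langle x\rangle = C$ in one of the added generators. These are easily fixable, but the $\frat(G)=1$ step is a genuine gap and needs a different strategy, namely the quotient-by-Frattini approach of the paper.
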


\begin{proof}Assume that $G$ satisfies the rank-independence property and set $F:=\frat(G).$ Since $G$ is supersoluble, 
\begin{equation*}
	G/F \cong (V_1^{\delta_1}\times \dots \times V_r^{\delta_r})\rtimes H
\end{equation*}
where $H$ is abelian, $\delta_1, \dots, \delta_r$ are positive integers, and $V_1,\dots,V_r$  are pairwise non-$H$-isomorphic, irreducible $H$-modules on each of which $H$ acts non-trivially. Moreover,  $r > 0$ since $G$ is not nilpotent, and for $i \in \{1, \ldots, V_i\}$ the group $V_i$ has prime order $p_i$.
By Lemma~\ref{corone},
\begin{equation}\label{d(G)}
	d:=d(G)=\max \{d(H), \delta_i+1\mid 1\leq i\leq r\}.
\end{equation}
Since $r \neq 0$, there exists a non-central minimal normal subgroup $N/F$ of $G/F.$ Since $N$ is non-central, there exist $x\in N$ and $y\in  G$ such that 
$[x,y]\neq 1.$  Now $\langle x, y \rangle$ is not abelian, so there exist $z_3,\dots,z_d \in G$ such that $G=\langle x, y, z_3,\dots,z_d\rangle.$ In particular, $ G=N\langle  y, z_3,\dots, z_d\rangle$ and hence $d(H)\leq d(G/N)\leq d-1$. It follows from \eqref{d(G)}  that $d=\delta_i+1$ for some $i\in \{1,\dots,r\}$. We may assume that $i=1.$

Set $V:=V_1$, $p:=p_1,$ and $\delta := \delta_1=d-1.$
We identify each element $w$ of $V^{{\delta}}$ with an element
$(x_1,\dots,x_{\delta})\in \FF_p^{{\delta}}$. Let $L=C/F$ be  a complement of $V^{{\delta}}$ in $ G/F.$ For each $\ell$ in $L,$  there exists an $\alpha(\ell) \in  \FF_p^\times$ such that
$w^\ell=\alpha (\ell)w$ for all  $w\in V^{{\delta}}.$ 
Given $g_1,\dots,g_d\in G,$ we shall write $g_iF=\ell_i(x_{i,1},\dots,x_{i,{\delta}})$, with
$x_{i,j} \in \FF_p$.
  Consider the matrix
	\[A = A(g_1,\dots,g_d):=\begin{pmatrix}
	1-\alpha(\ell_1)&\cdots&1-\alpha(\ell_d)\\
	x_{1,1}&\cdots&x_{d,1}\\
	\vdots&\cdots&\vdots\\
	x_{1,{\delta}}&\cdots&x_{d,{\delta}}
	\end{pmatrix},
	\]
        similar to Notation~\ref{not:matrix}. 
It follows from Lemma \ref{corone} that $\langle g_1,\dots,g_d\rangle=G$ if and only if 
$\langle \ell_1,\dots,\ell_d\rangle=L$, and ${\rm{rank}}(A)=d.$

We now show that $C$ is cyclic. Assume for a  contradiction that 
there exist $g_1, g_2 \in C$ such that $\langle  g_1,  g_2\rangle $ is not cyclic. 
Since $G$ satisfies the rank-independence property,  there exist $g_3,\dots,g_d\in G$ such that $\langle g_1,\dots,g_d\rangle=G$. However
the  first two columns of the matrix $A = A(g_1, \ldots, g_d)$ are linearly dependent, contradicting $\mathrm{rank}(A) = d$.
Since $C$ is cyclic, $L$ is also cyclic  and consequently $r=1,$ and so \[G/F \cong V^{{\delta}}\rtimes H.\]

We show next that $p$ does not divide $|H|.$ Assume, for a contradiction, that there exists $y_1 \in G$ such that $y_1F$ is an element of $H$ of order $p$, and choose $y_2 \in G$ so that $y_2F$ is a non-trivial element of $V^{{\delta}}.$ Then $K=\langle y_1, y_2\rangle F \cong (C_p \times C_p)F$ is a non-cyclic normal subgroup of $G$. As $G$ has the rank-independence property, there exist $y_3,\dots,y_d \in G$ such that $G=\langle y_1, y_2, y_3,\dots, y_d\rangle$. However, this is not possible, since  $d(G/K) = d( V^{{\delta}-1} \rtimes H/\langle y_1\rangle) > d-2$. Hence $(p, |H|) = 1$.

Let $P$ be a Sylow $p$-subgroup of $G$. From $(p, |H|) = 1$ we deduce
that $P\cap C\leq F.$ Moreover, $P$ is a normal subgroup of $G$ and $d(P)\geq {\delta}=d-1\geq 2,$ so, by Lemma \ref{sylow}, $P$ is either  elementary abelian or generalized quaternion. But in the second case, $|V|=2$,  contradicting the assumption that $H$ acts non-trivially on $V$. Thus $P$ is elementary abelian, and consequently $P\cap C\leq P\cap F=1$. In particular $P\cong V^{\delta}$ and $G\cong V^{\delta}\rtimes C$, so $p$ does not divide $C$.

\medskip

Conversely, assume  $G=P \rtimes C$, with $P\cong C_p^{{\delta}}$, ${\delta}\geq 2,$ $(|C|,p)=1$ and $C$ acting on $P$ as scalar multiplication. Since $G$ is not nilpotent, $C$  acts non-trivially on $P$. Additionally, $d:=d(G)={\delta}+1.$ We again apply Lemma \ref{corone}. We identify each element of $P$ with a vector $(y_1,\dots,y_{\delta}) \in \FF_p^{\delta}$.
For each $c\in C,$ there exists an $\alpha(c)\in \FF_p^\times$ such that $y^c=\alpha(c)y$ for all $y\in P.$ Let $g_1:=  c_1(y_{1,1},\dots,y_{1,{\delta}}), g_2:= c_2(y_{2,1},\dots,y_{2,{\delta}}) \in G$, and
suppose $\langle x \rangle=C$.  If $\langle g_1, g_2\rangle$ is not cyclic, then we may  choose $y_3, y_4, \ldots, y_d \in \FF_p^{\delta}$
in such a way that $\mathrm{rank}(A(g_1, g_2, xy_3, y_4, \ldots, y_d))$ is equal to $d$.
Then by Lemma~\ref{corone}, $\langle g_1, g_2, xy_3,\dots,y_{d}\rangle = G$, as required.
	\end{proof}

Propositions~\ref{nilp} and \ref{PT} combine to prove Theorem~\ref{thm:rank_perfect3}. The following result completes the proof of Theorem~\ref{thm:rank_perfect2}.

\begin{prop}
  Assume that $G$ is not nilpotent, and that $d(G)=2$. Then $G$ satisfies the rank-independence property if and only if $G$ is as described in Theorem~\ref{thm:rank_perfect2}(iii).
	\end{prop}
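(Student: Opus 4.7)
The plan is to prove both directions by exploiting the observation that, under the rank-independence property with $d(G)=2$, every proper subgroup of $G$ must be cyclic. Indeed, if a proper subgroup $H<G$ contained elements $x,y$ with $\langle x,y\rangle$ non-cyclic, then rank-independence would extend $\{x,y\}$ to a generating pair for $G$ of size $2$, forcing $\langle x,y\rangle=G$ and contradicting $H<G$.

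For the ``only if'' direction, recall from earlier in this section that $G$ is supersoluble. Since $G$ is non-nilpotent, the Sylow $p$-subgroup $P$ for the largest prime $p$ dividing $|G|$ is normal and proper in $G$, hence cyclic; write $P=C_{p^n}$. By Schur--Zassenhaus, $G=P\rtimes H$ with $\gcd(|H|,p)=1$, and $H$ is cyclic as a proper subgroup of $G$. For any Sylow subgroup $Q\leq H$ that acts non-trivially on $P$, the subgroup $P\rtimes Q$ is non-cyclic and so equals $G$, forcing $Q=H$; since at least one such $Q$ exists (otherwise $G$ would be nilpotent), $H=C_{q^m}$ is a $q$-group for a single prime $q$. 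Letting $K$ denote the kernel of the action of $H$ on $P$, the same argument applied to any subgroup $K<K'\leq H$ shows that $K$ is the unique maximal subgroup $C_{q^{m-1}}$ of $H$.

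To show $n=1$, observe first that $p$ must be odd: since $\aut(C_{2^n})$ has no non-trivial elements of odd order and $|H|$ is coprime to $p$, if $p=2$ then $H$ would act trivially on $P$, contradicting non-nilpotence. With $p$ odd, $\aut(P)\cong C_{p^{n-1}}\times C_{p-1}$, and the natural restriction map $\aut(P)\to\aut(\Omega_1(P))\cong C_{p-1}$ is an isomorphism on the $C_{p-1}$ factor. As the image of $H$ in $\aut(P)$ lies in this $p'$-Hall factor, $H$ must act faithfully on the characteristic subgroup $\Omega_1(P)\cong C_p$. Hence $\Omega_1(P)\rtimes H$ is non-cyclic, so equals $G$, yielding $P=\Omega_1(P)=C_p$.

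For the converse, $G=C_p\rtimes C_{q^m}$ with kernel $C_{q^{m-1}}$ is non-cyclic and 2-generated, so $d(G)=2$. Any subgroup of $G$ not containing $C_p$ meets it trivially (as $|C_p|$ is prime), so embeds into $G/C_p\cong C_{q^m}$ and is cyclic, while a proper subgroup containing $C_p$ has the form $C_p\rtimes K$ with $K<C_{q^m}$, so $K\leq C_{q^{m-1}}$ acts trivially on $C_p$, and $C_p\rtimes K=C_p\times K$ is cyclic of coprime order $p|K|$. Thus every proper subgroup of $G$ is cyclic, and the rank-independence property follows from the opening observation. The principal technical obstacle is the step deducing $n=1$; the remaining steps are formal consequences of Schur--Zassenhaus and the subgroup-lattice observation.
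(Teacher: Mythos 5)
Your proof is correct, and it starts from the same observation as the paper does: under the rank-independence property with $d(G)=2$, a pair $\{x,y\}$ generating a non-cyclic subgroup must extend to a generating pair, hence generate $G$, so every proper subgroup of $G$ is cyclic, and conversely. From that point on, though, the two arguments diverge. The paper simply invokes the Miller--Moreno classification of minimal non-abelian groups \cite{mm} and extracts the non-nilpotent groups with all proper subgroups cyclic, whereas you reprove the relevant case from scratch: using supersolubility to get a normal cyclic Sylow $p$-subgroup $P$ for the largest prime, then Schur--Zassenhaus for a cyclic complement $H$, then the ``$P\rtimes Q$ non-cyclic'' trick to pin down $H$ as a cyclic $q$-group with the action kernel equal to its unique maximal subgroup, and finally the $\Omega_1(P)$ argument to force $|P|=p$. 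This is a reasonable trade-off: the paper's citation is shorter, but your version is self-contained and makes the mechanism transparent (in particular it isolates exactly where each hypothesis is used). One small wording issue: in the step establishing $n=1$ you write that ``$H$ must act faithfully on $\Omega_1(P)$,'' which is not literally correct since $K$ acts trivially; what you mean (and what the argument needs) is that the kernel of the action of $H$ on $\Omega_1(P)$ is again $K$, so the action on $\Omega_1(P)$ is non-trivial and hence $\Omega_1(P)\rtimes H$ is non-cyclic. With that phrasing tightened, the proof is sound.
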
	
\begin{proof} The group $G$ has the rank-independence property if and only if all proper subgroups of $G$ are cyclic. The conclusion follows from the description of minimal non-abelian groups by Miller and Moreno in \cite{mm}.
\end{proof}

\subsection*{Acknowledgements}
We are grateful to Peter Cameron for helpful discussions, and in particular for suggesting the problem of classifying the groups satisfying the rank-independence property; and to anonymous referees for helpful comments. The first author was supported by a St Leonard's International Doctoral Fees Scholarship and a School of Mathematics \& Statistics PhD Funding Scholarship at the University of St Andrews. The fourth author would like to thank the Isaac Newton Institute for Mathematical Sciences, Cambridge, for support and hospitality during the programme “Groups, Representations and Applications: New perspectives”, where work on this paper was undertaken. This work was supported by EPSRC grant no EP/R014604/1, and also partially supported by a grant from the Simons Foundation.


\begin{thebibliography}{99}

\bibitem{al}
C. Acciarri and A. Lucchini.
\newblock Graphs encoding the generating properties of a finite group.
\newblock {\em Math. Nachr.}, 293(9):1644--1674, 2020.

\bibitem{bereczky}
\'{A}. Bereczky.
\newblock Maximal overgroups of {S}inger elements in classical groups.
\newblock {\em J. Algebra}, 234(1):187--206, 2000.

\bibitem{magma}
W. Bosma, J. Cannon, and C. Playoust.
\newblock The {M}agma algebra system. {I}. {T}he user language.
\newblock {\em J. Symbolic Comput.}, 24(3-4):235--265, 1997.

\bibitem{bhrd}
J. N. Bray, D. F. Holt, and C. M. Roney-Dougal.
\newblock {\em The maximal subgroups of the low-dimensional finite classical
  groups}, volume 407 of {\em London Math.~Soc.~Lecture Note Ser.}
\newblock CUP, Cambridge, 2013.

\bibitem{GAPchar}
T. Breuer.
\newblock {\em {The GAP Character Table Library, Version 1.3.1}},
  2020.

\bibitem{bgk}
T. Breuer, R. M. Guralnick, and W. M. Kantor.
\newblock Probabilistic generation of finite simple groups. {II}.
\newblock {\em J. Algebra}, 320(2):443--494, 2008.

\bibitem{burnessgiudici}
T. C. Burness and M. Giudici.
\newblock {\em Classical groups, derangements and primes}, volume~25 of {\em
  Aust.~Math.~Soc.~Lecture Ser}.
\newblock CUP, Cambridge, 2016.

\bibitem{bgh}
T. C. Burness, R. M. Guralnick, and S. Harper.
\newblock The spread of a finite group.
\newblock {\em Ann. of Math. (2)}, 193(2):619--687, 2021.

\bibitem{burnessharper}
T. C. Burness and S. Harper.
\newblock On the uniform domination number of a finite simple group.
\newblock {\em Trans. Amer. Math. Soc.}, 372(1):545--583, 2019.


\bibitem{busa} S. Burris and H. P. Sankappanavar. {\em A course in universal algebra}. Grad. Texts in Math., vol. 78, Springer-Verlag, New York-Berlin, 1981.

\bibitem{buturlakin}
A. A. Buturlakin and M. A. Grechkoseeva.
\newblock The cyclic structure of maximal tori in finite classical groups.
\newblock {\em Algebra Logika}, 46(2):129--156, 2007.

\bibitem{cc} P. J. Cameron and P. Cara. Independent generating sets and geometries for symmetric groups.
{\em J. Algebra}, 258(2):641--650, 2002.
	
\bibitem {c0} P. J. Cameron and S. Ghosh.  The power graph of a finite group. {\em Discrete Math.}, 311(13):1220--1222, 2011.
	
\bibitem{c1} P. J. Cameron. The power graph of a finite group, II. {\em J. Group
Theory}, 13(6):779--783, 2010.

\bibitem{c3} P. J. Cameron, H. Guerra, and \v{S}. Jurina. The power graph
of a torsion-free group. {\em J. Algebraic Combinatorics}, 49(1):83--98, 2019.

\bibitem {c4} P. J. Cameron and S. H. Jafari, On the connectivity and
independence number of power graphs of groups. {\em Graphs Combin.}, 36(3):895--904, 2020.

\bibitem {c5} P. J. Cameron and B. Kuzma. Between the enhanced power
graph and the commuting graph. \textup{Preprint, 2020, arXiv:2012.03789}.

\bibitem{c6} P. J. Cameron, P. Manna, and R. Mehatari. On finite groups whose power graph is a cograph.  {\em J. Algebra}, 591:59--74, 2022.







	
	





\bibitem{carterweyl}
R. W. Carter.
\newblock Conjugacy classes in the {W}eyl group.
\newblock {\em Compositio Math.}, 25:1--59, 1972.

\bibitem{ATLAS}
J. H. Conway, R. T. Curtis, S. P. Norton, R. A. Parker, and R. A. Wilson.
\newblock {\em Atlas of finite groups}.
\newblock OUP, Eynsham, 1985.

\bibitem{defranceschi}
G. De~Franceschi.
\newblock {\em Centralizers and conjugacy classes in finite classical groups}.
\newblock PhD thesis, University of Auckland, 2018.


\bibitem{deriziotis81}
D. I. Deriziotis.
\newblock Centralizers of semisimple elements in a {C}hevalley group.
\newblock {\em Comm. Algebra}, 9(19):1997--2014, 1981.

\bibitem{galt}
A. A. Gal't.
\newblock Strongly real elements in finite simple orthogonal groups.
\newblock {\em Sibirsk. Mat. Zh.}, 51(2):241--248, 2010.

\bibitem{GAP}
The GAP~Group.
\newblock {\em {GAP -- Groups, Algorithms, and Programming, Version 4.11.1}},
  2021.

\bibitem{guangxiang}
Z. Guangxiang.
\newblock On self-normalizing cyclic subgroups.
\newblock In {\em The {A}rcata {C}onference on {R}epresentations of {F}inite
  {G}roups ({A}rcata, {C}alif., 1986)}, volume~47 of {\em Proc. Sympos. Pure
  Math.}, pp419--422. Amer. Math. Soc., Providence, RI, 1987.

\bibitem{guralnick}
R. Guralnick.
\newblock On the number of generators of a finite group.
\newblock {\em Arch. Math. (Basel)}, 53(6):521--523, 1989.

\bibitem{guralnickmalle}
R. Guralnick and G. Malle.
\newblock Products of conjugacy classes and fixed point spaces.
\newblock {\em J. Amer. Math. Soc.}, 25(1):77--121, 2012.

\bibitem{guma} R. Guralnick and G. Malle. Simple groups admit Beauville structures. {\em J. Lond. Math. Soc. (2)}, 85(3):694--721, 2012.

\bibitem{gpps}
R. Guralnick, T. Penttila, C. E. Praeger, and J. Saxl.
\newblock Linear groups with orders having certain large prime divisors.
\newblock {\em Proc. London Math. Soc. (3)}, 78(1):167--214, 1999.

\bibitem{guralnickkantor}
R. M. Guralnick and W. M. Kantor.
\newblock Probabilistic generation of finite simple groups.
\newblock {\em J. Algebra}, 234(2):743--792, 2000.

\bibitem{guralnicktracey}
R. M. Guralnick and G. Tracey.
\newblock On elements of finite groups contained in a unique maximal subgroup.
\newblock \textup{In preparation}.

\bibitem{sh} S. Harper.
\newblock Flexibility in generating sets of finite groups.
\newblock {\em Arch. Math. (Basel)}, 118(3):231--237, 2022.

\bibitem{hering}
C. Hering.
\newblock Transitive linear groups and linear groups which contain irreducible
  subgroups of prime order.
\newblock {\em Geometriae Dedicata}, 2:425--460, 1974.

\bibitem{hestenes}
M. D. Hestenes.
\newblock Singer groups.
\newblock {\em Canadian J. Math.}, 22:492--513, 1970.

\bibitem{huppert}
B. Huppert.
\newblock Singer-{Z}yklen in klassischen {G}ruppen.
\newblock {\em Math. Z.}, 117:141--150, 1970.

\bibitem{jordan}
C. Jordan.
\newblock Sur la limite de transitivit\'{e} des groupes non altern\'{e}s.
\newblock {\em Bull. Soc. Math. France}, 1:40--71, 1872/73.

\bibitem{kq} A. V. Kelarev and S. J. Quinn. A combinatorial property and power
  graphs of groups. 
 In  {\em Contributions to general algebra, 12 ({V}ienna, 1999)}, pages 229--235. Heyn, Klagenfurt, 2000.

\bibitem{kleidmanliebeck}
P. Kleidman and M. Liebeck.
\newblock {\em The subgroup structure of the finite classical groups}, volume
  129 of {\em London Math.~Soc.~Lecture Note Ser}.
\newblock CUP, Cambridge, 1990.


\bibitem{kleidmano8}
P. B. Kleidman.
\newblock The maximal subgroups of the finite {$8$}-dimensional orthogonal
  groups {$P\Omega^+_8(q)$} and of their automorphism groups.
\newblock {\em J. Algebra}, 110(1):173--242, 1987.

	



\bibitem{gene} A. Lucchini.  Generators and minimal normal subgroups. {\em Arch. Math. (Basel)}, 64(4):273--276, 1995. 

\bibitem{ind} A. Lucchini.  The independence graph of a finite group. {\em Monatsh. Math.}, 193(4):845--856, 2020.

\bibitem{quesgen} A. Lucchini.  Some questions on the number of generators of a finite group. {\em Rend. Sem. Mat. Univ. Padova}, 83:201--222, 1990.

\bibitem{cliq}  A. Lucchini and A.  Mar\'{o}ti. On the clique number of the generating graph of a finite group. {\em Proc. Amer. Math. Soc.}, 137(10):3207--3217, 2009.	



\bibitem{mallesaxlweigel}
G. Malle, J. Saxl, and T. Weigel.
\newblock Generation of classical groups.
\newblock {\em Geom. Dedicata}, 49(1):85--116, 1994.

\bibitem{malletesterman}
G. Malle and D. Testerman.
\newblock {\em Linear algebraic groups and finite groups of {L}ie type}, volume
  133 of {\em Cambridge Studies in Advanced Mathematics}.
\newblock CUP, Cambridge, 2011.

\bibitem{mm} G. A. Miller and H. C. Moreno.
Non-abelian groups in which every subgroup is abelian.
{\em Trans. Amer. Math. Soc.}, 4(4):398--404, 1903.

\bibitem{nicolas}
F. Nicolas.
\newblock A simple, polynomial-time algorithm for the matrix torsion problem.
\newblock \textup{Preprint, 2009, arXiv:0806.2068}.


	
	

\bibitem{e1} R. P. Panda, S. Dalal, and J. Kumar. On the enhanced power graph of a finite group. {\em Comm. Algebra}, 49(4):1697--1716, 2021.

\bibitem{praeger}
C. E. Praeger.
\newblock Primitive prime divisor elements in finite classical groups.
\newblock In {\em Groups {S}t. {A}ndrews 1997 in {B}ath, {II}}, volume 261 of
  {\em London Math. Soc. Lecture Note Ser.}, pp605--623. CUP, Cambridge, 1999.



\bibitem{wrscott}
W. R. Scott.
\newblock {\em Group theory}.
\newblock Dover Publications, Inc., New York, second edition, 1987.

\bibitem{seitzstructure}
G. M. Seitz.
\newblock On the subgroup structure of classical groups.
\newblock {\em Comm. Algebra}, 10(8):875--885, 1982.


\bibitem{tracey}
G. M. Tracey.
\newblock Minimal generation of transitive permutation groups.
\newblock {\em J. Algebra}, 509:40--100, 2018.

\bibitem{veldkamp}
F. D. Veldkamp.
\newblock Regular elements in anisotropic tori.
\newblock In {\em Contributions to algebra (collection of papers dedicated to
  {E}llis {K}olchin)}, pp389--424. Academic Press, New York, 1977.

\bibitem{wall}
G. E. Wall.
\newblock On the conjugacy classes in the unitary, symplectic and orthogonal
  groups.
\newblock {\em J. Austral. Math. Soc.}, 3:1--62, 1963.

\bibitem{weigel}
T. S. Weigel.
\newblock Generation of exceptional groups of {L}ie type.
\newblock {\em Geom. Dedicata}, 41(1):63--87, 1992.

\bibitem{wielandt}
H. Wielandt.
\newblock {\em Finite permutation groups}.
\newblock Academic Press, New York-London, 1964.

\bibitem{whi} J. Whiston. Maximal independent generating sets of the symmetric group. {\em J. Algebra}, 232(1):255--268, 2000.

\bibitem{sawhi} J. Whiston and J. Saxl. On the maximal size of independent generating sets of $PSL_2(q).$ {\em J.
Algebra},  258(2):651--657, 2002.

\bibitem{z1} S. Zahirovi\'{c}. The power graph of a torsion-free group determines
the directed power graph. {\em Discrete Appl. Math.}, 305:109--118, 2021.

\bibitem{z2}  S. Zahirovi\'{c}, I. Bo\v{s}njak, and R. Madar\'{s}z. A study of enhanced
power graphs of finite groups. {\em J. Algebra Appl.}, 19(4):2050062, 2020.

\bibitem{jpz} J. P. Zhang. 
On finite groups all of whose elements of the same order are conjugate in their automorphism groups.
{\em J. Algebra}, 153(1):22-–36, 1992.
\end{thebibliography}
\end{document}